\newtheorem{theorem}{Theorem}[section]
\newtheorem{lemma}[theorem]{Lemma}
\newtheorem{proposition}[theorem]{Proposition}  
\newtheorem{corollary}[theorem]{Corollary} 
\theoremstyle{definition}
\newtheorem{remark}[theorem]{Remark} 
\newtheorem{example}[theorem]{Example} 
\newtheorem{definition}[theorem]{Definition}
\newcommand{\R}{\mathbb{R}}
\newcommand{\D}{\mathbb{D}}
\newcommand{\PP}{\mathbb{P}}
\newcommand{\C}{\mathbb{C}} 
\newcommand{\Z}{\mathbb{Z}}
\newcommand{\Q}{\mathbb{Q}}  
\newcommand{\K}{\mathbb{K}}
\newcommand{\HH}{\mathbb{H}} 
\newcommand{\hHH}{\widehat{\HH}}
\newcommand{\cO}{\mathcal{O}} 
\newcommand{\cA}{\mathcal{A}}
\newcommand{\cF}{\mathcal{F}} 
\newcommand{\cM}{\mathcal{M}}
\newcommand{\cD}{\mathcal{D}} 
\newcommand{\cU}{\mathcal{U}}
\newcommand{\cV}{\mathcal{V}}
\newcommand{\cL}{\mathcal{L}}
\newcommand{\cH}{\mathcal{H}}
\newcommand{\cX}{\mathcal{X}}
\newcommand{\cZ}{\mathcal{Z}}
\newcommand{\rel}{{\rm rel}}
\newcommand{\irr}{{\rm irr}}
\newcommand{\mar}{{\rm mar}} 
\newcommand{\Jac}{\operatorname{Jac}} 
\newcommand{\Res}{\operatorname{Res}} 
\newcommand{\End}{\operatorname{End}} 
\renewcommand{\Re}{\operatorname{Re}}
\renewcommand{\Im}{\operatorname{Im}} 
\newcommand{\Aut}{\operatorname{Aut}} 
\newcommand{\Mir}{\operatorname{Mir}} 
\newcommand{\id}{\operatorname{id}}
\newcommand{\Pic}{\operatorname{Pic}} 
\newcommand{\frh}{\mathfrak{h}} 
\newcommand{\be}{\mathbf{e}}
\def\parfrac#1#2{\frac{\partial #1}{\partial #2}}
\def\fracp#1{\langle #1 \rangle} 
\def\ceil#1{\lceil #1 \rceil}
\newcommand{\beq}{\begin{equation}}
\newcommand{\eeq}{\end{equation}}
\newcommand{\beqa}{\begin{eqnarray}}
\newcommand{\eeqa}{\end{eqnarray}}
\newcommand{\ben}{\begin{eqnarray*}}
\newcommand{\een}{\end{eqnarray*}}
\def\det{\mathop{\rm det}\nolimits}
\newsavebox{\@brx}
\newcommand{\llangle}[1][]{\savebox{\@brx}{\(\m@th{#1\langle}\)}%
  \mathopen{\copy\@brx\kern-0.5\wd\@brx\usebox{\@brx}}}
\newcommand{\rrangle}[1][]{\savebox{\@brx}{\(\m@th{#1\rangle}\)}%
  \mathclose{\copy\@brx\kern-0.5\wd\@brx\usebox{\@brx}}}
\def\lf{\left\lfloor}   
\def\rf{\right\rfloor}
\def\lan{\Big\langle}   
\def\ran{\Big\rangle}
\def\one{\mathbf{1}}
\begin{document} 

\title[GW Theory of Quotient of Fermat Calabi-Yau varieties]{\textbf{Gromov-Witten Theory of Quotient of Fermat Calabi-Yau varieties}}
\author{Hiroshi Iritani, Todor Milanov, Yongbin Ruan, Yefeng Shen}

\address{Department of Mathematics, Graduate School of Science, Kyoto University, Kitashirakawa-Oiwake-cho,
Sakyo-ku, Kyoto, 606-8502, Japan}
\email{iritani@math.kyoto-u.ac.jp}

\address{Kavli IPMU (WPI), UTIAS, The University of Tokyo, Kashiwa, Chiba 277-8583, Japan}
\email{todor.milanov@ipmu.jp}

\address{Department of Mathematics, University of Michigan, Ann Arbor, MI 48105, USA}
\email{ruan@umich.edu}

\address{Department of Mathematics, Stanford University, Stanford, CA 94305, USA}
\email{yfshen@stanford.edu}
\maketitle

\let\oldtocsection=\tocsection
\let\oldtocsubsection=\tocsubsection

\renewcommand{\tocsection}[2]{\hspace{0em}\oldtocsection{#1}{#2}}
\renewcommand{\tocsubsection}[2]{\hspace{1em}\oldtocsubsection{#1}{#2}}
{\tableofcontents }

\section{Introduction}

    Gromov-Witten theory started as an attempt to provide a rigorous mathematical foundation for the so-called A-model topological string theory of Calabi-Yau varieties.
    Even though it can be defined for all the K\"ahler/symplectic manifolds, the theory on Calabi-Yau varieties remains the most difficult one.
    In fact, a great deal of techniques were developed for non-Calabi-Yau varieties during the last twenty years. These techniques have only limited bearing
    on the Calabi-Yau cases. In a certain sense, Calabi-Yau cases are very special too. There are two outstanding problems for the Gromov-Witten theory of Calabi-Yau varieties 
    and they are the focus of our investigation.
    
    More than twenty years ago, physicists Bershadsky-Cecotti-Ooguri-Vafa \cite{BCOV} studied the higher genus B-model theory. One of the consequences of their investigation is the following mathematical
    conjecture.

    {\bf Modularity Conjecture: }{\em Suppose that $X$ is a Calabi-Yau manifold/orbifold  and $\cF^{GW}_g$ is the genus $g$ generating function of its Gromov-Witten theory.
    Then $\cF^{GW}_g$ is a quasi-modular form in an appropriate sense (see Definition \ref{def:qmf}, Definition \ref{def:amf} and Remark \ref{rem:qmf} ).}
    
    One of main intellectual advances of the field during the last several years was the realization that the modularity conjecture should be extended to orbifold quotients $[X/G]$ of a Calabi-Yau manifold/orbifold $X$.
    
    When $X$ is a Calabi-Yau hypersurface of weighted projective space, there is another famous duality from physics as follows. Suppose $X_W=\{W=0\}\subset \mathbb{P}(w_1, \cdots, w_n)$
    is a degree $d$ hypersurface. $X_W$ is a Calabi-Yau orbifold iff $d=\sum_i w_i$. 
    Let $G_W$ be the group of diagonal matrices preserving $W$. $G_W$  is nontrivial and it contains a special matrix $J=(\exp(2\pi \sqrt{-1}w_1/d), \cdots, \exp(2\pi \sqrt{-1} w_n/d))$. $J$ acts trivially on $X_W$. 
    In addition to $W$, we can choose a so-called {\em admissible group}  $\langle J \rangle \subset G \subset G_W$. Then, $\tilde{G}=G/\langle J \rangle$ acts faithfully on $X_W$.
     There are two curve counting theories built out
    of data $(W, G)$: the Gromov-Witten theory of an orbifold $[X_W/\tilde{G}]$ and the FJRW theory of $(W, G)$ \cite{FJR, FJR2}. Let $\cF_g^{\rm GW}, \cF_g^{\rm FJRW}$ be the generating functions of
    each theory. Define {\em partition functions}
    $$\cD_{\rm GW}=\sum_{g\geq0} \hbar^{g-1} \cF_g^{\rm GW}, \  \cD_{\rm FJRW}=\sum_{g\geq0} \hbar^{g-1} \cF^{\rm FJRW}_g.$$
    
    The second outstanding problem for Calabi-Yau varieties is the following conjecture \cite{W, R}.

    {\bf Landau-Ginzburg/Calabi-Yau correspondence Conjecture:  }{\em There is a differential operator $\hat{}\cU$ built out of genus zero data (the quantization of symplectic transformation in the sense of
    Givental) such that up to an analytic continuation
    $$\cD_{\rm GW}=\cU(\cD_{\rm FJRW}).$$}
    
    The above two conjectures are central for our understanding the GW-theory of Calabi-Yau varieties. For example, they are at the heart of a recent
    spectacular advance in physics \cite{HKQ} to compute higher genus Gromov-Witten invariants of the quintic 3-fold up to genus 51!
    
  It is clear that both conjectures are difficult.  In \cite{CHR}, it was proposed to put both conjectures into a single framework using {\em global mirror symmetry}.  Here, the word {\em global} refers to the global property of the B-model. The traditional version of mirror symmetry is
    {\em local} in the sense that we study a neighborhood of so-called {\em a large complex structure limit}. Global
    mirror symmetry emphasizes the idea of moving away from a large complex structure limit. In fact, we want to move around
    the entire B-model moduli space and study all the interesting limits including (not exclusively) the large complex limit. One of the special ones  is the Gepner limit, corresponding to
    FJRW theory.  Therefore, the knowledge of the Gepner limit (FJRW theory) will yield a wealth of information
    at the large complex structure limit (GW  theory). This provides an effective way to compute higher-genus Gromov-Witten invariants of Calabi-Yau hypersurfaces, which is a central and yet difficult problem in geometry and physics. Furthermore, one can study global properties of the
    entire family. The global properties of B-model naturally lead to the
    modularity of Gromov-Witten theory, a remarkable bonus of {\em global mirror symmetry}.
 This was exactly the way that BCOV discovered the modularity more than twenty years ago. Since then, there has been steady progress in
 physics on the modularity conjecture \cite{ABK, HK, HKQ, HKK, ASYZ}.     In a sense,  the mathematicians are finally catching up! However, the recent mathematical development did not follow
    the physical blueprint. Recall that the physical discussion for last 15 years focused on the Calabi-Yau
    B-model (see a mathematical formulation in \cite{CS}). An unexpected twist of recent events in mathematics is the development of
    the above framework in the set-up of
    the Landau-Ginzburg model over $[X/G]$, a related but much larger model.
    
    The main result of this article is to prove both conjectures 
for $(W, G_W)$ (Theorem \ref{thm:main}) 
in the case that $W$ is a Fermat polynomial. 
          
          \begin{theorem}
          Suppose that $W$ is a Fermat polynomial with $d=\sum_i c_i$ (hence  $X_W$ defines a Calabi-Yau hypersurface). Then, 
          \begin{itemize}
         \item[(1)]  LG/CY correspondence conjecture holds for the pair $(W, G_W)$.
         \item[(2)] The modularity conjecture holds for $[X_W/\tilde{G}_W]$.
         \end{itemize}
          \end{theorem}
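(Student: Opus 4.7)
The plan is to leverage the special structure of the Fermat polynomial $W=\sum_i x_i^{a_i}$ with its maximal diagonal symmetry group $G_W$ to reduce both conjectures to genus-zero statements together with the Givental-Teleman reconstruction of higher-genus invariants. The key feature of the maximal group is that every twisted sector of the FJRW state space for $(W, G_W)$ is narrow, so the state space and its Frobenius pairing admit a concrete, essentially combinatorial description; analogously, the inertia stack of $[X_W/\tilde{G}_W]$ decomposes into a manageable collection of components. Both theories should then be shown to define generically semisimple cohomological field theories, so that Teleman's theorem expresses $\cD_{\rm GW}$ and $\cD_{\rm FJRW}$ as $\widehat{R}$-actions on products of Kontsevich-Witten tau-functions, with $R$-matrices determined by the respective genus-zero data.

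Next, I would establish the genus-zero LG/CY correspondence by comparing I-functions on both sides. The Fermat structure lets these I-functions be written as products (one factor per variable) of explicit hypergeometric series on the GW side and of Mellin-Barnes-type integrals on the FJRW side. The symplectic transformation $\cU$ is then realized as the analytic continuation of the I-function from the large-complex-structure limit to the Gepner point, matched with the $\widehat{\Gamma}$-integral-structure framework of Chiodo-Iritani-Ruan. Once $\cU$ is pinned down at genus zero, the semisimple quantization formalism upgrades this to the full identity $\cD_{\rm GW}=\cU(\cD_{\rm FJRW})$, because the $R$-matrices on the two sides transform into one another under the genus-zero symplectic transformation.

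For the modularity statement, the strategy is to identify the global B-model moduli space of $[X_W/\tilde{G}_W]$ (equivalently, the complex moduli of its BHK mirror) with a modular curve or an arithmetic quotient of the upper half-plane, so that the periods of the mirror family become genuine modular forms in a uniformizing variable. This is where the Fermat hypothesis is essential: the mirror family is typically a low-parameter family whose Picard-Fuchs operator is of hypergeometric type and admits a Schwarzian uniformization by a modular group. Genus-zero GW/FJRW invariants, being period integrals composed with the mirror map, are then quasi-modular in the sense of the paper's Definition~\ref{def:qmf}. The quasi-modularity should then propagate to higher genera via the $R$-matrix: the entries of $R$ are built out of flat structures and Jacobi-type variations that inherit quasi-modular behavior, so Givental's quantization applied to the quasi-modular genus-zero potential yields quasi-modular $\cF_g^{\rm GW}$ for all $g$.

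The main obstacle will be the last step, namely controlling the higher-genus $R$-matrix carefully enough to prove that quasi-modularity survives the Givental quantization at all genera, and doing so in a manner consistent with the two phases (GW and FJRW) of the global model. The delicate point is to verify that, along the one-parameter slice of the Kähler moduli identified with the modular curve, the asymptotic expansions of solutions to the quantum differential equation near the Gepner point lie in the ring of quasi-modular forms, and that the transition $\cU$ between the two phases respects this ring. Once this is established, together with the genus-zero inputs above, both parts of the theorem follow from a single semisimple reconstruction argument.
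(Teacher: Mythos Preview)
Your overall strategy for part (1) is in the right spirit but routes through a different intermediary than the paper does. You propose to compare the GW and FJRW $I$-functions directly via Mellin--Barnes analytic continuation in the style of Chiodo--Iritani--Ruan, and then lift to all genera by semisimplicity. The paper instead constructs a \emph{global Landau--Ginzburg B-model} (Saito's Frobenius structure over the marginal moduli $\cM_\mar^\circ$, together with the Fock bundle $\widehat{\mathbb{V}}_{\rm tame}$) and proves two separate mirror theorems: one identifying the GW ancestor potential with the B-model near the large-complex-structure limit (Theorem~\ref{CY-LG:mirror_sym}), and one identifying the FJRW ancestor potential with the B-model near the Gepner point (Propositions~\ref{lg:d-module}, \ref{lg:pairing}, \ref{lg:opposite}). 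The symplectic transformation $\cU$ then arises not from direct $I$-function comparison but from \emph{parallel transport of opposite subspaces} along the Gauss--Manin connection on the B-model moduli, and the change-of-opposite-subspace formula (Lemma~\ref{change-R}) gives the quantized operator. Your approach could plausibly be made to work, but the global B-model has the advantage that analytic continuation is built into the geometry and the higher-genus lift is automatic from the Fock-bundle formalism, with no need to track $R$-matrices across phases by hand.

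For part (2) there is a genuine gap. Your plan to identify the B-model moduli with a modular curve or arithmetic quotient, and to invoke a Schwarzian uniformization of a hypergeometric Picard--Fuchs operator, only works when the Fermat hypersurface has complex dimension one (the elliptic orbifold cases). The paper explicitly notes in Section~2 that for Calabi--Yau threefolds and beyond the period domain is \emph{not} a hermitian symmetric space and the connection to classical modular forms is unclear. The modularity conjecture is therefore formulated and proved in the generalized sense of Definitions~\ref{def:qmf} and~\ref{def:amf}: a modular form is a section of a power of the vacuum line bundle $\cL$ over $\cM_\mar^\circ$, and quasi-modularity means being the holomorphic part of a non-holomorphic section. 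The proof is then almost tautological once the machinery is in place: the global ancestor potential $\cA(\hbar,f,\omega)$ is an abstract modular form by Corollary~\ref{scale-c}, the Cecotti--Vafa (complex-conjugate) opposite subspace furnishes the non-holomorphic completion, and the GW mirror theorem exhibits $\cF_g^{\rm GW}$ as the holomorphic limit. Your worry about propagating quasi-modularity through the $R$-matrix at higher genus is precisely what this construction sidesteps.
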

          
          We would like to mention that there are two other parts of LG/CY correspondences, {\em cohomological corespondence} and {\em genus zero correspondence}.
          The cohomological correspondence was solved for an arbitrary admissible pair $(W, G)$ by Chiodo-Ruan \cite{CHR3}. The genus zero correspondence for Fermat polynomial $W$ was solved by
          Chiodo-Iritani-Ruan \cite{CHR2, CIR} for the pair $(W, \langle J \rangle)$ (see wall-crossing proof in \cite{RR}) and by Lee-Priddis-Shoemaker \cite{PS, PLS} for the pair $(W, SL_W)$. The all-genus correspondence for simple elliptic singularities was solved by Krawitz-Shen \cite{KSh} and Milanov-Ruan \cite{MR}.
          There are also  very interesting versions for complete intersections by Clader \cite{CL} and  non-Calabi-Yau cases by Acosta \cite{AC}. Our focus is the higher genus correspondence as we stated in the theorem. 
          However, an intermediate
          step is a  proof of the genus zero correspondence for the pair $(W, G_W)$. The modularity conjecture was solved in dimension one\cite{MR, MRS, SZ} (see \cite{Coates-Iritani:Fock} for a related work on compact toric orbifolds). 
    
    Let's spell out our  general strategy.  The original version of  the LG/CY correspondence is a conjectural statement
          connecting the GW  theory of $X_W$ and the FJRW theory of $(W, \langle J \rangle)$. The computation of higher-genus
          Gromov-Witten invariants is a very difficult problem, which we hope to solve using the LG/CY correspondence. However, we can improve the situation by taking a certain
          maximal quotient $[X_W/\tilde{G}_W]$. By the Berglund-H\"ubsch-Krawitz LG-to-LG mirror symmetry \cite{BH, Kr}, $[X_W/\tilde{G}_W]$ should be mirror to the large
          complex structure limit of the B-model family of the  dual polynomial $W^T$ (a Fermat polynomial is self-dual). The Gepner limit in the B-model family is mirror to the FJRW theory of
          $(W, G_W)$ \cite{HLSW}. The B-model family of $W^T$ corresponds to miniversal deformation of $W^T$. Its genus zero theory is
          known as Saito's Frobenius manifold theory \cite{KS}. Saito's Frobenius manifold is generically semi-simple and Givental has
          defined a higher-genus potential function on the semi-simple locus \cite{G1}. Namely, we have a rigorous mathematical definition of the B-model theory
          in this case for all genera.   Using Teleman's
          solution of the Givental conjecture \cite{Te}, the higher genus theory of a semi-simple GW-theory is determined by the genus zero theory. Therefore, the all-genus
          LG/CY correspondence is reduced to the genus zero correspondence. On the other hand, there is no such reduction for CY cases such as $X_W$.
          We should mention that the extension of the Givental-Teleman higher genus function to non-semisimple locus is a well-known difficult problem
          and has been solved recently by Milanov \cite{Mi}.
                    
          We shall implement our strategy in  two steps:
          (i) a construction of the global LG B-model of $W^T$, and (ii) two mirror symmetry theorems connecting the B-model at the large complex structure
          limit to GW-theory and the B-model at the Gepner limit to FJRW-theory.           
          We have applied the above strategy successfully  for quotients of elliptic curves by $\Z_3, \Z_4, \Z_6$ \cite{KSh, MR}. But the B-model construction in \cite{MR}
          does not generalize to higher dimensions. In this article, we develop the higher dimensional theory 
          using a different approach.

          The main results of this article have been reported in various conferences during last five years. We apologize for
          the long delay.
          
           The article is organized as follows. In the Section 2, we will review the global CY-B-model  to motivate our global LG-B-model
           construction and the appearance of quasi-modular forms in Gromov-Witten theory. Sections 3-5 form the technical core of
           the paper where we construct the global LG-B-model. We should mention that  many ingredients were already in the literature \cite{He}. The two mirror symmetric theorems  as well as the proof of the main theorem will be  presented in Sections 6 and 7.
           The proof of the main theorem (Theorem \ref{thm:main}) 
will be presented in the Section 7. 
    
We thank Rachel Webb for careful reading of our manuscript 
and for helpful comments. 
Y.~R.~would like to thank Albrecht Klemm from whom he learned a great deal about the modularity conjecture. 
Y.~S.~would like to thank Si Li and Zhengyu Zong for helpful discussions. 

The work of H.~I.~is partially supported by JSPS Grant-In-Aid 
16K05127, 16H06337, 25400069, 26610008, 23224002. 
The work of T.~M.~is partially supported by JSPS Grant-In-Aid 26800003 
and by the World Premier International Research Center Initiative (WPI Initiative), 
MEXT, Japan. 
The work of Y.~R.~ is partially supported by NSF grants DMS 1159265
and DMS 1405245. 
The work of Y.~S.~is partially supported by NSF grant DMS 1159156.

\section{Global CY-B-model and quasi-modular form}
We are primarily working in the LG-setting.  
In this section, we review some basic properties expected 
for the Calabi-Yau B-model to motivate our construction. 
In the process, quasi-modular forms appear naturally in GW-theory. 
We follow closely the presentation of Dolgachev-Kondo \cite{DK}.

The B-model on a Calabi--Yau manifold $X$ concerns 
the moduli space of complex structures (possibly with a marking) on $X$. 
Traditionally, the moduli space is studied by its Hodge structure. 
Let us start from the abstract set-up. 
Let $V$ be a real-vector space and let $V_\C = V\otimes_\R \C$ 
denote the complexification of $V$. 
An {\em Hodge structure of weight $k$} on $V$ is 
the direct sum decomposition of $V_\C$: 
    $$V_{\C}=\bigoplus_{p+q=k} V^{p,q}$$
that satisfies $\overline{V^{p,q}}=V^{q,p}$. 
A {\em polarization} is a $(-1)^k$-symmetric 
non-degenerate bilinear form $Q \colon V\times V \to \R$, 
where $(-1)^k$-symmetricity means $Q(x,y) = (-1)^k Q(y,x)$. 
Furthermore, we require that $Q$ satisfies the conditions 
\begin{itemize}
\item[(i)] $Q(x,y)=0$ for all $x\in V^{p,q}, y\in V^{p',q'}$ with 
$(p,q) \neq (q',p')$;  
\item[(ii)] $i^{p-q}(-1)^{k(k-1)/2}Q(x,\bar{x})>0$ 
for all non-zero $x\in V^{p,q}$ 
\end{itemize}
where $Q$ is extended to a complex bilinear form on $V_\C$. 
We can associate the {\em Hodge filtration} 
 \[
0\subset F^k\subset F^{k-1}\subset \dots \subset F^0=V_{\C}, 
\]
by $F^p=H^{p,k-p} \oplus H^{p+1, k-p-1} \oplus \cdots 
\oplus H^{k,0}$. 
The above Hodge filtration defines a {\em flag} of $V_{\C}$. 
We can recover the Hodge decomposition from the flag by 
\[
H^{p,q} = F^p \cap \overline{F^{k-p}}. 
\]
Alternatively, when we have a polarization $Q$, we can also write 
\[
H^{p,q}=\{x\in F^p: Q(x,\bar{y})=0, \forall y\in F^{p+1}\}.
\]
Take a decreasing sequence ${\bf m}=(\dim V \ge 
m_1\ge m_2 \ge \cdots \ge m_k\ge 0)$ 
of integers in the range $[0,\dim V]$ 
and denote by $Fl({\bf m}, V_{\C})$ the partial flag variety 
consisting of flags $(F^k \subset F^{k-1} \subset \cdots \subset F^0= V_\C)$ 
of linear subspaces with $\dim F^p = m_p$, $1\le p\le k$. 
A polarized Hodge structure of weight $k$ defines a point 
$(F^k\subset F^{k-1} \subset \cdots \subset F^0=V_\C)
\in Fl({\bf m}, V_{\C})$  
satisfying the following conditions
    \begin{itemize}
    \item[(i)] $V_{\C}=F^p\oplus \overline{F^{k-p+1}}$ for all $1\le p\le k$;
    \item[(ii)] $ Q(F^p, F^{k-p+1})=0$ for all $1\le p\le k$;
    \item[(iii)] $(-1)^{k(k-1)/2}Q(Cx, \bar{x})>0$ for all 
non-zero $x\in V_\C$, where $C$ is the real 
    endomorphism of $V_\C$ such that $C|_{H^{p,q}} = i^{p-q} \id_{H^{p,q}}$, 
    called the \emph{Weil operator}. 
    \end{itemize}
We denote by $\D_{\bf{m}}(V,Q)$ the subspace of $Fl({\bf m}, V_\C)$ 
satisfying the above conditions. 
The space $\D_{\bf{m}}(V,Q)$ is called the \emph{period domain} 
of $(V,Q)$ of type ${\bf m}$. 
By choosing a basis of $V$ and that of $F_p$, the subspace $F_p$ can be represented 
by a complex matrix $\Pi_p$ of size $r\times m_p$, which is 
called the \emph{period matrix}. 
An additional important structure is the integral structure: 
it is a free $\Z$-module 
$\Lambda \subset V$ of rank $\dim V$ such that 
the polarization $Q$ takes values in $\Z$ on $\Lambda$. 

\begin{example} 
\label{exa:Hodgestr}
Let $X$ be a compact K\"ahler manifold of dimension $k$. 
The middle cohomology $H^k(X,\R)$ carries a natural Hodge structure 
of weight $k$ via 
the Hodge decomposition $H^k(X,\C) = \bigoplus_{p+q =k} H^{p,q}(X)$. 
A polarization on $H^k(X,\R)$ is given by the intersection form 
$Q(\alpha,\beta) = \int_X \alpha \cup \beta$ and 
the integral structure is given by $\Lambda=H^k(X, \Z)$. 
\end{example}

Let $f\colon \cX \rightarrow T$ be a family of $k$-dimensional 
compact complex manifolds such that the total space $\cX$ is K\"ahler. 
As we saw in Example \ref{exa:Hodgestr}, the middle cohomology 
$H^k(X_t,\R)$ of each fiber $X_t = f^{-1}(t)$ carries a polarized 
Hodge structure of weight $k$ for $t\in T$. 
Let $V$ be a real vector space $V$ and let $Q_0$ be a  
$(-1)^k$-symmetric pairing on $V$. 
An isomorphism 
\[
\varphi_{t}\colon (V, Q_0) \cong (H^k(X_t, \R), \text{intersection pairing}). 
\]
is called a \emph{marking} of $X_t$. 
By pulling back the Hodge structure on $H^k(X_t,\R)$ by $\varphi_t$, 
we obtain a $Q_0$-polarized Hodge structure of weight $k$ on $V$. 
We analytically continue the marking $\varphi_t$ in $t$ so that 
it is flat with respect to the Gauss-Manin connection. 
Then $\varphi_t$ is extended to a multi-valued period map 
\[
\phi \colon T \dashrightarrow \D_{\bf{m}} = \D_{\bf{m}}(V,Q_0), 
\quad 
t\mapsto (\varphi_t^{-1}(F^p_t))
\]
where $F^p_t$ denotes the Hodge flag on $H^k(X_t,\C)$ and 
$\D_{\bf{m}}$ denotes the period domain of type 
${\bf m} = (m_p)$ with $m_p = \dim F^p_t$ 
(which is independent of $t$). 
The period map becomes single-valued on the universal cover of $T$. 
The multi-valuedness of the period map 
is measured by the \emph{monodromy representation} 
\[
\alpha\colon 
\pi_1(T, t_0)\rightarrow G_{\Lambda}:=\Aut(\Lambda, Q|_{\Lambda}). 
\]
Let $\Gamma$ be the image of $\alpha$. 
We obtain a single valued period map
 \[
\bar{\phi}\colon 
T\rightarrow \D_{\bf{m}}/\Gamma\rightarrow 
\D_{\bf m}/G_\Lambda.
\]
The global Torelli theorem is a statement that 
$\bar\phi(t_1) = \bar\phi(t_2)$ in $\D_{\bf{m}}/G_\Lambda$ 
implies $X_{t_1} \cong X_{t_2}$, 
which is basically true in dimension one and two. 
It is unknown if the global Torelli theorem holds in higher dimensions. 
Another important property is whether or not 
$\D_{\bf{m}}/G_\Lambda$ is a hermitian symmetry space, 
which makes the connection to number theory. 
Again, this is the case in dimension one and two and false in higher dimension. 

When $X$ is Calabi-Yau, $F^k=H^{k,0}$ is one-dimensional. 
An element of $H^{k,0}$ is  a 
{\em holomorphic $(k,0)$ form or a Calabi-Yau form}.
$F^k$ induces a holomorphic line bundle
\[
\cL\rightarrow \D_{\bf{m}}.
\]
In physics literature, $\cL$ is called a {\em vacuum} line bundle. 
It is equivariant with respect to the $G_{\Lambda}$ action and hence descends 
to $\D_{\bf{m}}/G_{\Lambda}$. 
We use the same $\cL$ to denote its pull back to $T$.
      Using $\cL$, we can define the {\em modular form}.
\begin{definition}\label{def:qmf}
We call an analytic (holomorphic) section $\Psi$ of $\cL^{w}$ 
a {\em (holomorphic) modular form of weight $w$}  of $T$.
Alternatively, $\Psi$ can be viewed as an analytic function 
on the total space of $\cL$ such that $\Psi(zv)=z^{-w}\Psi(v)$. 
We call a holomorphic function $\psi$  on $\D_{\bf m}$ a {\em quasi-modular form}
if it is the holomorphic part of a ``non-holomorphic''  modular form. 
In other words, there is a (non-holomorphic) modular form $\Psi$ and
functions $h_1, \dots, h_k$ (anti-holomorphic generators)  
such that $\Psi$ is a polynomial of $h_1, \cdots, h_k$ 
with holomorphic functions as coefficients and $\psi$ as the constant term.
\end{definition}

    \begin{remark}\label{rem:qmf}
    The above definition is unsatisfactory since it also includes  other objects such as mock modular forms. We use it as the working
    definition of this paper because of the lack of a better definition. The main point of BCOV's paper is that B-model  GW-theory generating
    function should be a {\em almost holomorphic }section of $\cL^k$ and hence almost holomorphic modular form. Here, the almost holomorphic
    means that its anti-holomorphic generators satisfy the so-called {\em holomorphic anomaly equation}. The A-model GW-theory generating function
    corresponds to the holomorphic part of B-model generating function. An important future problem is to study these anti-holomorphic generators,
    which will lead to a definition closer to that in number theory. 
    \end{remark}
    
\begin{example}[{\cite[\S 4]{DK}}]
A Hodge structure of weight 1 on $V$ gives rise to a decomposition 
\[
V_{\C}=V^{1,0}+V^{0,1}
\]
such that $\overline{V^{1,0}} = V^{0,1}$. 
In this case $V$ is necessarily even dimensional; we set 
$2g = \dim_\R V$. 
A polarization on $V$ is given by a symplectic form $Q$ that satisfies 
the following conditions: 
\begin{itemize}
     \item $Q|_{V^{1,0}}=0$, $Q|_{V^{0,1}}=0$ and; 
     \item $i Q(x, \bar{x})>0$, $\forall  x\in V^{1,0}\setminus \{0\}$. 
\end{itemize}
It is easy to check that $H(x,y)=-iQ(x, \bar{y})$ (or $Q(x,y)=iH(x, \bar{y})$) 
defines a hermitian form on $V_{\C}$ of signature $(g,g)$ 
\cite[Lemma 4.2]{DK}. 
Let $G(g, V_{\C})$ be the Grassmannian of $g$-dimensional subspaces 
of $V_{\C}$.
Set
\[
G(g, V_{\C})_H=\{W\in G(g, V_{\C}) :  Q|_{W}=0, H|_{W}>0\}.
\]
Then we have a one-to-one correspondence between Hodge structures 
on $V$ with polarization form $Q$ and points in $G(g,V_\C)_H$ 
\cite[Theorem 4.3]{DK}. 
Choose a symplectic basis of $V$. Then we can find a unique 
basis $w_1,\dots,w_g$ of $W$ of the form 
\[
\begin{pmatrix} 
\vert &  & \vert \\ 
w_1 & \dots & w_g \\
\vert &  & \vert 
\end{pmatrix} 
= 
\begin{pmatrix} 
Z \\ 
I_g 
\end{pmatrix} 
\]
where $I_g$ is the identity matrix of size $g$. 
The $g\times g$-matrix $Z$ must satisfy  
\[
Z^T=Z, \quad 
\Im(Z)=\frac{1}{2i}(Z-\bar{Z})>0.
\]
Therefore we can identify the period domain $\D_{(2g,g)}$ of 
polarized Hodge structures of weight $1$ with the space 
\[
\cZ_g=\{Z\in \operatorname{Mat}_{g}(\C): Z^T=Z, \Im(Z)>0\}. 
\]
This is the {\em Siegel upper half plane of degree $g$}. 
The dimension is given by $g(g+1)/2$.  
Suppose that $V$ has an integral structure $\Lambda$ 
such that the symplectic form $Q$ induces a perfect
pairing $\Lambda\times \Lambda \to \Z$. 
In this case, the automorphisms group $G_\Lambda 
= \Aut(\Lambda,Q)$ equals $Sp(2g,\Z)$.  
When we choose an integral symplectic basis of $V$, 
an element $M\in Sp(2g,\Z)$ acts on the period domain 
$\D_{(2g,g)} \cong \cZ_g$ by 
\[
M(Z)=(AZ+B) (CZ+D)^{-1} 
\]
where we write 
$M = 
\begin{pmatrix} 
         A&B\\
         C&D
\end{pmatrix}$. 

Suppose that $\cX\rightarrow T$ is a one-dimensional family of elliptic curves. 
The middle cohomology $H^1(X_t)$ of each fiber has a Hodge structure 
of weight 1 (with $g=1$). 
In this case, the period domain is the upper half plane $\cZ_1 \cong \mathbf{H}$ 
and the monodromy group $\Gamma$ is a subgroup of $SL(2,\Z)$.   
Suppose that $\cX$ is not a constant family. 
Then, the universal cover of $T$ is an open subset of $\mathbf{H}$ and we have 
$T \subset \mathbf{H}/\Gamma$. 
We would like to consider a modular form on the period domain. 
Note that we can consider $\D_{2,1}$ as a sub-domain of $\PP(V_{\C})$. 
Then the vacuum line bundle $\cL$ is the pull-back of the tautological 
line bundle of $\PP(V_{\C})$.

Let $\omega$ be a holomorphic $(1,0)$-form. 
Choose a symplectic basis (marking) $A, B$ of $H_1(X_t,\Z)$. 
The periods
\[
\alpha=\int_B \omega, \quad \beta=\int_A \omega
\]
define a homogeneous coordinate system on $\D_{(2,1)}$.  
The inhomogeneous coordinate is $\tau=\alpha/\beta\in \mathbf{H}$.
Moreover, the total space of $\cL$ (minus the zero-section) 
can be identified as 
$(V_{\C}\setminus \{0\})/\Gamma$, where 
an element $\begin{pmatrix} a & b \\ c & d \end{pmatrix} \in 
\Gamma$ acts by 
\[
\begin{pmatrix} 
\alpha \\ 
\beta 
\end{pmatrix} 
\longmapsto 
\begin{pmatrix} 
a\alpha+b\beta \\ 
c\alpha+d\beta 
\end{pmatrix} 
\]
By definition, a modular form of weight $k$ is a holomorphic function
\[
f\colon V_{\C} \setminus \{0\}\rightarrow \C
\]
such that
\begin{itemize}
\item[(i)] $f$ is invariant under the $\Gamma$-action;
\item[(ii)] $f(z v_0, z v_1)=z^{-k}f(v_0, v_1).$
\end{itemize}
We can normalize $\omega$ so that $\omega(A)=1$. 
This corresponds to considering the section 
\[
\tau\mapsto (\tau,1)
\]
of $\cL$. 
Set $F(\tau)=f(\tau,1)$. 
Under a fractional linear transformation 
$\tau \mapsto (a\tau+b)/(c\tau+d)$, $F(\tau)$ changes as 
\begin{align*} 
F\left(\frac{a\tau+b}{c\tau+d}\right) 
& =f\left(\frac{a\tau+b}{c\tau+d},1\right) 
=(c\tau+d)^{k}f(a\tau+b, c\tau+d) \\
& =(c\tau+d)^{k}f(\tau,1) =(c\tau+d)^{k}F(\tau)
\end{align*} 
which agrees with the usual definition of modular forms.

The above example can be generalized to higher rank cases. 
The $(1,0)$-part $V^{1,0}$ defines a rank $g$ bundle $\cV$ 
over the Siegel upper half space $\cZ_g$. Set $\cL=\det( \cV)$. 
A {\em Seigel modular form} $f$ of weight $k$ is a section of $\cL^k$. 
Similarly, we can work out its inhomogeneous presentation.  
It corresponds to a function $F\colon \cZ_g\rightarrow \C$ such that
\[
F\left(\frac{A\tau+B}{C\tau+D}\right)=\det 
\begin{pmatrix} 
  A&B\\
  C&D
\end{pmatrix}^k F(\tau)
\]
for $\tau\in \cZ_g$ and  
$\begin{pmatrix} 
A&B\\
C&D
\end{pmatrix} \in Sp(2g, \Z)$. 
\end{example}

\begin{example}
Next, we consider the B-model moduli space of a K3 surface. 
Let $X$ be an algebraic K3 surface. 
The second cohomology group $H^2(X, \Z)$ 
is a free abelian group of rank 22 and the intersection form 
is even unimodular of signature $(3,19)$. 
Let $\omega$ be an ample class. 
We consider the Hodge structure on the primitive cohomology 
$V=H^2_{\rm prim}(X, \Z)$ which is defined to be 
the orthogonal complement of $\omega$. 
The Hodge structure on $V_{\C}$ is of weight $2$, of type $(1,19,1)$ 
and polarized by the intersection form $Q$. 
The Hodge filtration is: 
\[ 
0\subset F^2=H^{2,0}(X)\subset F^1=H^{2,0}(X)+
H^{1,1}_{\rm prim}(X)\subset F^0=H^2_{\rm prim}(X, \C).
\]
Note that the Hodge filtration is completely determined by $F^2$ since 
$F^1=(F^2)^{\perp}$. 
Therefore the period domain $\D_{(21,20,1)}(V,Q)$ is identified with 
the complex manifold 
\[
\D_{(21,20,1)}(V,Q)=\{\C v\in \PP(V_{\C}) : Q(v,v)=0, Q(v, \bar{v})>0\}.
\]
The integral structure $\Lambda$ is again given by $H^2_{\rm prim}(X, \Z)$. 
The monodromy group $\Gamma$ is a subgroup of
$\Aut(\Lambda, Q|_{\Lambda})$.

This can be generalized to the so-called lattice polarized K3 surfaces. 
Let $M$ be an even lattice of signature $(1, r-1)$. 
An $M$-polarized K3 surface is a pair $(X,j)$ 
of an algebraic K3 surface $X$ and a primitive embedding 
$j\colon M\rightarrow \Pic(X)$ of lattices such that the image of $j$ 
contains an ample class. 
If $M$ is of rank one, it reduces to the previous case. 
A family of $M$-polarized K3 surfaces is a family $\pi\colon Y\rightarrow T$ 
of K3 surfaces equipped with primitive embeddings 
$j_t\colon M\rightarrow H^2(X_t, \Z)$ containing an ample class on $X_t$ 
and depending continuously on $t$. 

Let $L_{K3}$ denote the K3 lattice, that is, a lattice isomorphic to $H^2(X,\Z)$. 
We fix a primitive embedding $M \hookrightarrow L_{K3}$ 
and write $N=M^{\perp}$ for the orthogonal complement of $M$. 
The period domain for $M$-polarized K3 surfaces is given by 
\[
\D_M=\{\C v\in \PP(N_{\C}) : Q(v,v)=0, Q(v, \bar{v})>0\}.
\]
The space $\D_M$ is a hermitian symmetric space and 
of great interest to number theorist. 
The modular form in this context is referred to as 
an \emph{automorphic form} in the literature. 
There is an inhomogeneous description for $\D_M$ similar 
to that for the upper half plane. 
Suppose that $e,f\in N$ span a hyperbolic lattice, i.e.~ 
$Q(e, e)=Q(f,f)=0, Q(e,f)=1$. 
Consider the decomposition
\[
N_\R=V_0 \oplus \R f\oplus \R e.
\]
with $V_0$ the orthogonal complement of $\R f \oplus \R e$ 
in $N_\R$. Note that $(V_0,Q)$ is of signature $(1,19-r)$.  
We can identify $\D_M$ with the complex manifold 
\[
\{ z\in V_0 \otimes \C :  Q(\Im z, \Im z)>0\}
\]
via the map 
\[
z\mapsto w(z)=z + f - \frac{1}{2} Q(z,z)e.
\]
Using the above map, we can figure out the automorphic factor --- 
a generalization of $(c\tau+d)^{k}$. 
\end{example}

\begin{example}
Suppose that $X$ is a Calabi-Yau 3-fold. We obtain a Hodge structure of weight 3 
\[
0\subset F^3=H^{30}\subset F^2=H^{30}+H^{21}\subset F^1=H^{30}+H^{21}+H^{12}\subset F^0=V_{\C}.
\]
on $V=H^3(X,\R)$. 
The polarization $Q$ is symplectic in this case.   
The moduli space of complex structures $M_X$ 
on $X$ is smooth of dimension $h=\dim H^{2,1}(X)$. 
The period domain $\D_{\bf{m}}$ for 
$\mathbf{m}=(2h+2, 2h+1, h+1, 1)$ is not a hermitian symmetric space in general. 
The relation to number theory is not clear. 
However, we can define a modular form formally as a section of $\cL^{k}$. 
What is lacking is a inhomogeneous description similar to the upper half plane. 
However, we can again use the periods to define a convenient coordinate system.   
The monodromy group $\Gamma$ can be viewed as a subgroup of $Sp(2h+2, \Z)$.

One can conveniently forget about $F^3, F^2$. 
Then, we obtain a weight 1 Hodge structure
\[
0\subset F^2\subset F^0.
\]
This defines an embedding of the moduli space of complex structures 
into the Siegel upper half plane
\[
i\colon M_X  \rightarrow   \cZ_{h+1}/\Gamma.
\]
However, the image of above embedding is generally complicated.
\end{example}

\section{Global Landau-Ginzburg B-model at genus zero} 
\label{sec:twdR}

In this section we construct the genus-zero data
(Saito structure) of the global B-model over a deformation space of 
quasi-homogeneous polynomials. 
This is given as a vector bundle formed by the
twisted de Rham cohomology, equipped with the Gauss-Manin 
connection and the higher residue pairing. 
In many ways, the material in this section is already standard 
to the experts (see, e.g.~\cite{KS,MS, Sabbah:tame,He}); 
the (only) novel point in our construction is that we restrict 
ourselves to 
relevant and marginal deformations so that the resulting 
structure is global and algebraic.

\subsection{A family of polynomials} 
\label{subsec:family_poly}
Let $x_1,\dots,x_n$ be variables of degrees 
$c_1,\dots, c_n$ with $0<c_i<1$, $c_i\in \Q$. 
Let $\cM_{\rm mar}$ denote the space of all weighted 
homogeneous polynomials of degree one. 
\[
\cM_{\rm mar} = \{f \in \C[x_1,\dots,x_n] :\deg f =1 \}. 
\]
Here the subscript ``mar'' means marginal deformations 
following the terminology in physics. 
We recall the following standard fact: 
\begin{proposition}[\cite{Dimca:topics}]  
\label{prop:regular} 
For a weighted homogeneous polynomial $f\in \cM_\mar$, 
the following conditions are equivalent: 
\begin{enumerate} 
\item $f(x) =0$ has an isolated singularity at the origin; 
\item $\partial_{x_1} f(x),\dots, \partial_{x_n} f(x)$ form a regular 
sequence in $\C[x_1,\dots,x_n]$. 
\end{enumerate} 
For such $f$, the dimension of the Jacobi ring 
\[
\Jac(f):= \C[x_1,\dots,x_n]/(\partial_{x_1} f, \dots \partial_{x_n} f) 
\]
is independent of $f$ and is given by 
\[
N := \frac{(1-c_1) (1-c_2) \cdots ( 1-c_n)}{c_1c_2\cdots c_n}.  
\]
Moreover polynomials $f$ satisfying  
the conditions (1) and (2) form a (possibly empty) Zariski open 
subset of $\cM_\mar$. 
\end{proposition}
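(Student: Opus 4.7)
The plan is to exploit the weighted homogeneity of $f$ throughout, reducing everything to standard facts about graded complete intersections in $\C[x_1,\dots,x_n]$.

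First I would prove the equivalence $(1)\Leftrightarrow (2)$. Let $Z\subset \C^n$ denote the vanishing locus of $\partial_{x_1}f,\dots,\partial_{x_n}f$. Because each $\partial_{x_i}f$ is weighted homogeneous of positive degree $1-c_i$, the set $Z$ is stable under the $\C^*$-action $t\cdot(x_1,\dots,x_n)=(t^{c_1}x_1,\dots,t^{c_n}x_n)$, and every orbit has $0$ in its closure. Therefore $Z=\{0\}$ as a germ at the origin if and only if $Z=\{0\}$ globally. By Krull's height theorem, the ideal $(\partial_{x_1}f,\dots,\partial_{x_n}f)\subset \C[x_1,\dots,x_n]$ has height at most $n$, and equals $n$ exactly when $Z=\{0\}$; for $n$ elements in a Cohen--Macaulay ring of dimension $n$, this height condition is equivalent to forming a regular sequence. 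This yields both directions.

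Next I would compute $\dim_\C \Jac(f)$. Assuming (2), the Koszul complex on $\partial_{x_1}f,\dots,\partial_{x_n}f$ is exact except in top degree, so $\Jac(f)$ is a graded complete intersection with Poincar\'e series
\[
P(t)=\prod_{i=1}^n \frac{1-t^{1-c_i}}{1-t^{c_i}}
\]
(using that $x_i$ has degree $c_i$ and $\partial_{x_i}f$ has degree $1-c_i$). Since $\Jac(f)$ is Artinian and graded, $\dim_\C\Jac(f)=\lim_{t\to 1}P(t)=\prod_{i=1}^n (1-c_i)/c_i = N$ by L'H\^opital. In particular, the dimension depends only on the weights $c_i$, not on $f$ itself.

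For the Zariski openness, I would use semicontinuity. Consider the family $\cM_\mar \times \C^n \to \cM_\mar$ together with the relative critical scheme $\mathrm{Crit}\subset \cM_\mar\times \C^n$ defined by $\partial_{x_1}f=\cdots=\partial_{x_n}f=0$. By the $\C^*$-homogeneity argument above, the fiber over $f$ is zero-dimensional (equivalently, a single point $0$ with multiplicity) if and only if $f$ satisfies (1). Equivalently, this is the condition that the resultant of $(\partial_{x_1}f,\dots,\partial_{x_n}f)$ with respect to a common grading does not vanish, which is manifestly an open condition on $\cM_\mar$. Thus the regular locus is Zariski open (possibly empty, e.g.\ if the weights do not admit any nondegenerate representative at all).

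The main step requiring real content is the equivalence $(1)\Leftrightarrow (2)$ combined with the dimension formula; everything else is bookkeeping. Among these, the most important observation to get right is that weighted homogeneity promotes the \emph{local} isolated singularity condition at $0$ to a \emph{global} statement about the vanishing scheme in $\C^n$, which is what allows the passage to a graded complete intersection and the resulting uniform Poincar\'e series computation.
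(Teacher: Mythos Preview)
The paper does not actually prove this proposition: it is stated as a standard fact with a citation to \cite{Dimca:topics}, so there is no in-paper argument to compare against. Your proof is correct and is the standard route one would take.

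Two minor remarks. First, in the Poincar\'e series step the degrees $c_i$ and $1-c_i$ are rational, so strictly speaking $P(t)$ should be written in a variable $s=t^{1/d}$ for a common denominator $d$; the limit computation is unaffected. Second, for Zariski openness the cleanest justification is upper semicontinuity of fiber dimension for the projection $\mathrm{Crit}\to\cM_\mar$ (which you already mention), rather than invoking a resultant, since the weighted-homogeneous multipolynomial resultant requires a bit of care to set up. Neither point is a gap.
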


\begin{definition} 
We say that a weighted homogeneous polynomial 
$f\in \cM_\mar$ is \emph{regular} if 
one of the equivalent conditions in Proposition \ref{prop:regular} 
holds. 
Let $\cM_\mar^\circ \subset \cM_\mar$ denote the Zariski open 
subset consisting of regular homogeneous polynomials.  
We denote by $\cM$ the space of 
polynomials of degree $\le 1$ with regular leading terms: 
\[
\cM := \left\{ f \in \C[x_1,\dots,x_n] : 
f = \sum_{0<d\le 1} f_d, \ \deg(f_d) = d, \ 
f_1 \in \cM_\mar^\circ\right\}. 
\]
\end{definition} 

We will henceforth assume that $\cM_\mar^\circ$ is nonempty. 
For a point $t \in \cM$, we write $f(x;t) \in \C[x_1,\dots,x_n]$ 
for the polynomial represented by $t\in \cM$. 
Setting $X := \C^n \times \cM$, we have the following diagram: 
\begin{equation} 
\label{eq:universal_polynomial} 
\begin{CD} 
X  @>{f(x;t)}>> \C \\ 
@V{\pi}VV @. \\
\cM 
\end{CD} 
\end{equation}
where $\pi \colon X = \C^n\times \cM \to \cM$ is the projection 
to the second factor. 
The space $\cO(\cM)$ of regular functions on $\cM$ is graded 
as follows.
A finite cover of $\C^\times$ acts on $\cM$ by 
$\lambda \cdot f := \lambda^{-1} f(\lambda^{c_1} x_1,\dots,\lambda^{c_n} x_n)$; 
this action induces the action on functions $\varphi \in \cO(\cM)$ by 
$(\lambda \cdot \varphi)(f) = \varphi(\lambda^{-1} \cdot f)$. 
We say that $\varphi \in \cO(\cM)$ is of degree $d\in \Q$ 
if $\lambda \cdot \varphi = \lambda^d \varphi$. 
The grading on $\cO(\cM)$ and $\deg x_i = c_i$ together define 
a grading on $\cO(X) = \cO(\cM \times \C^n)$. 
The universal polynomial $f(x;t)\in \cO(X)$ 
is of degree one with respect to this grading. 
What is important for us is the fact that 
$\cO(\cM)$ and $\cO(X)$ are \emph{non-negatively} graded. 

We introduce the \emph{critical scheme} $C \subset X$ as follows: 
\[
\cO_C = \cO_X/
( \partial_{x_1} f(x;t),\dots, \partial_{x_n} f(x;t) ). 
\]
Proposition \ref{prop:regular} implies that $(\pi_*\cO_C)|_{\cM_\mar^\circ}$ 
is a locally free cohrerent sheaf of rank $N$; we will see that 
$\pi_*\cO_C$ is also locally free in Corollary \ref{cor:Jacobi_ring} below. 

\begin{remark}\label{group:cc}
A group of coordinate changes on $\C^n$ acts on the  
parameter space $\cM$ and our global B-model is equivariant 
with respect to the group. 
Let $G$ be the group of ring automorphisms of $\C[x]=\C[x_1,\dots,x_n]$ 
preserving the degree filtration $\C[x]^{\le d}=\{f \in\C[x]: \deg f \le d\}$. 
Then $G$ acts on $\cM$ and the diagram \eqref{eq:universal_polynomial}.  
The quotient stack $[\cM/G]$ should be viewed 
as a genuine moduli space.  
\end{remark} 

\begin{remark} 
In practice, it is convenient to work with a family of polynomials 
of the following form:  
for a weighted homogeneous polynomial $f_0(x)$ of degree one 
and a set of homogeneous polynomials $\phi_\alpha(x)$, 
we can consider a family $f(x;t) = f_0(x)+ \sum_\alpha t_\alpha \phi_\alpha(x)$. 
For such a family, 
we say that the deformation parameter $t_\alpha$ is \emph{relevant}  
(resp.~\emph{marginal}, \emph{irrelevant}) if $\deg \phi_\alpha<1$ 
(resp.~$\deg \phi_\alpha =1$, $\deg \phi_\alpha >1$). 
We can assign the degree of parameters as 
$\deg t_\alpha := 1-\deg \phi_\alpha(x)$. 
The above space $\cM$ includes only \emph{relevant and marginal} 
deformations. When we construct a miniversal deformation 
(see \S\ref{subsec:Frobenius}), we choose 
homogeneous polynomials $\{\phi_i\}_{i=1}^N$ such that 
$[\phi_1],\dots,[\phi_N]$ form a basis of the Jacobi ring 
$\Jac(f_0)$; in this case the deformation family may also 
contain irrelevant directions. 
\end{remark} 

\subsection{The twisted de Rham cohomology} 
\label{subsec:tw_deRham} 

We are interested in the hypercohomology of the twisted
de Rham complex: 
\[
\cF = \R^n \pi_*
\left(\Omega^\bullet_{X/\cM}[z],zd_{X/\cM}+df(x;t) \wedge\right)  
\]
where $f(x;t) \colon X\to \C$ is the universal polynomial in the 
diagram \eqref{eq:universal_polynomial}. 
Since $\pi$ is affine, this is: 
\[
\cF \cong \Omega^n_{X/\cM}[z]/(z d_{X/\cM} + df(x;t) \wedge) 
\Omega^{n-1}_{X/\cM}[z]. 
\]
The fiber of $\cF$ at a single polynomial $f$ is called the 
\emph{Brieskorn lattice} \cite{Brieskorn} of $f$. 
A presentation of the Brieskorn lattice as a twisted 
de Rham cohomology group was given in \cite{SaitoK:higher_residue}; 
this is also called the \emph{filtered de Rham cohomology} 
(see \cite{MS}). 
We introduce the grading on $\cF$ given by the grading 
on $\cO(X)$ together with $\deg (d x_i)= c_i$, $\deg z=1$. 
This is well-defined since the differential 
$z d_{X/\cM} + df(x;t) \wedge$ is of degree one. 
The module of global sections of $\cF$ is again non-negatively graded. 
\begin{proposition}
\label{H:vb}
The sheaf $\cF$ is a locally free 
$\mathcal{O}_\cM[z]$-module of rank $N$. 
\end{proposition}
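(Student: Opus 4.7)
The plan is to run a graded Nakayama argument in $z$, reducing local freeness of $\cF$ over $\cO_\cM[z]$ to local freeness of the ``classical'' quotient $\cF/z\cF$ over $\cO_\cM$. Setting $z=0$ identifies
\[
\cF/z\cF \;\cong\; \Omega^n_{X/\cM}/df\wedge \Omega^{n-1}_{X/\cM} \;\cong\; (\pi_*\cO_C)\cdot dx_1\wedge\cdots\wedge dx_n,
\]
via the trivialization of $\Omega^n_{X/\cM}$ by the top form, so the first goal is to show that $\pi_*\cO_C$ is a locally free $\cO_\cM$-module of rank $N$.

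For this I would show that $\partial_{x_1}f(x;t),\dots,\partial_{x_n}f(x;t)$ form a regular sequence in $\cO(X)=\cO(\cM)[x_1,\dots,x_n]$. These elements are homogeneous with respect to the grading $\deg x_i=c_i$ and the non-negative grading on $\cO(\cM)$; their ``initial'' parts along the parameter directions are $\partial_{x_i}f_1$, where $f_1\in\cM_\mar^\circ$ is the top-degree component of $f(x;t)$. By Proposition \ref{prop:regular}, $\partial_{x_i}f_1$ is a regular sequence in $\C[x]$, so a graded Nakayama / Koszul deformation argument promotes this to regularity of the full sequence $\partial_{x_i}f$ in $\cO(X)$. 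The Koszul complex $K_\bullet(\partial f)$ then resolves $\cO(X)/(\partial f)=\pi_*\cO_C$ by $\cO_\cM$-free modules, so $\pi_*\cO_C$ is $\cO_\cM$-flat; since its fibers have constant dimension $N$, it is locally free of rank $N$.

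Next I apply graded Nakayama in $z$. The module $\cF$ is graded with $\deg z=1$, and I would first verify that multiplication by $z$ is injective on $\cF$; this follows from the same Koszul-theoretic analysis, namely that the twisted de Rham complex $(\Omega^\bullet_{X/\cM}[z],\,zd_{X/\cM}+df\wedge)$ has cohomology concentrated in top degree. Pick a homogeneous $\cO_\cM$-basis $\bar e_1,\dots,\bar e_N$ of $\cF/z\cF$ and lift to $e_1,\dots,e_N\in \cF$. The fact that $\cF$ is graded and bounded below in each $z$-degree, together with $z$-torsion freeness, makes the standard graded Nakayama argument yield that $\{e_1,\dots,e_N\}$ is an $\cO_\cM[z]$-basis of $\cF$.

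The main obstacle is the regular-sequence / local-freeness step for $\pi_*\cO_C$: the parameter ring $\cO_\cM$ is only non-negatively graded and, in general, has a nontrivial degree-zero piece coming from the marginal parameters on $\cM_\mar^\circ$, so pure graded Nakayama in the parameter directions does not immediately apply. One must either work relative to a chosen point of $\cM_\mar^\circ$ and do graded Nakayama only in the relevant deformation directions while invoking the constancy of $\dim_\C\Jac(f(x;t))=N$ on $\cM_\mar^\circ$ to handle the marginal directions by flatness, or argue directly via exactness of the Koszul complex over $\cO_\cM$ using a depth/dimension count.
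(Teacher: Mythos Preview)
Your plan is correct and is essentially the paper's argument, phrased as graded Nakayama rather than as an explicit degree induction. The only organizational difference is that the paper does \emph{not} first establish local freeness of $\pi_*\cO_C$ over all of $\cM$: instead it starts from a homogeneous $\cO_U$-basis $\psi_1,\dots,\psi_N$ of $(\pi_*\cO_C)|_U$ over an affine open $U\subset\cM_\mar^\circ$ (already known from Proposition~\ref{prop:regular}), and proves directly by induction on the total degree that $\{\psi_i\,dx\}$ is an $\cO_{\cM_\rel\times U}[z]$-basis of $\cF$; local freeness of $\pi_*\cO_C$ over $\cM$ is then read off afterward by setting $z=0$ (Corollary~\ref{cor:Jacobi_ring}). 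The injectivity half of the paper's argument --- expanding in powers of $z$ and using exactness of the Koszul complex for $\partial_{x_i}f$ --- is exactly your $z$-torsion-freeness step, and the obstacle you flag (nontrivial degree-zero piece from the marginal parameters) together with your proposed fix (localize on $\cM_\mar^\circ$ and run the graded argument only in the positively graded relevant and $z$ directions) is precisely what the paper does.
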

\begin{proof} 
Let us fix an affine open subset $U\subset \cM_\mar^\circ$ 
such that $(\pi_*\cO_C)|_U$ is a free $\cO_U$-module.  
Choose quasi-homogeneous polynomials $\psi_i \in \cO(U)[x_1,\dots,x_n]$, 
$1\le i\le N$ which induce a basis of $(\pi_*\cO_C)|_U$. 
We claim that $\psi_i dx$, $1\le i\le N$ form a 
basis of $\cF$ over $\cM_\rel \times U$, i.e.~the map 
\[
\phi \colon 
\left( \cO_{\cM_\rel \times U}[z]\right)^{\oplus N} \to \cF|_{\cM_\rel\times U}  
\]
sending $(v_i)_{i=1}^N$ to the class of $\sum_{i=1}^N v_i \psi_i dx$ 
is an isomorphism, where we set 
$dx = dx_1 \wedge \cdots \wedge dx_n$. 

First we prove the surjectivity. 
Suppose by induction that the submodule $\cF(\cM_\rel \times U)^{\le k}$ of degree 
less than or equal to $k$ is contained in the image of $\phi$. 
Every homogeneous element $\omega \in \Omega^n_{X/\cM}[z]$ 
of degree $\le(k+1)$ can be written as  
$\omega = \sum_{i=1}^n v_i \psi_i  dx+ df \wedge \alpha$ 
for some $\alpha \in \Omega^{n-1}_{X/\cM}[z]$. 
By taking the homogeneous component if necessary, we may  
assume that $\alpha$ is homogeneous of degree $\le k$. 
Then $[\omega] = [\omega - (z d + df\wedge)\alpha] 
= \sum_{i=1}^N v_i [\psi_i dx]- z [d \alpha]$. 
By induction hypothesis, $[d\alpha]$ is in the image  
of $\phi$ and thus $\omega$ is also in the image. 

Let us prove the injectivity of $\phi$. 
Suppose that $\phi(v) = 0$ for some $v = ( v_1,\dots, v_N)$. 
By definition there exists $\alpha \in \Omega^{n-1}_{X/\cM}[z]$ 
such that $\sum_{i=1}^N v_i \psi_i dx = (zd + df \wedge) \alpha$. 
Expand $v_i$ and $\alpha$ in powers of $z$: 
\[
v_i = \sum_{k\ge 0} v_{i,k} z^k, \quad 
\alpha = \sum_{k\ge 0} \alpha_k z^k 
\]
where the sum is finite. 
Comparing with the coefficient of $z^0$, we have 
$\sum_{i=1}^N v_{i,0} \psi_i = df \wedge \alpha_0$. 
Since $\psi_i$, $1\le i\le N$ form a basis of the Jacobi ring, 
we have $v_{i,0}=0$ for all $i$. Therefore $df \wedge \alpha_0 = 0$. 
Because $\partial_{x_1} f(x;t),\dots,\partial_{x_n} f(x;t)$ 
form a regular sequence, 
there exists $\beta_0 \in \Omega^{n-2}_{X/\cM}$ such that 
$\alpha_0 = df \wedge \beta_0$.  
Setting $\alpha' = \alpha - (zd + df \wedge) \beta_0 
= \sum_{k\ge 1} \alpha'_k z^k$, we have  
\[
\sum_{i=1}^N \sum_{k\ge 1} z^k v_{i,k} \psi_i 
= (zd + d F \wedge) \alpha'. 
\]
Comparing with the coefficient of $z^1$, we obtain $v_{i,1}= 0$ for all $i$. 
We can repeat this argument inductively to show that 
$v_{i,0}= v_{i,1} = \cdots = 0$. 
\end{proof} 

Since we can identify the restriction $\cF|_{z=0}$ 
with $(\pi_* \cO_C) dx$, we obtain:  

\begin{corollary} 
\label{cor:Jacobi_ring} 
The sheaf $\pi_*\cO_C$ is a locally free $\cO_\cM$-module 
of rank $N$. 
\end{corollary}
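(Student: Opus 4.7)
The plan is to deduce the corollary directly from Proposition \ref{H:vb} by specializing the sheaf $\cF$ at $z = 0$. The key identification, already hinted at above, is $\cF/z\cF \cong (\pi_*\cO_C)\cdot dx$. Granting this, Proposition \ref{H:vb} gives that $\cF$ is locally free of rank $N$ over $\cO_\cM[z]$, so base-changing along the surjection $\cO_\cM[z]\twoheadrightarrow\cO_\cM[z]/(z) = \cO_\cM$ yields local freeness of $\cF/z\cF$ of rank $N$ over $\cO_\cM$, whence the same holds for $\pi_*\cO_C$.

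To verify the identification, I would use that $\pi$ is affine, so that
\[
\cF \;\cong\; \pi_*\Omega^n_{X/\cM}[z]\big/(zd_{X/\cM}+df\wedge)\,\pi_*\Omega^{n-1}_{X/\cM}[z].
\]
Tensoring with $\cO_\cM[z]/(z)$ sends the differential $zd_{X/\cM} + df\wedge$ to $df\wedge$, giving
\[
\cF/z\cF \;\cong\; \pi_*\Omega^n_{X/\cM}\big/\, df\wedge \pi_*\Omega^{n-1}_{X/\cM}.
\]
Writing $\Omega^n_{X/\cM} = \cO_X\cdot dx$ and expanding $(n-1)$-forms in the basis $\{dx_1\wedge\cdots\wedge\widehat{dx_i}\wedge\cdots\wedge dx_n\}_{i=1}^n$, the map $\alpha\mapsto df\wedge\alpha$ hits exactly the submodule $(\partial_{x_1}f,\dots,\partial_{x_n}f)\cdot dx$. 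Hence the right-hand side is $\pi_*(\cO_X/(\partial_{x_1}f,\dots,\partial_{x_n}f))\cdot dx = (\pi_*\cO_C)\cdot dx$, as required.

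The only genuinely non-trivial point is justifying the formal manipulation in the first display: one needs that passing to the quotient by the relations commutes with reducing modulo $z$. Equivalently, an element of $(zd_{X/\cM}+df\wedge)\Omega^{n-1}_{X/\cM}[z]$ with no $z$-dependence should lie in $df\wedge\Omega^{n-1}_{X/\cM}$. This is immediate from the observation that the $z^0$-component of $(zd_{X/\cM}+df\wedge)\alpha$ equals $df\wedge\alpha_0$, where $\alpha_0$ is the $z^0$-component of $\alpha$ — the very same kind of degree-by-degree argument that was used in the injectivity step of the proof of Proposition \ref{H:vb}. Everything else is straightforward bookkeeping.
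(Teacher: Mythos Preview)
Your proof is correct and follows exactly the paper's approach: the paper simply notes that $\cF|_{z=0}$ identifies with $(\pi_*\cO_C)\,dx$ and invokes Proposition~\ref{H:vb}, leaving all the details you spelled out (the base-change to $\cO_\cM[z]/(z)$ and the verification that $\cF/z\cF \cong \Omega^n_{X/\cM}/(df\wedge\Omega^{n-1}_{X/\cM})$) to the reader. Your extra care about whether reduction modulo $z$ commutes with the quotient is a welcome clarification of what the paper treats as obvious.
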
 

\subsection{The Gauss-Manin connection and the higher residue pairing} 
Here we introduce two important structures on the twisted 
de Rham cohomology $\cF$: the Gauss-Manin connection 
$\nabla$ and the higher residue pairing $K$. 
The \emph{Gauss-Manin connection} $\nabla$ is a map 
\[
\nabla \colon \cF \to z^{-1} \Omega_\cM^1 \otimes_{\cO_{\cM}} 
\cF \oplus z^{-2} \cO_{\cM}[z]dz 
\]
defined by the formula: 
\begin{align*} 
\nabla_{\vec{v}} [\phi(x,t,z) dx] 
& = 
\left[ \vec{v} \phi(x,t,z)  + \frac{\vec{v}(f(x;t))}{z} \phi(x,t,z) dx\right] \\ 
\nabla_{\partial_z} [\phi(x,t,z) dx] 
& = \left[ \left(\parfrac{\phi(x,t,z)}{z} - \frac{f(x;t)}{z^2}  
\phi(x,t,z)  
- \frac{n}{2} \frac{\phi(x,t,z)}{z} 
\right) dx \right] 
\end{align*} 
where $\phi(x,t,z) \in \cO_{X}[z]$, $dx = dx_1\wedge 
\cdots \wedge dx_n$ and $\vec{v}$ is a vector field on $\cM$.  
One can easily check that this is well-defined; moreover 
it satisfies the Leibnitz rule: 
\[
\nabla( g(t,z) \omega) = d g(t,z) \omega + g(t,z) \nabla \omega, 
\qquad g(t,z) \in \cO_{\cM}[z], \ \omega\in \cF 
\]
and the flatness condition $\nabla^2=0$.  
The \emph{higher residue pairing} of K.~Saito \cite{SaitoK:higher_residue} 
is a map 
\[
K \colon \cF \otimes_{\cO_\cM} \cF \to \cO_\cM[z]
\]
which we expand in the form
\[
K(\omega_1,\omega_2) = \sum_{p=0}^\infty z^{p} 
K^{(p)} (\omega_1,\omega_2). 
\]
The higher residue pairing is uniquely 
characterized by the following properties: 
\begin{enumerate} 
\item
$zK(\omega_1,\omega_2) = K(z\omega_1,\omega_2)
=-K(\omega_1,z\omega_2)$; 
\item 
$K^{(0)}(\omega_1,\omega_2) $ is the residue pairing on the Jacobi 
algebra of $f$: 
\[
K^{(0)}(\omega_1,\omega_2) = \Res_{X/\cM} \left[ 
\frac{\phi_1(x,t,0) \phi_2(x,t,0) dx}{ 
\partial_{x_1}f(x;t), \dots, \partial_{x_n} f(x;t)}\right] 
\]
where $\omega_i = \phi_i(x,t,z) dx$; 
\item $K(\omega_1,\omega_2)(z) = K(\omega_2,\omega_1)(-z)$; i.e.,~
$K^{(p)}$ is skew symmetric for $p$ odd and symmetric for $p$ even; 
\item
$K$ is flat with respect to the Gauss-Manin connection:
\ben
\xi K(\omega_1,\omega_2) = 
K(\nabla_{\xi}\omega_1,\omega_2) - K(\omega_1,\nabla_{\xi}\omega_2),
\een
where $\xi=z\vec{v}$ (with $\vec{v}$ a vector field on $\cM$) 
or $z^2\partial/\partial z$. 
\end{enumerate} 

\begin{remark} 
\label{rem:oscillatory} 
The Gauss-Manin connection is defined in such a way that 
oscillatory integrals 
\begin{equation} 
\label{eq:oscillatory} 
\cF \ni 
[\phi dx] \longmapsto 
(-2\pi z)^{-n/2}\int_\Gamma \phi(x) e^{f(x;t)/z} dx
\end{equation} 
define solutions  
(i.e.~intertwine the Gauss-Manin connection with the 
standard differential), where $\Gamma$ is a cycle in 
$H_n(\C^n, \{x\in \C^n: \Re (f(x)/z) \ll 0\};\Z)$. 
The prefactor $(-2\pi z)^{-n/2}$ here should be viewed as 
a shift of weights by $n/2$; this is introduced in order to 
make the Gauss-Manin connection compatible with 
the Dubrovin connection on the A-side under mirror symmetry. 
This in turn results in the shift of the higher residue pairing $K_f$ 
by the factor of $z^n$. 
\end{remark} 

\begin{definition} 
We call the triple $(\cF,\nabla,K)$ consisting of the twisted de Rham 
cohomology, the Gauss-Manin connection and the higher residue 
pairing the \emph{Saito structure} of the family 
\eqref{eq:universal_polynomial} of polynomials. 
\end{definition} 

\begin{remark} 
The Saito structure gives a \emph{TEP structure} in the 
sense of Hertling \cite{He2}.  
\end{remark}

\section{Opposite subspaces} 
In this section, we introduce opposite subspaces  
for the Saito structure $(\cF,\nabla,K)$. 
For a marginal polynomial $f$, we obtain a one-to-one correspondence 
between homogeneous opposite subspaces and splittings  
(opposite filtration) of the Hodge filtration on the vanishing cohomology. 
We also observe that the complex-conjugate 
opposite subspace yields a positive-definite Hermitian bundle 
with connection, called the Cecotti-Vafa structure. 
The notion of opposite filtrations were originally used in 
the work of M.~Saito \cite{MS} to construct a flat structure 
(Frobenius structure) \cite{KS,Du} on the base of miniversal 
deformations (see also \S\ref{subsec:Frobenius}). 
Most of the materials in this section are again not new; 
similar (and in fact more general) results have been 
obtained by Saito \cite{MS} and Hertling \cite{He2}. 
Since we restrict ourselves to weighted homogeneous polynomials, 
our presentation has the advantage of being more explicit and elementary. 

\subsection{Symplectic vector space and semi-infinite VHS} 
Let us recall that we sometimes identify the points $t\in \cM$ 
with the corresponding polynomials $f = f(x;t)$, so the points 
in $\cM$ are functions. 
Recall the sheaf $\cF$ of twisted de Rham cohomology groups from 
\S \ref{subsec:tw_deRham}. 
Proposition \ref{H:vb} implies that $\mathcal{F}$ is the sheaf
of sections of a vector bundle $\HH_+ $ on $\cM$, 
whose fiber over a deformation $f\in \cM$ is given by 
the infinite-dimensional vector space 
\ben
\HH_+(f):=
H_{\rm twdR}(f):=\Omega^n_{\C^n}[z]/(zd+df\wedge)
\Omega^{n-1}_{\C^n}[z]\cong \Jac(f)[z].
\een
We introduce the free $\C[z,z^{-1}]$-module
\ben
\HH(f):=H_{\rm twdR}(f)\otimes_{\C[z]}\C[z,z^{-1}]
\een
and its completion 
\ben
\widehat{
\HH}(f):=H_{\rm twdR}(f)\otimes_{\C[z]}\C(\!(z)\!).
\een
The spaces $\HH(f)$, $\hHH(f)$ are 
equipped with the symplectic form 
\[
\Omega(\omega_1,\omega_2) = 
\Res_{z=0} K_f(\omega_1,\omega_2) dz 
\]
where $K_f$ is the restriction of the higher residue pairing to the
fiber of the sheaf $\mathcal{F}$ at $f$. 
Note that $\HH_+(f)$ is a Lagrangian 
(i.e.~maximally isotropic) with respect to $\Omega$. 

The spaces $\HH(f)$, $\hHH(f)$ are the B-model 
analogues of Givental's symplectic space \cite{Givental:symplectic}. 
The Gauss-Manin connection induces a flat connection $\nabla$ on 
the bundle $\HH = \bigcup_f \HH(f)$ 
and the symplectic form $\Omega$ is flat with respect to $\nabla$. 
Over a contractible subset $U$ of the marginal locus $\cM_\mar^\circ$,  
we can identify all fibers $\HH(f)$ via parallel transport\footnote
{The parallel transport is well-defined only over the marginal locus; the 
parallel transport along relevant deformations involves infinitely many 
negative powers of $z$, and only makes sense after 
tensoring $\HH$ with the ring of holomorphic functions 
on $\{z\in \C^\times\}$ over $\C[z,z^{-1}]$.}
with a single symplectic space $\cH$; then we can regard 
$f\mapsto \HH_+(f)$ as a family of Lagrangian subspaces in $\cH$ 
parametrized by $f\in U$. 
This is an example of the \emph{semi-infinite variation of Hodge structure} 
(semi-infinite VHS) in the sense of Barannikov \cite{Barannikov:quantum} 
(see also \cite{CIT}). 
The main property of the semi-infinite VHS is the 
Griffiths transversality: 
\[
\nabla_{\vec{v}} \HH_+(f) \subset z^{-1} \HH_+(f) \qquad 
\text{for $\vec{v} \in T\cM$} 
\]
for the semi-infinite flag $\cdots \subset z \HH_+(f) \subset \HH_+(f) \subset 
z^{-1} \HH_+(f) \subset \cdots$. 
In the $z$-direction, we also have $\nabla_{z\partial_z} \HH_+(f) 
\subset z^{-1} \HH_+(f)$.

\subsection{Definition and first properties}\label{opposite-section}
\begin{definition}[\cite{Barannikov:quantum, CIT}] 
We say that a Lagrangian subspace $P\subset \HH(f)$ is 
{\em opposite} if $\HH(f) = \HH_+(f)\oplus P$ and 
$z^{-1}P\subset P$. 
\end{definition} 

The vector space $\HH(f)$ can be identified with the space of 
sections of a vector bundle over $\{z\in \C^\times\}$ 
and the subspace $\HH_+(f)$ corresponds to the extension 
of the vector bundle across $0$. 
In this viewpoint, the data of an opposite subspace $P$ 
corresponds to an extension of the bundle 
across $\infty$ such that the resulting bundle over $\PP^1$ is trivial.

\begin{proposition}\label{good-basis}
If $P$ is an opposite subspace, then the following properties
hold: 
\begin{enumerate}
\item[(1)]
The vector space $\HH_+(f)\cap zP$ has dimension $N$. 
\item[(2)]
If $\{\omega_i\}_{i=1}^N$ is a basis of $\HH_+(f)\cap
zP$, then $K(\omega_i,\omega_j)\in \C$.
\item[(3)] Let $\{\omega_i\}$ and $\{\omega^i\}$ be dual bases  of
$\HH_+(f)\cap zP$ with respect to the residue pairing
$K_f^{(0)}$. Then 
\ben
\{\omega_i z^k\}_{i=1,\dots,N}^{k=0,1,\dots}
\quad\mbox{and}\quad 
\{\omega^i (-z)^{-k-1}\}_{i=1,\dots,N}^{k=0,1,\dots} 
\een
are bases of respectively $\HH_+(f)$ and $P$ dual with
respect to the symplectic pairing. 
\end{enumerate}
\end{proposition}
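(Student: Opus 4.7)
The plan is to handle the three parts in order, exploiting the decomposition $\HH = \HH_+ \oplus P$, the $z^{-1}$-stability $z^{-1}P \subset P$, and the two $z$-sesquilinearity rules of $K$ throughout.

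For Part (1), I would show that the natural $\C$-linear map
\[
\Phi \colon \HH_+(f) \cap zP \longrightarrow \HH_+(f)/z\HH_+(f),
\]
induced by inclusion followed by the quotient, is an isomorphism. Injectivity is immediate: if $\omega = zu = zv$ with $u \in \HH_+$ and $v \in P$, then $u = v \in \HH_+ \cap P = 0$, so $\omega = 0$. For surjectivity, given $\omega \in \HH_+$, decompose $z^{-1}\omega = \alpha + \beta$ with $\alpha \in \HH_+$, $\beta \in P$; then $z\beta \in zP \cap \HH_+$ satisfies $z\beta \equiv \omega \pmod{z\HH_+}$. Combined with the identification $\HH_+/z\HH_+ \cong \Jac(f)\,dx$ from Proposition \ref{H:vb}, this gives $\dim_\C(\HH_+ \cap zP) = N$.

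For Part (2), I would use the identity $K^{(n)}(\omega_i,\omega_j) = \Omega(z^{-n-1}\omega_i, \omega_j)$, which is immediate from $\Omega(u,v) = \Res_{z=0} K(u,v)\,dz$ and $K(za,b) = zK(a,b)$. Writing $\omega_i = z p_i$, $\omega_j = z p_j$ with $p_i, p_j \in P$, and combining $K(za,b) = zK(a,b)$ with $K(a,zb) = -zK(a,b)$ yields
\[
\Omega(z^{-n-1}\omega_i,\omega_j) \;=\; -\,\Omega\bigl(z^{1-n} p_i,\ p_j\bigr).
\]
For $n \ge 1$, iterating $z^{-1}P \subset P$ gives $z^{1-n}p_i \in P$, so both arguments lie in the Lagrangian $P$ and the pairing vanishes. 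Hence $K(\omega_i,\omega_j) = K^{(0)}(\omega_i,\omega_j) \in \C$. The principal obstacle of the whole proof lives here, in the residue and sign bookkeeping needed to apply both sesquilinearity rules in concert to push the pairing of the $\omega_i$'s into a pairing entirely inside $P$.

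For Part (3), by Proposition \ref{H:vb} the space $\HH_+$ is a free $\C[z]$-module of rank $N$; since the $\omega_i$ lift a $\C$-basis of $\HH_+/z\HH_+$ (by Part (1)), they form a $\C[z]$-basis, so $\{\omega_i z^k\}_{i, k\ge 0}$ is a $\C$-basis of $\HH_+$. Next, $(-z)^{-k-1}\omega^i \in P$ because $\omega^i \in zP$ gives $z^{-1}\omega^i \in P$, and iterating $z^{-1}P \subset P$ yields $z^{-k-1}\omega^i \in z^{-k}P \subset P$. To see these span $P$: since $\{\omega_j\}$ is also a $\C[z,z^{-1}]$-basis of $\HH = \HH_+ \otimes_{\C[z]} \C[z,z^{-1}]$, any $u \in P$ can be split as $u = u_+ + u_-$ with $u_+ \in \HH_+$ and $u_-$ in the span of $\{\omega_j z^{-k-1}\}_{k\ge 0} \subset P$; then $u_+ = u - u_- \in \HH_+ \cap P = 0$, so $u = u_-$, and the invertible change of basis from $\{\omega_j\}$ to $\{\omega^j\}$ gives the claim. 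Finally, $\Omega$-duality follows from Part (2) together with sesquilinearity:
\[
\Omega\bigl(\omega_i z^k,\ (-z)^{-l-1}\omega^j\bigr) \;=\; \Res_{z=0}\bigl(z^{k-l-1}\, K(\omega_i,\omega^j)\bigr)\,dz \;=\; \delta_i^j\,\delta_{k,l}.
\]
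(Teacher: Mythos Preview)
Your argument is correct and is exactly the kind of verification the paper has in mind; the paper itself omits the proof as ``straightforward,'' so there is nothing to compare against beyond confirming your reasoning goes through.

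One place to tighten: in Part~(3) you write that, since the $\omega_i$ lift a basis of $\HH_+/z\HH_+$, they form a $\C[z]$-basis of $\HH_+$. Over the non-local ring $\C[z]$ this implication fails in general (a matrix in $\C[z]$ with invertible constant term need not be invertible over $\C[z]$). What saves you is the extra structure of $P$: your Part~(1) actually proves that $\HH_+\cap zP$ is a direct-sum \emph{complement} to $z\HH_+$ in $\HH_+$, i.e.
\[
\HH_+ \;=\; z\HH_+ \;\oplus\; \bigl(\HH_+\cap zP\bigr),
\]
and iterating this, together with $\bigcap_{k\ge 0} z^k\HH_+=0$, gives $\HH_+=\bigoplus_{k\ge 0} z^k(\HH_+\cap zP)$ directly. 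This yields the $\C$-basis $\{\omega_i z^k\}$ without invoking a Nakayama-type step. With that adjustment, all three parts are clean.
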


The proof of the above proposition is straightforward, so it will be omitted.
Motivated by Proposition \ref{good-basis}, for a given opposite
subspace $P$ we will refer to a basis of
$\HH_+(f)\cap zP$ as a {\em good basis}. Note that a
$\C[z]$-basis $\{\omega_i\}$ of 
$\HH_+(f)$ is good if and only if $K(\omega_i,\omega_j)\in
\C$. 
Similarly, one can define the notion of an opposite subspace and a good
basis for the completion $\hHH(f)$ and its Lagrangian
subspace $\hHH_+(f):=\HH_+(f)\otimes_{\C[z]} \C[\![z]\!]$. 
Proposition \ref{good-basis} still holds, except for property (3), 
which takes the following form. Put $H:=\hHH_+(f)\cap zP$,
then 
\begin{equation}\label{opposite-good}
\hHH_+(f)= H[\![z]\!],\quad P=H[z^{-1}]z^{-1}.
\end{equation}

An opposite subspace $P\subset \HH(f)$ at $f\in \cM$ can be extended 
to a family of opposite subspaces in a neighbourhood $U$ 
of $f\in \cM$ by parallel transport 
(see the discussion in \S\ref{subsec:Frobenius} and 
\cite[\S 2.2]{CIT}). 
We regard this family of opposite subspaces 
as a subbundle of $\HH= \bigcup_f \HH(f)$ and 
denote it again by $P$. The Gauss-Manin connection induces a flat 
connection on the finite-dimensional bundle $zP/P$ and 
the identification 
\[
\HH_+ \cap z P \cong z P/P 
\]
induces a trivialization $\HH_+ \cong (zP/P)[z]$ over $U$ 
by a flat bundle $zP/P$. 
With respect to this trivialization, the Gauss-Manin connection 
is of the form: 
\[
\nabla = d + \frac{1}{z} C 
\]
with $C \in \End(zP/P)\otimes \Omega^1_\cM$ independent 
of $z$. This fact is crucial in the construction of a Frobenius 
(flat) structure. See \S \ref{subsec:Frobenius} for more details. 

\subsection{Homogeneous opposite subspaces over the marginal moduli} 
In this subsection we assume that $f$ lies in the marginal moduli 
$\cM_\mar^\circ$, i.e.~$f$ is a weighted homogeneous polynomial 
of degree 1. 
The operator 
\beq
\label{grading-op}
z\partial_z+\operatorname{Lie}_\xi,\quad \xi:=
\sum_{i=1}^n c_i x_i\partial_{x_i},
\eeq
where $\operatorname{Lie}$ denotes the Lie derivative, 
defines a grading on $\Omega^n_{\C^n}[z,z^{-1}]$. 
Since the twisted de Rham differential $zd + df(x;t)\wedge$ 
is homogeneous (of degree 1), the twisted de Rham cohomology 
$\HH(f)$ inherits the grading. 
We say that an opposite subspace $P\subset \HH(f)$ is 
\emph{homogeneous} 
if $(z\partial_z+\operatorname{Lie}_\xi)P\subset P$. 
We would like to establish one-to-one correspondence between homogeneous 
opposite subspaces and splittings of the Steenbrink's Hodge filtration 
of the vanishing cohomology $\frh:=H^{n-1}(f^{-1}(1);\C)$.  

\begin{remark} 
An opposite subspace $P$ is homogeneous if and only if 
$P$ is preserved by the Gauss-Manin connection in the $z$-direction,  
i.e.~$\nabla_{z\partial_z} P \subset P$. 
This implies that the Gauss-Manin connection has a logarithmic 
singularity at $z=\infty$ with respect to the extension of the bundle $\HH_+(f)$ 
across $\infty$ defined by $P$; this corresponds to the notion of 
TLEP structure \cite{He}. 
\end{remark} 

\subsubsection{Steenbrink's Hodge structure for 
weighted-homogeneous singularities}\label{S-PHS}
Given a holomorphic form $\omega\in 
\Omega_{\C^n }(\C^n)$ we recall
the so-called {\em geometric section} (see \cite{AGV}) 
\ben
s(\omega,\lambda):=\int\frac{\omega}{df} \quad \in \quad H^{n-1}(f^{-1}(\lambda);\C),
\een
where $\omega/df$ denotes a holomorphic $(n-1)$-form $\eta$ defined in
a tubular neighborhood of $f^{-1}(\lambda)$ such that 
$\omega=df\wedge\eta$; the restriction of $\eta$ to $f^{-1}(\lambda)$ 
is well-defined. 
By definition, Steenbrink's Hodge filtration 
\cite{Steenbrink:quasihomogeneous, Stn} on $\frh$ is given by 
\ben
F^p\frh:=\{A\in \frh\ |\ A=s(\omega,1) \ \mbox{for
  some}\ \omega\ \mbox{such that}\     
\operatorname{deg}(\omega)\leq n-p\},
\een
where $\operatorname{deg}(\omega)$ denotes the maximal degree of a
homogeneous component of $\omega$. 
This is an exhaustive filtration; in particular every cohomology class 
of $f^{-1}(1)$ can be represented by a geometric section. 

The vector space $\frh=H^{n-1}(f^{-1}(1);\C)$ is equipped with a
linear transformation $M\in \End(\frh)$, called the {\em classical monodromy}, which
corresponds to the monodromy of the 
Gauss--Manin connection around $\lambda=0$. 
Using the fact that $f$ is
weighted-homogeneous, it is easy to see that if $A=s(\omega,1)$ for
some homogeneous form $\omega$, then $M(A) =
e^{-2\pi\sqrt{-1}\, \operatorname{deg}(\omega)} A.$ Let us decompose
$\frh=\frh_1\oplus \frh_{\neq 1}$, where
$\frh_1$ is the invariant subspace of $M$ and
$\frh_{\neq 1}$ is the remaining part of the spectral
decomposition of $\frh$ with respect to $M$. 
Following Hertling (see \cite[Ch.~10]{He}) we introduce the 
non-degenerate bilinear form
\ben
S(A,B) = (-1)^{(n-1)(n-2)/2}\langle A, \operatorname{Var}\circ \nu(B)\rangle ,\quad
A,B\in \frh,
\een
where $\nu$ is a linear operator such that $\nu=(M-1)^{-1}$ on
$\frh_{\neq 1}$ and $\nu=-1$ on $\frh_1$, and 
the variation operator 
\ben
\operatorname{Var}\colon H^n(f^{-1}(1);\C)\to H_n(f^{-1}(1);\C)
\een
is an isomorphism constructed via the composition of 
the Lefschetz duality
\ben 
H^n(f^{-1}(1);\C)\cong  
H_n(f^{-1}(1),\partial f^{-1}(1);\C)
\een 
and the isomorphism 
$H_n(f^{-1}(1),\partial f^{-1}(1)) \cong 
H_n(f^{-1}(1))$ 
mapping a relative cycle 
$\gamma$ to an absolute cycle 
$\widetilde{M}(\gamma) - \gamma$, where $\widetilde{M}:
f^{-1}(1)\to f^{-1}(1)$ is the geometric monodromy 
fixing the boundary (see \cite[Ch.~1.1, 2.3]{AGV}). 

Combining the results of Hertling (see \cite[Ch.~10]{He}) and 
Steenbrink (see \cite{Stn}) we get the following: 
the filtration $\{F^p\frh\}_{p=0}^{n-1}$, 
the form $S$, and the real subspace
$\frh_\R:=H^{n-1}(f^{-1}(1),\R)$ give rise to
a (pure) Polarized Hodge Structures on $\frh_{\neq 1} $ and
$\frh_1$ of weights respectively $n-1$ and $n$. 
More precisely, put
$\frh_s=\operatorname{Ker}(M-s\operatorname{Id})$; 
then $F^p \frh = \bigoplus_{s\in S^1} F^p \frh_s$ with 
$F^p \frh_s = F^p \frh \cap \frh_s$ and 
\ben
&
\text{(a)}
&
\frh_{s}=F^p\frh_{s}\oplus
\overline{F^{m+1-p}\frh}_{\bar s},\quad \forall p\in \Z,\\
&
\text{(b)}
&
S(u,v) = (-1)^m S(v,u), \\ 
&
\text{(c)} 
&
S(F^p\frh, F^{m+1-p}\frh) = 0,\\
&
\text{(d)} 
&
\sqrt{-1}^{2p-m}S(u,\overline{u})>0\quad \mbox{for}\quad 
u\in F^p\frh_{s}\cap
\overline{F^{m-p}\frh}_{\bar s}\setminus{0},
\een
where $m=n-1$ for $s\neq 1$ and $m=n$ for $s=1$. 
Note that $S(\frh_s,\frh_t) =0$ unless $t =\bar{s}$ 
and that $\overline{\frh_s} = \frh_{\bar{s}}$. 

\subsubsection{The polarizing form and the higher-residue pairing}
\label{pf-hrp}

We will identify the vector space $\frh=H^{n-1}(f^{-1}(1);\C)$ 
with a fiber of 
the local system underlying the Gauss-Manin connection 
$(\HH_+(f) = \cF|_{f},\nabla_{z\partial_z})$. 
Then we describe the higher residue pairing in terms of 
the polarizing form $S$ on $\frh$. 

Recall that oscillatory integrals \eqref{eq:oscillatory} 
give solutions of the Gauss-Manin connection, and therefore 
the local system underlying the Gauss-Manin connection 
is dual to the space 
\begin{equation} 
\label{eq:space_Lefschetz} 
V_{f,z} := \varprojlim_{M} H_n(\C^n, \{ x\in \C^n : \Re(f(x)/z)\le -M\}) 
\end{equation} 
of Lefschetz thimbles (twisted by $(-2\pi z)^{-n/2}$). 
By the relative homology exact sequence, 
we can easily see that this is isomorphic 
to $H_{n-1}(f^{-1}(1))$; hence fibers of the Gauss-Manin 
local system should be identified with $\frh$. 
To make this identification explicit, we use the Laplace transformation. 
When $z<0$ and the integration cycle $\Gamma$ 
in \eqref{eq:oscillatory} is a Lefschetz thimble of $f$ lying over 
the straight ray $[0,\infty)$,  
we may rewrite the oscillatory integral 
\eqref{eq:oscillatory} 
as the Laplace transform of a period
\[
(-2\pi z)^{-n/2}\int_0^\infty e^{\lambda/z} 
\int_{\Gamma_\lambda} s(\omega,\lambda)  
\]
where $\Gamma_\lambda$ is a vanishing cycle in $f^{-1}(\lambda)$ 
such that $\Gamma = \bigcup_{\lambda \in [0,\infty)} \Gamma_\lambda$. 
This can be viewed as the pairing of the vanishing cycle 
$\Gamma_1\subset f^{-1}(1)$ 
and the cohomology class $\widehat{s}(\omega,z)$ of $f^{-1}(1)$ 
given by: 
\[
\widehat{s}(\omega,z) := (-2\pi z)^{-n/2} \int_0^\infty e^{\lambda/z} 
s(\omega,\lambda) d\lambda 
\]
where we identified $H^{n-1}(f^{-1}(\lambda);\C)\cong \frh$ 
via the parallel transport along the integration path 
(with respect to the Gauss-Manin connection), so that $s(\omega,\lambda)$
takes values in $\frh$. 
Thus the map $[\omega]\mapsto \widehat{s}(\omega,z)$ defines 
a flat identification between $\HH_+(f)|_z$ and $\frh$. 
For a homogeneous form $\omega\in \Omega^n_{\C^n}(\C^n)$, 
the geometric section $s(\omega,\lambda)$ satisfies 
the homogeneity 
$s(\omega,\lambda) = \lambda^{\deg(\omega)-1} s(\omega,1)$, and 
therefore we find 
\begin{equation}
\label{eq:s_hat}
\widehat{s}(\omega,z) = (-2\pi z)^{-n/2} (-z)^{\deg(\omega)} 
\Gamma(\deg \omega) s(\omega,1).  
\end{equation} 
Thus $\widehat{s}(\omega,z)$ makes sense as a Laurent polynomial 
of $z$ (with fractional exponents) taking values in $\frh$. 
We verify the following lemma directly. 
\begin{lemma} 
\label{van-cohom} 
The map 
\[
\HH_+(f) = \cF|_f \longrightarrow \frh[z^{\pm 1/d}]  , \quad 
[\omega] \longmapsto \widehat{s}(\omega,z) 
\]
is well-defined and intertwines the Gauss-Manin connection $\nabla_{z\partial_z}$ 
with the standard differential $z\partial_z$, where $d$ is a common denominator 
of $c_1,\dots,c_n$ and $n/2$. This induces an isomorphism 
$\HH_+(f)|_{z} \cong \frh$ between fibers for every $z\in \C^\times$. 
\end{lemma}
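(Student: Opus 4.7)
The plan is to verify the three assertions of the lemma in order: well-definedness of the map $[\omega] \mapsto \widehat{s}(\omega,z)$ as a map into $\frh[z^{\pm 1/d}]$, intertwining of the connections, and fiberwise bijectivity. Throughout, I would extend $\widehat{s}$ from $\Omega^n_{\C^n}(\C^n)$ to $\Omega^n_{\C^n}[z]$ by $\C[z]$-linearity (so $\widehat{s}(z^k \omega,z) = z^k \widehat{s}(\omega,z)$) and use the closed-form expression \eqref{eq:s_hat} on homogeneous forms. Since every exponent $d_\omega - n/2$ arising lies in $\tfrac{1}{d}\Z$, the image automatically lies in $\frh[z^{\pm 1/d}]$, and it suffices to test every identity on homogeneous elements.

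For well-definedness, I would check that $\widehat{s}((zd+df\wedge)\eta,z) = 0$ for each homogeneous $\eta \in \Omega^{n-1}_{\C^n}$; set $e := \deg\eta > 0$. The Gelfand-Leray formula $\tfrac{d}{d\lambda} s(df\wedge\eta,\lambda) = s(d\eta,\lambda)$, combined with the homogeneity relation $s(\alpha,\lambda) = \lambda^{\deg\alpha - 1}s(\alpha,1)$, yields the key identity $s(d\eta,1) = e \cdot s(df\wedge\eta,1)$. Substituting into \eqref{eq:s_hat} and using $\Gamma(e+1) = e\,\Gamma(e)$, the contributions from $z\widehat{s}(d\eta,z)$ and $\widehat{s}(df\wedge\eta,z)$ cancel exactly. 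This cancellation is precisely the Laplace-dual form of Stokes' theorem $\int_\Gamma d(\eta\, e^{f/z}) = 0$ on a Lefschetz thimble, which is the path I would use to motivate the computation.

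For the intertwining, apply \eqref{eq:s_hat} to a homogeneous form $\omega = \phi\,dx$ of total degree $d_\omega$: differentiating in $z$ directly yields $z\partial_z \widehat{s}(\omega,z) = (d_\omega - n/2)\widehat{s}(\omega,z)$. On the other hand, $s(f\omega,\lambda) = \lambda\, s(\omega,\lambda)$ together with $\Gamma(d_\omega+1) = d_\omega \Gamma(d_\omega)$ gives $\widehat{s}(f\omega,z) = -d_\omega z\,\widehat{s}(\omega,z)$. Since the defining formula for $\nabla$ specializes on $z$-independent forms to $\nabla_{z\partial_z}[\phi\,dx] = -[(f/z)\phi\,dx] - (n/2)[\phi\,dx]$, one concludes $\widehat{s}(\nabla_{z\partial_z}\omega,z) = d_\omega \widehat{s}(\omega,z) - (n/2)\widehat{s}(\omega,z) = z\partial_z\widehat{s}(\omega,z)$, as desired.

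Finally, for the fiberwise isomorphism, I would apply Proposition \ref{H:vb}: choose homogeneous polynomial representatives $\psi_1,\dots,\psi_N$ of a basis of $\Jac(f)$; then $\{\psi_i\,dx\}$ is a $\C[z]$-basis of $\HH_+(f)$. At any $z_0 \in \C^\times$, the map sends $\psi_i\,dx$ to a nonzero scalar multiple of the geometric section $s(\psi_i\,dx,1)$, and Steenbrink's theorem for isolated weighted-homogeneous singularities (see \S\ref{S-PHS}) asserts that these geometric sections form a basis of $\frh$. The main obstacle I foresee is carefully managing the fractional $z$-powers, branches, and signs in \eqref{eq:s_hat} when reducing everything to single homogeneous components; once that bookkeeping is organized, the entire proof reduces to the Gelfand-Leray identity, the homogeneity scaling of geometric sections, and the Gamma-function recursion.
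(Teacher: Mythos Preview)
Your proof is correct and follows essentially the same route as the paper: the well-definedness and intertwining steps are identical (Gelfand--Leray identity plus homogeneity and the Gamma recursion, together with $s(f\omega,1)=s(\omega,1)$), and the only cosmetic difference is in the last step, where the paper argues surjectivity (every class in $\frh$ is a geometric section) plus the rank count from Proposition~\ref{H:vb}, while you argue directly that a Jacobi-ring basis yields a basis of $\frh$.
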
 
\begin{proof} 
Let us first check that the map passes to the quotient 
$\HH_+(f) = \Omega^n_{\C^n}(\C^n)[z]/ 
(z d + df\wedge) \Omega^{n-1}_{\C^n}(\C^n)[z]$. 
If $\omega$ is a homogeneous $(n-1)$-form of degree $m$, then the image of
$z d\omega + df\wedge\omega$ is 
\beq\label{kernel-image}
\Gamma(m) s(d\omega,1) - \Gamma(m+1) s(df\wedge\omega,1).
\eeq
multiplied by $(-2\pi z)^{-n/2} (-z)^{m} z$. 
On the other hand
\ben
s(d\omega,\lambda) = \int \frac{d\omega}{df} = \partial_\lambda
\int\omega =\partial_\lambda s(df\wedge \omega,\lambda).
\een
Using homogeneity, $s(df\wedge \omega,\lambda) = \lambda^m
s(df\wedge\omega,1)$. Hence
$
s(d\omega,1) =  m\, s(df\wedge \omega,1),
$
so the expression \eqref{kernel-image} vanishes. This proves that the
map in the Lemma passes to the quotient. 

Next we show that the map intertwines the Gauss-Manin connection with 
$z\partial_z$. For a homogeneous form $\omega$ of degree $m$, 
the image of $\nabla_{z\partial_z} [\omega] = [-(f/z + n/2) \omega]$ 
is 
\[
(-2\pi z)^{-n/2} (-z)^{m} 
\left(\Gamma(m+1) s(f\omega,1) - \frac{n}{2} \Gamma(m) s(\omega,1)
\right) 
\]
which equals $z\partial_z \widehat{s}(\omega,z) = (m-n/2) \widehat{s}(\omega,z)$ 
since $s(f\omega,1) = s(\omega,1)$.  

The last statement follows by comparing the ranks: the map is 
surjective since every class on $f^{-1}(1)$ is represented by a 
geometric section, and Proposition \ref{H:vb} shows that 
the rank of $\HH_+(f)$ equals the Milnor number $N = \dim \frh$. 
\end{proof} 


Let us denote by $\widehat{s}(\omega,z)^*:=
\widehat{s}(\omega, e^{-\pi\sqrt{-1}} z)$ 
the analytic continuation along the semi-circle
$\theta \mapsto e^{-\sqrt{-1}\theta}z $, $0\leq \theta\leq \pi$. 
The relation between the polarizing form $S$ and the higher residue
pairing $K$ has been determined by Hertling 
(see \cite[Ch.~10]{He}, 
\cite[\S 7.2(f); \S 8, Step 2]{He2}). 
We follow the presentation in \cite{Mi2}. 

\begin{theorem}[{\cite{He,He2}, \cite[Lemma 3.3]{Mi2}}] 
\label{thm:SandK} 
The polarizing form $S$ and the higher residue pairing $K_f$ are 
related by the formula: 
\begin{align} \label{SandK}
K_f(\omega_2,\omega_1) & = 
-S(\widehat{s}(\omega_1,z)^*, \nu^{-1} 
\widehat{s}(\omega_2,z) ) 
\end{align} 
where in the right-hand side we use the determination 
of $\widehat{s}(\omega_i,z)$ given canonically for 
$z\in \R_{<0}$ via formula \eqref{eq:s_hat}.  
\end{theorem}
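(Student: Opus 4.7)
The plan is to define the bilinear form
\[
\widetilde{K}(\omega_2,\omega_1) := -S(\widehat{s}(\omega_1,z)^*,\nu^{-1}\widehat{s}(\omega_2,z))
\]
on $\HH_+(f)$ and to show that it satisfies the four defining properties (1)--(4) of the higher residue pairing listed in \S 3.3. By the uniqueness of $K_f$, this forces $\widetilde{K}=K_f$, which is the claim.

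First I would verify that $\widetilde{K}$ actually takes values in $\cO_\cM[\![z]\!]$ (rather than in some completion with fractional exponents). For a weighted-homogeneous form $\omega_i$ of degree $m_i$, formula \eqref{eq:s_hat} exhibits $\widehat{s}(\omega_i,z)$ as a fractional power of $z$ times an element of the eigenspace $\frh_{s_i}$ of the classical monodromy $M$ with eigenvalue $s_i=e^{-2\pi\sqrt{-1}\,m_i}$. Since $S$ pairs $\frh_s$ nontrivially only with $\frh_{\bar s}$, the monodromy constraint $s_1=\bar s_2$ forces $m_1+m_2\in\Z$ whenever the term is nonzero; combined with the phases produced by the star operation and by the $(-2\pi z)^{-n/2}$-prefactor, this guarantees cancellation of all fractional exponents, so that $\widetilde K$ extends across $z=0$.

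Properties (1), (3), and (4) are then relatively routine. Property (1) follows from the identity $\widehat{s}(z\omega,z)=z\,\widehat{s}(\omega,z)$ together with the fact that the star operation sends $z$ to $-z$. Property (3) follows from the symmetry $S(A,B)=(-1)^m S(B,A)$ on the weight-$m$ piece, handled separately on the pieces $\frh_1$ (where $\nu=-1$ and $m=n$) and $\frh_{\neq 1}$ (where $\nu=(M-1)^{-1}$ and $m=n-1$), using the compatibility $\nu \circ M = \nu + \id$ to interchange $\nu$ and the complex conjugation implicit in the star. Property (4) is essentially automatic: by Lemma \ref{van-cohom} the map $\omega\mapsto\widehat{s}(\omega,z)$ intertwines the Gauss-Manin connection with the trivial connection on the constant fiber $\frh$, while $S$, $\nu$, and the star operation are all defined using only the flat local system structure on $\frh$, so the pullback is automatically flat.

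The main obstacle will be verifying property (2), namely that the constant term $\widetilde K^{(0)}$ reproduces the residue pairing on $\Jac(f)$. Using \eqref{eq:s_hat}, the extraction of the lowest-order $z$-term of $\widetilde K(\omega_2,\omega_1)$ on homogeneous $\omega_1,\omega_2$ produces a product of $\Gamma$-factors $\Gamma(m_1)\Gamma(m_2)$, a phase from the star operation, and a factor from $\nu^{-1}=M-1$ on $\frh_{\neq 1}$ (respectively $-\id$ on $\frh_1$). After applying the weight constraint $m_1+m_2= n - \sum_i c_i$ forced by nonvanishing of the residue pairing, the reflection formula $\Gamma(a)\Gamma(1-a)=\pi/\sin(\pi a)$ combines with Steenbrink's theorem identifying $S$ with the residue pairing on the Hodge-graded pieces to collapse these factors to the expected normalization. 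I would carry out this reduction on a monomial basis of $\Jac(f_0)$ adapted to the weight grading at a single marginal $f_0\in\cM_\mar^\circ$, and then extend to a general point of $\cM$ using property (4) — the flatness of both sides — applied along a path from $f_0$ to the given polynomial.
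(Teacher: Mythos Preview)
The paper does not give its own proof of this theorem; it is stated as a citation from Hertling \cite{He,He2} and Milanov \cite{Mi2}, with only the brief topological explanation in the Remark immediately following. That explanation is quite different from your route: it uses Pham's identification of $K_f$ with the dual of the intersection pairing between the spaces $V_{f,z}$ and $V_{f,-z}$ of Lefschetz thimbles, and then identifies that intersection pairing topologically with the Seifert form $\langle A,\operatorname{Var}(B)\rangle$ on $\frh$, which is exactly what $S(\cdot,\nu^{-1}\cdot)$ unwinds to. So the paper's (sketched) argument is geometric/topological, whereas yours is an algebraic uniqueness argument.

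Your scheme is coherent for properties (1), (3), (4); the difficulty is entirely in (2), and here your sketch has a gap. You invoke ``Steenbrink's theorem identifying $S$ with the residue pairing on the Hodge-graded pieces,'' but Steenbrink's result is that $(\frh,F^\bullet,S)$ is a polarized Hodge structure; it does not by itself identify $S$ with the Grothendieck residue on $\Jac(f)$. Concretely, to match $\widetilde K^{(0)}$ with the residue pairing you must evaluate $S(s(\omega_1,1),s(\omega_2,1))$ explicitly on monomial forms, and that computation passes through the very identification (Seifert form $\leftrightarrow$ intersection form $\leftrightarrow$ residue) that Hertling and Pham establish topologically --- which is precisely the content you are trying to reprove. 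Unless you can produce an independent calculation of $S$ on geometric sections (for instance via an explicit oscillatory-integral asymptotic and stationary phase), your verification of (2) is circular. The cleanest way to close the gap is to follow the paper's hint and go through the Lefschetz-thimble intersection form; once (2) is secured at one marginal point, your flatness argument for the extension is fine.
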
 

\begin{remark}
We give a brief explanation for the above formula \eqref{SandK}. 
Pham \cite[2\`eme, \S 4]{Pham} identified the higher residue pairing with 
the dual of the intersection pairing between 
the relative homologies $V_{f,z}$ and $V_{f,-z}$ 
from \eqref{eq:space_Lefschetz}  
(see also \cite[\S 8]{He2}, \cite[Definition 2.18]{CIT}). 
This intersection pairing can then be identified with the Seifert form 
$\langle A, \operatorname{Var}(B)\rangle$  
on $\frh$ by a topological argument 
in \cite[Ch.~2.3]{AGV}, \cite[2\`eme, \S 3.2]{Pham}. 
Note that the right-hand side of \eqref{SandK} 
is induced by the Seifert form. 
\end{remark}

\subsubsection{Splitting of the Hodge structure}
\label{si-section}
Following M.~Saito \cite{MS}, we use opposite filtrations on $\frh$ 
to construct homogeneous opposite subspaces for $\HH_+(f)$ 
(see also \cite[Theorem 7.16]{He}). 
\begin{definition} 
An \emph{opposite filtration} on $\frh = H^{n-1}(f^{-1}(1);\C)$ is an 
increasing $M$-invariant filtration $\{U_p\frh\}_{p\in \Z}$ 
such that 
\ben
&
\text{(a)}
&
U_p\frh=0
\quad \mbox{for}\quad p\ll 0
\quad
\mbox{and}
\quad
U_p\frh=\frh
\quad \mbox{for}\quad p\gg 0,\\
&
\text{(b)}
&
\frh = \bigoplus_{p\in \Z} F^p\frh\cap
U_p\frh,\\
&
\text{(c)}
&
S(U_p\frh , U_{m-1-p}\frh) = 0, \quad \forall p\in \Z,
\een
where $m=n-1$ for $s\neq 1$ and $m=n$ for $s=1$.
\end{definition} 

Let $\psi \colon \frh \to \HH(f)[z^{\pm 1/d}]$ denote 
the map inverse to the map $[\omega] \mapsto \widehat{s}(\omega,z)$ 
in Lemma \ref{van-cohom}. 
This is given by 
\begin{equation} 
\label{eq:psi} 
\psi(A) = \frac{(2\pi)^{n/2}}{\Gamma(\deg \omega)}  
(-z)^{-\deg \omega + n/2} [\omega]  
\end{equation} 
when $A = s(\omega,1) \in \frh$ for some 
homogeneous form $\omega \in \Omega^n_{\C^n}(\C^n)$. 
The image $\psi(A)$ is homogeneous of degree $n/2$ and 
is flat with respect to the Gauss-Manin connection. 
Take $s = e^{2\pi\sqrt{-1} \alpha}$ with 
$0\le \alpha <1$. 
Note that $\psi(A) \in z^{\alpha -n/2} \HH(f)$ for $A \in \frh_s$. 
The Hodge filtration $F^p \frh_s$ 
can be described in terms of the Lagrangian subspace $\HH_+(f)$ as 
\[
F^p \frh_s = \{ A \in \frh_s :  \psi(A) \in z^{p + \alpha - n/2} \HH_+(f) \}   
\]
since, for $A = s(\omega,1)\in \frh_s$, every homogeneous component 
$\omega_j$ of $\omega$ satisfies $\fracp{-\deg \omega_j} = \alpha$ 
and we have 
\[
-\deg \omega_j + \frac{n}{2} \ge p + \alpha - \frac{n}{2} 
\Longleftrightarrow \ceil{\deg \omega_j} \le n-p.  
\] 
Conversely, $\HH_+(f)$ can be reconstructed from $F^p\frh$ as 
\[
\HH_+(f) = \sum_{0\le \alpha<1} 
\sum_{p\in \Z} z^{-p -\alpha +n/2} \psi( F^p \frh_s)  [z] 
\qquad \text{with $s= e^{2\pi\sqrt{-1} \alpha}$.}
\]
The correspondence between homogeneous opposite subspaces $P 
\subset \HH(f)$ 
and opposite filtrations $U_p \frh$ is given similarly. We require 
that $P$ and $U_p\frh$ are related by 
\begin{align} 
\label{eq:opposite_correspondence}
\begin{split} 
U_p \frh_s &= \{ A \in \frh_s : \psi(A) \in z^{p+\alpha - n/2} z P \} \\
P & = \sum_{0\le \alpha<1} 
\sum_{p\in \Z} z^{-p-\alpha +n/2} z^{-1} \psi( U_p \frh_s)  [z^{-1}] 
\qquad \text{with $s= e^{2\pi\sqrt{-1} \alpha}$.}
\end{split} 
\end{align} 
\begin{proposition}[{\cite{MS}, \cite[Ch~7.4]{He}}] 
The formulas \eqref{eq:opposite_correspondence} establish 
one-to-one correspondence between homogeneous opposite subspaces $P$ and 
opposite filtrations $\{U_p \frh\}_{p\in \Z}$. 
\end{proposition}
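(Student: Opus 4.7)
The plan is to use the isomorphism $\psi$ of Lemma \ref{van-cohom} (given explicitly by \eqref{eq:psi}) as a dictionary translating each condition on a homogeneous opposite subspace $P$ into the corresponding condition on an opposite filtration $\{U_p\frh\}$. Since $\psi(A)$ is homogeneous of degree $n/2$ and, for $A\in \frh_s$ with $s=e^{2\pi\sqrt{-1}\alpha}$ and $0\le\alpha<1$, has $z$-exponent lying in $\alpha-n/2+\Z$, the map $\psi$ identifies the $M$-eigenspace decomposition of $\frh$ with a decomposition of $\HH(f)$ by the residue class of the $z$-exponent modulo $1$. Moreover $\psi$ is flat for $\nabla_{z\partial_z}$, so it will automatically carry the homogeneity of $P$ to the $M$-invariance of $\{U_p\frh\}$, and conversely.

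\textbf{Forward direction (from $P$ to $\{U_p\}$).} Given a homogeneous opposite subspace $P$, I would define $U_p\frh_s$ by the first formula in \eqref{eq:opposite_correspondence} and check the opposite filtration axioms in turn. The filtration property $U_p\frh\subset U_{p+1}\frh$ follows directly from $z^{-1}P\subset P$. The exhaustiveness condition (a) follows from $\HH(f)=\bigcup_k z^{-k}\HH_+(f)$ and $\bigcap_k z^{k}\HH_+(f)=0$ together with the splitting $\HH(f)=\HH_+(f)\oplus P$; the homogeneity of $P$ bounds the grading range on the degree-$n/2$ slice and hence the range of $p$ for which $U_p\frh$ is nontrivial. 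For the splitting (b), the key step is to restrict $\HH=\HH_+\oplus P$ to the degree-$n/2$ homogeneous part in each monodromy sector. This slice is identified with $\frh_s$ via $\psi$, and the decomposition it inherits from $\HH_+$ and $P$ --- together with the filtrations $\{z^k\HH_+\}$ and $\{z^{-k}P\}$ --- translates into $\frh_s=\bigoplus_p F^p\frh_s\cap U_p\frh_s$. For the polarization condition (c), I would combine Theorem \ref{thm:SandK} with the Lagrangian property $\Omega(P,P)=0$, i.e.\ $\Res_{z=0}K(P,P)\,dz=0$, to deduce $S(U_p\frh,U_{m-1-p}\frh)=0$.

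\textbf{Reverse direction and mutual inversion.} Conversely, given an opposite filtration $\{U_p\frh\}$, define $P$ by the second formula of \eqref{eq:opposite_correspondence}. The homogeneity of $P$ and the condition $z^{-1}P\subset P$ are built into the formula. The direct sum $\HH=\HH_+\oplus P$ follows from $\frh=\bigoplus_p F^p\cap U_p$ by reassembling both sides sector by sector, and the Lagrangian property of $P$ follows from the polarization condition via Theorem \ref{thm:SandK}. That the two constructions are mutual inverses then follows by unwinding the definitions on each pair $(s,p)$.

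\textbf{Main obstacle.} The principal technical point is matching the two direct-sum decompositions $\HH=\HH_+\oplus P$ and $\frh=\bigoplus_p F^p\cap U_p$. This requires careful bookkeeping of how the $z$-filtrations on $\HH_+$ and $P$ interact with the fractional $z$-shift $\alpha-n/2$ arising on each $M$-eigenspace $\frh_s$, $s=e^{2\pi\sqrt{-1}\alpha}$. A secondary subtlety is the equivalence of the polarization conditions under \eqref{SandK}: one must track how the factor $\nu^{-1}$ and the analytic continuation $z\mapsto e^{-\pi\sqrt{-1}}z$ interact with the splittings given by $P$ and $\{U_p\}$, in particular how the pairing of the slots $p$ and $m-1-p$ (with $m$ depending on whether $s=1$) arises from $K^{(0)}$-orthogonality between $\HH_+\cap zP$ and $z^{-1}(\HH_+\cap zP)$.
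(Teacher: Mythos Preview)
Your outline is correct and follows essentially the same route as the paper: both directions are argued via the dictionary $\psi$ (equivalently $\widehat{s}$), the direct-sum condition $\HH=\HH_+\oplus P$ is matched with $\frh_s=\bigoplus_p F^p\frh_s\cap U_p\frh_s$ by reading off homogeneous components of $\HH_+\cap zP$, and the Lagrangian/polarization correspondence is handled through formula \eqref{SandK}. The paper makes the splitting step concrete by writing down the explicit decomposition \eqref{eq:P_H+_decomp} and then evaluating at $z=-1$ via Lemma \ref{van-cohom}, which is exactly the ``degree-$n/2$ slice'' picture you describe.
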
 
\begin{proof} 
Given an opposite filtration $\{U_p\frh\}$, we prove that 
the subspace $P$ defined by the second formula 
of \eqref{eq:opposite_correspondence} is an opposite subspace. 
It is obvious that $P$ is homogeneous. 
Using the decomposition $\frh_s = \bigoplus_{p\in \Z} 
(F^p\frh_s \cap U_p \frh_s)$, we can rewrite 
\begin{align}
\label{eq:P_H+_decomp} 
\begin{split}  
\HH_+(f) &= \bigoplus_{0\le \alpha<1} \bigoplus_{p\in \Z} 
z^{-p -\alpha + n/2} \psi(F^p\frh_s \cap U_p \frh_s) [z] \\
P & = \bigoplus_{0\le \alpha <1} \bigoplus_{p\in \Z} 
z^{-p-\alpha + n/2} z^{-1} \psi(F^p \frh_s \cap U_p\frh_s)[z^{-1}] 
\end{split} 
\end{align} 
where the summand indexed by $\alpha$ is generated by elements 
$\omega \in \HH(f)$ with $\alpha = \fracp{-\deg \omega}$. 
This decomposition clearly shows $\HH(f) = \HH_+(f) \oplus P$. 
The Lagrangian property of $P$ follows from the property (c) 
of the opposite filtration: using 
the fact that $\psi$ is inverse to $[\omega]\mapsto \widehat{s}(\omega,z)$ 
and equation \ref{SandK},  
we have 
\begin{align*} 
& K(z^{-p-\alpha+\frac{n}{2}} z^{-1} \psi(U_p\frh_s), 
z^{-q-\beta + \frac{n}{2}} z^{-1} \psi(U_q \frh_t))  \\ 
& =  z^{-p-q-\alpha-\beta +n-2} S(U_q\frh_t, \nu^{-1} 
U_p \frh_s) 
= z^{-p-q-\alpha-\beta+n-2} S(U_q\frh_t, U_p\frh_s) 
\end{align*} 
when $s =e^{2\pi\sqrt{-1} \alpha}$ and $t = e^{2\pi\sqrt{-1} \beta}$ 
with $\alpha,\beta \in [0,1)$, 
and this is nonzero only if $-p-q-\alpha-\beta+n\le 0$ by the property (c). 
This means $K(P,P) \subset z^{-2} \C[z^{-1}]$ and thus $P$ is Lagrangian. 

In the opposite direction, we start from a homogeneous opposite subspace 
$P \subset \HH(f)$. 
The filtration $U_p\frh_s$ defined by the first formula 
of \eqref{eq:opposite_correspondence} is 
an increasing filtration since $P \subset zP$. 
The homogeneity of $P$ implies that the finite-dimensional 
space $\HH_+(f) \cap zP$ is spanned by homogeneous elements. 
The map $A \mapsto (-z)^{-p-\alpha+n/2} \psi(A)$ 
identifies 
$U_p \frh_s \cap F^p\frh_s$ 
with the homogeneous component of 
$zP \cap \HH_+(f)$ of degree $n-p-\alpha$ 
(when $s = e^{2\pi\sqrt{-1} \alpha}$, $\alpha\in [0,1)$). 
Setting $z=-1$ and using the isomorphy of the map 
$\frh \cong \HH_+(f) \cap zP \cong \HH(f)|_{z=-1}$, 
$A\mapsto \psi(A)|_{z=-1}$ 
in Lemma \ref{van-cohom}, we conclude the decomposition 
$\frh_s = \bigoplus_{p\in \Z} F^p\frh_s \cap U_p\frh_s$, 
i.e.~property (b) for opposite filtrations holds. 
The property (a) follows from (b), and the property (c) follows from 
the Lagrangian property of $P$ by reversing the above argument. 
\end{proof} 

An opposite filtration $\{U_p \frh\}_{p\in \Z}$ induces 
a homogeneous opposite subspace $P$ \eqref{eq:opposite_correspondence} 
and an isomorphism 
$\sigma = \sigma_{U_\bullet} \colon \frh \xrightarrow{\cong} 
\HH_+(f) \cap zP$ 
by the formula (cf.~\eqref{eq:P_H+_decomp}) 
\begin{equation} 
\label{eq:sigma_splitting} 
\sigma(A) = (-z)^{-p-\alpha+\frac{n}{2}} \psi(A) \qquad 
\text{for $A\in F^p \frh_s \cap U_p \frh_s$} 
\end{equation} 
where $s = e^{2\pi \sqrt{-1} \alpha}$, 
$\alpha \in [0,1)$ and $\psi$ is given in \eqref{eq:psi}. 
The map $\sigma_{U_\bullet}$ gives a splitting of the 
projection $\HH_+(f) \to \HH_+(f)|_{z=-1} \cong \frh$. 

The higher residue pairing takes values in $\C$ on the image of 
$\sigma$. We compute the precise values for later purposes. 
\begin{lemma}
\label{sigma-K}
Let $\sigma$ be the splitting \eqref{eq:sigma_splitting} 
associated to an opposite filtration $U_\bullet$. 
If $A \in F^p\frh_s \cap U_p \frh_s$ with $s=e^{2\pi\sqrt{-1}\alpha}$, 
$\alpha\in [0,1)$ and $B \in \frh$ is arbitrary, 
then  
\[
K_f(\sigma(A), \sigma(B)) = C(s) \sqrt{-1}^{2p-m} S(A,B) 
\]
where we set $m=n-1$ if $s\neq 1$ and $m=n$ if $s=1$, and 
\[  
C(s) = \begin{cases} 
2 \sin(\pi \alpha) & \text{if $\alpha \neq 0$;} \\ 
1 & \text{otherwise.}
\end{cases}  
\]  
\end{lemma}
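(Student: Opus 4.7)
The plan is to derive the formula from Theorem \ref{thm:SandK} by a direct computation, after reducing to the case where $B$ is also bi-homogeneous with respect to the Hodge and opposite filtrations. By bilinearity of both sides we may assume $B \in F^q\frh_t \cap U_q\frh_t$ with $t = e^{2\pi\sqrt{-1}\beta}$, $\beta \in [0, 1)$. Using the definition \eqref{eq:sigma_splitting} of $\sigma$ together with the fact that $\psi$ is a right inverse of the map $[\omega]\mapsto\widehat{s}(\omega,z)$ of Lemma \ref{van-cohom}, I would first read off
\[
\widehat{s}(\sigma(A), z) = (-z)^{-p-\alpha + n/2} A,
\qquad
\widehat{s}(\sigma(B), z) = (-z)^{-q-\beta + n/2} B.
\]
The analytic continuation $z \mapsto e^{-\sqrt{-1}\pi}z$ multiplies $(-z)^a$ by $e^{-\sqrt{-1}\pi a}$, so
\[
\widehat{s}(\sigma(B), z)^{*} = e^{\sqrt{-1}\pi(q+\beta-n/2)}(-z)^{-q-\beta+n/2} B,
\]
and substituting into Theorem \ref{thm:SandK} yields
\[
K_f(\sigma(A), \sigma(B)) = -e^{\sqrt{-1}\pi(q+\beta-n/2)}(-z)^{n-p-q-\alpha-\beta}\, S(B, \nu^{-1}A).
\]

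The key step is to observe that the combined properties (c) of the Hodge filtration ($S(F^p, F^{m+1-p}) = 0$) and of the opposite filtration ($S(U_p, U_{m-1-p}) = 0$) force $S(B, \nu^{-1}A) = 0$ unless $p + q = m$; together with the vanishing of $S$ on $\frh_s \times \frh_t$ for $t \neq \bar s$, this also forces $\alpha + \beta \in \{0, 1\}$. In the surviving cases we have $p + q + \alpha + \beta = n$, so the factor $(-z)^{n-p-q-\alpha-\beta}$ collapses to $1$, confirming that $K_f(\sigma(A), \sigma(B))$ is a $z$-independent scalar multiple of $S(A, B)$ via the symmetry $S(B, A) = (-1)^m S(A, B)$.

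What remains is bookkeeping of phases in the two cases distinguished by the definition of $\nu$. For $s \neq 1$ we have $m = n-1$, $\nu^{-1}A = (e^{2\pi\sqrt{-1}\alpha}-1)A$, $q = n-1-p$, $\beta = 1-\alpha$, so the combination $e^{-\sqrt{-1}\pi\alpha}(e^{2\pi\sqrt{-1}\alpha}-1) = 2\sqrt{-1}\sin(\pi\alpha)$ produces the $C(s) = 2\sin(\pi\alpha)$ factor, and the residual phase reduces to $\sqrt{-1}^{2p-m}$ using the elementary identity $(-\sqrt{-1})^n = \sqrt{-1}^{-n}$ (equivalently, $(-1)^n\sqrt{-1}^{n+1} = \sqrt{-1}^{1-n}$). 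For $s = 1$ we have $m = n$, $\nu^{-1}A = -A$, $\alpha = \beta = 0$, $q = n-p$, and the phase collapses to $\sqrt{-1}^{2p-n}$ via $(-1)^n = \sqrt{-1}^{2n}$, giving $C(s) = 1$. The main obstacle is purely the branch-tracking in the analytic continuation and the trigonometric reconciliation of the phases with $\sqrt{-1}^{2p-m}$; once the sign conventions are fixed everything else is a formal unpacking of the definitions and of Theorem \ref{thm:SandK}.
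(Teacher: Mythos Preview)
Your proof is correct and follows essentially the same approach as the paper: both reduce to bi-homogeneous $B$, invoke Theorem \ref{thm:SandK} with $\widehat{s}(\sigma(A),z) = (-z)^{-p-\alpha+n/2}A$, and then split into the cases $s\neq 1$ and $s=1$ for the phase bookkeeping. Your justification of the constraint $p+q=m$ via the two property-(c) orthogonalities is slightly more explicit than the paper's, which simply asserts the vanishing condition $p+q+\alpha+\beta=n$; otherwise the arguments are the same.
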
 
\begin{proof} 
We may assume that $B\in F^q\frh_t \cap U_q\frh_t$ 
with $t=e^{2\pi\sqrt{-1}\beta}$ and $\beta\in [0,1)$. 
Combining the fact that $\psi$ is inverse to $[\omega]\mapsto \widehat{s}(\omega,z)$ 
and Theorem \ref{thm:SandK}, we find 
\[
K_f(\sigma(A),\sigma(B)) 
= - S(e^{-\pi\sqrt{-1}(-q-\beta+\frac{n}{2})} 
(-z)^{-q-\beta + \frac{n}{2}} B, \nu^{-1} (-z)^{-p-\alpha+\frac{n}{2}} A).  
\] 
This pairing vanishes unless $\alpha+\beta\equiv 0 \mod \Z$ 
and $p+q +\alpha+\beta =n$. 
If $\alpha \neq 0$, this equals 
\[
-e^{\pi\sqrt{-1}(m-p +1-\alpha)} (\sqrt{-1})^{-m-1} 
(e^{2\pi\sqrt{-1}\alpha}-1) 
(-1)^m S(A,B)
\]
by $\nu^{-1} A= (e^{2\pi\sqrt{-1} \alpha} -1)A$, $\beta=1-\alpha$ and 
$p+q=n-1=m$; if $\alpha=0$, this equals 
\[
- e^{\pi\sqrt{-1}(m-p)} (\sqrt{-1})^{-m} (-1)(-1)^m S(A,B) 
\]
by $\nu^{-1}A= -A$, $\alpha=\beta=0$, $p+q=n=m$. The conclusion 
follows easily.  
\end{proof} 

\begin{remark}
When we regard $\HH_+(f)$ as a vector bundle over $\C$ and 
$zP$ as the extension data across $\infty$, the filtration 
$U_p\frh$ \eqref{eq:opposite_correspondence} 
on the space $\frh$ of flat sections is determined by pole orders at $z=\infty$. 
\end{remark}

\subsection{The complex conjugate opposite subspace} 
Over the marginal locus, 
the vector bundle $\HH\to \cM_\mar^\circ$ has a natural real
structure coming from the space of real semi-infinite cycles
\[ 
V_{f,z} = \varprojlim_{M\in \R_+}  
H_n(\C^n, \{x: \Re(f(x)/z)< -M\};\R) 
\cong \R^N,
\]
where the homology groups form an inverse system with respect to the
natural order on $\R_+$ and the limit is the projective (or
inverse) limit of vector spaces. The vector spaces $V_{f,z}$ form a
real vector bundle on $\cM_\mar^\circ \times \C^\times$ equipped with a
flat Gauss--Manin connection. For each $f\in \cM_\mar$, let us
denote by $\HH(f;\R)\subset \HH(f)$ the real
vector subspace consisting of $\omega$ such that 
\begin{equation} 
\label{eq:reality_condition} 
(-2\pi z)^{-n/2} \int_{\alpha} e^{f/z} \omega \in
\R \qquad \forall \alpha\in V_{f,z}, \quad \forall z \in S^1 
\end{equation} 
or equivalently, 
\[
\widehat{s}(\omega,z) \in \frh_\R \qquad \forall z\in S^1 
\]
where $S^1=\{|z|=1\}$ is the unit circle. 
Let $\kappa\colon \HH(f)\to \HH(f)$ be the complex conjugation 
corresponding to the real subspace $\HH(f;\R)$. 
The main properties of the complex conjugation $\kappa$ can be
summarized as follows (see \cite{Iritani:ttstar} for generalities 
on the real structure in a semi-infinite VHS). 
Let us denote by 
\[
\gamma\colon \C[z,z^{-1}] 
\to \C[z,z^{-1}], \quad
\gamma(g)(z):=\overline{g(\overline{z}^{-1})} 
\]
the complex conjugation corresponding to the real subspace consisting 
of Laurent polynomials that take real values on $|z|=1$. 
By definition, we have 
\ben
\kappa(g\omega) = \gamma(g)\kappa(\omega). 
\een 

\begin{remark}
\label{rem:realstr_awayfrom_marginal}
The definition \eqref{eq:reality_condition} for $[\omega]$ to be real
extends to non-marginal polynomials $f\in \cM\setminus \cM^\circ_\mar$; 
however our algebraic model $\HH(f)$ is not necessarily closed under the 
real involution $\kappa$. In general, $\kappa$ is defined on the 
analytification $\HH(f)^{\rm an} = \HH(f) \otimes_{\C[z,z^{-1}]} 
\cO^{\rm an}(\C^\times)$ and $\HH(f;\R)$ can be only 
defined as a subspace of 
$\HH(f)^{\rm an}$, where $\cO^{\rm an}(\C^\times)$ denotes 
the ring of holomorphic functions on $\C^\times$. 
\end{remark}

Complex conjugation in the vanishing cohomology $\frh$ gives a
natural splitting of the Steenbrink's Hodge filtration:
\beq\label{U-conj}
U_p\frh_s := \overline{F^{m-p}\frh}_{\bar s} \qquad 
\text{for $p\in \Z$, $|s|=1$}, 
\eeq
where $m=n-1$ for $s\neq 1$ and $m=n$ for $s=1$. 
\begin{proposition}\label{kappa-h}
Let $f\in \cM_\mar^\circ$ be a marginal deformation. 
The subspace $z^{-1} \kappa(\HH_+(f))$ is opposite and 
corresponds to the complex conjugate opposite filtration \eqref{U-conj} 
under \eqref{eq:opposite_correspondence}. 
\end{proposition}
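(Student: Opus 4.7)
The plan is to verify the statement by establishing two facts in sequence: first, that the family $U_\bullet$ defined by \eqref{U-conj} actually is an opposite filtration, and second, that the opposite subspace it determines via the correspondence \eqref{eq:opposite_correspondence} coincides with $z^{-1}\kappa(\HH_+(f))$. Granted these, the opposite-ness of $z^{-1}\kappa(\HH_+(f))$ follows for free from the previous proposition.

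For the first fact, I would use the Hodge decomposition $\frh_s = \bigoplus_{p+q=m} H^{p,q}_s$ (with $m=n-1$ if $s\neq 1$ and $m=n$ if $s=1$) together with $\overline{H^{p,q}_s} = H^{q,p}_{\bar s}$. This immediately rewrites $U_p \frh_s = \overline{F^{m-p}\frh_{\bar s}}$ as $\bigoplus_{q\le p} H^{q,m-q}_s$, so condition (a) is obvious and condition (b) follows from $F^p\frh_s \cap U_p\frh_s = H^{p,m-p}_s$ together with $\frh_s = \bigoplus_p H^{p,m-p}_s$. For condition (c), I would use that $S$ is real-bilinear (it comes from the Seifert pairing via the real structure $\frh_\R$) so that $S(\bar X,\bar Y) = \overline{S(X,Y)}$; then
\[
S(U_p \frh_s, U_{m-1-p}\frh_t) = \overline{S(F^{m-p}\frh_{\bar s}, F^{p+1}\frh_{\bar t})} = 0
\]
by condition (c) of the polarized Hodge structure.

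For the second fact, I would expand $\HH_+(f)$ in the homogeneous form \eqref{eq:P_H+_decomp},
\[
\HH_+(f) = \sum_{0\le \alpha<1}\sum_{p\in \Z} z^{-p-\alpha+n/2}\psi(F^p\frh_s)[z] \qquad (s=e^{2\pi\sqrt{-1}\alpha}),
\]
and apply $\kappa$ termwise using the rule $\kappa(g\,\omega) = \gamma(g)\kappa(\omega)$ together with $\gamma((-z)^a) = (-z)^{-a}$. The resulting expression lies in $\C[z^{-1}]$-multiples of $\kappa(\psi(F^p\frh_s))$ with the exponent of $z$ negated. The central computation is to identify $\kappa(\psi(A))$ for $A\in \frh_s$ with a nonzero scalar multiple of $\psi(\bar A)$, where $\bar A \in \frh_{\bar s}$; this can be pinned down by the reality characterization $\widehat{s}(\kappa[\omega],z) = \overline{\widehat{s}([\omega],z)}$ for $z\in S^1$, together with formula \eqref{eq:s_hat} for geometric sections of a homogeneous form. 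After tracking the branch carefully on $S^1$ (where $\bar z = z^{-1}$), both sides are evaluated as explicit $\Gamma$-factors times $\bar A$, yielding $\kappa(\psi(A)) = c_{\deg} \, \psi(\bar A)$ for a nonzero constant $c_{\deg}$ depending only on the degree. Substituting back and using $\overline{F^p\frh_s} = U_{m-p}\frh_{\bar s}$, multiplying by $z^{-1}$ and reindexing via $(p,\alpha) \leftrightarrow (m-p, 1-\alpha)$ (or $(m-p,0)$ if $\alpha=0$), the sum collapses to the right-hand side of \eqref{eq:opposite_correspondence} applied to $U_\bullet$. A direct bookkeeping of the $z$-exponents confirms the identity, and one checks that the exponents $-p-\alpha-1+n/2$ in $P_U$ precisely match $q+\beta-1-n/2$ in $z^{-1}\kappa(\HH_+(f))$ under the involution $p+q = m$, $\alpha+\beta = 1$ (or both zero).

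The main obstacle is the computation of $\kappa(\psi(A))$: the map $\psi$ lands in $\HH(f)[z^{\pm 1/d}]$ and involves fractional powers of $z$ whose branches must be consistently chosen, and one must also verify that the various $\Gamma$-factors appearing in $\psi$ for the degrees $d$ and $n-d$ combine with the factor from $\gamma$ to give a nonzero scalar. This boils down to the formula $\gamma((-z)^a) = (-z)^{-a}$ applied on $S^1$ and the compatibility between Laplace transforms at $z$ and at $\bar z^{-1} = z$. Once this key identity is in hand, all remaining matching of exponents and subspaces is purely bookkeeping.
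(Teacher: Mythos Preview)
Your proposal is correct and follows essentially the same route as the paper. Both arguments hinge on the identity $\kappa(\psi(A)) = \psi(\bar A)$, deduced from the fact that $[\omega]\mapsto \widehat{s}(\omega,z)$ intertwines $\kappa$ with ordinary complex conjugation on $\frh$ for $z\in S^1$.

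One simplification worth noting: you work through explicit $\Gamma$-factors and worry about a constant $c_{\deg}$, but this is unnecessary. Since $\psi$ is literally the inverse of $[\omega]\mapsto \widehat{s}(\omega,z)$, the intertwining relation you cite gives $\kappa\circ\psi = \psi\circ\overline{\phantom{x}}$ immediately, so $c_{\deg}=1$ with no computation. The paper exploits this and then packages the exponent bookkeeping via the splitting $\sigma$ of \eqref{eq:sigma_splitting}: it shows $\kappa(\sigma(A)) = \sigma(\bar A)$, whence $\sigma(\frh)$ is $\kappa$-invariant and $\kappa(\HH_+(f)) = \kappa(\sigma(\frh)[z]) = \sigma(\frh)[z^{-1}] = zP$. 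Your explicit verification that $U_\bullet$ from \eqref{U-conj} is an opposite filtration is a useful addition that the paper leaves implicit.
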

\begin{proof} 
Let $\sigma \colon \frh \to \HH(f)$ be the 
splitting \eqref{eq:sigma_splitting} 
defined by the complex conjugate filtration \eqref{U-conj}. 
It suffices to show that $\sigma(\frh)$ is $\kappa$-invariant; 
indeed this implies $\sigma(\frh)[z^{-1}] = \kappa(\sigma(\frh)[z]) 
= \kappa (\HH_+(f))$. 
We will prove that 
\begin{equation} 
\label{kappa-A}
\kappa(\sigma(A)) = \sigma(\overline{A}) 
\qquad 
\forall A \in \frh. 
\end{equation} 
Recall from the definition of $\widehat{s}(\omega,z)$ in 
\S\ref{pf-hrp} that 
we have 
\[
(-2\pi z)^{-n/2} \int_\Gamma e^{f/z} \omega = \int_{\Gamma_1} 
\widehat{s}(\omega,z)
\]
and thus the map $[\omega]\mapsto \widehat{s}(\omega,z)$ 
intertwines the complex conjugation $\kappa$ on $\HH(f)|_{S^1}$ 
with the standard complex conjugation on $\frh$.  
This implies $\psi(\overline{A}) = \kappa(\psi(A))$ since $\psi$ 
is inverse to the map $[\omega]\mapsto \widehat{s}(\omega,z)$. 
For $A \in F^p \frh_s \cap U_p\frh_s$ with $s=e^{2\pi\sqrt{-1}\alpha}$, 
$0\le \alpha <1$, we have 
\[
\kappa(\sigma(A)) = \kappa( z^{-p-\alpha + \frac{n}{2}} \psi(A))  
= z^{p+\alpha-\frac{n}{2}} \psi(\overline{A}). 
\]
Here $\overline{A} \in F^q \frh_{\bar{s}} \cap U_q\frh_{\bar{s}}$ 
for $q$ with  
$p+\alpha -\frac{n}{2} = -q-\fracp{-\alpha}+\frac{n}{2}$. 
Therefore, the above quantity equals 
$z^{-q-\fracp{-\alpha}+\frac{n}{2}} \psi(\overline{A})
=\sigma(\overline{A})$. 
The proposition is proved. 
\end{proof}

\subsection{Opposite subspaces for Fermat polynomials}
In this subsection we will assume that
\ben
f(x)=x_1^{N_1+1}+\cdots + x_n^{N_n+1}
\een
is a Fermat polynomial. The higher residue pairing $K_f$
factorizes into a tensor product of the higher residue pairings of the
summands $x_i^{N_i+1}$. Using a simple degree count it is easy to see
that the forms (see also \cite[Theorem 2.10]{HLSW})
\begin{equation} 
\label{eq:Fermat_goodbasis} 
x_1^{i_1}\cdots x_n^{i_n} dx_1\cdots dx_n,\quad 0\leq i_s\leq N_s-1
\end{equation} 
form a good basis, i.e., if we denote by $H\subset \HH_+(f)$
the subspace spanned by the above forms, then $P=z^{-1}H[z^{-1}]$ is
an opposite subspace.  
\begin{proposition}
The complex conjugate subspace $\kappa(\HH_+(f))$ equals the subspace $zP$ 
spanned by the forms \eqref{eq:Fermat_goodbasis} over $\C[z^{-1}]$.   
\end{proposition}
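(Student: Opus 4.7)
The plan is to reduce the claim to the one-variable Fermat polynomial via the tensor product structure, and then compute the action of $\kappa$ explicitly on the monomial basis using geometric sections on the Milnor fiber.

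First I would reformulate the claim. By Proposition~\ref{kappa-h}, $z^{-1}\kappa(\HH_+(f))$ is the opposite subspace corresponding under \eqref{eq:opposite_correspondence} to the complex conjugate filtration $U_p\frh_s = \overline{F^{m-p}\frh}_{\bar s}$. Writing $H\subset \HH_+(f)$ for the $\C$-span of the monomial forms \eqref{eq:Fermat_goodbasis}, we have $\HH_+(f) = H[z]$ and $zP = H[z^{-1}]$. Since $\gamma(z) = z^{-1}$, one has $\kappa(H[z]) = \kappa(H)[z^{-1}]$, so the proposition is equivalent to showing $\kappa(H) = H$ inside $\HH(f)^{\rm an}|_{S^1}$. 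For a Fermat polynomial $f = \sum_{s=1}^n x_s^{N_s+1}$, the Milnor fiber, the relative cycles \eqref{eq:space_Lefschetz}, the higher residue pairing, and the involution $\kappa$ all factor as tensor products over the variables, and $H$ factorizes accordingly as $\bigotimes_s H_s$. This reduces the problem to the one-variable case $f(x) = x^{N+1}$.

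In one variable, set $c = 1/(N+1)$ and consider the Milnor fiber $f^{-1}(1) = \{\zeta\in\C : \zeta^{N+1}=1\}$. The geometric section is $s(x^i dx, 1)(\zeta) = \zeta^{i-N}/(N+1)$. Since $|\zeta|=1$, standard complex conjugation on functions on $f^{-1}(1)$ gives
\begin{equation*}
\overline{s(x^i dx,1)}(\zeta) \;=\; \zeta^{N-i}/(N+1) \;=\; \zeta^{-i-1}/(N+1) \;=\; s(x^{N-i-1} dx,1)(\zeta)
\end{equation*}
for $0\le i\le N-1$, so complex conjugation on $\frh$ permutes the basis $\{s(x^i dx,1)\}$ via the involution $i\mapsto N-1-i$. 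To transfer this back to $\HH$, I would use \eqref{eq:s_hat}: $\widehat{s}(x^i dx, z) = (-2\pi z)^{-1/2}(-z)^{(i+1)c}\Gamma((i+1)c)\, s(x^i dx, 1)$. Since the map $[\omega]\mapsto \widehat{s}([\omega], z)$ intertwines $\kappa$ with ordinary complex conjugation on $\frh$ over $|z|=1$, combining with the previous computation and using the crucial identity $(i+1)c + (N-i)c = 1$, the $z$-dependent factors cancel in the ratio $\overline{\widehat{s}(x^i dx, z)}/\widehat{s}(x^{N-i-1}dx, z)$, leaving a nonzero constant $C_i \in \C^\times$ (essentially $\Gamma((i+1)c)/\Gamma((N-i)c)$ up to a unimodular phase). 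This gives $\kappa([x^i dx]) = C_i\, [x^{N-i-1}dx]$, which is the desired invariance $\kappa(H) = H$.

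The main technical obstacle will be choosing the branches of the fractional powers $(-z)^a$ consistently on $|z|=1$, so that the relation $\overline{(-z)^a} = (-z^{-1})^a$ holds for the right branch. Once the principal branch is fixed, the essential cancellation is a direct consequence of the degree identity $(i+1)c+(N-i)c=1$, which encodes the Calabi--Yau condition at the level of a single Fermat factor, and the tensor product reduction propagates this to the multivariable case.
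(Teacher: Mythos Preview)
Your proof is correct and follows essentially the same architecture as the paper's: reduce to $\kappa(H)=H$, factor through the tensor product to one variable, and compute $\kappa$ explicitly on monomials. The only minor difference in execution is that the paper states the one-variable formula $\kappa\bigl(x^i\,dx/\Gamma(\tfrac{i+1}{N+1})\bigr)=x^{N-1-i}\,dx/\Gamma(\tfrac{N-i}{N+1})$ as a direct oscillatory-integral computation, whereas you route it through the geometric sections on the zero-dimensional Milnor fiber and the intertwining of $\kappa$ with standard conjugation on $\frh$ established in the proof of Proposition~\ref{kappa-h}; these are equivalent, and your degree identity $a_i+a_{N-1-i}=1$ is exactly what makes the $z$-factors cancel. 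The paper also offers a second, cleaner argument you might note: for $n=1$ one has $\frh_1=0$ and the Hodge filtration on $\frh$ is the trivial one-step filtration, so there is a \emph{unique} monodromy-invariant opposite filtration, forcing the complex-conjugate opposite subspace to coincide with $P$ without any computation.
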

\begin{proof} 
Since $P$ is an opposite subspace, we have
$\HH_+(f)=H[z]$. Therefore, it is enough to prove that
$\kappa(H)\subset H$. On the other hand, note that if $f=f_1\oplus
f_2$ is a direct sum of the quasi-homogeneous functions
$f_i\colon \C^{n_i}\to \C$, then the direct product of
cycles defines an isomorphism
\[
H_{n_1}(\C^{n_1},\{\Re(f_1/z)\ll 0\})\otimes
H_{n_2}(\C^{n_2},\{\Re(f_2/z)\ll 0\})\cong
H_{n}(\C^n, \{\Re(f/z)\ll 0\}).
\]
Moreover, if $\omega_i\in \HH_+(f_i)$, $i=1,2$, then
$\omega_1\wedge\omega_2\in \HH_+(f)$ and 
\ben
\kappa(\omega_1\wedge \omega_2) = \kappa_{f_1}(\omega_1)\wedge
\kappa_{f_2}(\omega_2), 
\een
where on the RHS we use the index $f_i$ ($i=1,2$) to indicate that the
conjugation is in the corresponding twisted de Rham cohomology. This
observation reduces the proof of our Proposition to the case $n=1$,
i.e., we may assume that $f(x)=x^{N+1}$. The oscillatory integrals are
very easy to compute explicitly and we can verify directly that 
\ben
\kappa\left( \frac{x^i dx}{\Gamma\left(\tfrac{i+1}{N+1}\right)} \right) = 
 \frac{x^{N-1-i}dx}{\Gamma\left(\tfrac{N-i}{N+1}\right)}. 
\een
Alternatively, we could argue that the opposite subspace $P$
corresponds to a splitting of the Steenbrink's Hodge filtration of $f$. 
However, in this case $\frh_1=0$ and $F^p\frh=0$
for $p>0$ and $F^p\frh=\frh$ for $p\leq 0$. 
Note that there is a unique monodromy invariant filtration $U_\bullet$ which
gives a splitting: $U_p\frh = \frh$ for $p\geq 0$ and
$U_p\frh = 0$ for $p<0$. Using Proposition \ref{kappa-h}, we
get that $P=\kappa( \HH_+(f))z^{-1}$. 
\end{proof} 

\begin{remark}
The results of this section can be generalised to any invertible polynomial.
\end{remark}

\subsection{The Cecotti-Vafa structure}\label{CV}
Cecotti and Vafa introduced $tt^*$-geometry for 
$N=2$ supersymmetric quantum field theories  
\cite{Cecotti-Vafa:top_anti_top, Cecotti-Vafa:classification}. 
This structure has been studied in mathematics by Dubrovin 
\cite{Dubrovin:top_anti_top}, Hertling \cite{He2} and 
many others. 
The Cecotti-Vafa structure for isolated hypersurface singularities has been 
introduced in \cite{He2}. We describe the structure 
for weighted homogeneous polynomials 
using the complex conjugate opposite subspaces. 

\begin{proposition}
If $f\in \cM_{\rm mar}^\circ$, then the subspace $
z^{-1}\kappa(\HH_+(f))$ is an opposite subspace and 
\ben
h(\omega_1,\omega_2) = K^{(0)}(\kappa(\omega_1),\omega_2) 
\een
is a positive-definite Hermitian pairing on 
\ben
\K(f):=\HH_+(f)\cap \kappa(\HH_+(f)).
\een
\end{proposition}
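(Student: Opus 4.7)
The first assertion is essentially a restatement of Proposition \ref{kappa-h}, so I would simply invoke it: the subspace $P := z^{-1}\kappa(\HH_+(f))$ is opposite and corresponds to the complex conjugate filtration $U_p\frh_s = \overline{F^{m-p}\frh}_{\bar s}$ under \eqref{eq:opposite_correspondence}. In particular
\[
\K(f) = \HH_+(f) \cap \kappa(\HH_+(f)) = \HH_+(f) \cap zP
\]
is an $N$-dimensional space by Proposition \ref{good-basis}(1), and $K_f$ restricted to $\K(f) \otimes \K(f)$ takes values in $\C$ by Proposition \ref{good-basis}(2); in particular $K_f = K^{(0)}$ on $\K(f)$. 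Moreover $\K(f)$ is $\kappa$-stable (both factors are), so $h$ is at least a well-defined $\C$-valued sesquilinear form.

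Next I would verify that $h$ is Hermitian. The key input is that the real involution $\kappa$ and the complex conjugation $\gamma$ on $\C[z,z^{-1}]$ are compatible with the higher residue pairing in the sense that $\gamma\bigl(K_f(\omega_1,\omega_2)\bigr) = K_f(\kappa\omega_1,\kappa\omega_2)$ (this follows from the oscillatory integral representation of $K_f$ and the definition of $\kappa$ via real semi-infinite cycles). Restricted to $\K(f)$, where $K_f$ takes values in $\C$, this reads $\overline{K^{(0)}(\omega_1,\omega_2)} = K^{(0)}(\kappa\omega_1,\kappa\omega_2)$. Combining this with the symmetry of $K^{(0)}$ (which is symmetric because $K^{(p)}$ is symmetric for $p$ even) gives
\[
\overline{h(\omega_1,\omega_2)} = \overline{K^{(0)}(\kappa\omega_1,\omega_2)} = K^{(0)}(\omega_1,\kappa\omega_2) = K^{(0)}(\kappa\omega_2,\omega_1) = h(\omega_2,\omega_1).
\]

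For positive-definiteness, I would use the splitting $\sigma\colon \frh \xrightarrow{\cong} \K(f)$ from \eqref{eq:sigma_splitting} associated to $U_\bullet$. By \eqref{kappa-A} we have $\kappa\circ\sigma = \sigma\circ\overline{(\cdot)}$. Given a nonzero homogeneous element $A\in F^p\frh_s\cap \overline{F^{m-p}\frh}_{\bar s}$, its complex conjugate $\overline{A}$ lies in $F^{m-p}\frh_{\bar s}\cap U_{m-p}\frh_{\bar s}$, and Lemma \ref{sigma-K} yields
\[
h(\sigma(A),\sigma(A)) = K^{(0)}\bigl(\sigma(\overline{A}),\sigma(A)\bigr) = C(\bar s)\sqrt{-1}^{\,m-2p} S(\overline{A},A).
\]
Using $C(\bar s) = C(s) > 0$, the $(-1)^m$-symmetry $S(\overline{A},A)=(-1)^m S(A,\overline{A})$, and then the polarization inequality $\sqrt{-1}^{\,2p-m}S(A,\overline{A})>0$ (property (d) of the polarized Hodge structure), the net sign $C(s)(-1)^m \sqrt{-1}^{\,2(m-2p)} = C(s)(-1)^{2m-2p}$ is positive, so $h(\sigma(A),\sigma(A))>0$. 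Extending to general elements by the orthogonal decomposition of $\K(f)$ into Hodge-type pieces (on which $h$ vanishes between different $(p,s)$-types by the vanishing in Lemma \ref{sigma-K}) finishes positive-definiteness.

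I expect the main subtle point to be the compatibility $\overline{K_f(\omega_1,\omega_2)} = K_f(\kappa\omega_1,\kappa\omega_2)$ on $\K(f)$; the rest is a careful bookkeeping of the signs $C(s)$, $\sqrt{-1}^{\,2p-m}$, and $(-1)^m$ coming from Lemma \ref{sigma-K} and properties (b), (d) of the polarized Hodge structure. Since all signs are controlled and the polarization $S$ is positive on Hodge types by Steenbrink's theorem, no deeper input is needed.
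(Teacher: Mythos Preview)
Your proof is correct and follows essentially the same route as the paper: invoke Proposition \ref{kappa-h} for the opposite property, use the splitting $\sigma$ and the identity $\kappa\circ\sigma=\sigma\circ\overline{(\cdot)}$ from \eqref{kappa-A}, and reduce positivity to Lemma \ref{sigma-K} together with property~(d) of the polarized Hodge structure. Your version is in fact slightly more complete than the paper's: you explicitly check Hermitian symmetry via the compatibility $\gamma(K_f(\omega_1,\omega_2))=K_f(\kappa\omega_1,\kappa\omega_2)$, and you spell out the orthogonality of distinct $(p,s)$-types under $h$ (needed to pass from diagonal positivity to full positive-definiteness), whereas the paper leaves both points implicit.
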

\begin{proof} 
According to Proposition \ref{kappa-h}, $z^{-1} \kappa(\HH_+(f))$ 
is an opposite subspace. Thus we have the corresponding 
splitting (see \eqref{eq:sigma_splitting}) 
\ben
\sigma\colon \frh\to \HH_+(f)\cap \kappa(\HH_+(f)).
\een 
Moreover, using formula \eqref{kappa-A}, we have
\ben
h(\omega,\omega) = K^{(0)}(\sigma(\overline{A}),\sigma(A)),
\een
where $\omega=\sigma(A)$. Let us assume that $A\in
F^p\frh_s\cap U_p\frh_s$. 
Then using Lemma \ref{sigma-K}, we get that the above pairing is 
\ben
C(s) \sqrt{-1}^{2p-m}S(\overline{A},A).
\een
Recalling that $F^p\frh$ is a Polarized Hodge Structure (see 
property (d) in Section \ref{S-PHS}), 
we get that the above number is a positive real number. 
\end{proof} 

By Remark \ref{rem:realstr_awayfrom_marginal}, 
we can extend the real structure $\kappa$ over the whole space 
$\cM$ by extending scalars. 
Following the notation there, we write 
$\HH_+(f)^{\rm an} = \HH_+(f) \otimes_{\C[z]} \cO^{\rm an}(\C)$. 
Because the oppositeness is an open condition, 
the subspace $z^{-1} \kappa(\HH_+(f)^{\rm an})$ 
is opposite to $\HH_+(f)^{\rm an}$ for $f$ 
in a neighborhood of $\cM_\mar^\circ$. 
Moreover, the Hermitian form $h(\omega_1,\omega_2) 
= K(\kappa(\omega_1), \omega_2)$ on 
the vector space 
\[
\K(f) := \HH_+(f)^{\rm an} \cap z \kappa(\HH_+(f)^{\rm an})
\]
is positive definite for $f$ in a neighborhood of $\cM_\mar^\circ$. 
On the other hand, Sabbah \cite[\S 4]{Sabbah:FL} proved that the 
Brieskorn lattice of any cohomologically tame function on a smooth affine manifold 
with only isolated critical points  satisfies these properties, 
i.e.~the oppositeness and the positive-definiteness of $h$. 
Therefore we have a globally defined Hermitian 
$C^\infty$ vector bundle $\K\to \cM$ whose fiber at $f\in \cM$ 
is $\K(f)$. 
The Gauss--Manin connection $\nabla$ on $\HH$ induces a family of flat 
connections of $\K$ depending on a parameter $z\in \C^\times$, 
which will be called the {\em Cecotti--Vafa connection}. 
Namely, let us pick a $C^\infty$-frame $\{\omega_i\}_{i=1}^N$ 
for $\K$. 
The deformation space $\cM$, being a Zariski open subset of a standard complex
vector space, has a natural holomorphic coordinate system
$\sigma=(\sigma_1,\dots, \sigma_{N'})$ and the vector fields
\ben
\partial/\partial \sigma_1,\dots,\partial/\partial \sigma_{N'},
\partial/\partial \overline{\sigma}_1,\dots,\partial/\partial \overline{\sigma}_{N'}
\een
give a frame for the complexified tangent bundle
$T^\C\cM:=T\cM\otimes_\R\C$. 
The properties $\nabla_X \HH_+(f) \subset z^{-1} \HH_+(f)$, 
$\nabla_{z\partial_z} \HH_+(f) \subset z^{-1} \HH_+(f)$ 
imply that 
\begin{align*} 
\nabla_X (\kappa \HH_+(f)) & = 
\kappa(\nabla_{\overline{X}} \HH_+(f)) 
\subset z \kappa(\HH_+(f))  \\ 
\nabla_{z\partial_z} \kappa(\HH_+(f)) & = 
\kappa(\nabla_{-z\partial_z} \HH_+(f)) 
= z \kappa(\HH_+(f))  
\end{align*} 
for a complexified vector field $X \in T^\C\cM$, 
where we used $\nabla_X\kappa =
\kappa \nabla_{\overline{X}}$ 
and $\kappa (z^{-1}\omega) = z \kappa(\omega)$. 
From these properties we get that in the frame $\{\omega_i\}$ 
the Gauss--Manin connection takes the form
\begin{align*} 
\nabla_i \omega_a & =  \sum_{b=1}^N \Big(\Gamma_{ia}^b(\sigma) + z^{-1}
C_{ia}^b(\sigma)\Big) \omega_b \\
\nabla_{\bar\imath} \omega_{a} & =  \sum_{b=1}^N
\Big(\Gamma_{\bar\imath a}^{ b}(\sigma) + z
\widetilde{C}_{\bar\imath a}^{ b}(\sigma)\Big) \omega_{ b} \\
\nabla_{z\partial_z} \omega_a & =  \sum_{b=1}^N \Big(
-U_a^b(\sigma)z^{-1}+Q_a^b(\sigma)+\widetilde{U}_a^b(\sigma) z\Big)
\omega_b
\end{align*} 
where $\nabla_i:=\nabla_{\partial/\partial \sigma_i}$,
$\nabla_{\bar\imath}:=\nabla_{\partial/\partial \overline{\sigma}_i}$,
and the connection matrices are $C^\infty$ functions in
$\sigma$. It is easy to prove that 
\ben
D = d+\sum_{i=1}^{N'} \Big(\Gamma_i d\sigma_i 
+\Gamma_{\bar\imath}d\overline{\sigma}_i\Big)
\een
is compatible with the Hermitian metric $h$ 
and its $(0,1)$ part defines the holomorphic structure on 
$\K \cong \HH_+/z\HH_+$ 
and its $(1,0)$ part defines the anti-holomorphic structure 
on $\K \cong \kappa(\HH_+)/z^{-1} \kappa(\HH_+)$.  
Here $\Gamma_i,\Gamma_{\bar\imath}\in \End(\K)$ are defined by
$\Gamma_i\omega_a=\sum_b \Gamma_{ia}^b\omega_b$ and 
$\Gamma_{\bar\imath}\omega_a=\sum_b \Gamma_{\bar\imath a}^b\omega_b$.
Using the compatibility of the
Gauss--Manin connection with the complex conjugation
\[
\nabla_{\overline{X}} = \kappa \nabla_X \kappa,\quad 
\nabla_{z\partial_z} = -\kappa \nabla_{z\partial_z} \kappa
\]
we get the following relations between the connection matrices
\[
\widetilde{C}_{\bar\imath} = \kappa C_i \kappa, \quad
\widetilde{U} = \kappa U \kappa,\quad
\widetilde{Q} = -\kappa Q \kappa.
\]
\begin{remark} 
When we choose $\{\omega_i\}$ to be a frame that  
is holomorphic under the identification $\K \cong \HH_+/z\HH_+ 
\cong \bigcup_f \Jac(f) \cdot dx$, then we have 
$\Gamma_{\bar\imath} =0$. 
Moreover the operators $C_i$ and $U$ above correspond to 
the multiplication on $\Jac(f) \cdot dx$ by $\partial_{\sigma_i} f(x;\sigma)$ and $f$, respectively. In particular, 
$U=0$ along the marginal locus $\cM_\mar^\circ$. 
\end{remark}

\section{Quantization and Fock bundle}

Using Givental's quantization formalism \cite{G2}, we define a vector
bundle of Fock spaces on the moduli space $\mathcal{M}_{\rm
  mar}^\circ$. The main motivation for our definition is to provide a
convenient language to state mirror symmetry as well as to investigate
the transformation properties under analytic continuation of
Givental's total ancestor potential.

\subsection{Givental's quantization formalism}
Let $H$ be a complex vector space of dimension $N$ equipped with a non-degenerate bi-linear
pairing $(\ ,\ )$. Givental's quantization is based on the vector
space $\mathcal{H}=H(\!(z)\!)$ equipped with the following
symplectic structure:
\ben
\Omega(\mathbf{f}_1(z),\mathbf{f}_2(z)) = \Res_{z=0} 
(\mathbf{f}_1(-z),\mathbf{f}_2(z)) dz.
\een
The Lie algebra of infinitesimal symplectic transformations $A$ of
$\mathcal{H}$ is naturally identified with the Poisson Lie algebra of
quadratic Hamiltonians via
\ben
A\mapsto h_A(\mathbf{f}):=\frac{1}{2}\Omega(A\mathbf{f},\mathbf{f}).
\een
Note that $\mathbf{f}\mapsto A\mathbf{f}$ can be interpreted as a
vector field on $\mathcal{H}$. This vector field is Hamiltonian with
Hamiltonian $h_A$ if and only if $A$ is an infinitesimal symplectic
transformation. 

The symplectic vector space has a natural polarization
$\mathcal{H}=\mathcal{H}_+\oplus \mathcal{H}_-$, where
$\mathcal{H}_+:=H[\![z]\!]$ and $\mathcal{H}_-:=H[z^{-1}]z^{-1}$ are
Lagrangian subspaces. The polarization allows us to use the so-called
{\em canonical quantization} to represent quadratic Hamiltonians by
differential operators. In coordinates, the representation can be
constructed as follows. Let $\{\phi_i\}_{i=1}^N$ and
$\{\phi^i\}_{i=1}^N$ be bases of $H$ dual with respect to the pairing
$(\ ,\ )$. Then the linear functions on $\mathcal{H}$ defined by 
\ben
p_{k,i}(\mathbf{f})= \Omega(\mathbf{f},\phi_i z^k),\quad
q_{k,i}(\mathbf{f})=\Omega(\phi^i(-z)^{-k-1},\mathbf{f}),\quad 1\leq i\leq N,k\geq 0,
\een
form a Darboux coordinate system. We define
$\widehat{A}:=\widehat{h}_A$, where a function in $p_{k,i}$ and
$q_{k,i}$ is quantized by the rules
\ben
p_{k,i}\mapsto \hbar^{1/2}\frac{\partial}{\partial q_{k,i}},\quad
q_{k,i}\mapsto \hbar^{-1/2} q_{k,i}
\een
and normal ordering, i.e., all differentiation operations should
preceed the multiplication ones. 

If $R$ is a symplectic transformation of $\mathcal{H}$ of the form
$1+R_1 z+R_2 z^2+\cdots$, where $R_k\in {\rm End}(H)$, then we can
formally define $A=\log R$ and $\widehat{R}=e^{\widehat{A}}.$
Let us introduce the quadratic differential operator
\ben
V_R:=\sum_{k,\ell=0}^\infty\sum_{i,j=1}^N
(V_{k\ell}\phi^j,\phi^i)
\frac{\partial^2}{\partial q_{k,i}\partial q_{\ell,j}},
\een
where $V_{k\ell}\in {\rm End}(H)$ are defined by 
\ben
\sum_{k,\ell=0}^\infty V_{k\ell} z^kw^\ell = \frac{1-R(z)R^t(w)}{z+w}.
\een. 
\begin{proposition}\label{R-action}
Let $\cF=\cF(\hbar;\mathbf{q})$ be a formal power series
in $\mathbf{q}=(q_{k,i})$ with coefficients in
$\C_\hbar=\C(\!(\hbar)\!)$ such that
$\widehat{R}^{-1}\cF$ is well defined. Then 
\ben
\widehat{R}^{-1}\cF=
\left.
\Big(e^{\frac{\hbar}{2}\, V_R}\cF\Big)
\right|_{\mathbf{q}\mapsto R(z)\mathbf{q}},
\een
where $R(z)\mathbf{q}$ is defined by identifying $\mathbf{q}$ with a
vector $\sum_{k=0}^\infty \sum_{i=1}^N q_{k,i}\phi_i z^k\in H[\![z]\!]$.
\end{proposition}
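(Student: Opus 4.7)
The plan is to recognize the identity as a manifestation of Wick's theorem (normal-ordering) in the metaplectic representation and verify it by a one-parameter differentiation argument. Set $A := \log R = A_1 z + A_2 z^2 + \cdots$, an infinitesimal symplectic transformation, and consider the family $R(s) := \exp(sA)$, so that $R(0) = \operatorname{id}$ and $R(1) = R$. Define
\[
F(s) := \widehat{R(s)}^{-1} \cF, \qquad
G(s) := \left.\Bigl(e^{\frac{\hbar}{2} V_{R(s)}} \cF\Bigr)\right|_{\mathbf{q}\mapsto R(s)\mathbf{q}}.
\]
Both $F$ and $G$ equal $\cF$ at $s=0$, so it suffices to prove $\partial_s F = \partial_s G$ as linear operators applied to $\cF$ for all $s$. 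On the left, the one-parameter subgroup property $\widehat{R(s)} = e^{s\widehat{A}}$ (valid on the nose within a single $\exp$-orbit, with no cocycle obstruction) yields $\partial_s F = -\widehat{A}\, F$. On the right, Leibniz splits $\partial_s G$ into two terms: differentiating $e^{\frac{\hbar}{2}V_{R(s)}}$ produces a second-order differential operator with coefficient $\tfrac{\hbar}{2}\,\partial_s V_{R(s)}$, while differentiating the substitution $\mathbf{q}\mapsto R(s)\mathbf{q}$ produces a first-order (Euler-type) differential operator corresponding to the linear vector field $\mathbf{q}\mapsto A(z)\mathbf{q}$.

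Next I would verify the resulting infinitesimal identity, namely that the sum of these two pieces equals $-\widehat{A}$. Expanding $A$ in the Darboux coordinates $(p_{k,i}, q_{k,i})$ and using the quantization rules $p_{k,i} \mapsto \hbar^{1/2}\partial_{q_{k,i}}$, $q_{k,i}\mapsto \hbar^{-1/2} q_{k,i}$, the quadratic Hamiltonian $h_A(\mathbf{f}) = \tfrac{1}{2}\Omega(A\mathbf{f},\mathbf{f})$ decomposes into three types of monomials. The $qq$-terms vanish because $A$ sends $\mathcal{H}_+$ to $\mathcal{H}_+$, so $\widehat{A}$ has a first-order part (from the $pq$-terms, coming from the $\mathcal{H}_+$-preserving block of $A$) and a second-order part (from the $pp$-terms, coming from the block $\mathcal{H}_-\to \mathcal{H}_+$). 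The first-order part is precisely the quantization of the substitution $\mathbf{q}\mapsto A(z)\mathbf{q}$. For the second-order part, I would check the generating-function identity
\[
\partial_s V_{R(s)}(z,w)\big|_{s=0} = -\frac{A(z) + A^t(w)}{z+w},
\]
obtained by differentiating $(1 - R(z)R^t(w))/(z+w)$. The symplectic condition $R(-z)R^t(z) = \operatorname{id}$ gives $A(-z) + A^t(z) = 0$, so the numerator vanishes on $z+w = 0$ and the quotient is a genuine power series; reading off its Taylor coefficients reproduces exactly the $pp$-coefficients in $\widehat{A}$.

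The main obstacle is the combinatorial bookkeeping in the second-order matching: one must keep careful track of the signs and factors of $(-1)^k$ arising from the Darboux basis vector $\phi^i(-z)^{-k-1}$, and confirm that the residue-pairing identification $V_{k\ell}\in \End(H)$ matches these coefficients after extraction. A secondary subtlety is that the identity $\widehat{R(s)} = e^{s\widehat{A}}$ is used exactly; this is justified because the metaplectic cocycle vanishes on any one-parameter subgroup through the identity, so no central correction appears. The more general statement for an arbitrary symplectic $R$ of the form $1 + O(z)$ then follows by specializing to $s=1$.
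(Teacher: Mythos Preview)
The paper states this proposition without proof (it is a standard formula from Givental's quantization formalism, cf.\ \cite{G2}), so there is no paper-side argument to compare against. Your one-parameter deformation strategy is the natural approach and does work, but as written it has a gap.

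You set $R(s)=e^{sA}$, write $F(s)=e^{-s\widehat{A}}\cF$ and $G(s)=T_{R(s)}e^{\frac{\hbar}{2}V_{R(s)}}\cF$ (with $T_R$ the substitution $\mathbf{q}\mapsto R\mathbf{q}$), and claim that both satisfy $\partial_s X=-\widehat{A}X$. For $F$ this is immediate. For $G$ you correctly split $\partial_s G$ into the first-order piece $L_A G$ (from $\partial_s T_{R(s)}=L_A T_{R(s)}$, which is indeed $s$-independent because $[R(s),A]=0$) and the second-order piece $T_{R(s)}\tfrac{\hbar}{2}(\partial_s V_{R(s)})T_{R(s)}^{-1}G$. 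But you then only compute $\partial_s V_{R(s)}\big|_{s=0}=-\tfrac{A(z)+A^t(w)}{z+w}$. This verifies the ODE at $s=0$ only; to conclude $F\equiv G$ you need it for all $s$, and you have not established that the right-hand side defines a one-parameter group (which would let you propagate from $s=0$).

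The fix is short: at general $s$ one has
\[
\partial_s V_{R(s)}(z,w)=-\frac{A(z)R(s,z)R(s,w)^t+R(s,z)R(s,w)^t A(w)^t}{z+w},
\]
and conjugating a constant-coefficient second-order operator with kernel $V(z,w)$ by $T_{R(s)}$ replaces the kernel by $R(s,z)^{-1}V(z,w)(R(s,w)^t)^{-1}$. Applying this and using $[R(s,z),A(z)]=0$ collapses the expression to $-\tfrac{A(z)+A^t(w)}{z+w}$, independent of $s$. Hence $T_{R(s)}(\partial_s V_{R(s)})T_{R(s)}^{-1}=W_A$ for all $s$, and $\partial_s G=-\widehat{A}G$ holds identically. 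With this addition your argument is complete; without it, the derivative match at $s=0$ alone does not force $F=G$.
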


\subsection{From an opposite subspace to a Frobenius structure}
\label{subsec:Frobenius} 

Let us denote by $\mathcal{L}\to \cM_\mar^\circ$ the line bundle whose
fiber over a point $f\in \cM_\mar^\circ$ is the space of elements in
$\HH_+(f)$ of minimal degree; i.e., 
$\mathcal{L}_f=\C\, dx_1\cdots dx_n$. 
We refer to $\mathcal{L}$ as the {\em vacuum line bundle}. 

Assume now that $f\in \cM_{\rm mar}$ is a given point,  $\omega\in
\mathcal{L}_f$ is a non-zero form, and $P$ is a homogeneous opposite
subspace of $\HH_+(f)$.  Let us choose  a good basis of homogeneous
forms $\{\omega\}_{i=1}^N\subset \HH_+(f)\cap Pz$ and define $\phi_i\in
\operatorname{Jac}(f)$ such that\footnote
{The differential form $\phi_i \omega$ depends on the choice of 
a representative of $\phi_i \in \Jac(f)$, but the class of 
$\phi_i \omega$ in $\HH_+(f)/z\HH_+(f)$ does not.}
\ben
\omega_i \equiv \phi_i \, \omega \mod z\HH_+(f),\quad
1\leq i\leq N.
\een
We construct a miniversal unfolding of $f$ by 
\ben
F(x,t)=f(x)+\sum_{i=1}^N t_i\phi_i(x),\quad t=(t_1,\dots,t_N)\in B_f,
\een 
where $B_f\subset \mathbb{C}^N $ is a sufficiently small ball
representing the holomorphic germ at $0$ of $\C^N$ and $\phi_i(x)$ is
a homogeneous polynomial representing $\phi_i\in
\operatorname{Jac}(f)$. Let us assign a degree
to  $t_i$ such that $F(x,t)$ is weighted homogeneous of degree 1, and let us
split the deformation parameters $t$ into 3 groups: relevant $t^{\rm
  rel}=(t_1,\dots,t_{N_{\rm rel}})$, marginal $t^{\rm mar}=(t_{N_{\rm
    rel}+1},\dots,t_{N_{\rm rel} + N_{\rm mar} }),$ and irrelevant $
t^{\rm irr}=(t_{N_{\rm rel} + N_{\rm mar}  +1},\dots,t_N)$, depennding
on whether their degrees are 
respecively $>0$, $=0$, or $<0$. 
There is a natural way to construct a Frobenius structure on 
$B_f$. Let us outline
the construction referring for more details to \cite{He, KS, ST}. To
begin with, we choose an appropriately small Stein domain
$X_f\subset \C^n\times B_f$ around $0$ (see \cite{AGV}). Let
us denote by $F\colon X_f\to \C$ the miniversal unfolding
of $f$ and put 
$\widehat{\mathcal{F}}:=\mathbb{R}^n\varphi_*(\widehat{\Omega}_{\rm 
  twdR},\widehat{d}_{\rm twdR})$  for the hypercohomology of the
twisted de Rham complex 
\ben
(\widehat{\Omega}_{\rm twdR}^\bullet,\widehat{d}_{\rm twdR}):=
(\Omega^\bullet_{X_f/B_f}[\![z]\!],zd_{X_f/B_f}+dF\wedge),
\een
where $\varphi\colon X_f\subset \C^n\times B_f \to B_f$ is the natural
projection. Following the argument in Section \ref{sec:twdR} it is
easy to prove that  after decreasing $B_f$ if necessary,
$\widehat{\mathcal{F}}$ is a
trivial vector bundle on $B_f$, whose fiber over a point $s\in B_f$ 
\ben
\hHH_+(F):=
\Omega^n_{X_f}[\![z]\!]/(zd+dF\wedge)
\Omega^{n-1}_{X_f}[\![z]\!],\quad F=F(x,s),
\een
is a free $\C[\![z]\!]$-module of rank the Milnor number $N$. 
Put 
\ben
\hHH(F):=\hHH_+(F) \otimes_{\C[\![z]\!]} 
\C(\!(z)\!).
\een 
This way we obtain vector bundles
$\hHH_+\subset\hHH$ on $B_f$. For a given holomorphic
function $g(x,z)\in \mathcal{O}_{X_f}(X_f)\otimes \C(\!(z)\!)$ let us denote by 
\ben
[g(x,z)dx]_F=:\int e^{F(x)/z}g(x,z) dx
\een
the equivalence class of the form $g(x,z)dx_1\cdots dx_n$ in the de Rham
cohomology group $\hHH(F)$.  

\subsubsection{Extension in the relevant and marginal directions}
There is a unique way to extend $\omega_i=[g_i(x,z)dx]_f$ to sections
$\widetilde{\omega}_i$ of $\widehat{\HH}_+|_{\{t^{\rm irr}=0\}}$ so that they give
a good basis in each fiber $\widehat{\HH}_+(F)$ for $F\in \{t^{\rm
  irr}=0\}\subset B_f$. 
The extension can be constructed by Birkhoff factorization as
follows. Let us denote by $G_i$ the section of $\widehat{\HH}_+$ obtained
by flat extension of $\omega_i$ with respect to the Gauss--Manin
connection. The Gauss--Manin connection $\nabla$ gives rise to a
system of differential equations
\ben
z\nabla_{\partial/\partial t_i} [g_j(x,z)dx]_F=\sum_{k=1}^N
\Gamma_{ij}^k(t,z) [g_k(x,z)dx]_F,\quad 1\leq i\leq N_{\rm rel}+N_{\rm mar}.
\een
Since $f$ is weighted-homogeneous, the
functions $g_i(x,z)$ are polynomials in $z$. In particular, the connection
matrix $\Gamma$ is holomorphic at $(t,z)=(0,0)$. Let us pick a
fundamental solution $\Phi(t,z)$; i.e., a $N\times N$ non-degenerate
matrix solving the differential equations
\beq\label{GM:system}
z\partial_{t_i} \Phi(t,z) = \Gamma_i(t,z)\Phi(t,z),\quad 1\leq i\leq
N_{\rm rel}+N_{\rm mar},
\eeq
where $\Gamma_i(t,z)$ is the matrix whose $(j,k)$-entry is
$\Gamma_{ij}^k(t,z)$ and satisfying $\Phi(0,z)=1$. We have
\ben
[g_i(x,z)dx]_F = \sum_{j=1}^N \Phi_{ij}(t,z) G_j.
\een
Note that $\Phi(t,z)$ is a holomorphic matrix for $z\in
\C^*:=\mathbb{P}^1\setminus{\{0,\infty\}}$ that has a Birkhoff
factorization at $t=0$, so $\Phi(t,z)$ must have a Birkhoff
factorization for all $t\in B_f$ provided we choose $B_f$ sufficiently
small. Put $\Phi(t,z)=\Phi_+(t,z)^{-1}\Phi_-(t,z)$, where $\Phi_-(t,z)$ is
holomorphic for $z\in \mathbb{P}^1\setminus{\{0\}}$ (with
$\Phi_-(t,\infty)=1$) and $\Phi_+(t,z)$ is holomorphic for
$z\in \mathbb{P}^1\setminus{\{\infty\}}$. One can check that the forms
\ben
\widetilde{\omega}_i = \sum_{j=1}^N (\Phi_+(t,z))_{ij}[g_j(x,z)dx]_F
,\quad 1\leq i\leq N,
\een 
give rise to a good basis. Moreover,  the good basis
$\widetilde{\omega}_i=[\widetilde{g}_i(x,t^{\rm rel},t^{\rm
  mar};z)dx]_F$ depends polynomially on $x,t^{\rm rel}$ and $z$
(because these variables have positive degrees) and
analytically in $t^{\rm mar}$.
\subsubsection{Extension in the irrelevant directions}
If we want to extend in the irrelevant directions, then the above
argument becomes much more involved, because the system
\eqref{GM:system} might fail to be convergent and holomorphic in $z$. To offset this
difficulty one can take the formal Laplace transform, solve the
resulting system, and then obtain $\widetilde{\omega}_i$ via the
inverse Laplace transform. The details are quite delicate, so we refer
to \cite{He, MS}. An alternative way to proceed is the perturbative
approach of \cite{LLS}. The main idea is to look for a good basis that
depends formally on the irrelevant parameters, i.e., we are looking
for a good basis of the form 
\ben
\widetilde{\omega}_i =[\widetilde{g}_i(x,t,z)dx]_F,\quad 
\widetilde{g}_i\in \C\{t^{\rm mar}\}[x,t^{\rm rel}][\![t^{\rm \irr},z]\!],
\een  
where $\C\{a\}$ is the ring of convergent power series in
$a$. According to \cite{LLS}, first we have to find the  extension
$\widetilde{\omega}=\widetilde{g}(x,t,z) dx$ of the volume 
form $\omega\in \mathcal{L}_f-\{0\}$ by solving the following
equation in $\hHH_+(f)$ :
\ben
J(t,z):=e^{(F(x,t)-f(x,t_{\rm rel},t_{\rm
    mar}))/z}\widetilde{g}(x,t,z) dx \in \omega+H[\![z^{-1}]\!]z^{-1},
\een
where $H=\HH_+(f)\cap zP$ is the vector space spanned by the
good basis $\{\omega_i\}_{i=1}^N$. 
The extension of the remaining forms is obtained from the period map 
\beq\label{period-map}
TB_f[\![z]\!] \to
\hHH_+,\quad \partial_{t_i}\mapsto z\nabla_{\partial_{t_i}} [\widetilde{\omega}]
\eeq
as the image of the {\em flat} vector fields. The latter are the vector
fields corresponding to the coordinate system on $B_f$ given by the
coefficients in front of $z^{-1}$ of $J(t,z)$. 
We define a Frobenius structure on $B_f$ for which a basis of flat vector
fields corresponds via the period map \eqref{period-map} to the good
basis $\{\widetilde{\omega}_i\}_{i=1}^N$ and the flat pairing
corresponds to $K_F^{(0)}$. Let us point out that the extension
$\widetilde{\omega}$ of the volume form $\omega$ is a {\em primitive form}
in the sense of K. Saito. Slightly abusing the terminology we will
sometimes refer to the sections of $\mathcal{L}$ as primitive forms,
keeping in mind that they do become primitive only after an
appropriate extension.

\subsection{The total ancestor potential}

Given $f\in \cM_{\rm mar}$, $\omega\in
\mathcal{L}_f\setminus{\{0\}}$, an opposite subspace
$P\subset \HH(f)$, and a good basis $\{\omega_i\}_{i=1}^N\subset
\HH_+(f)\cap Pz$, let us construct a miniversal unfolding space $B_f$
equipped with a Frobenius structure as explained above. Using the flat
structure we trivialize the tangent and the co-tangent bundle 
\ben
T^*B_f\cong T B_f \cong B_f\times T_0 B_f \cong B_f\times {\rm Jac}(f),
\een
where the first isomorphism uses the non-degenerate pairing, the
2nd one uses the flat Levi-Civita connection, and the last one is
induced from the period isomorphism $T_0B_f\cong \HH_+(f)/z
\HH_+(f)$ and the isomorphism 
\beq\label{Jac-triv}
 {\rm Jac}(f)\cong
\HH_+(f)/z
\HH_+(f),\quad \phi(x)\longmapsto \phi(x) \omega \mod z\HH_+(f).
\eeq
Note that $\phi_i\in  {\rm Jac}(f)$ are the elements corresponding
to the good basis $\omega_i$ via the isomorphism  \eqref{Jac-triv}. 
Let us introduce the Fock space
\ben
\C_\hbar[\![q_0,q_1+\mathbf{1},q_2,\dots]\!] 
= \C(\!(\hbar)\!)[\![q_0,q_1+\mathbf{1},q_2,\dots]\!] 
\een 
of formal power series in
$\mathbf{q}=(q_{k,i})_{i=1,\dots,N}^{k=0,1,\dots}$. We denote
$q_k=\sum_{i=1}^N q_{k,i}\phi_i$. The shift $q_1+\mathbf{1}$ means
that the element $\mathbf{1}\in {\rm Jac}(f)$ should be written as $\mathbf{1}
= \sum_i a_i \phi_i$ and in the formal power series the variables
$q_{1,i}$ are shifted to $q_{1,i}+a_i$.
 
On the other hand, if $F$ is a generic deformation of $f$, then the
critical values of $F$ give rise to the so-called {\em canonical} coordinate system
$u=(u_1,\dots,u_N)$, defined locally near $F$, in which the Frobenius
multiplication and the pairing are diagonal
\ben
(\partial_{u_i},\partial_{u_j}) =
\delta_{i,j}/\Delta_i,\quad \partial_{u_i}\bullet \partial_{u_j}=\delta_{i,j}\partial_{u_j} . 
\een
Let us denote by 
\ben
\Psi_F\colon \C^N \to 
T_F B \cong {\rm Jac}(f),\quad \Psi_F(e_i) =\sqrt{\Delta_i}\partial_{u_i}
\een
the trivialization of the tangent bundle at a generic $F$. 
The total ancestor potential
is an element of the Fock space defined by 
\ben
\cA_F(\hbar;\mathbf{q}) := \widehat{\Psi}_F \widehat{R}_F
\prod_{i=1}^N \mathcal{D}_{\rm pt} (\hbar\Delta_i;{}^i\mathbf{q}\sqrt{\Delta_i}),
\een
where $R_F$ is a symplectic transformation of
$\C^N(\!(z)\!)$ of the type
$1+R_1z+\cdots$, which will be defined below. We have a different set
of formal variables ${}^i\mathbf{q}=({}^iq_0,{}^iq_1,\dots)$ which is
related to the previous one by 
\ben
\sum_{i=1}^N {}^iq_k \Psi_F(e_i) = q_k,\quad k\geq 0.
\een
By definition the quantization $\widehat{\Psi}_F$ acts by the above
substitution. Finally, $\mathcal{D}_{\rm pt}$ is the
Witten--Kontsevich tau function:
\ben
\mathcal{D}_{\rm pt} (\hbar;\mathbf{q})= 
\exp \Big( \sum_{g,n=0}^\infty \frac{\hbar^{g-1}}{n!}
\int_{\overline{\cM}_{g,k}} \prod_{i=1}^n (\mathbf{q}(\psi_i)+\psi_i)\Big),
\een
where $\mathbf{q}=(q_k)_{k\geq 0}$ is a sequence of formal variables
and $\mathbf{q}(\psi_i)=\sum_{k=0}^\infty q_k \psi_i^k$  (with $\psi_i$
the 1st Chern class of the line bundle of $i$-th marked point cotangent lines) is a
cohomology class on $\overline{\cM}_{g,k}$. Note that the dilaton
shift is incorporated here, so the function is an element of 
$\C_\hbar[\![q_0,q_1+1,q_2,\dots]\!]$.

The operator $R_F$ is in general defined as a formal solution of the
Gauss--Manin connection near the irregular singular point $z=0$. In
the case of singularity theory, however, we have an alternative
description in terms of a stationary phase asymptotic. Namely, let
$\beta_i\subset \C^n$ be the cycle swept by the vanishing
cycle vanishing at the critical point of $F$ corresponding to the
critical value $u_i$, then the stationary phase asymptotic 
\beq\label{asymptotic}
(-2\pi z)^{-1/2}\int_{\beta_i} e^{F(x)/z} \widetilde{\omega}_a \sim
(\Psi_F R_F(z) e_i,\phi_a)e^{u_i/z},\quad z\to 0,
\eeq
where $\phi_a\in {\rm Jac}(f)$ corresponds to the flat vector field
determined by $\widetilde{\omega}_a$. 

According to Milanov \cite{Mi}, the total ancestor potential $\cA_F$
extends analytically for all $F\in B_f$. In particular, we have a well
defined limit
\ben
\cA_{f,\omega}^{\omega_1,\dots, \omega_N}(\hbar;\mathbf{q})
:=\lim_{F\to f} \cA_F (\hbar;\mathbf{q}).
\een
Let us describe the dependence of the total ancestor potential on the
choices of $\omega$ and $\omega_1,\dots,\omega_N$. Assume that
$\omega'\in \mathcal{L}_f-\{0\}$, $P'\subset \HH(f)$ is
an opposite subspace, and $\{\omega_i'\}_{i=1}^N\subset 
\HH_+(f)\cap P'z $ is a good basis. It is convenient to split the general
formula into two cases. The first case is the following: 
if $P'=P$, then 
\beq\label{fr-change}
\omega_j' = \sum_{i=1}^N \omega_i B_{ij},\quad \omega'=c\omega
\eeq
for some invertible matrix $B=(B_{ij})_{i,j=1}^N$ and some non-zero constant $c$. 
The second case is the following: if $\omega'=\omega$ and 
\ben
\omega_i'\equiv \omega_i\equiv \phi_i \omega 
\mod z\HH_+(f),\quad 1\leq i\leq N,
\een
where $\{\phi_i\}_{i=1}^N\subset \operatorname{Jac}(f)$. 
Let us denote by $R(f,z)$ the linear operator
in $\Jac(f)(\!(z)\!)$ whose matrix $(R_{ij}(f,z))_{i,j=1}^N$
with respect to the basis $\{\phi_i\}_{i=1}^N$ is defined by 
\beq\label{gb-change}
\omega'_j(f)=\sum_{i=1}^N \omega_i (f) R_{ij}(f,z),\quad 1\leq j\leq N.
\eeq
Let us recall Givental's quantization formalism for 
\ben
H=\operatorname{Jac}(f),\quad
(\psi_1,\psi_2):=K^{(0)}(\psi_1\omega,\psi_2\omega),\quad
\psi_1,\psi_2\in \operatorname{Jac}(f).
\een
Note that $R(f,z)$ is a symplectic transformation of
$\mathcal{H}=H(\!(z^{-1})\!)$ of the type $1+R_1(f)z+R_2(f)z^2+\cdots$. 

\begin{proposition}\label{anc-change}
a) The transformation of the total ancestor potential under the change
\eqref{fr-change} is given by
\ben
\cA_{f,\omega'}^{\omega_1',\dots,\omega_N'}(\hbar;\mathbf{q})  =
\cA_{f,\omega}^{\omega_1,\dots,\omega_N}(\hbar c^{-2}; c^{-1} B \mathbf{q}) ,
\een 
where $(c^{-1}B\mathbf{q})_{k,i}=\sum_{j=1}^N c^{-1}B_{ij}q_{k,j}$. 

b) The transformation of the total ancestor potential under the change
\eqref{gb-change} is given by 
\ben
\cA_{f,\omega}^{\omega'_1,\dots,\omega'_N}(\hbar ;
\mathbf{q}) =(R(f,z)^t)^\wedge 
\cA_{f,\omega}^{\omega_1,\dots,\omega_N}(\hbar; \mathbf{q}),
\een
where $R(f,z)^t$ is the transpose of $R(f,z)$ with respect to the residue pairing.
\end{proposition}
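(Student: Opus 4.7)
My plan is to prove both parts by tracking how every ingredient in the definition
\[
\cA_F(\hbar;\mathbf{q}) = \widehat{\Psi}_F \widehat{R}_F \prod_{i=1}^N \cD_{\rm pt}(\hbar\Delta_i; {}^i\mathbf{q}\sqrt{\Delta_i})
\]
changes under the replacements, verifying the formulas first at a generic semisimple $F \in B_f$ and then passing to the limit $F \to f$ via Milanov's analytic continuation \cite{Mi}. For part (a), I would decompose the transformation into two independent sub-cases: (i) the rescaling $\omega \to c\omega$ with $B = \id$, and (ii) the constant change of good basis $\omega_j' = \sum_i B_{ij}\omega_i$ keeping $\omega$ fixed.

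In sub-case (i) the residue pairing $(\psi_1,\psi_2) = K^{(0)}(\psi_1\omega,\psi_2\omega)$ on $\Jac(f)$ scales by $c^2$ while the Frobenius multiplication and the critical values $u_i$ are unchanged. Consequently $\Delta_i \to c^{-2}\Delta_i$ and $\Psi_F \to c^{-1}\Psi_F$, and since $q_k = \sum_i {}^iq_k\, \Psi_F(e_i)$ is intrinsic, ${}^iq_k \to c\, {}^iq_k$; in particular the combination ${}^iq_k\sqrt{\Delta_i}$ is invariant. A direct calculation from the defining asymptotic \eqref{asymptotic} shows that $R_F$ is unaffected as an endomorphism of $\Jac(f)(\!(z)\!)$ (the factor of $c$ in $\tilde\omega_a$ cancels the factor $c$ in $\Psi_F$ and the factor $c$ in $\phi_a$ through the dual pairing), so the whole transformation on $\cA_F$ comes from the shift of Darboux coordinates in the quantization, yielding $\hbar \to c^{-2}\hbar$. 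In sub-case (ii) the constant matrix $B$ induces a linear change of the basis $\phi_j$ of $\Jac(f)$ and hence a linear (hence $z$-independent) change of flat coordinates on $B_f$, under which all ingredients transform compatibly to produce the substitution $\mathbf{q} \to B\mathbf{q}$. Composing the two sub-cases (and using the relation $\phi'_j = c^{-1}\sum_i B_{ij}\phi_i$ between Jacobi-ring bases) yields the stated formula.

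For part (b) the primitive form $\omega$ is unchanged, so the underlying Frobenius structure on $B_f$ is the same; what changes is the Birkhoff factorization that defines the Givental $R$-matrix. Since $\omega'_j = \sum_i \omega_i R_{ij}(f,z)$ at $t = 0$ and both $\{G_i\}, \{G'_j\}$ are flat with respect to the Gauss--Manin connection, the two flat frames are related by the same constant-in-$t$ matrix $R(f,z)$. Choosing compatible representatives $[g'_j(x,z)dx]_F = \sum_i R_{ij}(f,z)\,[g_i(x,z)dx]_F$, the transition matrices satisfy $\Phi'(t,z) = R(f,z)^t\,\Phi(t,z)$, so $\Phi'_- = \Phi_-$ while $\Phi'_+ = \Phi_+ R(f,z)^{-t}$ is the new Birkhoff factor. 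Via \eqref{asymptotic}, this translates into $R'_F(z) = R_F(z)\cdot R(f,z)^{-1}$ (in the appropriate dual frame), and then Proposition \ref{R-action}, applied to the quantization identity $\widehat{R'_F} = \widehat{R_F} \cdot \widehat{R(f,z)^{-1}}$, produces the transformation $\cA^{\omega'_\bullet}_{f,\omega} = (R(f,z)^t)^\wedge \cA^{\omega_\bullet}_{f,\omega}$ after taking the limit $F \to f$.

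The main obstacle is the careful bookkeeping of transposes and inverses: in part (b) one must distinguish whether $R(f,z)$ acts on forms, on $\Jac(f)$, or on the dual of $\Jac(f)$, and then verify that the symplectic transpose (with respect to $K^{(0)}$) is the operator that actually gets quantized; a wrong convention would invert $R$ or omit the transpose. In part (a), the delicate point is checking that every factor of $c$ coming from the rescaling of $\omega$, the pairing, the trivialization $\Psi_F$, and the Darboux coordinates $p_{k,i},q_{k,i}$ conspires to produce exactly $\hbar \to c^{-2}\hbar$ without any residual shift in ${}^iq_k\sqrt{\Delta_i}$. Once these bookkeeping checks are completed at generic $F$, passing to the limit $F \to f$ is immediate from \cite{Mi}.
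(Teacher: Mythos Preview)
Your approach to part (a) is essentially the paper's, only organized as a two-step decomposition rather than done in one pass. One bookkeeping point: in your sub-case (i) ($B=\id$, $\omega'=c\omega$) the Jacobi-ring basis already changes to $\phi'_i=c^{-1}\phi_i$, so the transformation there is not just $\hbar\to c^{-2}\hbar$ but also $\mathbf{q}\to c^{-1}\mathbf{q}$; you seem to defer this $c^{-1}$ to the composition step, which is fine once made explicit. The paper simply tracks $\sqrt{\Delta_i'}=c^{-1}\sqrt{\Delta_i}$, $R'_F=R_F$, $\Psi'_F=B^{-1}\Psi_F$ (as matrices) and observes that $\widehat{R}_F$ commutes with the rescaling $(\hbar,{}^i\mathbf{q})\mapsto(c^{-2}\hbar,c^{-1}\,{}^i\mathbf{q})$; this yields the formula at generic $F$ and the limit $F\to f$ finishes.

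Part (b) has a genuine gap. Your claim $\Phi'(t,z)=R(f,z)^t\Phi(t,z)$ is wrong: if you change both the representatives $g'_j=\sum_i R_{ij}g_i$ and the flat frame $G'_j=\sum_i R_{ij}G_i$, then $\Phi'=R^T\Phi R^{-T}$ is a conjugation, not a one-sided multiplication. Worse, since $R(f,z)=1+R_1z+\cdots$ is holomorphic at $z=0$ but not at $z=\infty$, the Birkhoff factors of $\Phi'$ are \emph{not} obtained from those of $\Phi$ by simple multiplication by $R^{\pm T}$; so the route ``Birkhoff of $\Phi$ $\Rightarrow$ relation between $R_F$ and $R'_F$'' does not work as stated. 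The paper avoids this entirely: it writes the (uniquely extended) good bases as $\widetilde{\omega}'_b=\sum_a\widetilde{\omega}_a\widetilde{R}_{ab}(F,z)$, plugs this directly into the stationary-phase definition \eqref{asymptotic} to get $\Psi'_FR'_F=\widetilde{R}(F,z)^t\,\Psi_FR_F$, quantizes (the upper-triangular group quantization is a genuine representation, so $\widehat{\Psi'_FR'_F}=(\widetilde{R}^t)^\wedge\widehat{\Psi_FR_F}$), and then passes to the limit $\widetilde{R}(F,z)\to R(f,z)$. No Birkhoff manipulation is needed.
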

\proof 
Let us denote by
\ben
\mathcal{A}_F'(\hbar;\mathbf{q}) = \widehat{\Psi}_F'\widehat{R}_F'\,
\prod_{i=1}^N \mathcal{D}_{\rm pt}(\hbar\Delta_i';{}^i\mathbf{q} \sqrt{\Delta'_i})
\een
the total ancestor potential corresponding to the Frobenius structure
determined by the primitive form $\omega'$ and the good basis
$\omega_1',\dots,\omega_N'$. 
 Let $\tau=(\tau_1,\dots,\tau_N)$ and
$\tau'=(\tau_1',\dots,\tau_N')$ be the flat coordinates on $B_f$ corresponding
respectively to the good bases $\{\omega_i\}_{i=1}^N$ and
$\{\omega_i'\}_{i=1}^N.$ By definition 
\ben
z\nabla_{\partial/\partial \tau_i} \omega =\omega_i,\quad 
z\nabla_{\partial/\partial \tau'_i} \omega' =\omega'_i,\quad 1\leq
i\leq N.
\een 

\medskip
a)
Recalling the change \eqref{fr-change} we get the following relations
\ben
\tau_i=\sum_{j=1}^N c^{-1}B_{ij} \tau_j',\quad 
\frac{\partial}{\partial \tau_j'} = \sum_{i=1}^N
c^{-1}B_{ij}\frac{\partial}{\partial \tau_i}.
\een
The matrix of $\Psi'_F\colon \mathbb{C}^N\to
\operatorname{Jac}(f)$ with respect to the bases
$\{e_i\}_{i=1}^N\subset \mathbb{C}^N$ and 
$\{\phi_i'\}_{i=1}^N\subset \operatorname{Jac}(f)$,
$\phi_i'=\partial/\partial \tau_i'$ has entries $(\Psi'_F)_{ki}$  defined by 
\ben
\Psi'_F e_i = \sum_{k=1}^N \phi_k'\, (\Psi'_F)_{ki},\quad
(\Psi'_F)_{ki} = \sqrt{\Delta_i} \, \partial \tau'_k/\partial u_i. 
\een
Similarly, $\Psi_F$ is represented by a matrix with entries
$(\Psi_F)_{ki}=\sqrt{\Delta_i} \,\partial \tau_k/\partial u_i.$
Note that $\sqrt{\Delta_i'} = c^{-1}\sqrt{\Delta_i}$, $R'_F=R_F$, and
$\Psi_F'=B^{-1}\,\Psi_F$. Using also that the quantized action
$\widehat{R}_F$ commutes with the rescaling 
\ben
\hbar\mapsto \hbar c^{-2},\quad {}^iq_k\mapsto {}^iq_k c^{-1},
\een
we get 
\ben
\mathcal{A}'_F(\hbar;\mathbf{q} ) = \mathcal{A}_F(\hbar c^{-2}, c^{-1} B\mathbf{q}).
\een
Taking the limit $F\to f$ completes the proof of part a). 

\medskip
b) The entries of the matrix of the linear operator $\Psi'_{F}R_F'$ with respect to
the bases $\{e_i\}_{i=1}^N\subset \mathbb{C}^N$ and
$\{\phi_a\}_{a=1}^N$ are by definition the stationary phase
asymptotics
\ben
(-2\pi z)^{-n/2} \int_{\beta_i} e^{(F-u_i)/z} \widetilde{\omega}'_b \,
\eta^{ab}\ \sim\
(\Psi'_FR'_F)_{ai},\quad z\to 0,
\een
where $\{\widetilde{\omega}_a'\}$ is the extension of the good basis
$\{\omega_a'\}_{a=1}^N$ to a good basis on $B_f$ and
$(\eta^{ab})_{a,b=1}^N$ is the inverse matrix of the matrix of the
residue pairing $(\eta_{ab})_{a,b=1}^N$, $\eta_{ab} = (\phi_a,\phi_b)$. Similarly, 
\ben
(-2\pi z)^{-n/2} \int_{\beta_i} e^{(F-u_i)/z} \widetilde{\omega}_b\, \eta^{ab} \ \sim\
(\Psi_FR_F)_{ai},\quad z\to 0.
\een
Let us denote by $\widetilde{R}(F,z)$ the symplectic transformation of
$\operatorname{Jac}(f)(\!(z)\!)$ whose entries
$\widetilde{R}_{ab}(F,z)$  with respect to the basis
$\{\phi_a\}_{a=1}^N$  are given by 
\ben
\widetilde{\omega}'_b = \sum_{a=1}^N \widetilde{\omega}_a \, \widetilde{R}_{ab}(F,z).
\een
By definition $\lim_{F\to f} \widetilde{R}_{ab}(F,z) = R_{ab}(f,z).$
Comparing the above asymptotic expansions, we get 
\ben
(\Psi'_FR'_F)_{ai} = \sum_{\mu,\nu=1}^N \eta^{a\mu}
\widetilde{R}_{\nu\mu}(F,z) \eta_{\nu b} \, (\Psi_F R_F)_{bi}.
\een
Note that the matrix of the transpose $\widetilde{R}(F,z)^t$ with
respect to the residue pairing has entries
\ben
(\widetilde{R}(F,z)^t)_{ab} = \sum_{\mu,\nu=1}^N \eta^{a\mu}
\widetilde{R}_{\nu\mu}(F,z) \eta_{\nu b}.
\een
We get that $\Psi'_FR'_F = \widetilde{R}(F,z)^t \Psi_F R_F$, so 
\ben
\mathcal{A}'_F(\hbar;\mathbf{q}) = (\widetilde{R}(F,z)^t)^\wedge\,
\mathcal{A}_F(\hbar;\mathbf{q}). 
\een
Taking the limit $F\to f$ completes the proof. \qed

\subsection{The abstract Fock bundle}

Recall that a series of the form 
\[
\sum_{g\in \mathbb{Z}}\sum_{\kappa=(k_1,i_1),\dots,(k_r,i_r)}\
c^{(g)}_\kappa \hbar ^{g-1} t_{k_1,i_1}\cdots t_{k_r,i_r},\quad
t_{k,i}=q_{k,i}+\delta_{k,1} a_i,
\] 
is called {\em tame} if $c^{(g)}_\kappa\neq 0$ only for $\kappa$ satisfying the
$(3g-3+r)$-jet constraint
\ben
k_1+\cdots + k_r \leq 3g-3+r.
\een
It is known that Givental's total ancestor potential
$\mathcal{A}_{f,\omega}^{\omega_1,\dots,\omega_N}(\hbar;\mathbf{q})$
is tame (see \cite{G4}). Motivated by Proposition \ref{anc-change} we
define a vector bundle 
$\widehat{\mathbb{V}}_{\rm tame}$ on $\mathcal{M}_{\rm
  mar}^\circ$ whose fibers are the Fock spaces
$\C_\hbar[\![q_0,q_1+\mathbf{1},q_2,\dots]\!]_{\rm
  tame}$ of tame series\footnote
{Note that elements of $\C[\![\hbar]\!]$ are tame, but $\hbar^{-1}$ is 
not tame; hence $\C_\hbar[\![q_0,q_1+\mathbf{1},q_2,\dots]\!]_{\rm tame}$ 
is not a $\C_\hbar$-algebra (only a $\C[\![\hbar]\!]$-algebra).} 
and the transition functions are given
by the transformation laws of Proposition \ref{anc-change} (with
$c=1$). Following Costello-Li's interpretation \cite{CS} 
of Givental's quantization formalism, we will
identify each fiber of $\widehat{\mathbb{V}}_{\rm tame}$ with a highest
weight module of a certain Weyl algebra, which in particular yields
an intrinsic definition of $\widehat{\mathbb{V}}_{\rm tame}$.

\subsubsection{The Weyl algebra and the Fock space}

Let us fix $f\in \cM_{\rm mar}^\circ$. 
The Weyl algebra of $\HH(f)$ is defined by 
\ben
\mathcal{W}(f)=\bigoplus_{n=0}^\infty 
\Big( \HH(f)^{\otimes n}\otimes \C_\hbar \Big)/ I,
\een
where $\C_\hbar = \C(\!(\hbar)\!)$ and 
$I$ is the two sided ideal generated by the elements 
\ben
a\otimes b-b\otimes a -\hbar \, \Omega(a,b),\quad a,b\in \HH(f).
\een
The Lagrangian subspace $\HH_+(f)$ determines the following Fock space 
\ben
\mathbb{V}(f):=\mathcal{W}(f)/\mathcal{W}(f) \HH_+(f).
\een
\begin{lemma}\label{Fock-coords}
If $P$ is an opposite subspace, then 
the natural map
\ben
\phi_{P}\colon \bigoplus_{n=0}^\infty
\Big(\operatorname{Sym}^n(P)\otimes
\C_\hbar \Big) \to \mathbb{V}(f),\quad a_1\cdots
a_n\mapsto a_1\otimes\cdots \otimes a_n,
\een
induces an isomorphism. 
\end{lemma}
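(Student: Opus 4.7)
The plan is to prove this via a PBW/Stone-von Neumann type argument. The key structural facts are that $\HH_+(f)$ and $P$ are both Lagrangian subspaces, that together they give a polarization $\HH(f) = \HH_+(f) \oplus P$, and that the Weyl algebra then factorizes as $\operatorname{Sym}(P) \otimes \operatorname{Sym}(\HH_+(f))$ (up to the $\C_\hbar$-action), with the Fock space picking out the $\operatorname{Sym}(P)$-factor.

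First I would check that $\phi_P$ is well-defined, i.e.~factors through $\operatorname{Sym}^n(P)$. This is immediate from the Lagrangian property of $P$: for any $a,b \in P$, one has $\Omega(a,b)=0$, so in $\cW(f)$ the relation $a\otimes b - b\otimes a = \hbar\,\Omega(a,b)$ reduces to $a\otimes b = b\otimes a$. Hence the tensor powers of $P$ act by symmetric tensors inside $\cW(f)$, and a fortiori inside $\mathbb{V}(f)$.

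Next I would establish surjectivity by a PBW-style normal ordering argument. Starting from any pure tensor $v_1 \otimes \cdots \otimes v_n$ with each $v_i \in \HH(f) = \HH_+(f) \oplus P$, I would decompose each $v_i$ into its $P$-component and its $\HH_+(f)$-component, expand, and then repeatedly apply the Weyl relation $a\otimes b = b\otimes a + \hbar\,\Omega(a,b)$ to move every $\HH_+(f)$-factor to the right of every $P$-factor. Each such move either swaps two letters or lowers the length by $2$ while introducing a factor of $\hbar$; by induction on length (and on the number of inversions) this terminates, expressing any element of $\cW(f)$ as a $\C_\hbar$-linear combination of products of the form (symmetric word in $P$) $\otimes$ (word in $\HH_+(f)$). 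In the quotient $\mathbb{V}(f) = \cW(f)/\cW(f)\HH_+(f)$, the latter words annihilate whenever they are nonempty, so the image of $\phi_P$ is all of $\mathbb{V}(f)$.

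The main obstacle is injectivity; I would handle it by exhibiting an explicit left inverse via a Fock representation. Let $S := \operatorname{Sym}(P) \otimes \C_\hbar$. I would define a $\C_\hbar$-linear map $\rho\colon \HH(f) \to \End_{\C_\hbar}(S)$ by setting $\rho(p)$ to be multiplication by $p$ for $p \in P$, and $\rho(q)$ to be the unique $\C_\hbar$-linear derivation of $S$ with $\rho(q)(p) = \hbar\,\Omega(q,p)$ for $p \in P$ and $q \in \HH_+(f)$ (the existence and uniqueness of such a derivation of the symmetric algebra is standard). A direct check using the Leibniz rule gives $[\rho(q),\rho(p)] = \hbar\,\Omega(q,p)\,\id_S$, $[\rho(p),\rho(p')]=0$ (both are multiplications) and $[\rho(q),\rho(q')]=0$ (both are derivations valued in constants, hence commute). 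Thus $\rho$ extends to a representation of $\cW(f)$ on $S$. Because $\rho(\HH_+(f))$ annihilates the vacuum $1 \in \operatorname{Sym}^0(P)$, the map $\cW(f) \to S$, $w \mapsto \rho(w)\cdot 1$, kills $\cW(f)\HH_+(f)$ and therefore descends to $\Psi\colon \mathbb{V}(f) \to S$. Directly from the definitions, $\Psi \circ \phi_P$ sends $p_1 \cdots p_n$ to $\rho(p_1)\cdots \rho(p_n)\cdot 1 = p_1 \cdots p_n$, so $\Psi$ is a left inverse of $\phi_P$ and $\phi_P$ is injective. Combined with Step 2, this gives the desired isomorphism.
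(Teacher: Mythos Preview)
Your proof is correct and follows essentially the same line as the paper's. The paper dispatches the lemma very briefly: well-definedness and injectivity are asserted to follow from $P$ being Lagrangian, and surjectivity is obtained from Wick's formula (moving all $\HH_+(f)$-factors to the right via the Weyl relation, exactly your normal-ordering argument). Your treatment of injectivity, via the explicit Fock representation on $\operatorname{Sym}(P)\otimes\C_\hbar$ giving a left inverse, is more detailed than the paper's one-phrase appeal to Lagrangianity, but this is the standard justification behind that phrase rather than a genuinely different route.
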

\proof
The map is well defined and injective because $P$ is
Lagrangian. The surjectivity follows from the Wick's formula (see \cite{K2}). Namely,
given $a_1,\dots, a_n\in \HH(f)$ we have the following identity
in $\mathbb{V}(f)$:
\ben
a_1\otimes \cdots \otimes a_n = \sum \Big(\prod_{s=1}^{n'}
\Omega(a_{i_s'}^+,a_{i_s''}^-)\Big) a_{j_1}^-\otimes \cdots \otimes a_{j_{n''}}^-,
\een 
where the sum is over all possible ways to select pairs
$(i_1',i_1''),\dots, (i_{n'}',i_{n'}'')\subset \{1,2,\dots,n\}$ such that $i_s'<i_s''$ and
$i_1'<\cdots <i_{n'}'$ and $\{j_1,\dots,j_{n''}\} =
\{1,2,\dots,n\}\setminus \{ i_1',\dots ,i_{n'}',i_1'',\dots
,i_{n'}''\}$, and where $a^-\in P$ (resp. $a^+\in \HH_+(f)$) denotes the
projection of $a$ on $P$ (resp. $\HH_+(f)$) along
$\HH_+(f)$ (resp. $P$).
\qed

\subsubsection{The tame Weyl algebra and the tame Fock space}
If $P\subset \HH(f)$ is an opposite subspace, then using the
vector spaces isomorphism 
\ben
\mathcal{W}(f)=\bigoplus_{r,s=0}^\infty P^{\otimes r}\otimes
\HH_+(f)^{\otimes s} \otimes \C_\hbar 
\een
we can write an element of $\mathcal{W}(f)$ as a finite sum of terms
of the form
\beq\label{weyl:mon}
c^{(g)}_{I,J} \hbar^{g-1}\omega^{i_1}(-z)^{-k_1-1}\cdots
\omega^{i_r}(-z)^{-k_r-1}\otimes 
\omega_{j_1}z^{\ell_1}\cdots \omega_{j_s}z^{\ell_s},
\eeq
where $\{\omega^i\}$ and $\{\omega_j\}$ are dual bases of
$\HH_+(f)\cap zP$, and the coefficient $c^{(g)}_{I,J}$ is a
constant depending on $g$ and the
multi-indexes $I=\{(i_1,j_1),\dots,(i_r,k_r)\}$ and
$J=\{(j_1,\ell_1),\dots,(j_s,\ell_s)\}$. The {\em tame} Weyl algebra
$\mathcal{W}_{\rm tame} (f)$ 
is defined as the vector subspace of  $\mathcal{W}(f)$ spanned by
monomials of the type \eqref{weyl:mon} such that 
\beq\label{def:tame}
k_1+\dots+k_r - r\leq 3(g-1+s/2).
\eeq
Finally, we need to introduce the completion of $\mathcal{W}_{\rm
  tame} (f)$
\ben
\widehat{\mathcal{W}}_{\rm tame} (f):=\varprojlim_m \mathcal{W}_{\rm
  tame} (f)/\mathcal{W}^m_{\rm tame} (f), 
\een
where the decresing filtration $\{\mathcal{W}^m_{\rm tame} (f)\}_{k=0}^\infty$ is
defined as the span of all terms of the type \eqref{weyl:mon} such that
\ben
k_1+\cdots+k_r+\ell_1+\cdots+\ell_s+r+s\geq m.
\een
Equivalently, the elements of $\widehat{\mathcal{W}}_{\rm tame}
(f)$ are arbitrary infinite sums of terms of the type \eqref{weyl:mon}
satisfying the tameness condition \eqref{def:tame}. 
We can prove the following proposition by an argument similar to 
the proof of the fact that tame functions are preserved 
by the upper-triangular Givental group action \cite{G4}. 

\begin{proposition}\label{w:tame}
The tame Weyl algebra $\mathcal{W}_{\rm tame} (f)$ and its completion
$\widehat{\mathcal{W}}_{\rm tame} (f)$ are independent of the choices
of an opposite subspace and a good basis. Moreover, the multiplication
induced from $\mathcal{W}(f)$ is well defined, so both
$\mathcal{W}_{\rm tame} (f)$ and 
$\widehat{\mathcal{W}}_{\rm tame} (f)$ are
associative algebras. 
\end{proposition}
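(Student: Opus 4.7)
The proof I have in mind follows the template of Givental's argument in \cite{G4} showing that the upper-triangular Givental group action preserves tame power series; I plan to organize it in three steps corresponding to the three claims of the proposition.

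\medskip
\noindent\textbf{Step 1: Reduction to an upper-triangular change of good basis.} Given two choices $(P, \{\omega_i\})$ and $(P', \{\omega'_i\})$, I first normalize the second good basis modulo $z\HH_+(f)$ by a constant invertible matrix. A constant change of basis within a fixed opposite subspace only rescales the creation and annihilation generators without modifying any $z$-exponent, so both sides of the tameness inequality \eqref{def:tame} are preserved term by term. After this normalization, the passage from $(P, \{\omega_i\})$ to $(P', \{\omega'_i\})$ is governed by an upper-triangular symplectic operator $R(z) = 1 + R_1 z + R_2 z^2 + \cdots$ with $R_k \in \End(\Jac(f))$, characterized by $\omega'_j = \sum_i R_{ij}(z)\, \omega_i$.

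\medskip
\noindent\textbf{Step 2: Preservation of tameness under $R(z)$.} I expand the old generators in terms of the new. The annihilation generators transform as
\[
\omega_i z^k = \sum_{j,\, m \ge 0} \bigl(R(z)^{-1}\bigr)^{(m)}_{ji}\, \omega'_j\, z^{k+m},
\]
so the creation indices play no role. Using the symplectic relation $R(-z)^{T} R(z) = 1$, the creation generators transform as
\[
\omega^i (-z)^{-k-1} = \sum_{j,\, 0 \le \ell \le k} A^{ji}_{k\ell}\, {\omega'}^{j} (-z)^{-\ell-1} + \sum_{j,\, 0 \le \ell \le k-1} B^{ji}_{k\ell}\, \omega'_j\, z^{\ell},
\]
with coefficients $A$, $B$ determined by $R(z)$. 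The critical feature is the upper bound $\ell \le k$ on the creation indices in the output, which follows from the $\C[z^{-1}]$-module structure of $P$ and $P'$ together with $R(z) = 1 + O(z)$. Substituting these expansions into a tame monomial of type \eqref{weyl:mon} and renormal-ordering, every new creation index is no greater than the corresponding old one; the extra terms generated by pushing $B$-type $\omega'_j z^\ell$ contributions past creation operators are Wick contractions, handled by Step 3.

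\medskip
\noindent\textbf{Step 3: Closure under multiplication via Wick contractions.} The product of two tame monomials of type \eqref{weyl:mon}, brought to normal form, decomposes as a sum over matchings between the $s_1$ annihilations of the left factor and the $r_2$ creations of the right factor. Each matched pair $(\omega_j z^{\ell},\, \omega^i (-z)^{-k-1})$ is replaced by $\hbar\, \delta^{i}_{j}\, \delta_{k\ell}$, so after $N$ contractions the output monomial has
\[
r^{\mathrm{out}} = r_1 + r_2 - N, \quad s^{\mathrm{out}} = s_1 + s_2 - N, \quad g^{\mathrm{out}} = g_1 + g_2 + N,
\]
together with $\sum k^{\mathrm{out}} = \sum k^{(1)} + \sum k^{(2)} - \sum_{\mathrm{contr}} k$, where each contracted $k$ is non-negative. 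Summing the tameness inequalities for the two input factors and comparing with the output one finds
\[
\Bigl(\sum k^{\mathrm{out}} - r^{\mathrm{out}}\Bigr) - 3\Bigl(g^{\mathrm{out}} - 1 + \tfrac{s^{\mathrm{out}}}{2}\Bigr) \le - 3 - \tfrac{N}{2} - \sum_{\mathrm{contr}} k \le 0,
\]
so every term produced is tame. The same inequality shows that the decreasing filtration $\{\mathcal{W}^m_{\mathrm{tame}}(f)\}$ is respected by multiplication, so the completion $\widehat{\mathcal{W}}_{\mathrm{tame}}(f)$ is again closed. Associativity is inherited from the ambient algebra $\mathcal{W}(f)$.

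\medskip
\noindent The main technical obstacle is verifying the bound $\ell \le k$ in the expansion of Step 2. This requires explicit control of how pole orders at $z = \infty$ transform between the two opposite subspaces, a point that is the infinite-dimensional analogue of upper-triangularity in the Givental formalism. Once this bound is established, the combinatorial verification of tameness in Steps 2 and 3 is strictly parallel to Givental's proof in \cite{G4}.
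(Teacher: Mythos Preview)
Your approach is exactly what the paper indicates: it gives no detailed proof but simply refers to Givental's argument in \cite{G4} that upper-triangular quantized operators preserve tame series, and your three-step outline (constant normalization, then upper-triangular $R(z)$, then Wick combinatorics) is a faithful execution of that strategy, with the key bound $\ell \le k$ in Step~2 confirmed by the expansion appearing in the proof of Lemma~\ref{change-R}. One arithmetic slip to fix: since $\hbar^{g_1-1}\hbar^{g_2-1}\hbar^{N} = \hbar^{(g_1+g_2+N-1)-1}$ the product genus is $g^{\mathrm{out}} = g_1 + g_2 + N - 1$, so your final inequality should read $\le -N/2 - \sum_{\mathrm{contr}} k$ rather than $\le -3 - N/2 - \sum_{\mathrm{contr}} k$, but the conclusion is unaffected.
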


Let $\{\phi_i\}_{i=1}^N\subset \operatorname{Jac}(f)$ be a fixed
basis, let $P\subset \HH(f)$ be an opposite subspace, and let $\omega\in
\mathcal{L}_f-\{0\}$. Given these data, we can uniquely construct a good
basis $\{\omega_i\}_{i=1}^N\subset \HH_+(f)\cap Pz$ such that
\ben
\omega_i\equiv \phi_i \omega \mod z\HH_+(f)
\een
and so that there is an isomorphism 
\ben
\Phi_{\omega_1,\dots,\omega_N}\colon 
\C_\hbar[q_0,q_1,\dots]\to \mathbb{V}(f)
\een
defined by
\ben
\Phi_{\omega_1,\dots,\omega_N}(q_{k_1,i_1}\cdots q_{k_1,i_1}):=
(\omega^{i_1}z^{-k_1-1})\otimes\cdots\otimes (\omega^{i_n}z^{-k_n-1}),
\een
where $\{\omega^i\}_{i=1}^N$ is a basis of $\HH_+(f)\cap
zP$ dual to  $\{\omega_i\}_{i=1}^N$ with respect to the
residue pairing $K_f^{(0)}$.
Let us define
\ben
\sigma_{\omega,P}^{\phi_1,\dots,\phi_N} := e^{-\omega
  z/\hbar}\Phi_{\omega_1,\dots,\omega_N} \colon 
\C_\hbar[q_0,q_1,\dots]\to \mathbb{V}(f),
\een
where we use that $\mathbb{V}(f)$ is a $\mathcal{W}(f)$-module on
which $\omega z$ acts locally nilpotently. Note that 
\ben
[-\omega z , \omega^i (-z)^{-k-1}] = \hbar \Res_{z=0}
K_f(-\omega z,\omega^i(-z)^{-k-1}) dz = -\hbar \delta_{k,1} a_i,
\een
where $a_i$ are the coordinates of $\mathbf{1}$, i.e.,
$\mathbf{1}=\sum_{i=1}^N a_i \phi_i$. 
Therefore the operator $e^{-\omega z/\hbar}$ acts as the shift 
$q_1 \mapsto q_1- \mathbf{1}$, and we have 
\[
\sigma_{\omega,P}^{\phi_1,\dots,\phi_N} (g(q_0,q_1,q_2,\dots)) 
= \Phi_{\omega_1,\dots,\omega_N}(g(q_0,q_1-\mathbf{1},q_2,\dots)) 
\]
for any $g\in \C_\hbar[q_0,q_1,\dots]$. 
It follows that 
$\sigma_{\omega,P}^{\phi_1,\dots,\phi_N}$ induces an isomorphism
between the completed tame Fock spaces  
\beq\label{Fock-triv}
\sigma_{\omega,P}^{\phi_1,\dots,\phi_N} \colon \C_\hbar
[\![q_0,q_1+\mathbf{1},q_2\dots]\!]_{\rm tame} 
\to \widehat{\mathbb{V}}_{\rm tame}(f), 
\eeq
where $\widehat{\mathbb{V}}_{\rm
  tame}(f):=\widehat{\mathcal{W}}_{\rm
  tame}/\widehat{\mathcal{W}}_{\rm tame}\HH_+(f) $.

\subsubsection{The total ancestor potential and the abstract Fock space}\label{ancestor-F}
It turns out that the dependence of the
isomorphism $\Phi_{\omega_1,\dots,\omega_N}$ on the choice of an opposite subspace and a good
basis is controlled by Givental's symplectic loop group
quantization. Let us assume that we have two opposite filtrations $P'$ and
$P$ and $\{\omega_i'\}_{i=1}^N\subset \HH_+(f)\cap P'z$ and
$\{\omega_i\}_{i=1}^N\subset \HH_+(f)\cap Pz$ are
corresponding good bases. 
\begin{lemma}\label{change-B}
If $P'=P$ and the transition between the good bases is given by
\eqref{fr-change}, then  
\ben
\Phi_{\omega_1',\dots,\omega_N'}^{-1}\circ 
\Phi_{\omega_1,\dots,\omega_N}\mathcal{F}(\hbar;\mathbf{q}) = 
\mathcal{F}(\hbar;B\mathbf{q}). 
\een
\end{lemma}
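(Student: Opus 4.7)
The plan is to reduce to the action on the generators $q_{k,i}$ and then extend by multiplicativity and continuity. The map $\Phi_{\omega_1,\dots,\omega_N}$ is an algebra isomorphism from $\C_\hbar[q_0,q_1,\dots]$ onto $\mathbb{V}(f) \cong \bigoplus_n \operatorname{Sym}^n(P)\otimes \C_\hbar$ (Lemma \ref{Fock-coords}), because $P$ is Lagrangian so the images of the generators mutually commute in $\mathbb{V}(f)$. Hence it is enough to verify the identity on each $q_{k,i}$.

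First I would work out how the dual basis transforms. The new dual basis $\{\omega'^k\}$ is defined by $K_f^{(0)}(\omega_j',\omega'^k)=\delta_j^k$. Writing $\omega'^k=\sum_\ell C_{\ell k}\omega^\ell$ and substituting $\omega_j'=\sum_i \omega_i B_{ij}$ gives $(B^T C)_{jk}=\delta_{jk}$, so $C=(B^{-1})^T$. Inverting, one obtains
\[
\omega^i \;=\; \sum_{j=1}^N B_{ij}\,\omega'^j.
\]

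The main computation then reads
\[
\Phi_{\omega_1,\dots,\omega_N}(q_{k,i}) \;=\; \omega^i z^{-k-1} \;=\; \sum_{j=1}^N B_{ij}\,\omega'^j z^{-k-1} \;=\; \Phi_{\omega'_1,\dots,\omega'_N}\!\Bigl(\sum_{j=1}^N B_{ij}\,q_{k,j}\Bigr),
\]
so $\Phi_{\omega'_1,\dots,\omega'_N}^{-1}\circ \Phi_{\omega_1,\dots,\omega_N}$ sends $q_{k,i}$ to $(B\mathbf{q})_{k,i}=\sum_j B_{ij}q_{k,j}$, which matches the restriction of the substitution $\mathbf{q}\mapsto B\mathbf{q}$ to generators. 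Extending multiplicatively gives the identity on all polynomials in $\mathbf{q}$, and the substitution preserves total $\mathbf{q}$-degree, so the identity passes to the completion in which $\mathcal{F}(\hbar;\mathbf{q})$ lives.

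The only real obstacle is index-bookkeeping: tracking $B$, $B^{-1}$ and their transposes correctly when transporting the residue-pairing duality between $\{\omega_i\}$ and $\{\omega^j\}$. Beyond that, the argument is essentially formal, and in particular no input from the specific structure of $f$ or from Givental's quantization of non-trivial loop-group elements is needed — the transformation is by an element of $GL$, not by a loop in $z$.
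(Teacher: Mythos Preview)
Your proof is correct and is precisely the natural verification; the paper states the lemma without proof, treating it as immediate from the definitions, and your argument makes explicit exactly the index computation (dual basis transforms via $(B^{-1})^T$, hence $\omega^i=\sum_j B_{ij}\omega'^j$) that underlies it.
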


\noindent
It remains only to investigate the case when $P'$ and
$P''$ are different. Let us choose $\omega\in
\mathcal{L}_f-\{0\}$. Using Lemma \ref{change-B} we may reduce the
general case to the case when $\omega_i\equiv\omega_i'\equiv
\phi_i\omega \mod z \HH_+(f)$.
In order to compare with Givental's formalism put
$H:=\operatorname{Jac}(f)$, and denote by $(\ ,\ )$ the pairing
on $H$ induced by the residue pairing $K_f^{(0)}$. Let $R(f,z)$ be the
symplectic transformation of $H(\!(z)\!)$ defined by \eqref{gb-change}. 
\begin{lemma}\label{change-R}
The following formula holds
\ben
\Phi_{\omega_1',\dots,\omega_N'}^{-1}\circ 
\Phi_{\omega_1,\dots,\omega_N} (\cF) =
(R(f,z)^t)^\wedge\cF,
\een
where $R(f,z)^t$ is the transopse of $R(f,z)$ with respect to the residue pairing.
\end{lemma}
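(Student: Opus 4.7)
The strategy is to compute $\Phi_{\omega_1',\dots,\omega_N'}^{-1}\circ \Phi_{\omega_1,\dots,\omega_N}$ directly on monomials in the intrinsic Fock space $\mathbb{V}(f)$ via normal ordering in the Weyl algebra, and then match the result with Givental's explicit formula for the canonical quantization in Proposition~\ref{R-action}.

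First, I would analyze how the two dual bases are related. Because both good bases satisfy $\omega_i\equiv \omega_i'\equiv \phi_i\omega$ modulo $z\HH_+(f)$, the matrix $R(f,z)=I+R_1(f)z+R_2(f)z^2+\cdots$ from \eqref{gb-change} belongs to $\End(\Jac(f))[\![z]\!]$ and begins with the identity. By property~(2) of Proposition~\ref{good-basis} we have $K(\omega_i,\omega_j)=K(\omega_i',\omega_j')$, a constant matrix equal to the residue pairing on $\Jac(f)$; combining this with the sesquilinearity of $K$ one deduces the symplecticity $R(f,z)^t R(f,-z)=I$, where $^t$ is the transpose with respect to the residue pairing. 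The dual bases are then related by $\omega^j = \sum_i \omega'^i\, (R(f,-z)^t)_{ij}$ with coefficients in $\C[z]$, which is the relation needed to substitute one trivialization for the other.

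Next, I would write $\Phi_{\omega_1,\dots}(q_{k_1,i_1}\cdots q_{k_n,i_n}) = \omega^{i_1}z^{-k_1-1}\otimes\cdots\otimes \omega^{i_n}z^{-k_n-1}$, substitute the expression for each $\omega^{i_s}$ from the first step, and split every coefficient $(R(f,-z)^t)_{j i_s}z^{-k_s-1}$ into its principal part in $z^{-1}\C[z^{-1}]$ (producing an element of $P'$) and its regular part in $\C[z]$ (producing an element of $\HH_+(f)$). Each tensor factor thus becomes a sum of a creation and an annihilation operator relative to the new polarization $P'$. Applying Wick's formula in $\mathbb{V}(f)$ to normal-order these operators produces a leading polynomial contribution together with $\hbar$-weighted Wick contractions whose values are the symplectic pairings $\Omega$ of the annihilation parts with the creation parts. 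Applying $\Phi_{\omega_1',\dots}^{-1}$ then converts this to a polynomial in $\mathbf{q}$. Finally, I would match the outcome with $(R(f,z)^t)^\wedge \cF$ via Proposition~\ref{R-action}: the principal-part projection from the substitution step realizes the linear change of variables $\mathbf{q}\mapsto R(f,z)^t \mathbf{q}$ (since the coefficients extracted from the principal part are exactly the entries of $R(f,z)^t$ acting on the index of $q_{k,i}$), and the Wick contraction sum yields a quadratic form whose generating kernel is $(I - R(f,z)^t R(f,w))/(z+w)$, matching the kernel defining $V_{R^t}$.

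The main obstacle is the last identification: verifying that the Wick contraction kernel arising from Weyl-algebra reordering is exactly the kernel $V_{R^t}$ of Givental. This is a bilinear generating-function identity whose proof reduces, after careful sign tracking (coming from the conventions $q_{k,i}\leftrightarrow \omega^i z^{-k-1}$ rather than $\omega^i(-z)^{-k-1}$ and from the sesquilinearity of $K$), to the symplecticity relation $R(f,z)^t R(f,-z)=I$ established in the first step. The appearance of the transpose in the final formula is natural, because the polynomial variables $\mathbf{q}=(q_{k,i})$ are indexed by the dual basis $\{\omega^i\}$, so a change of basis $R$ on the $\omega_i$ induces the transposed action on the $\mathbf{q}$-coordinates.
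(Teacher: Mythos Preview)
Your approach is sound and rests on the same ingredients as the paper's proof (the basis-change formula for the dual bases, the creation/annihilation split, and Givental's explicit formula from Proposition~\ref{R-action}), but the execution is organized differently. You propose to compute $\Phi^{-1}_{\omega'}\circ\Phi_{\omega}$ on a full monomial at once, using Wick's formula to normal-order the product of split operators, and then to identify the resulting quadratic-form kernel with $V_{R^t}$. The paper instead argues by induction on the length of the monomial: assuming the identity for $\cF$, it computes $\Phi^{-1}_{\omega'}\circ\Phi_{\omega}(q_{k,i}\cF)$ by expanding the single factor $\omega^i(-z)^{-k-1}$ in the primed basis, translating the resulting creation/annihilation operators through $\Phi^{-1}_{\omega'}$, and matching with the elementary identity
\[
e^{\frac{\hbar}{2}V}\,q_{k,i}\;=\;\Bigl(q_{k,i}+\tfrac{\hbar}{2}[V,q_{k,i}]\Bigr)\,e^{\frac{\hbar}{2}V}.
\]
This reduces the whole claim to checking a single commutator, which is a short direct computation, whereas your route requires verifying the full bilinear generating-function identity for the Wick kernel. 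Both are correct; the paper's inductive step is shorter and avoids the bookkeeping of many simultaneous contractions.

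One small point to watch in your write-up: the substitution realized by the principal-part projection is $\mathbf{q}\mapsto R(f,-z)\mathbf{q}$, not $\mathbf{q}\mapsto R(f,z)^t\mathbf{q}$; the two differ since $R(f,z)^t=R(f,-z)^{-1}$ by symplecticity. This is exactly the form of Proposition~\ref{R-action} applied with $R(-z)$ in place of $R$, so that $\widehat{R(-z)}^{-1}=\widehat{R^t}$. Getting this direction right is what makes the kernel match work, so be careful with it when you unwind the generating function.
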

\proof
It enough to prove that if the Lemma is true for some $\cF$, then it is
true for $q_{k,i}\cF$ for all $k\geq 0$, $1\leq i\leq N$.
Recalling Proposition \ref{R-action} and that $R(f,z)^t =
R(f,-z)^{-1}$, we get
\ben
(R(f,z)^t)^\wedge\cF(\mathbf{q}) = 
\left.
\Big( e^{\frac{\hbar}{2} V(\partial_\mathbf{q},\partial_\mathbf{q})}
\, \cF\Big)\right|_{\mathbf{q}\mapsto R(f,-z)\mathbf{q}},
\een
where the 2nd order differential operator 
\ben
V(\partial_\mathbf{q},\partial_\mathbf{q}) = \sum_{k,\ell=0}^\infty
(V_{k\ell} \phi^j,\phi^i)\frac{\partial^2}{\partial q_{k,i}\partial q_{\ell,j}}
\een
is given by 
\ben
V_{k\ell} = \sum_{a=0}^\ell (-1)^{a+k+\ell+1} R_{k+1+a} R_{\ell-a}^t.
\een
By definition,
\ben
\Phi_{\omega_1,\dots,\omega_N} (q_{k,i}\cF) = \omega^i(-z)^{-k-1} \Phi_{\omega_1,\dots,\omega_N} (\cF) 
\een
and $\omega^i(-z)^{-k-1} $ is given by
\ben
\sum_{j=1}^N \Big( \sum_{a=0}^k R_{ij;a}(f)
(-1)^a \omega'^{j} (-z)^{-(k-a)-1} + \sum_{a=k+1}^\infty  R_{ij;a}(f)
(-1)^a \omega'^{j} (-z)^{a-k-1}\Big).
\een
By definiton, if $k-a\geq 0$, then
\ben
\Phi^{-1}_{\omega_1',\dots,\omega_N'} \circ \omega'^{j}
(-z)^{-(k-a)-1} = q_{k-a,j} \circ \Phi^{-1}_{\omega_1',\dots,\omega_N'} 
\een
and if $a\geq k+1$, then 
\ben
\Phi^{-1}_{\omega_1',\dots,\omega_N'} \circ \omega'^{j} (-z)^{a-k-1} =
(-1)^{a+k+1} \hbar \sum_{j'=1}^N (\phi^j,\phi^{j'})
\frac{\partial}{\partial q_{a-k-1,j'} }\circ 
\Phi^{-1}_{\omega_1',\dots,\omega_N'} .
\een
Therefore $\Phi^{-1}_{\omega_1',\dots,\omega_N'}
\Phi_{\omega_1,\dots,\omega_N} (q_{k,i}\cF)$ can be written as the sum
of 
\ben
\sum_{j=1}^N \sum_{a=0}^k R_{ij;a}(f)
(-1)^a q_{k-a,j} \left.
\Big( e^{\frac{\hbar}{2} V(\partial_\mathbf{q},\partial_\mathbf{q})}
\, \cF\Big)\right|_{\mathbf{q}\mapsto R(f,-z)\mathbf{q}} 
\een
and
\ben
\sum_{j=1}^N \sum_{a=k+1}^\infty  R_{ij;a}(f)
(-1)^{k+1}\hbar
\sum_{j'=1}^N (\phi^j,\phi^{j'})
\frac{\partial}{\partial q_{a-k-1,j'} }\
\left.
\Big( e^{\frac{\hbar}{2} V(\partial_\mathbf{q},\partial_\mathbf{q})}
\, \cF\Big)\right|_{\mathbf{q}\mapsto R(f,-z)\mathbf{q}} .
\een
Note that 
\ben
\sum_{j=1}^N \sum_{a=0}^k R_{ij;a}(f)
(-1)^a q_{k-a,j} = (R(f,-z)\mathbf{q})_{k,i}.
\een
A straightforward computation shows that 
\ben
\sum_{j=1}^N \sum_{a=k+1}^\infty  R_{ij;a}(f)
(-1)^{k+1}\hbar
\sum_{j'=1}^N (\phi^j,\phi^{j'})
\frac{\partial}{\partial q_{a-k-1,j'} }\
\left.
\Big(\mathcal{G}(\mathbf{q})\right|_{\mathbf{q}\mapsto
R(f,-z)\mathbf{q}} \Big) 
\een
equals
\ben
\left.
\Big(
\frac{\hbar}{2}[V(\partial_\mathbf{q},\partial_\mathbf{q}),q_{k,i}]
\mathcal{G}\Big)\right|_{\mathbf{q}\mapsto R(f,-z)\mathbf{q}}. 
\een
Finally, for $\Phi^{-1}_{\omega_1',\dots,\omega_N'}
\Phi_{\omega_1,\dots,\omega_N} (q_{k,i}\cF)$ we get
\ben
\left.
\Big( \Big(q_{k,i}+
\frac{\hbar}{2}[V(\partial_\mathbf{q},\partial_\mathbf{q}),q_{k,i}]\Big)
e^{\frac{\hbar}{2} V(\partial_\mathbf{q},\partial_\mathbf{q})} 
\, \cF\Big)\right|_{\mathbf{q}\mapsto R(f,-z)\mathbf{q}} = 
\left.
\Big(
e^{\frac{\hbar}{2} V(\partial_\mathbf{q},\partial_\mathbf{q})}
\, (q_{k,i}\cF)\Big)\right|_{\mathbf{q}\mapsto R(f,-z)\mathbf{q}}.
\een
The above expression is precisely $(R(f,z)^t)^\wedge (q_{k,i}\cF)$.
\qed

\medskip
Comparing the transformation laws for the total ancestor potential
(see Proposition \ref{anc-change}) and the transformation laws in
Lemma \ref{change-B} and Lemma \ref{change-R}, we get that  
\ben
\cA(\hbar,f,\omega):=\sigma^{\phi_1,\dots,\phi_N}_{\omega,P}
(\cA_{f,\omega}^{\omega_1,\dots,\omega_N}(\hbar;\mathbf{q}))
\een 
is a vector in $\widehat{\mathbb{V}}_{\rm tame}(f)$ independent of the
choice of the basis $\{\phi_i\}_{i=1}^N$ and the choice of the
opposite subspace $P$. We refer to $\cA(\hbar,f,\omega)$ as 
the {\em global ancestor potential} of $f$.
Let us denote by $\widehat{\mathbb{V}}_{\rm tame}$ the
vector bundle on $\cM_{\rm mar}^\circ$ 
whose fiber over a point $f\in \cM_{\rm mar}$ is the completed tame Fock
space $\widehat{\mathbb{V}}_{\rm mar}(f).$ We call it the {\em
  completed tame Fock  bundle} or simply the {\em abstract Fock bundle}.
The global ancestor potential may be viewed as a holomorphic function 
\ben
\cA\colon \mathcal{L}\setminus{\{0\}}\to \widehat{\mathbb{V}}_{\rm tame},\quad 
(f,\omega)\mapsto \cA(\hbar,f,\omega).
\een
Note that the above map is not a map of vector bundles. Nevertheless, we
have the following symmetry, which in some sense allows us to think of
$\hbar$ as a coordinate along the fiber of $\mathcal{L}$.
\begin{corollary}\label{scale-c}
The global ancestor potential has the following scaling property:
\ben
\cA(\hbar,f,c\,\omega) = \cA(\hbar c^{-2},f, \omega)
,\quad \forall c\in \C\setminus{\{0\}}.
\een
\end{corollary}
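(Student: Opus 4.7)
The plan is to exploit the stated independence of $\cA(\hbar, f, \omega)$ from the choice of basis $\{\phi_i\}_{i=1}^N$ of $\Jac(f)$ and opposite subspace $P$. The strategy is to compute $\cA(\hbar, f, c\omega)$ using an alternative basis of $\Jac(f)$ chosen so that the good basis of $\HH_+(f)\cap zP$ arising in the computation coincides with the one used in the computation of $\cA(\hbar c^{-2}, f, \omega)$.

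More concretely, fix a basis $\{\phi_i\}$ and opposite subspace $P$, and let $\{\omega_i\}$ be the good basis determined by $\omega_i \equiv \phi_i \omega \mod z \HH_+(f)$. I would then compute $\cA(\hbar, f, c\omega)$ using the alternative basis $\tilde\phi_i := c^{-1}\phi_i$ (and the same $P$). Since $\tilde\phi_i \cdot c\omega = \phi_i \omega$, the uniqueness of the good basis forces the good basis associated to $(c\omega, P, \tilde\phi_i)$ to be the same $\{\omega_i\}$ used in the $(\omega, P, \phi_i)$ setup. This is the crucial simplification enabled by choosing $\tilde\phi_i = c^{-1}\phi_i$.

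With the good basis unchanged, I can apply Proposition \ref{anc-change}(a) with $B = I$ and scaling $c$ to obtain
\[
\cA^{\omega_1,\dots,\omega_N}_{f, c\omega}(\hbar;\mathbf{q}) = \cA^{\omega_1,\dots,\omega_N}_{f,\omega}(\hbar c^{-2}; c^{-1}\mathbf{q}).
\]
Next, I would apply $\sigma^{\tilde\phi_1,\dots,\tilde\phi_N}_{c\omega, P} = e^{-c\omega z/\hbar}\Phi_{\omega_1,\dots,\omega_N}$ to this tame series. Because $\tilde\phi_i = c^{-1}\phi_i$, the coefficients of the unit $\mathbf{1} \in \Jac(f)$ in the $\tilde\phi_i$-basis are $c$ times those in the $\phi_i$-basis, and the coordinate variables associated to $\tilde\phi_i$ satisfy $\tilde q_{k,i} = c q_{k,i}$. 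Substituting these relations into the formula $\sigma(g(\mathbf{q})) = \Phi_{\omega_1,\dots,\omega_N}(g(q_0, q_1 - \mathbf{1}, q_2,\dots))$ produces the cascade $c^{-1}\tilde{\mathbf{q}} = \mathbf{q}$ and $c^{-1}(q_1^{\tilde\phi} - \mathbf{1}^{\tilde\phi}) = q_1^{\phi} - \mathbf{1}^{\phi}$, so that all factors of $c$ cancel and the result reduces to $\sigma^{\phi_1,\dots,\phi_N}_{\omega, P}(\cA^{\omega_1,\dots,\omega_N}_{f,\omega}(\hbar c^{-2};\mathbf{q})) = \cA(\hbar c^{-2}, f, \omega)$.

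The main obstacle will be the careful bookkeeping of how the rescaling $\phi_i \mapsto c^{-1}\phi_i$ propagates through three related objects simultaneously: the coordinate variables $\mathbf{q}$ on the tame Fock space, the coefficient vector of the dilaton shift $\mathbf{1}$, and the argument rescaling $c^{-1}\mathbf{q}$ in Proposition \ref{anc-change}(a). The claim is essentially that these three independent sources of $c$-factors conspire to cancel, and verifying this compatibility is the technical heart of the argument, but it requires no new ingredient beyond Proposition \ref{anc-change}(a) and the definition of $\sigma$.
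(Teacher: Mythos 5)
Your proof is correct and follows essentially the same route as the paper, whose entire argument is to cite Proposition \ref{anc-change}(a) together with Lemma \ref{change-B}. The only difference is organizational: by rescaling the Jacobi-ring basis to $\tilde\phi_i=c^{-1}\phi_i$ you keep the good basis $\{\omega_i\}$ (hence the map $\Phi_{\omega_1,\dots,\omega_N}$) unchanged, which makes the invocation of Lemma \ref{change-B} unnecessary and reduces everything to Proposition \ref{anc-change}(a) plus the bookkeeping of the dilaton shift that you describe.
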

\proof
The statement is a Corollary of Proposition \ref{anc-change}, a) and
Lemma \ref{change-B}.
\qed

\subsection{Abstract modular forms}
Motivated by Corollary \ref{scale-c} and the generalized definition of
a quasi-modular form in the theory of the period maps (see Definition
\ref{def:qmf}), we would like to introduce the notion of a
quasi-modular form for the moduli space $\cM^\circ_{\rm mar}$.

\begin{definition}\label{def:amf}
We say that a function 
\ben
\cA:\cL-\{0\} \to \widehat{\mathbb{V}}_{\rm tame}, 
\een 
is an {\em abstract modular form} if $\cA(\hbar,f,c\,\omega) = 
\cA(\hbar c^{-2},f,\omega)$.
\end{definition}
\begin{remark}
According to Corollary \ref{scale-c}, the global ancestor potential is
an abstract modular form. 
\end{remark}

In order to compare Definitions \ref{def:amf} and \ref{def:qmf}, let
us trivialize the abstract Fock bundle over an open subset
$\mathcal{U}\subset \cM_{\rm mar}^\circ$. Let us denote by $\Jac$ 
the vector bundle over $\cM_{\rm mar}^\circ$ whose fiber over $f$ is the
Jacobi algebra of $f$. Assume that $\{\phi_i\}_{i=1}^N$ is a frame for $\Jac|_{\mathcal{U}}$,
$\omega$ is a frame for $\mathcal{L}|_{\mathcal{U}}$, and 
$P \subset \HH|_{\mathcal{U}}$ is a sub-bundle such that
$P(f)\subset \HH(f)$ is an opposite subspace for
all $f\in \mathcal{U}$. The isomorphism
\eqref{Fock-triv} is a trivialization of the abstract Fock bundle. 

Let $\cA(\hbar,f,\omega)$ be an abstract modular form. Put 
\ben
\cA(\hbar; f,\omega,\mathbf{q}):=
(\sigma^{\phi_1,\dots,\phi_N}_{\omega,P(f)})^{-1}
(\cA(\hbar,f,\omega)).
\een
Suppose that we have the following genus expansion
\ben
\cA(\hbar;f,\omega,\mathbf{q})=
\exp\left( 
\sum_{g=0}^\infty \hbar^g \cF_g(f,\omega,\mathbf{q})\right).
\een
The scaling property of $\cA$ is equivalent to 
\ben
\cF_g(f,c\,\omega,c\, \mathbf{q}) = 
c^{2-2g} \cF_g(f,\omega,\mathbf{q}),
\een
so the coefficients  of $\cF_g(f,\omega,\mathbf{q})$ in front of the
$\mathbf{q}$-monomials can be interpreted as sections of $\cL^{2g-2+\operatorname{deg}}|_{\cU}$, where
$\operatorname{deg}$ is the degree of the corresponding $\mathbf{q}$-monomial.

The notion of quasi-modularity is contained in Definition
\ref{def:amf} as follows. 
Given an abstract modular form $\cA$ and a
chart $\mathcal{U}$ such that $\Jac|_{\mathcal{U}}$ is trivial, 
we can choose the complex conjugate opposite
sub-bundle $\kappa(\HH_+)z^{-1}$ to obtain coordinate expressions
$\widetilde{\cF}_g(f,\omega,\mathbf{q})$. The latter depends
non-holomorphically on $f$, but its coefficients transform as modular forms in the
sense of Definition \ref{def:qmf}. Choosing a different opposite
sub-bundle $P \subset \HH$ with a holomorphic trivialization
of $\HH_+\cap P z$, we can obtain another coordinate expression
$\cF_g(f,\omega,\mathbf{q})$ for $\cA$. The change of the opposite
subspace is given by a matrix $B(f)$ and a symplectic
transformation $R(f,z)=1+R_1(f)z+R_2(f)z^2+\cdots$ 
(see Section \ref{ancestor-F}). 
Recalling Lemma \ref{change-B} and Lemma \ref{change-R}, 
we get that the coefficients of $\widetilde{\cF}_g(f,\omega,\mathbf{q})$ 
are polynomial expressions of the coefficients of $\cF_l(f,\omega,\mathbf{q})$, 
$l\le g$,  
with coefficients in $\C[B,R_1,R_2,\dots]$, where 
$\mathbb{C}[B,R_1,R_2,\dots]$ is the polynomial ring on the 
entries of the matrices $B(f),R_1(f),R_2(f),\dots$.  
The entries of these matrices depend non-holomorphically on $f$, so following the
terminology in Definition \ref{def:qmf}, we call them {\em anti-holomorphic
generators}. Let us point out that finding explicit formulas for the
anti-holomorphic generators is in general a difficult problem (see the
example in Section \ref{example:e6}).  

\begin{remark} 
We explain a relationship of the Fock bundle in the present paper 
to the Fock sheaf in \cite{Coates-Iritani:Fock}. 
Given an opposite subspace $P$, 
a section of the Fock sheaf in \cite{Coates-Iritani:Fock} can be locally 
identified with a function on the Givental Lagrangian cone 
of the form $Z = \exp(\sum_{g=0}^\infty \hbar^{g-1} F_g)$ 
 (this is similar to the trivialization \eqref{Fock-triv}). 
The total space of $\cL-\{0\}$ over $\cM_{\rm mar}^\circ$ can be 
identified with a finite-dimensional slice of the Givental cone, and a coordinate 
expression $\cA(\hbar;f,\omega,\mathbf{q}) 
= (\sigma^{\phi_1,\dots,\phi_N}_{\omega,P})^{-1} \cA(\hbar,f,\omega)$ 
of the abstract ancestor potential corresponds to the jet of the potential 
$Z$ at the point $(f,-z \omega)$. This is related to the 
\emph{jetness} in \cite{Coates-Iritani:Fock}. 
\end{remark}

\subsection{The holomorphic anomaly equations}
Let us pick local holomorphic frames $\{\phi_i\}_{i=1}^N$ and $\omega $ for
respectively ${\rm Jac}$ and $\mathcal{L}$.
Let us choose a local holomorphic coordinate system
$\sigma=(\sigma_1,\dots,\sigma_{N'})$ on $\cM_{\rm mar}^\circ$, so that each
$\phi_i=\phi_i(x,\sigma )$ is a weighted-homogeneous polynomial
depending holomorphically on $\sigma$. We define the {\em hybrid
  ancestor potential} $\cA_f(\hbar,\mathbf{q})$ to be the
coordinate expression
$\cA_{f,\omega}^{\omega_1,\dots,\omega_N}(\hbar,\mathbf{q})$ of the
global ancestor potential with respect to the opposite subspace
$\kappa(\HH_+)z^{-1}$ (see Section \ref{ancestor-F}). The
hybrid ancestor potential depends non-holomorphically on $f$. We would
like to derive differential equations for $\cA_f(\hbar,\omega)$, which following the physics
literature will be called {\em holomorphic anomaly equations}, that govern the
non-holomorphic dependence on $f$.

Let us begin with several remarks about the Cecotti-Vafa connection from Section
\ref{CV}. For given $f\in \cM_{\rm mar}^\circ$, we have a vector spaces isomorphism 
\ben
\Jac(f)\cong \mathbb{K}(f),\quad \phi_a\mapsto \omega_a,
\een
which allows us to interpret all connection matrices and the complex
conjugation $\kappa$ as endomorphisms and a complex conjugation of
$\Jac(f)$. Note that $C_i$ is the operator of
multiplication by $\partial f/\partial \sigma_i$ and $U$ is the operator of multiplication
by $f$. In particular, $\widetilde{U}=U=0$ because $f$ is weighted-homogeneous, so
it vanishes in $\Jac(f)$. 
The next observation is that $\{\omega_a\}_{a=1}^N$ is a
holomorphic frame for $\mathbb{K}$, because the transition  functions
in this frame are the same as the transition functions of $\Jac\otimes
\mathcal{L}$ in the frame $\{\phi_a\otimes \omega\}$ and the latter is
by definition holomorphic. This observation implies that the
connection matrices satisfy
$\Gamma_{\bar\iota}=0$ and $\Gamma_i=h^{-1}\partial h$, where
$\partial$ is the holomorphic de Rham differential on $\cM_{\rm mar}^\circ$
and  $h=(h_{ab})$ is the matrix of the Hermitian pairing
\ben
h_{ab} = K^{(0)}(\kappa(\omega_a),\omega_b).
\een
Finally, let us point out that the operators
$z\widetilde{C}_{\bar\iota},$ $1\leq 
i\leq N'$ are infinitesimal symplectic transformations of
$\Jac(f)(\!(z^{-1})\!)$. This can be proved by using the
compatibility of the Gauss--Manin connection with the higher residue
pairings and the fact that up to a holomorphic factor, the quantity
\ben
K(\omega_i,\omega_j) = K^{(0)}(\omega_i,\omega_j)
\een
 is the Grothendick residue of
$\phi_i(x,\sigma)\phi_j(x,\sigma)$, so it must be holomorphic in $\sigma\in
\cM_{\rm mar}^\circ$. 
\begin{proposition}\label{han}
The hybrid ancestor potential satisfies the following differential equations
\ben
\partial_{\overline{\sigma}_i} \cA_f (\hbar,\mathbf{q}) = 
(z\widetilde{C}_{\bar\iota}^t)^\wedge 
\cA_f (\hbar,\mathbf{q}) ,\quad 1\leq i\leq N',
\een 
where ${}^t$ is conjugation with respect to the residue pairing.
\end{proposition}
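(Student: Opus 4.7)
The strategy is to exploit the fact that the global ancestor potential $\cA(\hbar,f,\omega)$ is a holomorphic section of the Fock bundle $\widehat{\mathbb{V}}_{\rm tame}$ in its natural complex structure, whereas the trivialization $\sigma_{\omega,\kappa(\HH_+)z^{-1}}^{\phi_1,\dots,\phi_N}$ used to define the hybrid ancestor potential depends anti-holomorphically on $f$ via the complex conjugation $\kappa$. Consequently, the derivative $\partial_{\bar\sigma_i}\cA_f(\hbar,\mathbf{q})$ is produced entirely by the anti-holomorphic variation of the trivialization, and the task reduces to identifying this variation with $(z\widetilde{C}_{\bar\imath}^t)^\wedge$.

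Concretely, over a small chart $\cU\subset \cM_{\rm mar}^\circ$, I would choose a holomorphic opposite subspace $P_{\rm hol}$ together with a holomorphic good basis $\{\omega_i^{\rm hol}\}$ that reduces to $\{\phi_i\omega\}$ modulo $z\HH_+$; such a $P_{\rm hol}$ exists locally by parallel transport of any opposite subspace through the (holomorphic) Gauss-Manin connection, and for Fermat polynomials it is the one constructed in Section~\ref{sec:twdR}. Milanov's analytic extension \cite{Mi}, combined with the holomorphic construction of the Frobenius structure from $(\omega,P_{\rm hol})$, implies that the corresponding coordinate expression $\cA^{\rm hol}_f(\hbar,\mathbf{q})$ is holomorphic in $f\in\cU$. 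Since $\omega_a$ (the CV-good basis) and $\omega_a^{\rm hol}$ both reduce to $\phi_a\omega$ modulo $z\HH_+$, Lemma~\ref{change-R} applies and gives
\[
\cA_f(\hbar,\mathbf{q}) = (R(f,\bar f;z)^t)^\wedge \cA^{\rm hol}_f(\hbar,\mathbf{q}),
\]
where $R(f,\bar f;z)=I+R_1 z+R_2 z^2+\cdots$ is the symplectic transformation defined by $\omega_a=\sum_b\omega_b^{\rm hol}R_{ba}$, depending non-holomorphically on $f$. Differentiating both sides by $\partial_{\bar\sigma_i}$ and using holomorphicity of $\cA^{\rm hol}_f$ yields
\[
\partial_{\bar\sigma_i}\cA_f \;=\; \bigl[\bigl(\partial_{\bar\sigma_i}(R^t)^\wedge\bigr)\cdot ((R^t)^\wedge)^{-1}\bigr]\,\cA_f.
\]

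The next step is to identify the operator in brackets. The Gauss-Manin connection on $\HH$ is a holomorphic connection on a holomorphic bundle, so its $(0,1)$-part coincides with the $\bar\partial$-operator of $\HH$; applied to the $C^\infty$ frame $\{\omega_a\}$, which is holomorphic only as a frame of $\K$ (not of $\HH$), this gives $\partial_{\bar\sigma_i}\omega_a=\nabla_{\bar\imath}\omega_a=z\widetilde{C}_{\bar\imath a}^b\omega_b$ by the computation in Section~\ref{CV} and the vanishing $\Gamma_{\bar\imath}=0$. Comparing with $\partial_{\bar\sigma_i}\omega_a=\sum_b\omega_b^{\rm hol}\partial_{\bar\sigma_i}R_{ba}$ and using $\omega_b=\sum_c\omega_c^{\rm hol}R_{cb}$, a short matrix computation yields $\partial_{\bar\sigma_i}R=zR\widetilde{C}_{\bar\imath}$, and transposing gives the clean right-invariant identity $(\partial_{\bar\sigma_i}R^t)(R^t)^{-1}=z\widetilde{C}_{\bar\imath}^t$. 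By the standard Lie-algebra-to-group correspondence in Givental's quantization (together with the fact, recorded before the proposition, that $z\widetilde{C}_{\bar\imath}$ is an infinitesimal symplectic transformation of $\Jac(f)(\!(z^{-1})\!)$), this gives $\partial_{\bar\sigma_i}(R^t)^\wedge\cdot((R^t)^\wedge)^{-1}=(z\widetilde{C}_{\bar\imath}^t)^\wedge$ modulo a possible $\hbar$-scalar central term, which then yields the claimed holomorphic anomaly equation.

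The main technical obstacle is controlling this central $\hbar$-scalar: Givental's symplectic group quantization is only projective, so a Darboux-coordinate calculation along the lines of the proof of Lemma~\ref{change-R} is required to verify that no scalar correction appears. Two secondary points need attention, namely that the Gauss-Manin connection on $\HH$ is genuinely holomorphic (so $\nabla^{(0,1)}=\bar\partial$ on $C^\infty$ sections), and that Milanov's extension result \cite{Mi} applies uniformly in $f$ so that $\cA^{\rm hol}_f$ varies holomorphically with $f$ and not merely within a fixed miniversal unfolding. Once these are settled, the transpose in $(z\widetilde{C}_{\bar\imath}^t)^\wedge$ arises naturally from the $R\mapsto R^t$ convention of Lemma~\ref{change-R}, with no further sign ambiguity.
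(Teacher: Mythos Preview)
Your proposal is correct and follows essentially the same route as the paper: introduce a holomorphic reference opposite subspace, relate the hybrid and holomorphic ancestor potentials via $(R^t)^\wedge$, differentiate in $\bar\sigma_i$, and compute $\partial_{\bar\sigma_i}R$ from the Cecotti--Vafa connection together with $\Gamma_{\bar\imath}=0$. The one concern you flag as the ``main technical obstacle''---the possible central $\hbar$-scalar---is in fact a non-issue and the paper dispatches it in one line: although Givental's quantization is only projective in general, when restricted to the upper-triangular subgroup of symplectic transformations of the form $1+R_1z+R_2z^2+\cdots$ it is an honest representation, so $\partial_{\bar\sigma_i}(R^t)^\wedge = (z\widetilde{C}_{\bar\imath}^t)^\wedge (R^t)^\wedge$ holds on the nose with no cocycle correction.
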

\proof
Let us fix $f\in \cM_{\rm mar}^\circ$ and denote by $P(F)$ ($F\in B_f$) the
holomorphic extension of the opposite subspace
$\kappa(\HH_+(f))$ to a family of opposite subspaces on the
parameter space $B_f$ of a miniversal unfolding of $f$.
Let us fix arbitrary holomorphic frames $\{\phi_a\}_{a=1}^N$ and $\omega$ of
$\Jac$ and $\mathcal{L}$ on $\cM_{\rm mar}^\circ\cap B_f$. For every $f'\in \cM_{\rm mar}^\circ\cap
B_f$ we have two opposite subspaces in $\HH(f')$:  the complex conjugate subspace
$\kappa(\HH_+(f'))z^{-1}$ and the holomorphic
opposite subspace $P(f')$. Let us denote by
$\{\omega_a\}_{a=1}^N\subset \mathbb{K}(f')$
and $\{\widetilde{\omega}_a\}_{a=1}^N\subset \HH_+(f')\cap
P(f')z$ the good bases such that
\ben
\omega_a \equiv  \widetilde{\omega}_a\equiv \phi_a\omega 
\mod z\HH_+(f'). 
\een
According to Proposition \ref{anc-change}, b),  the ancestor potentials 
\ben
\cA_{f'}(\hbar,\mathbf{q}):=\cA_{f',\omega}^{\omega_1,\dots,\omega_N}(\hbar;\mathbf{q})\quad
\mbox{and}\quad  
\widetilde{\cA}_{f'}(\hbar,\mathbf{q}):=
\cA_{f',\omega}^{\widetilde{\omega}_1,\dots,\widetilde{\omega}_N}(\hbar;\mathbf{q})  
\een
are related by 
\ben
\cA_{f'}(\hbar;\mathbf{q})=(R(f',z)^t)^\wedge \, 
\widetilde{\cA}_{f'}(\hbar;\mathbf{q}),
\een
where the symplectic transformation $R(f',z)$ of
$\Jac(f')(\!(z^{-1})\!)$ is represented in the basis
$\{\phi_a\}_{a=1}^N$ of $\Jac(f')$ by the matrix
$(R_{ab}(f',z))_{a,b=1}^N$  that describes the change 
\beq\label{R-def}
\omega_b(f') = \sum_{a=1}^N
\widetilde{\omega}_a(f')R_{ab}(f',\sigma),\quad 1\leq b\leq N.
\eeq
Since $\widetilde{\cA}_{f'}$ depends holomorphically on $f'$,
we just need to find the derivatives of $R(f',z)$ with respect to
$\overline{\sigma}'_i$, where $\sigma'=(\sigma_1',\dots,\sigma_{N'}')$  
denotes the coordinates of $f'$. Differentiating  \eqref{R-def} with
respect to $\overline{\sigma}'_i$ we get 
\ben
\sum_{k=1}^N \omega_k(f') (\Gamma_{\bar\iota b}^k +
\widetilde{C}_{\bar\iota b}^k z) = 
\sum_{a,k=1}^N \widetilde{\omega}_a(f')R_{ak}(f',\sigma)
 (\Gamma_{\bar\iota b}^k +
\widetilde{C}_{\bar\iota b}^k z) 
\een
for the LHS and
\ben
\sum_{a=1}^N \widetilde{\omega}_a(f')\partial_{\overline{\sigma}_i} R_{ab}(f',z)
\een
for the RHS. Comparing the coefficients in front of
$\widetilde{\omega}_a$ we get
\ben
\partial_{\overline{\sigma}_i} R_{ab}(f',z) = 
\sum_{k=1}^N 
R_{ak}(f',\sigma)
 (\Gamma_{\bar\iota b}^k +\widetilde{C}_{\bar\iota b}^k z) .
\een
In matrix form the above equation becomes
$\partial_{\overline{\sigma}_i} R(f',z) =  R(f',z)
(\Gamma_{\bar\iota}+ z \widetilde{C}_{\bar\iota}).$ Although quantization of
symplectic transformations is only a projective representation, when
restricted to the subgroup of symplectic transformations of the type
$R_0+R_1z+\cdots$ the quantization becomes a representation. Therefore  
\ben
\partial_{\overline{\sigma}_i} (R(f',z)^t)^\wedge =  
(\Gamma^t_{\bar\iota}+ z \widetilde{C}^t_{\bar\iota})^\wedge
(R(f',z)^t)^\wedge. 
\een
It remains only to use that the frame $\{\omega_a\}_{a=1}^N$ is holomorphic, so
$\Gamma_{\bar \iota}=0.$\qed

\begin{remark}
Note that by definition, the quantization of the infinitesimal symplectic
transformation $z \widetilde{C}_{\bar\iota}^t$ is
the following differential operator
\ben
\sum_{a,b=1}^N \Big( 
\frac{\hbar}{2} \widetilde{C}_{\bar\iota}^{ab}\frac{\partial^2}{\partial
  q_{0,a}\partial q_{0,b}} -
\sum_{k=0}^\infty 
\widetilde{C}_{\bar\iota a}^b q_{k,a}\frac{\partial}{\partial
  q_{k+1,b}}  \Big),
\een
where 
\ben
\widetilde{C}_{\bar\iota}^{ab}:=(\widetilde{C}_{\bar\iota}\phi^a,\phi^b)=
\overline{(C_i \kappa(\phi^a),\kappa(\phi^b))}
\een
is symmetric in $a$ and $b$. Here $\{\phi^a\}_{a=1}^N$ is a basis of
$\Jac(f)\cong \mathbb{K}(f)$ dual to $\{\phi_a\}_{a=1}^N$ with respect to
the residue pairing. From this explicit formula we see that our
differential equations have the same form as the BCOV
holomorphic anomaly equations (see \cite{BCOV}, formula (3.17)). 
\end{remark}
\begin{remark} 
The holomorphic anomaly equation was also studied in 
\cite[\S 9.3]{Coates-Iritani:Fock}.  
\end{remark} 
\subsection{Fermat simple elliptic singularity of type $E_6^{(1,1)}$ }\label{example:e6}

We would like to give an example in which our construction gives an
elegant way to investigate the modular properties of the total
ancestor potential. 
Put
\ben
f(x,Q) = x_1^3+x_2^3+x_3^3-\frac{1}{Q} x_1x_2x_3,\quad Q(1-3^3Q^3)\neq 0.
\een
This family of polynomials represents a transversal slice to the orbits
of the group of coordinate changes, so it could be viewed as an open
chart in the marginal moduli space (see Remark \ref{group:cc}). 
Let us introduce the functions 
\ben
f_0(Q) & = &  1+\sum_{k=1}^\infty \frac{(3k)!}{(k!)^3} Q^{3k},\\
f_1(Q) & = & \log Q + \sum_{k=1}^\infty \frac{(3k)!}{(k!)^3}
Q^{3k}(\log Q + h_{3k}-h_k), \\
g(Q,\overline{Q})
& = &
-\frac{ (Df_1)\overline{f}_0+(Df_0)\overline{f}_1
}{
f_1 \overline{f}_0+ f_0\overline{f}_1
},
\een
where $D=Q\partial_Q Q = (Q+Q^2\partial_Q)$ and
$h_\ell=1+\frac{1}{2}+\cdots + \frac{1}{\ell}$ are the harmonic 
numbers. Note that $f_0$ and $f_1$ are solutions to a 2nd order
Fuchsian equation defined by the differential operator
\beq\label{hge}
(Q\partial_Q)^2 - 3^3 Q^3 (Q\partial_Q+1)(Q\partial_Q+2).
\eeq
We are going to choose two sets of good bases $\{\omega_e^{\rm KS}\}$
and $\{\omega_e^{\rm GW}\}$, where the index set for $e$ is
splitted into the following 4 groups:
\ben
\{(0,0,0),\, (1,1,1)\}\sqcup \{ (1,0,0),\, (2,0,0)\}\sqcup \{ (0,1,0),\,
(0,2,0)\}\sqcup \{ (0,0,1),\, (0,0,2)\}  .
\een
The following set of forms is a good basis for the complex conjugate
opposite subspace:
\ben
& \omega^{\rm KS}_{0,0,0} & = dx/Q,\\
& \omega^{\rm KS}_{1,1,1} & = (x_1x_2x_3 + g(Q,\overline{Q}) z) dx/Q \\
& \omega^{\rm KS}_{m\, e_i} & = x_i^m dx/Q,\quad 1\leq m\leq 2,\quad
1\leq i\leq 3,
\een
where $e_i$ is the $i$th coordinate vector in $\mathbb{Z}^3$. 
Another good basis is computed from mirror symmetry at the large
radius limit point $Q=0$
\ben
& \omega_{0,0,0}^{\rm GW} & = \sqrt{-1}\, \frac{dx}{Qf_0(Q)}, \\ 
& \omega_{1,1,1}^{\rm GW} & = \sqrt{-1}\, \Big(x_1x_2x_3 - z
\frac{Df_0}{f_0} \Big)\,
f_0(Q)\operatorname{det}(I_0)^{-1}\, \frac{dx}{Q},  \\
& \omega_{m e_i}^{\rm GW} & = \sqrt{-1}\, (1-3^3Q^3)^{m/3} \, x_i^m
\frac{dx}{Q} ,\quad m=1,2,
\een 
where 
\ben
I_0 = \begin{bmatrix}
f_0(Q) & Df_0(Q) \\
f_1(Q) & Df_1(Q)
\end{bmatrix}
\een
is the Wronskian matrix of the differential equation \eqref{hge}.
The details of both computations will be presented
in a future investigation. 

Let us pick $\omega = \omega_{0,0,0}^{\rm GW}$ to be a frame for the
vacuum line bundle and denote by $\cA_f^{\rm KS}(\hbar;\mathbf{q})$
and $\cA_f^{\rm GW}(\hbar;\mathbf{q})$ the total ancestor potentials
corresponding respectively to the good bases $\{\omega_i^{\rm KS}\}$ and
$\{\omega_i^{\rm GW}\}$. Let us define 
\ben
t:=2\pi\sqrt{-1}\tau/3:=f_1(Q)/f_0(Q).
\een
Using the positive definite Hermitian pairing on $\mathbb{H}_+(f)\cap
\kappa(\HH_+(f))$, it is easy to check that $t+\overline{t}<0$, i.e.,
$\operatorname{Im}(\tau)>0$. Note that
\ben
(\omega_{0,0,0}^{\rm KS},\dots, \omega_{0,0,2}^{\rm KS}) =
(\omega_{0,0,0}^{\rm GW},\dots, \omega_{0,0,2}^{\rm GW})\,
R(Q,\overline{Q},z)\, B(Q),
\een
where the matrices $R$ and $B$ are block-diagonal 
\begin{align*}
R & =\operatorname{Diag}(R^{(1)},R^{(2)},R^{(3)},R^{(4)}),\\
B & =\operatorname{Diag}(B^{(1)},B^{(2)},B^{(3)}, B^{(4)}),
\end{align*}
where the blocks $R^{(i)}$ and $B^{(i)}$ are given by the following
formulas: 
\ben
R^{(1)}=
\begin{bmatrix}
1 & z/(t+\overline{t}) \\
0 & 1
\end{bmatrix}, \quad
R^{(2)} = R^{(3)}=R^{(4)}=
\begin{bmatrix}
1 & 0 \\
0 & 1
\end{bmatrix}
\een
and 
\begin{align*}
B^{(1)} = &  
-\sqrt{-1} \,
\begin{bmatrix}
f_0 &0 \\
0 & \operatorname{det}(I_0)/f_0 
\end{bmatrix},\\
B^{(2)} = B^{(3)}=B^{(4)} = &
-\sqrt{-1} \,
\begin{bmatrix}
(1-3^3Q^3)^{-1/3} & 0 \\
0 & (1-3^3Q^3)^{-2/3} 
\end{bmatrix}.
\end{align*}
Recalling Proposition \ref{anc-change}, we get that 
\beq\label{antiholo-compl}
\widetilde{\cA}_{\tau}(\hbar;\mathbf{q}):=(R(Q,\overline{Q},z)^t)^\wedge
\,
\cA_f^{\rm GW}(\hbar;\mathbf{q}) = \cA_f^{\rm KS}(\hbar;B^{-1}\mathbf{q}).
\eeq
Let $\Sigma=\mathbb{P}^1-\{Q(1-3^3Q^3)=0\}$
be the domain of the 
deformation parameter $Q$. The function $\tau$ gives an
identification between the unniversal covering of $\Sigma$ and the
upper-half plane $\mathbf{H}$. It is easy to check that the monodromy
transformations of $\tau$ are given by 
\ben
\tau\mapsto g(\tau)=\frac{a\tau+b}{c\tau+d},\quad 
g=\begin{bmatrix}
a & b\\
c & d
\end{bmatrix}\in \Gamma(3).
\een 
Under the analytic continuation the primitive form $\omega$ and
$B^{-1}$ are transformed respectively to 
\ben
\omega\mapsto \omega (c\tau+d)^{-1} \quad \mbox{and}\quad 
B^{-1}\mapsto B^{-1} J(g,\tau) (c\tau+d)^{-1},
\een
where 
\ben
J(g,\tau) = \operatorname{Diag}(
1,(c\tau+d)^2,\underbrace{c\tau+d,\dots,c\tau+d}_\text {6 times }).
\een
The analytic continuation of the identity \eqref{antiholo-compl}
yields
\ben
\widetilde{\cA}_{g(\tau)} (\hbar;\mathbf{q}) = 
\widetilde{\cA}_{\tau} (\hbar(c\tau+d)^2;J(g,\tau) \mathbf{q}) .
\een
Comparing the coefficients in front of the monomials in $\mathbf{q}$
we get that the coefficients of the total ancestor potential
$\widetilde{\cA}_{\tau} (\hbar;\mathbf{q})  $ transform as modular
forms on $\Gamma(3)$. 
\begin{remark}
The potential $\widetilde{\cA}_{\tau}
(\hbar;\mathbf{q})  $ coincides with the anti-holomorphic completion
constructed in an ad hoc way in \cite{MR}. Recalling also the mirror
symmetry established in Theorem \ref{CY-LG:mirror_sym}, we recover
the main result of \cite{MR}: the Gromov--Witten invariants of the
elliptic orbifold $\mathbb{P}^1_{3,3,3}$ are quasi-modular forms. 
\end{remark}
\begin{remark}
Slightly generalizing the above argument, we would like to investigate the
total ancestor potential $\cA_f^{\rm GW}(\hbar;\mathbf{q})$ as a
formal series in $q_{k,i}$, $k>0$, whose coefficients are analytic
functions of $\tau_i:=q_{0,i}$, $1\leq i\leq N$. Recalling the results
of Looijenga (see \cite{Lo}), we may identify each relevant deformation parameter
$\tau_i$ with an appropriate $\theta$-function. We expect that under
this identification, the GW invariants will turn into quasi-Jacobi
forms.  We will address this problem in a future investigation. 
\end{remark}
   
\section{Mirror symmetry for orbifold Fermat CY hypersurfaces} 

In the last three sections, we have constructed a global B-model
generating function which is modular in an appropriate generalized
sense, but non-holomorphic. In the remaining part of this paper we
will prove two mirror theorems for Fermat type polynomials satisfying
a CY condition. Namely, we will prove that the generating
functions of certain GW-theory/FJRW-theory invariants are holomorphic
limits of the global B-model generating function. In this section, we
will establish the mirror theorem for GW-theory. 

Let $W:=x_1^{d_1}+\cdots + x_n^{d_n}$ where $d_1,\dots,d_n\in \mathbb{Z}$ are positive integers satisfying 
\ben
\frac{1}{d_1}+\cdots +\frac{1}{d_n} = 1.
\een
Let $D:=\operatorname{lcm}(d_1,\dots,d_n)$ and $w_i:=D/d_i.$ Let $G$ be the group
\ben
G=\{t\in (\mathbb{C}^*)^n \ |\ t_1^{d_1}=\cdots =t_n^{d_n} \}.
\een
We define two orbifolds 
\ben
\mathbf{P} := [(\mathbb{C}^n\setminus{\{0\}})/G]=\mathbb{P}(w_1, \cdots, w_n)
\een
and a suborbifold Calabi--Yau (CY) hypersurface
\ben
Y=[Z/G],\quad Z=\{x_1^{d_1}+\cdots + x_n^{d_n}=0\}\subset \mathbb{C}^n\setminus{\{0\}}.
\een
The above quotients are taken in the category of orbifold groupoids or
equivalently in the category of smooth Deligne--Mumford stacks. 
We remark that here $$Y=[X_W/\tilde{G}_W].$$
 
\subsection{Orbifold Gromov--Witten theory}
Let $Y$ be an orbifold groupoid whose coarse moduli space $|Y|$ is a projective 
variety. Let us denote by $H:=H_{\rm CR}(Y,\mathbb{C})$ the Chen--Ruan cohomology
of $Y$. Then
$$\dim H=N:=(d_1-1)\cdots (d_n-1).$$
Our main interest is in the orbifold Gromov--Witten (GW)
invariants of $Y$
\beq\label{GW-inv}
\lan \phi_{i_1}\psi^{\ell_1},\dots,\phi_{i_k}\psi^{\ell_k}\ran_{g,k,d},
\eeq
where $\{\phi_i\}_{i=1}^N$ is a basis of $H$ and $d\in
\operatorname{Eff}(Y)\subset H_2(|Y|;\mathbb{Z})$ is an effective curve
class. The invariants  are defined through the intersection theory on
the moduli space $\overline{\cM}_{g,k}(Y,d)$. The latter is the moduli
space of degree-$d$ stable
orbifold maps 
$$f\colon (C,(z_1,g_1),\dots,(z_k,g_k))\to Y,$$ where $C$ is a genus-$g$
nodal curve equipped with an orbifold structure, $n$
marked points $z_1, \cdots, z_k$, and a choice of a generator $g_i\in
\Aut_C(z_i)$ for each $i$.  The evaluation at the $i$-th marked point
$(f(z_i),f(g_i))$ determines an evaluation map $\operatorname{ev}_i\colon 
\overline{\cM}_{g,k}(Y,d)\to IY$, where $IY$ is the inertia orbifold of
$Y$. Let us denote by $\psi_i=c_1(L_i)$, $1\leq i\leq k$, the descendant
classes, where $L_i$ is the line bundle on $\overline{\cM}_{g,k}(Y,d)$
formed  by the cotangent lines of the coarse space of $C$ at the
$i$-th marked point (see \cite{Ts}). By definition the GW invariant
\eqref{GW-inv} is obtained by pairing the  cohomology class
\ben
\operatorname{ev}_1^*(\phi_{i_1})\psi_1^{\ell_1}\cup\cdots \cup
\operatorname{ev}_1^*(\phi_{i_k})\psi_k^{\ell_k} 
\een
with the virtual fundamental cycle $[\overline{\cM}_{g,k}(Y,d)]^{\rm
  virt}$. The invariant takes its value in the Novikov ring
$\mathbb{C}[\![Q]\!]$. Let us fix an ample $\mathbb{Z}$-basis
$\{L^{(i)}\}_{i=1}^r$ of $\operatorname{Pic}(|Y|)$.  We embed 
\ben
\mathbb{C}[\![Q]\!]\subset \mathbb{C}[\![Q_1,\dots,Q_r]\!],\quad
Q^d\mapsto Q_1^{\langle c_1(L^{(1)}),d\rangle}\cdots  Q_r^{\langle c_1(L^{(r)}),d\rangle}
\een
For further details on orbifold GW theory we refer to \cite{AGrV} for
the algebraic approach and to \cite{CR} for the analytic approach. 

\subsubsection{Givental's cone}
Let us introduce a set of formal variables $\mathbf{t}=\{t_{k,i}\}$, $1\leq i\leq N$,
$k\geq 0$. The generating function
\ben
\mathcal{F}^{(g)}_Y(\mathbf{t}) = 
\sum_{k=0}^\infty 
\sum_{d\in \operatorname{Eff}(Y)}
\frac{Q^d}{k!}
\lan \mathbf{t}(\psi),\dots,\mathbf{t}(\psi)\ran_{g,k,d}
\een
is called the {\em genus-$g$ total descendant potential}. Here we
write 
$$\mathbf{t}(\psi) = \sum_{k=0}^\infty\sum_{i=1}^N
t_{k,i}\phi_i\psi^k$$ and expand the correlator multiniearly as a
formal power series in $\mathbf{t}$ whose coefficients are the GW
invariants \eqref{GW-inv}. 

Following Givental \cite{G2} we introduce the symplectic vector space
\ben
\mathcal{H}_Y=H_{\rm CR}(Y;\mathbb{C}[\![Q]\!])(\!(z^{-1})\!),\quad
\Omega(f,g) = \Res_{z=0} (f(-z),g(z))dz, 
\een 
where $(\ ,\ )$ is the orbifold Poincar\'e pairing. 
The subspaces $\mathcal{H}_Y^+:=H_{\rm CR}(Y;\mathbb{C}[\![Q]\!])[z]$
and $\mathcal{H}_Y^-:=H_{\rm
  CR}(Y;\mathbb{C}[\![Q]\!])[\![z^{-1}]\!]z^{-1}$ are Lagrangian
subspaces and define a polarization
$\mathcal{H}_Y=\mathcal{H}_Y^+\oplus \mathcal{H}_Y^-$, which allows us
to identify $\mathcal{H}_Y\cong T^*\mathcal{H}_Y^+.$ By definition,
Givental's cone $\mathcal{L}_Y$ is the graph of the differential
$d\mathcal{F}^{(0)}_Y$.  Explicitly,
\ben
\mathcal{L}_Y = \left\{ -z+\mathbf{t} + \sum_{k=0}^\infty \sum_{i=1}^N
\frac{\partial \mathcal{F}^{(0)}_Y}{\partial t_{k,i}} (\mathbf{t}) \,
\phi^{i}(-z)^{-k-1}\ \Big|\ \mathbf{t}(z)\in \mathcal{H}_Y^+
\right\},
\een
where $\{\phi^i\}\subset H$ is a basis dual to $\{\phi_i\}$ with
respect to the Poincar\'e pairing.  The above definition should be
understood in the formal sense, i.e., $\mathcal{L}_Y$ is the formal
germ at $\mathbf{t}=0$ of a cone in $\mathcal{H}_Y$.

\subsubsection{The $J$-function and the calibration}

Let us fix $\tau\in H.$ It is convenient to introduce the notation
\ben
\llangle[\Big]\alpha_1,\dots,\alpha_k\rrangle[\Big]_{g,k}(\tau) = 
\sum_{m=0}^\infty \sum_{d\in \operatorname{Eff}(Y)}
\frac{Q^d}{m!}
\lan\alpha_1,\dots,\alpha_k,\tau,\dots,\tau\ran_{g,k+m,d},
\een
where $\alpha_s=\phi_{i_s}\psi^{\ell_s}$, $1\leq s\leq k$, are arbitrary
insertions.  By definition, Givental's J-function of $Y$ is 
\ben
\widetilde{J}_Y(\tau,Q,z) = 
z+\tau+
\sum_{k=0}^\infty
\sum_{i=1}^N
\sum_{d\in {\rm Eff}(Y)} 
Q^d\, \llangle[\Big]\phi_i\psi^k\rrangle[\Big]_{0,1,d}(\tau)\, \phi^i z^{-k-1}.
\een
Note that 
\ben
\widetilde{J}_Y(\tau,Q,-z) = -z+\tau+d_{-z+\tau}\mathcal{F}^{(0)}_Y
\ \in \ \mathcal{L}_Y.
\een
Recall also the calibration series $S(\tau,Q,z)=1+S_1(\tau,Q)z^{-1}+\cdots$ defined by
\ben
(S(\tau,Q,z)\phi_i,\phi_j) = (\phi_i,\phi_j)+
\sum_{k=0}^\infty 
\sum_{d\in {\rm Eff}(Y)} 
Q^d\,\llangle[\Big]
\phi_i\psi^k,\phi_j\rrangle[\Big]_{0,2,d}(\tau)\, z^{-1-k}.
\een
Let us denote by $\widetilde{L}_\tau$ the tangent space to
$\mathcal{L}_Y$ at $\widetilde{J}(\tau,Q,-z)$, then we have 
\ben
\widetilde{J}(\tau,Q,-z)=-z\, S(\tau,Q,z)^{-1} \, 1,\quad \widetilde{L}_\tau =
S(\tau,Q,z)^{-1} \mathcal{H}_Y^+.
\een
\subsubsection{Quantum cohomology}

The quantum cup product $\bullet_\tau$ is defined by 
\ben
(\phi_a\bullet_\tau\phi_b,\phi_c) = \llangle[\Big]
\phi_a,\phi_b,\phi_c\rrangle[\Big]_{0,3}(\tau). 
\een
Let us fix $\epsilon \ll 1$, assume that the Novikov variables
$|Q_i|\leq \epsilon$ $(1\leq i\leq r)$, and denote by $B\subset H$ the
open subset of $\tau\in H$ for which the 
quantum cup product is convergent. In the absence of convergence, we
think of $B$ as a formal analytic germ at $Q=0$ and $\tau=0$. Let us introduce
also the {\em Euler vector field}
\ben
E = \sum_{i=1}^N (1-\operatorname{deg}_{\rm CR}(\phi_i) )
t_i\partial_{t_i} + c_1(Y),
\een
where $\operatorname{deg}_{\rm CR}$ denotes the Chen--Ruan
degree. Finally, let 
\ben
\theta:H\to H,\quad \theta(\phi_i) = 
\Big( \frac{\operatorname{dim}_\mathbb{C}(Y)}{2}-
\operatorname{deg}_{\rm CR}(\phi_i)\Big) \phi_i
\een
be the so-called {\em Hodge grading operator}.

By definition the quantum (or Dubrovin's) connection is the connection
$\nabla$ on the trivial $H$-bundle with base $B\times\mathbb{C}^*$
defined by 
\ben
\nabla = d + \Big( -z^{-1}\theta+z^{-2}E\bullet \Big) dz - \sum_{i=1}^N
z^{-1} (\phi_i\bullet) dt_i.
\een
It is known that the gauge transformation defined by the calibration
$S(\tau,Q,z)$ acts on $\nabla$ as follows:
\ben
S(\tau,Q,z)^{-1} \nabla S(\tau,Q,z) = d + \Big( -z^{-1}\theta+z^{-2}\rho\Big) dz,
\een
where $\rho:=c_1(Y)\cup$ is the operator of classical cup product multiplication
by $c_1(Y)$. In particular $\nabla$ is a flat connection. 

\subsubsection{The total ancestor potential}

Let $\tau\in H$ be an arbitrary parameter. The ancestor GW invariants
\ben
\llangle[\Big]
\phi_{i_1}\bar{\psi}^{\ell_1},\dots,\phi_{\ell_k}\bar{\psi}^{\ell_k}\rrangle[\Big]_{g,k,d}(\tau):=
\sum_{m=0}^\infty \frac{1}{m!} 
\langle \phi_{i_1}\bar{\psi}^{\ell_1},\dots,\phi_{i_k}\bar{\psi}^{\ell_k},\tau,\dots,\tau\rangle_{g,k+m,d}
\een
are defined in the same way as the descendant ones except that instead
of the descendant classes $\psi_i$ $(1\leq i\leq k)$ we use
$\bar{\psi}_i:={\rm ft}^*\psi_i$, where ${\rm ft}\colon 
\overline{\cM}_{g,k+m}(Y,\beta)\to \overline{\cM}_{g,k}$ is the map
that forgets the map to $Y$, the orbifold structure on the domain
curve, the last $m$ marked points, and it contracts all unstable
components. If $\overline{\cM}_{g,k}=\emptyset$, i.e., $2g-2+k\leq 0$,
then the ancestor invariant is by definition $0$. Let us point out
that in general the dependence on $\tau$ is only formal, i.e., the
ancestor invariant is a formal power series in $\tau$. 

The generating function 
\ben
\overline{\cF}_{\tau,Q}^{(g)}(\mathbf{t}) = \sum_{n=0}^\infty \sum_{d\in
  \operatorname{Eff}(Y)} \frac{Q^d}{n!}\, 
\llangle[\Big] \mathbf{t}(\bar{\psi}),\dots, \mathbf{t}(\bar{\psi})\rrangle[\Big]_{g,k,d}(\tau)
\een
is called the {\em genus-$g$ total ancestor potential} and 
\ben
\mathcal{A}_{\tau,Q}^Y(\hbar;\mathbf{t}) := \exp \Big(
\sum_{g=0}^\infty \overline{\cF}_{\tau,Q}^{(g)}(\mathbf{t}) \, \hbar^{g-1}\Big)
\een
is called the {\em total ancestor potential} of $Y$.
The relation between ancestors and descendants is completely
determined by the calibration $S(\tau,z)$ and the genus-1 primary
potential of $Y$ (see \cite{G2} for more details). Thanks to the divisor
equation we have the following symmetry
\ben
\mathcal{A}_{\tau,Q}^Y = \mathcal{A}_{\tau-\sum_{i=1}^r P_i\log Q_i,1}^Y,
\een
where $P_i=c_1(L^{(i)})$ and $Q=1$ means $Q_i=1$ for all
$i$. Therefore, without loss of generality we may set $Q_i=1$ for all
$i$ and work with $\mathcal{A}^Y_\tau:=\cA_{\tau,1}^Y$.

\subsection{$I$-function} 

Let us return to the case when $Y$ is the orbifold Fermat CY
hypersurface. 
\subsubsection{Combinatorics of the inertia orbifold}
Let 
\ben
&&
v_i = (0,\dots,d_i,\dots,0)\in \mathbb{Z}^{n-1}\quad (1\leq i\leq n-1) \\
&&
v_n = (-d_n,\dots,-d_n)\in \mathbb{Z}^{n-1}.
\een
Let $\Sigma$ be the fan consisting of all subcones of
$\tau_1,\dots,\tau_n$, where $\tau_i$ is the cone in
$\mathbb{R}^{n-1}$ spanned by the $(n-1)$ rays (without $v_i$)
\ben
v_1,\dots,\check{v}_i,\dots, v_n.
\een
Note that $\mathbf{P}$ is the toric orbifold corresponding to the fan
$\Sigma$, so according to the general theory (see \cite{BCS}) the connected components
of $\mathbf{P}$ are parametrized by the set 
\ben
\operatorname{Box}(\Sigma) = \{ c\in \mathbb{Q}^n\ |\ 
0\leq c_i < 1,\
\operatorname{supp}(c)\subset \sigma,\ 
\sum_{i=1}^n c_i v_i \in \mathbb{Z}^{n-1}\ 
\mbox{for some}\ \sigma\in \Sigma \}
\een
where $\operatorname{supp}(c)$ is the set of all $v_i$ such that $\ c_i\neq 0$.
We have 
\ben
I\mathbf{P} = \coprod_{c\in  \operatorname{Box}(\Sigma) } \mathbf{P}_c,
\quad \text{where} \quad
\mathbf{P}_c = [\{x_1c_1=\cdots = x_n c_n=0\}/G].
\een
The dimension of $\mathbf{P}_c$ is one less than the number of $i$ such that $c_i=0.$
The orbifold $\mathbf{P}_c$ is non-reduced and it has a generic
stabilizer
\ben
G_c := \prod_{i: c_i\neq 0} \boldsymbol{\mu}_{d_i}. 
\een
In particular, the order $|G_c|=\prod_{i: c_i\neq 0} d_i$.

The inertia orbifold $IY$ is a suborbifold of $I\mathbf{P}$, we have 
\ben
\dim(Y_c) = \dim(\mathbf{P}_c) -1,
\een
so the twisted sectors are parametrized by $c\in
\operatorname{Box}(\Sigma)$ such that $\dim(\mathbf{P}_c)>0$; i.e., $c$
has at least 2 entries that are 0. We denote the set of all such $c$ by
$\operatorname{Box}_Y(\Sigma)$. 

\subsubsection{Cohomology and Poincar\'e pairing}
The coarse moduli spaces of $Y$ and $\mathbf{P}$ are respectively
$\mathbb{P}^{n-2}$ and $\mathbb{P}^{n-1}$. Indeed, the map 
\beq\label{pi-map}
\pi_{\mathbf{P}}\colon (\mathbb{C}^n\setminus{\{0\}})\times G \to
(\mathbb{C}^n\setminus{\{0\}})\times \mathbb{C}^*,
\quad (x,t)\mapsto (x,t_1^{d_1})
\eeq
induces a map of orbifolds that maps the pair $(|Y|,|\mathbf{P}|)$
homeomorphically onto $(\mathbb{P}^{n-2},\mathbb{P}^{n-1})$. Let us
define $L=\pi_{\mathbf{P}}^*\mathcal{O}_{\mathbb{P}^{n-2}}(1)$ and
$p=c_1(L)\in H^2(Y,\mathbb{Z})$. A basis in $H_{\rm CR}(Y;\mathbb{C})$
can be fixed as follows:
\ben
p^k\mathbf{1}_c,\quad 0\leq k\leq \dim(Y_c),\quad c\in \operatorname{Box}_Y(\Sigma), 
\een
where $\mathbf{1}_c$ is the unit in $H(Y_c;\mathbb{C})$. 
Given a cohomology class $p^k\mathbf{1}_c \in H^{2k}(Y_c;\mathbb{C})$,
the orbifold Poincar\'e pairing $(p^k\mathbf{1}_c,p^{k'}\mathbf{1}_{c'} )$
is non-zero if and only if 
\ben
k+k'=\dim(Y_c),\quad c_i+c_i' \equiv 0 \ \operatorname{mod}\Z, \quad
1\leq i\leq n.
\een
If $(k,c)$ and $(k',c')$ satisfy the above conditions, then we have 
\ben
(p^k\mathbf{1}_c,p^{k'}\mathbf{1}_{c'} ) = \frac{1}{|G_c|}.
\een
Finally,
\ben
H_2(|Y|;\mathbb{Z})\cong \mathbb{Z},\quad \beta\mapsto d:=\langle c_1(\mathcal{O}(1)),\beta\rangle.
\een

\subsubsection{The $J$-function of $Y$}
Given $d\in \mathbb{Z}_{\geq 0}$ and $\nu\in (\mathbb{Z}_{\geq 0})^n$
we define
\ben
I_{d,\nu}(z) = 
\frac{\Gamma(1+d+pz^{-1})}{\Gamma(1+pz^{-1})}\,
\prod_{i=1}^n
\frac{\Gamma(1-c_i+(p/d_i)z^{-1})}{\Gamma(1-c_i+k_i+(p/d_i)z^{-1})}\, 
\mathbf{1}_c z^{d-|\nu|-|k|},
\een
where $k_i\in \mathbb{Z}$ and $0\leq c_i< 1$ are defined uniquely by
the identity
\ben
\frac{\nu_i-d}{d_i} = -k_i+c_i
\een
and we put $|\nu|=\nu_1+\cdots +\nu_n$ and $|k|=k_1+\cdots +k_n$.
Put
\ben
I_Y(t,Q,z) = e^{p\log Q/z}
\sum_{d=0}^\infty 
\sum_{\nu\in  \mathbb{Z}^n_{\geq 0}}
I_{d,\nu}(z)\,Q^d \frac{t^\nu}{\nu!} ,
\een
where
\ben
t^\nu=t_1^{\nu_1}\cdots t_n^{\nu_n},\quad \nu!=\nu_1!\cdots \nu_n!.
\een
Note that 
\ben
I_Y(t,Q,z) = f_0(Q)\mathbf{1} + z^{-1} f_1(t,Q)
+z^{-2}f_2(t,Q)+\cdots\ ,
\een
where $f_k(t,Q)\in H_{\rm CR}(Y;\mathbb{C})$ $(k\geq 1)$ and 
\ben
f_0(Q) = 1+\sum_{d=1}^\infty \frac{(dD)!}{(dw_1)!\cdots (d w_n)!}\, Q^d,
\een
where $D=\operatorname{lcm}(d_1,\dots,d_n)$ and $w_i=D/d_i$.

It will be convenient for our purposes to modify slightly Givental's
$J$-function and to work with 
\ben
J_Y(\tau,Q,z) = e^{p\log Q/z} \widetilde{J}_Y(\tau,Q,z)=
\widetilde{J}_Y(\tau+P\log Q,1,z),
\een 
where the 2nd equality is a consequence of the {\em divisor equation}.
The suborbifold $Y\subset \mathbf{P}$ is cut out by the section
$x_1^{d_1}+\cdots+x_n^{d_n}$ of the convex line bundle
$\pi_{\mathbf{P}}^*\mathcal{O}(1)$, where $\pi_{\mathbf{P}}\colon\mathbf{P}\to \mathbb{P}^{n-1}$ is
the map induced from \eqref{pi-map}. We
may recall Theorem 25 in \cite{CCIT} and get the following formula for
the $J$-function of $Y$. 
\begin{proposition}\label{CCIT}
If $\tau=f_1(t,Q)/f_0(Q)$, then $J_Y(\tau,1,z) f_0(Q)= z I_Y(t,Q,z)$.
\end{proposition}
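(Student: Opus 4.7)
The plan is to obtain Proposition \ref{CCIT} as a direct consequence of the toric-orbifold/complete-intersection mirror theorem of Coates--Corti--Iritani--Tseng \cite{CCIT}, suitably specialized to the Calabi--Yau hypersurface $Y \subset \mathbf{P}$. The key characterization we shall use is that Givental's Lagrangian cone $\cL_Y \subset \cH_Y$ is a cone ruled by the tangent spaces $\widetilde L_\tau = S(\tau,1,z)^{-1}\cH_Y^+$, and that the $J$-function $z\mapsto -z S(\tau,1,z)^{-1}\mathbf{1}$ is uniquely determined within $\cL_Y$ by the condition that its expansion at $z=\infty$ has the form $-z+\tau+O(z^{-1})$ with $\tau\in H$. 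After replacing $z$ by $-z$ (and correspondingly $\widetilde J_Y$ by $J_Y$ via the divisor equation), it therefore suffices to show: (i) the modified $I$-function $zI_Y(t,Q,z)$, viewed as a family of elements in $\cH_Y$ parameterized by $(t,Q)$, lies on $\cL_Y$; and (ii) the change of variables $\tau=f_1(t,Q)/f_0(Q)$ matches $zI_Y(t,Q,z)/f_0(Q)$ with $J_Y(\tau,1,z)$ term by term in the asymptotic expansion as $z\to\infty$.

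For step (i), I would proceed in two stages. First, recall the standard $I$-function $I_{\mathbf P}(t,Q,z)$ of the toric Deligne--Mumford stack $\mathbf P=\mathbb P(w_1,\dots,w_n)$; by the mirror theorem for toric orbifolds (a special case of \cite{CCIT}) it lies on the cone $\cL_{\mathbf P}$. Second, the hypersurface $Y\subset\mathbf P$ is cut out by a section of the convex line bundle $\pi_{\mathbf P}^*\cO(1)$, and the Calabi--Yau condition $\sum 1/d_i=1$ means the ``twisting by Euler class'' which converts $I_{\mathbf P}$ into the twisted $I$-function of the normal bundle gives precisely the Gamma quotient $\Gamma(1+d+pz^{-1})/\Gamma(1+pz^{-1})$ appearing in $I_{d,\nu}(z)$. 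Applying Theorem 25 of \cite{CCIT} (the hypersurface case of the quantum Lefschetz/Gamma principle) one concludes that the restriction of this twisted ambient $I$-function to $H_{\rm CR}(Y;\C)$, which is exactly $I_Y(t,Q,z)$, lies on $\cL_Y$ after multiplication by $z$; the Calabi--Yau condition ensures there are no irrelevant positive powers of $z$ spoiling the asymptotic behavior.

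For step (ii), I would read off from the explicit formula for $I_Y$ that
\[
zI_Y(t,Q,z) = f_0(Q)\bigl(z\mathbf 1 + f_1(t,Q)/f_0(Q)\bigr) + O(z^{-1})
\]
as $z\to\infty$. Thus $zI_Y/f_0$ is a family of points on $\cL_Y$ whose expansion at infinity has the form $z+\tau+O(z^{-1})$ with $\tau=f_1(t,Q)/f_0(Q)\in H$. By the uniqueness characterization of the $J$-function recalled above (which is equivalent to the fact that the $(-z\mathbf 1)$-directed ruling of $\cL_Y$ intersects each affine slice $-z+\tau+\cH_Y^-$ in a single point), this forces
\[
zI_Y(t,Q,z)/f_0(Q) = J_Y(\tau,1,z),\qquad \tau=f_1(t,Q)/f_0(Q),
\]
which is the claim.

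The main obstacle is really bookkeeping rather than conceptual: one must carefully align the normalizations between \cite{CCIT}'s formulation (which is for arbitrary toric orbifolds with possibly non-trivial monodromy in the twisted sectors) and the present Fermat set-up, in particular verifying that the Box combinatorics of $\Sigma$ matches the cohomological degree assignments $\deg_{\rm CR}(\mathbf 1_c)=\sum_i c_i$ and that the $c_i$ computed from $(\nu_i-d)/d_i$ in the definition of $I_{d,\nu}$ indeed produces classes supported on the correct twisted sector $Y_c\subset IY$. Once these identifications are made, the argument is a direct appeal to \cite[Thm.~25]{CCIT} combined with the uniqueness of $J_Y$ on $\cL_Y$.
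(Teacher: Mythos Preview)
Your proposal is correct and aligns with the paper's treatment: the paper does not give a self-contained proof of this proposition but simply invokes Theorem~25 of \cite{CCIT}, noting that $Y\subset\mathbf{P}$ is cut out by a section of the convex line bundle $\pi_{\mathbf{P}}^*\mathcal{O}(1)$. Your two-step argument (placing $zI_Y$ on the cone via the toric-hypersurface mirror theorem, then identifying it with $J_Y$ by matching the asymptotic expansion $z+\tau+O(z^{-1})$) is exactly the mechanism behind that citation, spelled out in detail.
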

The $I$-function is known to be a solution to a Picard--Fuchs differential
equation in $Q$, so it has a non-zero radius of convergence as a power
series at $Q=0$. Let $\Delta$ be the disc of convergence, $\Delta^*:=\Delta-\{0\}$, and
$\pi\colon \widetilde{\Delta^*} \to \Delta^*$ 
be the universal cover of $\Delta^*$. The function
$\tau$ in Proposition \ref{CCIT} defines a map
\beq\label{mirror-map}
\tau\colon \mathbb{C}^n\times \widetilde{\Delta^*}  \to H,\quad (t,Q) \mapsto \tau(t,Q):=f_1(t,Q)/f_0(Q)
\eeq
which will be called the {\em mirror map}.

\subsection{Mirror symmetry for $\mathcal{D}$-modules} 

Let us introduce the following family of polynomials:
\ben
f(x,t,Q) = \sum_{i=1}^n (x_i^{d_i}+t_i x_i) -\frac{1}{Q} x_1\cdots
x_n, 
\een
where 
\ben
(t,Q)\in \mathbb{C}^n\times
(\mathbb{P}^1\setminus{\{0,a_1,\dots,a_r,\infty\} }),
\een
where $a_i$ are the values of $Q$ for which the polynomial has a
non-isolated singularity. 
\begin{remark}
The radius of the disc $\Delta$ is precisely
$\operatorname{max}_{1\leq i\leq r} |a_i|.$ 
\end{remark}

\subsubsection{The twisted de Rham cohomology and the quantum
  $\cD$-module }
The main goal of this section is to construct an isomorphism between
the sheaf $\mathcal{F}$ of the twisted de Rham cohomology and quantum
$\cD$-module. 
Let $\mathcal{D}$ be the sheaf of differential operators 
\ben
\cO_{\mathbb{C}^n\times \Delta^* }[z] \Big\langle z\frac{\partial}{\partial
  t_1},\dots,z\frac{\partial}{\partial t_n},
zQ\frac{\partial}{\partial Q} \Big\rangle.
\een
We would like to construct a $\mathcal{D}$-module isomorphism  
\ben
\pi^*(\cF|_{\mathbb{C}^n\times \Delta^*}) \cong \tau^* (\cO_B\otimes H[z]),
\een
where $\pi \colon \mathbb{C}^n\times \widetilde{\Delta^*}\to
\mathbb{C}^n\times \Delta^*$ is the universal covering, $\tau$ is the
mirror map, and $B\subset H$ is the domain of convergence for the
quantum cohomology. 
The $\cD$-module structures on the LHS and the RHS of the above
isomorphism are induced respectively from the Gauss--Manin
connection and the Dubrovin's connection, i.e., 
\ben
z\partial_{t_a}   \mapsto z\partial_{ t_a} - \sum_{i=1}^N
\frac{\partial \tau_i}{\partial t_a}(t,Q)\, \phi_i \bullet,\quad 
zQ \partial_{Q}   \mapsto zQ\partial_Q - \sum_{i=1}^N
Q\frac{\partial \tau_i}{\partial Q}(t,Q)\, \phi_i \bullet, 
\een
where $\tau(t,Q)=:\tau_1(t,Q)\phi_1+\cdots +\tau_N(t,Q)\phi_N$. Put 
\ben
I^\be_Y(t,Q,z) := (z\partial_t)^\be I_Y(t,Q,z),\quad \be=(\be^{(1)}, \cdots, \be^{(n)})\in \mathbb{Z}^n_{\geq 0},
\een
where 
\ben
 (z\partial_t)^\be = (z\partial_{t_1})^{\be^{(1)}}\cdots (z\partial_{t_n})^{\be^{(n)}} .
\een
Let $|\be|:=\be^{(1)}+\cdots+\be^{(n)}$.
The Taylor's series of $I^\be_Y(t,Q,z)$ at $Q=t=0$ takes the form
\ben
I^\be_Y(t,Q,z)=e^{p\log Q/z}\sum_{d=0}^\infty\sum_{\nu\in
  \mathbb{Z}^n_{\geq 0}} Q^d \frac{t^\nu}{\nu!}
I_{d,\nu+\be}(z) z^{|\be|}.
\een
We will need the following lemma.
\begin{lemma}\label{Idnu}
Let $d\geq 0$ and $\nu \in \mathbb{Z}^n_{\geq 0}$.

a) If $m=(1,\dots,1)\in \mathbb{Z}^n$, then
\ben
I_{d+1,\nu+\be+m}(z)\, z^{|\be|+n} = (1+d+pz^{-1})I_{d,\nu+\be}(z)\, z^{|\be|+1}.
\een

b) If $e_i\in \mathbb{Z}^n$ is the vector with a non-zero entry equal to $1$ only at
the $i$-th place, then  
\ben
I_{d,\nu+\be+d_i e_i}(z) = d_i^{-1}(d-\nu_i-\be^{(i)}+pz^{-1})I_{d,\nu+\be}(z) z^{-d_i+1}.
\een 
\end{lemma}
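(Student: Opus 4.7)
\textbf{Proof plan for Lemma \ref{Idnu}.} Both parts are direct computations using the functional equation $\Gamma(x+1)=x\Gamma(x)$, together with a careful bookkeeping of how the integer shifts $k_i$ and the fractional parts $c_i$ (which are defined by $(\nu_i-d)/d_i=-k_i+c_i$ with $k_i\in\Z$, $0\le c_i<1$) vary under the replacements $(d,\nu+\be)\mapsto(d+1,\nu+\be+m)$ and $(d,\nu+\be)\mapsto(d,\nu+\be+d_i e_i)$. Throughout, set $\mu=\nu+\be$ to lighten notation.

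For part (a), the key observation is that
\[
\frac{(\mu_i+1)-(d+1)}{d_i}=\frac{\mu_i-d}{d_i}=-k_i+c_i,
\]
so the pairs $(k_i,c_i)$, and in particular the Box element $c\in\operatorname{Box}_Y(\Sigma)$ and $\mathbf{1}_c$, are unchanged. Hence the only change in the Gamma product in the definition of $I_{d,\mu}(z)$ happens in the leading factor: $\Gamma(1+d+pz^{-1})$ becomes $\Gamma(2+d+pz^{-1})=(1+d+pz^{-1})\,\Gamma(1+d+pz^{-1})$. The $z$-exponent shifts from $d-|\mu|-|k|$ to $(d+1)-(|\mu|+n)-|k|=d-|\mu|-|k|-(n-1)$. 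Multiplying by the prefactor $z^{|\be|+n}$ on the left-hand side produces exactly the $(1+d+pz^{-1})I_{d,\mu}(z)\,z^{|\be|+1}$ on the right.

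For part (b), the shift $\mu\mapsto\mu+d_ie_i$ increases $(\mu_i-d)/d_i$ by $1$, so $c_i$ is preserved (hence so is $\mathbf{1}_c$) while $k_i$ drops to $k_i-1$, all other $(k_j,c_j)$ being unchanged. The $i$-th factor in $\prod_j\Gamma(1-c_j+(p/d_j)z^{-1})/\Gamma(1-c_j+k_j+(p/d_j)z^{-1})$ therefore picks up the factor
\[
-c_i+k_i+\frac{p}{d_i}z^{-1}=\frac{d-\mu_i+pz^{-1}}{d_i}=\frac{d-\nu_i-\be^{(i)}+pz^{-1}}{d_i},
\]
via $\Gamma(x+1)=x\Gamma(x)$ applied at $x=-c_i+k_i+(p/d_i)z^{-1}$. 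The $z$-exponent changes from $d-|\mu|-|k|$ to $d-(|\mu|+d_i)-(|k|-1)$, contributing the extra factor $z^{-d_i+1}$. Combining gives the claimed identity.

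I do not foresee any serious obstacle; the calculation is elementary once one pins down the combinatorial data. The only subtle point is to check that the Box element $c$ (hence the twisted sector $\mathbf{1}_c$) is indeed unchanged under both shifts, which is what makes the right-hand side meaningful as an identity of $H_{\rm CR}(Y)$-valued Laurent polynomials in $z$.
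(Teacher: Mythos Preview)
Your proposal is correct and matches the paper's approach: the paper states that ``the proof follows immediately from the definitions and it is omitted,'' and what you have written is precisely the direct Gamma-function and exponent bookkeeping that the authors left implicit. Both the observation that the pairs $(k_i,c_i)$ (hence the twisted sector $\mathbf{1}_c$) are unchanged under each shift, and the application of $\Gamma(x+1)=x\Gamma(x)$ at the appropriate argument, are exactly the content of the omitted computation.
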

The proof follows immediately from the definitions and it is
omitted. We also need the following Lemma, which is a corollary of
Proposition \ref{H:vb}. 
\begin{lemma}
The sheaf $\cF|_{\mathbb{C}^n\times \Delta^*}$ is a free
$\cO_{\mathbb{C}^n\times \Delta^*}[z]$-module of rank  
$N.$
\end{lemma}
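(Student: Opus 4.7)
The plan is to adapt the argument in the proof of Proposition \ref{H:vb} by exhibiting an explicit global basis. I claim that the $N = \prod_i (d_i-1)$ forms
\[
x^\alpha \, dx = x_1^{\alpha_1}\cdots x_n^{\alpha_n}\, dx_1\wedge\cdots\wedge dx_n,
\qquad 0 \le \alpha_i \le d_i - 2,
\]
furnish a global $\cO_{\mathbb{C}^n\times\Delta^*}[z]$-basis of $\cF|_{\mathbb{C}^n\times\Delta^*}$. Once this is established, both the surjectivity and injectivity arguments in the proof of Proposition \ref{H:vb} go through verbatim, since those arguments only used that the chosen polynomials $\psi_i$ induce a pointwise basis of $\Jac(f)$. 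Thus the task reduces to verifying that the images $[x^\alpha]$ form a $\C$-basis of $\Jac(f(x;t,Q))$ at every point $(t,Q) \in \mathbb{C}^n \times \Delta^*$.

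Since the marginal leading term of $f(x;t,Q)$ lies in $\cM_{\mar}^\circ$ whenever $Q \in \Delta^*$ (by the definition of $\Delta^*$, which avoids the discriminant), Proposition \ref{prop:regular} gives $\dim_\C \Jac(f(x;t,Q)) = N$. As there are exactly $N$ monomials with $0 \le \alpha_i \le d_i-2$, it suffices to prove that they span $\Jac(f(x;t,Q))$. The key relation, coming from $\partial_i f \equiv 0$ in the Jacobi ring, is
\[
x_i^{d_i - 1} \equiv \frac{1}{d_i Q}\prod_{j\neq i} x_j - \frac{t_i}{d_i} \pmod{J_f}.
\]
Given an arbitrary monomial $x^\gamma$, let $i$ be the smallest index with $\gamma_i \ge d_i-1$. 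Substituting the relation rewrites $x^\gamma$ as an $\cO_{\mathbb{C}^n\times\Delta^*}$-linear combination of $x^{\gamma'}$ with $\gamma' = \gamma - (d_i-1)e_i + \sum_{j\neq i} e_j$ and $x^{\gamma - (d_i-1)e_i}$. Note that the coefficient $1/(d_i Q)$ is regular on $\Delta^*$ precisely because $0 \notin \Delta^*$.

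Termination is proved by double induction: the relation preserves weighted degree $\sum_i \gamma_i/d_i$ in the first term and strictly decreases it in the second; within each weighted degree, only finitely many monomials appear and both resulting exponent tuples agree with $\gamma$ in coordinates $1,\dots,i-1$ but have strictly smaller $i$-th coordinate, so they are lexicographically smaller than $\gamma$. Thus repeated application of the reduction terminates in a finite combination of basis monomials with coefficients in $\cO_{\mathbb{C}^n\times\Delta^*}$. The main technical point is this reduction argument, but the only subtlety is arranging the lex order together with weighted degree so that the process genuinely terminates---the CY condition $\sum_i 1/d_i = 1$ and the exclusion of $Q=0$ from $\Delta^*$ are precisely what is needed to make the coefficients well-defined global sections. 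With the pointwise basis established, the freeness statement follows from the proof of Proposition \ref{H:vb}.
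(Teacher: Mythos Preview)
Your termination argument has a concrete error. You claim that both resulting exponent tuples agree with $\gamma$ in coordinates $1,\dots,i-1$, but for the first term $\gamma' = \gamma - (d_i-1)e_i + \sum_{j\neq i} e_j$ one has $\gamma'_j = \gamma_j + 1$ for every $j\neq i$, including $j<i$. So $\gamma'$ is lexicographically \emph{larger} than $\gamma$, and the induction collapses.

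Worse, the rewriting procedure genuinely cycles. Take $n=3$, $d_1=d_2=d_3=3$ and start from $\gamma=(2,2,0)$. Reducing at $i=1$ yields $(0,3,1)$; reducing at $i=2$ yields $(1,1,2)$; reducing at $i=3$ returns to $(2,2,0)$. Chasing the coefficients gives
\[
x_1^2x_2^2 \equiv \frac{1}{(3Q)^3}\,x_1^2x_2^2 + (\text{lower weighted degree}) \pmod{J_f},
\]
so $x_1^2x_2^2$ only reduces to the proposed basis after inverting $1-(3Q)^{-3}$, which is exactly the discriminant factor. In other words, the spanning claim for your monomial set is \emph{not} the output of a terminating rewriting algorithm; it is a linear-algebra statement in each fixed weighted degree whose proof must invoke the hypothesis that $Q\in\Delta^*$ avoids the discriminant. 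Your argument never uses that hypothesis beyond making $1/Q$ regular, which is why it cannot succeed as written.

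The paper does not attempt an explicit basis: it simply declares the lemma a corollary of Proposition~\ref{H:vb}. The proof of that proposition actually gives freeness of $\cF$ over $\cM_\rel\times U$ whenever the graded Jacobi-ring bundle is free over $U\subset\cM_\mar^\circ$; since $\Delta^*$ is a non-compact Riemann surface (hence Stein), every holomorphic vector bundle on it is trivial, so such a frame exists and freeness over $\C^n\times\Delta^*$ follows. If you want to salvage your explicit approach, work one weighted degree at a time: in each degree the Jacobi relations define a linear system with coefficients in $\cO(\Delta^*)$, and one checks (using nonvanishing of the discriminant) that the monomials with $\alpha_i\le d_i-2$ span the quotient.
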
 
The main result of this subsection can be stated as follows.
\begin{proposition}\label{Dmod-iso}
 The assignment
\ben
x^\be dx/Q\mapsto S(\tau,1,-z)\, I^\be_Y(t,Q,z) ,
\een
where $\tau=\tau(t,Q)$ is the mirror map, induces an isomorphism of
$\mathcal{D}$-modules 
\ben
\Mir\colon \pi^*(\cF|_{\mathbb{C}^n\times \Delta^*}) \to 
\tau^* (\cO_B\otimes H[z]).
\een
\end{proposition}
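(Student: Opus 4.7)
The plan is to verify three properties of $\Mir$: (i) well-definedness on the quotient $\cF$, (ii) intertwining with the two $\mathcal{D}$-module structures, and (iii) bijectivity.

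For (i), I would use that the defining relations of $\cF$ are generated by applying the twisted de Rham differential $zd + df\wedge$ to the $(n-1)$-forms $(-1)^{i-1} x^\gamma dx_1\wedge\cdots\wedge \widehat{dx_i}\wedge\cdots\wedge dx_n$. A direct computation, using $\partial_{x_i} f = d_i x_i^{d_i-1} + t_i - x_1\cdots x_n/(Q x_i)$, produces for each $i,\gamma$ the relation
\[
z\gamma_i [x^{\gamma-e_i} dx/Q] + d_i [x^{\gamma + (d_i-1)e_i} dx/Q] + t_i [x^\gamma dx/Q] - Q^{-1}[x^{\gamma+m-e_i} dx/Q] = 0,
\]
with $m=(1,\dots,1)$. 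Since the prefactor $S(\tau,1,-z)$ is invertible, I need to verify the same identity for the $I^\beta_Y$'s. Expanding the coefficient of $Q^d t^\nu/\nu!$ and applying Lemma \ref{Idnu}(b) with $\beta=\gamma-e_i$ to rewrite the second term, and Lemma \ref{Idnu}(a) with the same $\beta$ to rewrite the fourth, each summand becomes a multiple of $I_{d,\nu+\gamma-e_i}(z)z^{|\gamma|}$, and the resulting coefficients collapse to
\[
\gamma_i + (d-\nu_i-\gamma_i+1+pz^{-1}) + \nu_i - (1+d+pz^{-1}) = 0.
\]
The potential $Q^{-1}$ contribution from the fourth term needs separate handling: I would check that the relevant box element has at most one zero entry and therefore does not index a twisted sector of $Y$, so the corresponding class $\mathbf{1}_c$ vanishes in $H_{\rm CR}(Y,\C)$.

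For (ii), the key observation is that $\cF$ is cyclically generated as a $\mathcal{D}$-module by $[dx/Q]$, since $[x^\beta dx/Q] = (z\nabla^{\rm GM}_{\partial_t})^\beta[dx/Q]$ follows directly from the Gauss--Manin formula and $\partial_{t_i} f = x_i$. It then suffices to match the action of the generators $z\partial_{t_a}$ and $zQ\partial_Q$ on the two sides. For $z\partial_{t_a}$ I would use the defining property $z\partial_{\tau_i} S = \phi_i\bullet S$ of the calibration; combined with the pullback $z\partial_{t_a} - \sum_i (\partial\tau_i/\partial t_a)\phi_i\bullet$ of the Dubrovin connection under the mirror map, the cross terms involving $\partial\tau_i/\partial t_a$ cancel, leaving $z\nabla^{\rm Dub,pull}_{\partial_{t_a}}(S\,I^\beta_Y) = S\cdot z\partial_{t_a} I^\beta_Y = S\cdot I^{\beta+e_a}_Y$, as required. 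For $zQ\partial_Q$, an analogous calculation using the divisor-equation identity $\widetilde J_Y(\tau,Q,z) = \widetilde J_Y(\tau+p\log Q,1,z)$ together with the explicit formula for $\nabla^{\rm GM}_{\partial_Q}$ produces the compatible action on both sides.

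For (iii), by Proposition \ref{H:vb} the source is locally free of rank $N$ and the target is free of the same rank, so it suffices to verify that $\Mir$ is an isomorphism at one fiber. I would specialize to the large complex structure limit $t=0$, $Q\to 0$: there $S(\tau(0,0),1,-z)$ reduces to the identity and the leading behavior of $I^\beta_Y$ matches the good basis $\{x^\beta dx/Q : 0\le \beta_i\le d_i-1\}$ with an explicit basis of $H_{\rm CR}(Y,\C)$ indexed by $(k,c)$. The main technical obstacle is step (i): carefully matching the Gamma-function recursions of Lemma \ref{Idnu} to the de Rham relations and ensuring that all apparent $Q^{-1}$ contributions cancel precisely because the corresponding classes $\mathbf{1}_c$ vanish in $H_{\rm CR}(Y)$. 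Step (ii) is then largely formal once the calibration identities are exploited, and step (iii) reduces to a direct computation at the LCSL point.
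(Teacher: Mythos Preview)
Your outline follows the paper's proof closely: part (i) uses Lemma~\ref{Idnu} exactly as the paper does, and part (ii) makes explicit the cyclicity that the paper leaves implicit (the paper writes out only the $zQ\partial_Q$ check, taking the $z\partial_{t_a}$ direction as automatic from $I_Y^\be=(z\partial_t)^\be I_Y$ and the quantum differential equation for $S$).

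Your instinct to isolate the $Q^{-1}$ boundary term in (i) is good---the paper's index shift $d\mapsto d+1$ silently drops it---but your proposed reason is wrong. The box element $c_i=\langle(\nu_i+\be_i+1)/d_i\rangle$ can have many zero entries: take $\nu_i=d_i-1-\be_i$ for every $i$ and all $c_i$ vanish. The correct argument is that whenever $c_i=0$ one has $k_i\le -1$, and then the factor $\Gamma(1+(p/d_i)z^{-1})/\Gamma(1+k_i+(p/d_i)z^{-1})$ is a polynomial in $p$ divisible by $p$; hence if $s$ entries of $c$ vanish, $I_{0,\nu+\be+m}$ carries a factor $p^s$, which kills $\mathbf{1}_c$ since $\dim Y_c=s-2$. (For $s\le 1$ the class $\mathbf{1}_c$ is already zero in $H_{\rm CR}(Y)$, as you note.)

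For (iii), be careful: $Q=0$ lies outside $\Delta^*$, and $\tau(0,Q)$ contains $p\log Q$, so ``specializing $S$ at the LCSL'' is not literal. The paper instead observes that $\Mir([dx/Q])=S(\tau,1,-z)I_Y=f_0(Q)\mathbf{1}$, so $\mathbf{1}$ lies in the image; since $\Mir$ is already a $\mathcal{D}$-module map and $\mathbf{1}$ generates the target over $\mathcal{D}$, surjectivity follows, and rank comparison finishes the argument.
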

\proof
According to Proposition \ref{CCIT}, $S(\tau,1,-z)\, I^\be_Y(t,z) \in H[z]$.
To prove that we have an induced map $\Mir$ we have to prove that
$\Mir$ maps 
\ben
(zd+df\wedge) x^\be dx_1\cdots \check{dx_i}\cdots dx_n 
\een
to $0$ for all $i=1,2,\dots,n$ and $\be=(\be^{(1)},\dots,\be^{(n)})\in\mathbb{Z}^n_{\geq 0}$. The above form takes the form
\ben
(z\be^{(i)} x^{\be-e_i}+d_i x^{\be+(d_i-1)e_i}+t_i x^\be-Q^{-1} x^{\be+m-e_i}) dx,
\een
where $m,e_i\in \mathbb{Z}^n$ are the same as in Lemma
\ref{Idnu}. Shifting $\be\mapsto \be+e_i$ we get
\ben
(z(\be^i+1) x^{\be}+d_i x^{\be+d_ie_i}+t_i x^{\be+e_i}-Q^{-1} x^{\be+m}) dx.
\een 
It is enough to prove that $\Mir$ maps the above form to
$0$. Recalling the definition of $\Mir$ we get that the
above form is mapped to
\ben
\sum_{d,\nu} \Big(
z(e^{(i)}+1) I_{d,\nu+\be}z^{|\be|}+d_i I_{d,\nu+\be+d_ie_i}z^{|\be|+d_i}+t_i
I_{d,\nu+\be+e_i}z^{|\be|+1}-Q^{-1} I_{d,\nu+\be+m}z^{|\be|+n}\Big) 
Q^d\frac{t^\nu}{\nu!}.
\een 
The terms in the brackets are transformed as follows: for the 2nd one
we apply Lemma \ref{Idnu}, b); for the 3rd term we shift the index
$\nu\mapsto \nu-e_i$ and use that 
\ben
t_i\frac{t^{\nu-e_i}}{(\nu-e_i)!}  = \nu_i \frac{t^\nu}{\nu!};
\een 
for the 4th term we first shift the index $d\mapsto d+1$ and then
recall Lemma \ref{Idnu}, a). We get
\ben
\sum_{d,\nu}
((\be^{(i)}+1)+(d-\nu_i-\be^{(i)}+pz^{-1})+\nu_i-(1+d+pz^{-1}))I_{d,\nu+\be}(z)\,
z^{|\be|+1}Q^d\frac{t^\nu}{\nu!} = 0.
\een

To prove that $\Mir$ is a $\mathcal{D}$-module morphism
we need only to verify that
\ben
\Mir(zQ\partial_Q [x^\be dx/Q]) = zQ\partial_Q \Mir([x^\be dx/Q]).
\een
Since
\ben
\Mir(zQ\partial_Q [x^\be dx/Q] )= \sum_{d,\nu}
(I_{d+1,\nu+\be+m} z^{|\be|+n} -I_{d,\nu+\be} z^{|\be|+1})Q^d\frac{t^\nu}{\nu!} 
\een
the above identity follows from Lemma \ref{Idnu}, a). 

Finally, since 
\beq\label{IY}
I_Y(t,Q,z) = f_0(Q) S(\tau,1,-z)^{-1}\, \mathbf{1} ,
\eeq
the map $\Mir$ is surjective. Since both sheaves are free
$\cO_{\mathbb{C}^n\times \widetilde{\Delta^*}}[z]$-modules of rank
$N$, the map must be an isomorphism. 
\qed

\subsection{Pairing matches}

Let us introduce the following pairing 
\beq\label{GW-hrp}
\widetilde{K}(\omega_1,\omega_2) =
-(\Mir(\pi^*\omega_1),\Mir(\pi^*\omega_2)^*),
\quad \omega_1,\omega_2\in \cF|_{\mathbb{C}^n\times \Delta^*}, 
\eeq
where $*$ is the involution in $H[z]$ induced from $z\mapsto
-z$, and $(\ ,\ )$ is the Poincar\'e pairing.
It is convenient to expand in the powers of $z$
\ben
\widetilde{K}(\omega_1,\omega_2)=:\sum_{p\in \mathbb{Z}}^\infty \widetilde{K}^{(p)}(\omega_1,\omega_2) z^{p}.
\een
The main goal in this section, which is one of the key ingredients in
the proof of our mirror symmetry theorem, is the following
proposition. 
\begin{proposition}\label{hrp=K}
The pairing $\widetilde{K}$ coincides with K. Saito's higher resdiue pairing. 
\end{proposition}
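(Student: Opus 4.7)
The plan is to invoke the uniqueness part of Saito's characterization: the higher residue pairing $K$ is uniquely determined on $\cF$ by the four properties (1)--(4) recalled in Section 3. Consequently, it suffices to verify that $\widetilde{K}$ defined in \eqref{GW-hrp} satisfies all four axioms.

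First I would dispatch the formal axioms. Property (1) ($z$-sesquilinearity) is immediate from the definition: since $\Mir$ is $\cO[z]$-linear and the involution $(\cdot)^*$ sends $z\mapsto -z$, one gets $\widetilde{K}(z\omega_1,\omega_2)=z\widetilde{K}(\omega_1,\omega_2)=-\widetilde{K}(\omega_1,z\omega_2)$. Property (3) ($(-1)^p$-symmetry) follows from the symmetry of the orbifold Poincar\'e pairing on $H_{\rm CR}(Y,\mathbb{C})$ together with applying $*$ to both arguments: $\widetilde{K}(\omega_2,\omega_1)(-z) = -(\Mir(\omega_2)^*, \Mir(\omega_1)) = -(\Mir(\omega_1), \Mir(\omega_2)^*)$ at the spectral value $z$, which agrees with $\widetilde{K}(\omega_1,\omega_2)(z)$. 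Property (4) (flatness) follows from Proposition \ref{Dmod-iso}: $\Mir$ intertwines the Gauss--Manin connection with Dubrovin's connection, and the Poincar\'e pairing is flat with respect to the latter, i.e.~$\vec{v}(v_1,v_2) = (\nabla_{\vec v} v_1,v_2)+(v_1,\nabla_{\vec v} v_2)$ because the quantum product is self-adjoint. In the $z\partial_z$-direction, the corresponding identity involves $\theta$ and $E\bullet$; applying $*$ to the second slot converts $\nabla_{z\partial_z}$ acting on $\Mir(\omega_2)^*$ into $-\nabla_{z\partial_z}$, which gives exactly the Leibnitz identity required for axiom (4) (note that $\xi=z\vec v$ or $z^2\partial_z$ in the axiom precisely accounts for the $z$-shifts).

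The main work is property (2): the constant term $\widetilde{K}^{(0)}$ must coincide with the Grothendieck residue pairing on the Jacobi ring of $f(x,t,Q)$. For this I would use the identity \eqref{IY}, $I_Y = f_0(Q) S(\tau,1,-z)^{-1}\mathbf{1}$, to compute $\Mir([dx/Q]) = f_0(Q)\mathbf{1}$. More generally, applying Lemma \ref{Idnu} to evaluate the $z^0$ component of $\Mir([x^\beta dx/Q]) = S(\tau,1,-z)(z\partial_t)^\beta I_Y$ produces an explicit cohomology class in $H_{\rm CR}(Y,\mathbb{C})$ mirroring the monomial $x^\beta$ in $\Jac(f)$: powers of $x_i$ past $d_i-1$ correspond to powers of the hyperplane class $p$ (via the factor $(d-\nu_i-\beta^{(i)}+pz^{-1})$ in Lemma \ref{Idnu}(b)), while the residue class of $x^\beta$ mod $x_i^{d_i}$ determines the twisted sector component $\mathbf{1}_c$ through the shift $c_i=\fracp{-\beta^{(i)}/d_i}$. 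I would then compare the orbifold Poincar\'e pairing of two such mirror classes---which is nonzero only when the sector indices are inverse and the cohomological degrees are complementary, yielding a $1/|G_c|$ factor---with the Grothendieck residue of $x^\beta \cdot x^{\beta'}\cdot dx/Q$ at the critical points of $f$, which likewise factorizes through the Fermat summands and produces exactly the same $1/\prod_{c_i\neq 0}d_i$ prefactor.

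The main obstacle is this last identification: one must verify an exact numerical match between the Gamma-function prefactors in $I_{d,\nu}(z)$ (after substituting $\tau=\tau(t,Q)$ and expanding $S(\tau,1,-z)$ at $z=0$) and the residue prefactors, and the verification must be uniform in all twisted sectors and in all relevant/marginal directions in $\cM$. In fact, because both $K^{(0)}$ and $\widetilde{K}^{(0)}$ are holomorphic functions on $\cM$, it is enough to match them at one single point (e.g.~$t=0$, $Q\to 0$, $f=\sum x_i^{d_i}$), where both pairings are computed by the classical Macaulay residue on the Fermat Jacobi ring $\bigotimes_i \C[x_i]/(x_i^{d_i-1})$ and by the Chen-Ruan pairing on the untwisted-direction reduction, respectively. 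This reduction to a single Fermat point, combined with factorization of the higher residue pairing over the direct-sum decomposition $f=\bigoplus_i x_i^{d_i}$ (already used in the proof for Fermat opposite subspaces), makes the final comparison essentially one-variable and tractable.
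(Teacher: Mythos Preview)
Your verification of axioms (1), (3), (4) is correct, and the overall strategy of appealing to the uniqueness of the higher residue pairing is reasonable.  The genuine gap is in your treatment of axiom (2).

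First, your reduction ``match at one single point, e.g.\ $t=0$, $Q\to 0$, $f=\sum x_i^{d_i}$'' is confused.  In the family $f(x,t,Q)=\sum(x_i^{d_i}+t_ix_i)-Q^{-1}x_1\cdots x_n$, the limit $Q\to 0$ does \emph{not} give the Fermat polynomial: the term $-Q^{-1}x_1\cdots x_n$ blows up.  The pure Fermat polynomial sits at $Q=\infty$, which is outside the disk $\Delta^*$ on which $\Mir$ is defined.  So there is no point in the base where the higher residue pairing factorizes as a tensor product of one-variable pairings; the Jacobi ring of $f(x,0,Q)$ for $Q\in\Delta^*$ is genuinely coupled by the $x_1\cdots x_n$ term, and the residue computation there (cf.\ the paper's Step~4) involves the full Hessian of $f$ and a nontrivial combinatorial identity, not a one-variable check.

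Second, even granting flatness (4), what propagates by parallel transport is the \emph{full} pairing $K-\widetilde K$, not the leading term $K^{(0)}-\widetilde K^{(0)}$ alone.  So to reduce to one fiber you would need to match all $K^{(p)}$ there, or else invoke a fiberwise uniqueness statement (uniqueness from (1)--(3) and the $z^2\partial_z$-part of (4) on a single $\HH_+(f)$), which you have not justified.  Your sentence ``both $K^{(0)}$ and $\widetilde K^{(0)}$ are holomorphic, so it is enough to match at one point'' is simply false for holomorphic functions.

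The paper bypasses these issues by a different route: it uses Hertling's formula \eqref{SandK} expressing $K$ via the polarization form $S$ on vanishing cohomology, transports $S$ to $H_{\rm CR}(Y)$ through the flat map $\Psi$ of \eqref{psi-iso}--\eqref{psi-def} (so the comparison becomes $Q$-independent), and then reduces Proposition~\ref{hrp=K} to a single identity $\chi(a,e^{-\pi\sqrt{-1}\theta}b)=(a,b)$ on $H$ (Lemma~\ref{M_mar}).  That identity is then checked by specializing \eqref{K-ext} to $t=Q=0$ and computing the limiting Grothendieck residue explicitly---the computation is the non-obvious Steps~1--4 following Lemma~\ref{CR-product}, culminating in the determinant identity for $\mathrm{Hess}(f)$.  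This is precisely the hard work your proposal hopes to avoid but cannot.
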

The main idea of the proof is to use Hertling's formula \eqref{SandK} in order to
obtain a formula for Saito's pairing similar to \eqref{GW-hrp}.

\subsubsection{The mirror map for vanishing cohomology }
Recall the notation from Sections \ref{S-PHS} and \ref{pf-hrp}. Let us
fix a polynomial $f_0$ corresponding to a reference point in
$\mathbb{C}^n\times \Delta^*$ and define for all $f$ sufficiently
close to $f_0$ a linear isomorphism
\beq\label{psi-iso}
\Psi:\mathfrak{h}=H^{n-1}(f^{-1}(1);\mathbb{C})\to H=H_{\rm
  CR}(Y;\mathbb{C}), 
\eeq  
such that
\beq\label{psi-def}
\Psi ( \widehat{s}(\omega_\be,z) )=
(-z)^{-\theta}I^\be_Y(t,Q,z),
\eeq
where $\omega_\be=x^\be dx/Q$ is a fixed set of weighted-homogeneous forms that induces a
trivialization of $\cF|_{\mathbb{C}^n\times \Delta^*}$.  Using that the
isomorphism in Proposition \ref{Dmod-iso} is a $\mathcal{D}$-module
isomrphism we can check that $\Psi$ is independent of $t$ and
$Q$. Note that the homogeneity of the $I$-function can be written in
the form
\ben
(z\partial_z+\sum_{i=1}^n (1-1/d_i)t_i\partial_{t_i} +
\operatorname{deg}_{\rm CR}) \, I^\be_Y(t,z) =
(\operatorname{deg}(\omega_\be)-1) \, I^\be_Y(t,z) .
\een
From this equation we get that $\Psi$ is independent of $z$ as
well. 
\begin{remark}
The map $\Psi$ is multivalued in $f$. It can be viewed as a trivialization of the pullback via
$\pi:\mathbb{C}^n\times \widetilde{\Delta^*}\to
\mathbb{C}^n\times \Delta^*$ of 
the vanishing cohomology bundle. 
\end{remark}

\subsubsection{The polarization form and the Poincar\'e pairing}

Let us introduce the following bilinear form on $H$:
\ben
\chi(a,b) := S(\Psi^{-1}(a),\nu^{-1} \Psi^{-1}(b)) ,
\een
where $S$ is the polarization form of Steenbrink's Hodge structure
(see Section \ref{S-PHS}). 
\begin{lemma}\label{M_mar}
The claim in Proposition \ref{hrp=K} is equivalent to the identity
\ben
\chi(a,e^{-\pi\sqrt{-1}\theta}b) = (a, b),\quad a,b\in H. 
\een
\end{lemma}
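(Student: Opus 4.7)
The plan is to transform both $\widetilde K(\omega_\beta,\omega_\gamma)$ and $K_f(\omega_\beta,\omega_\gamma)$ into bilinear forms on the Chen--Ruan cohomology $H$ via the isomorphism $\Psi$, and then compare them; the resulting pointwise identity will be exactly the algebraic identity stated in the lemma.

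First I would eliminate the calibration $S$ from $\widetilde K$. Because $S(\tau,1,z)$ is a formal family of symplectic transformations of $\mathcal{H}_Y$, it satisfies $(S(\tau,1,-z)f,\,S(\tau,1,z)g)=(f,g)$ for all $f,g$. Unwinding the definition \eqref{GW-hrp} together with the formula $\Mir([x^\beta dx/Q])=S(\tau,1,-z)I^\beta_Y$ yields
\[
\widetilde K(\omega_\beta,\omega_\gamma) \;=\; -\bigl(I^\beta_Y(z),\,I^\gamma_Y(-z)\bigr).
\]
Next, using the defining relation \eqref{psi-def} of $\Psi$, I rewrite each factor in terms of vanishing cohomology:
\[
I^\beta_Y(z)=(-z)^{\theta}\,\Psi\bigl(\widehat s(\omega_\beta,z)\bigr),\qquad
I^\gamma_Y(-z)=z^{\theta}\,\Psi\bigl(\widehat s(\omega_\gamma,z)^*\bigr),
\]
where the second equality uses $\widehat s(\omega,z)^*=\widehat s(\omega,e^{-\pi\sqrt{-1}}z)$ and tracks the analytic continuation of $(-z)^{\theta}$ along the path $z\mapsto e^{-\pi\sqrt{-1}}z$. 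Since $\theta$ is anti-self-adjoint with respect to the Poincar\'e pairing (its eigenvalue on $\phi$ is $\tfrac12\dim_{\mathbb C}Y-\deg_{\rm CR}(\phi)$, and $(\cdot,\cdot)$ pairs Chen--Ruan degrees summing to $\dim_{\mathbb C}Y$), the central scalar $(-z)^{-\theta}z^{\theta}=e^{-\pi\sqrt{-1}\theta}$ can be passed across the pairing, giving
\[
\widetilde K(\omega_\beta,\omega_\gamma) \;=\; -\Bigl(\Psi\bigl(\widehat s(\omega_\beta,z)\bigr),\; e^{-\pi\sqrt{-1}\theta}\,\Psi\bigl(\widehat s(\omega_\gamma,z)^*\bigr)\Bigr).
\]

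Meanwhile, Hertling's formula \eqref{SandK} gives directly
\[
K_f(\omega_\beta,\omega_\gamma) \;=\; -S\bigl(\widehat s(\omega_\gamma,z)^*,\,\nu^{-1}\widehat s(\omega_\beta,z)\bigr)\;=\;-\chi\bigl(\Psi(\widehat s(\omega_\gamma,z)^*),\,\Psi(\widehat s(\omega_\beta,z))\bigr)
\]
by definition of $\chi$. Setting $a:=\Psi(\widehat s(\omega_\beta,z))$ and $b:=\Psi(\widehat s(\omega_\gamma,z)^*)$, the equality $\widetilde K=K_f$ is thus equivalent to $(a,e^{-\pi\sqrt{-1}\theta}b)=\chi(b,a)$. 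As $\omega_\beta,\omega_\gamma$ range over the generating set $\{x^\beta dx/Q\}$ and $(t,Q,z)$ varies, the resulting pairs $(a,b)$ span $H\times H$ — this follows from the surjectivity of $\Mir$ established in Proposition \ref{Dmod-iso} together with Lemma \ref{van-cohom}. Swapping $a\leftrightarrow b$, then invoking the symmetry of the Poincar\'e pairing and once more the anti-self-adjointness of $\theta$, transforms $(b,e^{-\pi\sqrt{-1}\theta}a)=\chi(a,b)$ into $\chi(a,b)=(a,e^{\pi\sqrt{-1}\theta}b)$, equivalently $\chi(a,e^{-\pi\sqrt{-1}\theta}b)=(a,b)$, which is the stated identity. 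Reading the chain of equivalences backwards gives the converse.

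The main obstacle is the careful bookkeeping of branches: one must verify that the analytic continuation of the factor $(-z)^{\theta}$ under $z\mapsto e^{-\pi\sqrt{-1}}z$ produces precisely the phase $e^{-\pi\sqrt{-1}\theta}$ appearing in the statement, matching the convention used to define $\widehat s(\omega,z)^*$ in Theorem \ref{thm:SandK}. A secondary point — routine but worth spelling out — is the density argument showing that the identity of pairings, verified on pairs $(a,b)$ coming from $(\omega_\beta,\omega_\gamma)$, extends to all of $H\times H$; this uses flatness of $\Psi$ in $(t,Q)$ and the fact that $[\omega]\mapsto\widehat s(\omega,z)$ is an isomorphism onto $\mathfrak h$ (Lemma \ref{van-cohom}).
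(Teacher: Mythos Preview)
Your argument is correct and follows essentially the same route as the paper: both unwind $\widetilde K$ using the symplectic property of $S$, rewrite both sides via $\Psi$ and Hertling's formula \eqref{SandK}, and match the resulting expressions on $H$. The one structural difference is that the paper first proves that $p\cup$ is an infinitesimal symmetry of $\chi$ (via the monodromy identification $\Psi\circ M_{\rm mar}^{-1}=e^{-2\pi\sqrt{-1}p}\circ\Psi$) and uses this to strip the factor $e^{p\log Q/z}$ from $I^\beta_Y$, working instead with $\widetilde I^\beta_Y$; you bypass this by absorbing those factors directly into $a$ and $b$. Your shortcut is legitimate here, since on the $\widetilde K$ side the factors cancel by self-adjointness of $p\cup$ for the Poincar\'e pairing, and on the $K_f$ side they are simply part of $\widehat s(\omega_\beta,z)$. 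Note, however, that the paper's auxiliary fact $\chi(p\cup a,b)+\chi(a,p\cup b)=0$ is not wasted: it is invoked again in the proof of the next lemma, so you would need to supply it there anyway.
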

\proof
Let us first establish that $p\cup$ is an infinitesimal symmetry of
$\chi$. Let us denote by $M_{\rm mar}$ the
monodromy transformation of $\mathfrak{h}$ of the Gauss--Manin
connection around $Q=0$ in counter clockwise direction. The analytic
continuation around $Q=0$ transforms the RHS of \eqref{psi-def} into
\ben
(-z)^{-\theta}\ e^{2\pi\sqrt{-1} p/z}\, I^\be_Y(t,Q,z) = 
e^{-2\pi\sqrt{-1} p}\, 
(-z)^{-\theta}I^\be_Y(t,Q,z).
\een
Therefore, 
\ben
\Psi\circ M_{\rm mar}^{-1} = e^{-2\pi\sqrt{-1}p} \circ \Psi.
\een
In particular, $M_{\rm mar}$ is unipotent and there is uniquely defined nilpotent
operator $N_{\rm  mar}:=-\frac{1}{2\pi\sqrt{-1} } \log M_{\rm mar}.$
By definition
\ben
\chi(a,b)=(-1)^{(n-1)(n-2)/2} 
\langle \Psi^{-1}(a),\operatorname{Var}\circ \Psi^{-1}(b)\rangle.
\een
Since the form
$\langle\cdot,\operatorname{Var}(\cdot)\rangle$ is 
$M_{\rm  mar}$-invariant and 
\ben
\Psi(N_{\rm mar}\, A) = p\,\Psi(A),\quad A\in \mathfrak{h},
\een
we get that $\chi(p\cup a,b)+\chi(a,p\cup b)=0.$ Using this property
we can complete the proof as follows. By definition 
\ben
\widehat{s}(\omega_\be,z) = 
\Psi^{-1} ((-z)^{-\theta}I^\be_Y(t,Q,z) ) =
e^{\log Q \, N_{\rm mar}}\, \Psi^{-1}
((-z)^{-\theta}\widetilde{I}^\be_Y(t,Q,z)) , 
\een
where $I^\be_Y(t,Q,z) =: e^{p\log Q/z} \widetilde{I}^\be_Y(t,Q,z)$.
Recalling formula \eqref{SandK} we get
\beq\label{K-ext}
K(\omega_{\be'},\omega_{\be''}) = -
\chi(e^{\pi\sqrt{-1}\theta} (-z)^{-\theta}\widetilde{I}^{\be''}_Y(t,Q,-z),(-z)^{-\theta}\widetilde{I}^{\be'}_Y(t,Q,z)),
\eeq
In order to complete the proof, we just have to notice that
\eqref{GW-hrp} can be written as 
\ben
\widetilde{K}(\omega_{\be'},\omega_{\be''}) =
- 
((-z)^{-\theta} \widetilde{I}_Y^{\be''}(t,Q,-z),(-z)^{-\theta}
\widetilde{I}_Y^{\be'}(t,Q,z) ),
\een
where we used that $S(\tau,1,z)$ is a symplectic transformation, so
\ben
S(\tau,1,z)^t S(\tau,1,-z)=1.\qed
\een

\medskip 

We will compute $\chi$ by specializing \eqref{K-ext} to $t=Q=0$, i.e.,
we will express $\chi$ in  terms of the limit of the higher residue
pairing at $t=Q= 0$.  
To begin with, let us compute the Chen--Ruan product of $Y$. 
Put $\phi_i=\mathbf{1}_{(0,\dots,1/d_i,\dots,0)}$, where the non-zero
entry is on the $i$-th place. Let 
\ben
\phi_\be:=\phi_1^{\be^{(1)}}\cdots \phi_n^{\be^{(n)}}, 
\een
where $\be=(\be^{(1)},\dots,\be^{(n)})$ is a sequence of non-negative integers and
the monomial on the RHS is defined via the Chen--Ruan cup product. 
\begin{lemma}\label{CR-product}
The following formula holds 
\ben
\phi_\be=
(d_1^{-\ell_1}\cdots d_n^{-\ell_n})\,p^\ell\mathbf{1}_c,\quad c=(c_1,\dots,c_n), 
\een
where the numbers $\ell:=\ell_1+\cdots +\ell_n$ and $c_i$ are defined by 
\ben
\frac{\be^{(i)}}{d_i}=\ell_i+c_i,\quad 0\leq c_i<1,\quad \ell_i\in
\mathbb{Z}.
\een
\end{lemma}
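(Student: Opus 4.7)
The plan is to compute $\phi_\beta$ by iterated application of the Chen-Ruan cup product formula on the toric orbifold $\mathbf{P}$ and then restricting to the CY hypersurface $Y$. The key input will be the Borisov-Chen-Smith-style multiplication rule for twisted sectors on a toric Deligne-Mumford stack.

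First I would establish the elementary multiplication rule for $\phi_i \cdot \mathbf{1}_c$, where $c \in \operatorname{Box}_Y(\Sigma)$ has each component of the form $c_j = k_j/d_j$ with $0 \le k_j < d_j$ (this is forced by the constraint $\sum c_j v_j \in \Z^{n-1}$ in the Fermat case). Since $\phi_i = \mathbf{1}_{e_i/d_i}$ has age $1/d_i$ only in the $i$-th coordinate, the age sum $c_i + 1/d_i$ either stays below $1$ or equals exactly $1$ (the only possibility being $c_i = (d_i-1)/d_i$). In the first case, the Chen-Ruan obstruction bundle is zero and the product is simply $\phi_i \cdot \mathbf{1}_c = \mathbf{1}_{c + e_i/d_i}$. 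In the second case, the Borisov-Chen-Smith formula for the toric stack $\mathbf{P}$ yields $\phi_i \cdot \mathbf{1}_c = [D_i]\cdot \mathbf{1}_{c - (d_i-1)e_i/d_i}$, where $D_i = \{x_i = 0\}$ is the $i$-th orbifold coordinate hyperplane.

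Next I would identify $[D_i] = p/d_i$ in $H^2(Y;\C)$. This follows from the definition $p = c_1(\pi_\mathbf{P}^* \mathcal{O}_{\mathbb{P}^{n-2}}(1))$: the monomial $x_i^{d_i}$ is a section of $L = \pi_\mathbf{P}^*\mathcal{O}(1)$ (viewed through the coordinates $y_i = x_i^{d_i}$ on $\mathbb{P}^{n-2}$) and vanishes to orbifold order $d_i$ along $D_i$, so $p = d_i [D_i]$. Then I would iterate: writing $\beta^{(i)}/d_i = \ell_i + c_i$, multiplying by $\phi_i^{\beta^{(i)}}$ produces exactly $\ell_i$ boundary crossings (each contributing a factor $[D_i] = p/d_i$) and lands in the sector indexed by the fractional part $c_i e_i$. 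Since the multiplications in distinct coordinate directions commute and act on disjoint entries, the total product is
\[
\phi_\beta = \prod_{i=1}^n (p/d_i)^{\ell_i} \mathbf{1}_c = d_1^{-\ell_1} \cdots d_n^{-\ell_n}\, p^{\ell}\, \mathbf{1}_c,
\]
with $\ell = \ell_1 + \cdots + \ell_n$ and $c = (c_1, \ldots, c_n)$, as claimed.

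The main technical point to justify carefully will be that the Chen-Ruan multiplication of these ``ambient'' classes on $Y$ is computed by the same formula as on $\mathbf{P}$. This requires checking that the Fermat hypersurface $Z$ meets every twisted sector $\mathbf{P}_c$ transversely so that $Y_c = Z \cap \mathbf{P}_c$ has the expected codimension, and that the CR obstruction bundle for such products on $Y$ inherits its rank and Euler class from the ambient calculation; the CY condition $\sum 1/d_i = 1$ is what ensures the age/dimension count works out. A conceptually cleaner alternative, which I would use to cross-check the normalizations $d_i^{-1}$, is to invoke the Chen-Ruan-to-Jacobian isomorphism of Chiodo-Ruan: the relation $\phi_i^{d_i} = p/d_i$ matches the Jacobian relation $d_i x_i^{d_i-1} \equiv 0$ in $\operatorname{Jac}(W)$ for $W = \sum x_j^{d_j}$, which fixes the constants unambiguously.
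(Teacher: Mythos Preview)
Your approach is correct in outline but takes a genuinely different route from the paper. You compute the Chen--Ruan product directly, using the Borisov--Chen--Smith multiplication rule on the ambient toric stack $\mathbf{P}$ together with the divisor identification $[D_i]=p/d_i$, and then restrict to $Y$. The paper instead reads off the formula from the $I$-function: at $Q=0$ one has $\widetilde{J}_Y(\widetilde\tau,0,z)=z\,e^{\widetilde\tau\cup_{\rm CR}/z}$ and $S(0,0,z)=1$, so by Proposition~\ref{CCIT} the free term of $\widetilde I^\beta_Y(0,0,z)$ equals $z^{-1}(z\partial_t)^\beta\widetilde J_Y|_{t=0}=\phi_\beta$; a direct evaluation of the explicit series for $I_{d,\nu}$ at $d=\nu=0$ then gives $(d_1^{-\ell_1}\cdots d_n^{-\ell_n})p^\ell\mathbf{1}_c$.

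The trade-off: your argument is self-contained and geometric, but the step you flag---that the ambient Chen--Ruan product on $\mathbf{P}$ restricts correctly to $Y$---is not free, since CR obstruction bundles do not in general behave functorially under pullback to a hypersurface; you would need to check that for these particular triples of twisted sectors the obstruction bundle on $Y$ is the restriction of the one on $\mathbf{P}$ (this is true here but requires an argument). The paper's proof sidesteps this entirely: the CCIT mirror theorem is already established for $Y$, so the identity is obtained by specializing the $I$-function machinery that the section has just set up, with no separate CR computation needed. In context, the paper's route is shorter and more internal; yours would be preferable in a setting where the $I$-function were not already available.
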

\proof
Note that at $Q=0$ the $J$-function of $Y$ is  
\ben
\widetilde{J}_Y(\widetilde{\tau},0,z)= ze^{\widetilde{\tau}\cup_{\rm CR} /z},
\quad
\widetilde{\tau}:= t_1 \phi_1+\cdots + t_n\phi_n,
\een
where $\cup_{\rm CR}$ denotes the Chen--Ruan product. Note also
that $\tau(t,Q) = p\log Q + \widetilde{\tau} + O(Q)$ and that the calibration
$S(0,0,z)=1$. Recalling Proposition \ref{CCIT} we get that the 
vector $\widetilde{I}_Y^\be(0,0,z) $ is polynomial in $z$ and that its
free term 
\ben
\widetilde{I}_Y^\be(0,0,0)=(d_1^{-\ell_1}\cdots
d_n^{-\ell_n})\,p^\ell\mathbf{1}_c
\een 
must coincide with
$z^{-1}(z\partial_t)^\be \widetilde{J}_Y(t,0,z)|_{t=0}=\phi_\be$. 
\qed

Choosing $\be=(\be^{(1)},\dots,\be^{(n)})$ appropriately we can
arrange that the vectors $\phi_\be$ give a basis of the Chen--Ruan cohomology. Note that the
Chen--Ruan degree of $\phi_\be$ is
$$\deg_{\rm CR}\phi_\be=\operatorname{deg}(\be):=\sum_{i=1}^n
{\be^{(i)}\over d_i}.$$
\begin{lemma}
We have
\ben
\chi(\phi_{\be''},e^{-\pi\sqrt{-1}\theta} \phi_{\be'}) =
(\phi_{\be'},\phi_{\be''}).
\een
\end{lemma}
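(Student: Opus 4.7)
The identity $\chi(\phi_{\beta''}, e^{-\pi\sqrt{-1}\theta}\phi_{\beta'}) = (\phi_{\beta'}, \phi_{\beta''})$ should be proved by specialising the formula \eqref{K-ext} to $t=0$ and matching both sides as (Laurent) series in $z$, extracting the $\chi$ from the coefficient at the appropriate $z$-power. Since $\chi$ is a bilinear form on $H$ independent of $z$, $t$, $Q$ --- it is simply the pullback of the Steenbrink polarisation via $\Psi$ --- I have full freedom in the choice of base point, and the convenient choice is the Fermat polynomial $f_0 = \sum_i x_i^{d_i}$ (reached as a formal $Q\to\infty$ degeneration of $f(x,0,Q)$) because its higher residue pairing factorises by Thom--Sebastiani.

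First I would compute the right-hand side of \eqref{K-ext} at $t=0$. The formula $\widetilde I_Y^{\beta}(0,Q,z) = \sum_{d\ge 0} Q^{d} I_{d,\beta}(z) z^{|\beta|}$ gives at $Q=0$ the explicit Gamma-ratio product
\[
\widetilde I_Y^{\beta}(0,0,z) = \prod_{i=1}^{n}\frac{\Gamma\!\bigl(1-c_i + (p/d_i)z^{-1}\bigr)}{\Gamma\!\bigl(1-c_i-\ell_i + (p/d_i)z^{-1}\bigr)}\,\mathbf{1}_c\, z^{|\ell|},
\]
where $\beta^{(i)}/d_i = \ell_i + c_i$ with $0\le c_i<1$, and Lemma~\ref{CR-product} identifies its $z\to\infty$ leading term with $\phi_\beta$. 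Acting by $(-z)^{-\theta}$ produces an additional scalar $(-z)^{\deg_{\mathrm{CR}}\phi_\beta - (n-2)/2}$ at leading order, with subleading corrections in powers of $pz^{-1}$. On the left-hand side I would compute $K(\omega_{\beta'},\omega_{\beta''})$ at the Fermat point: by Thom--Sebastiani, $K_{f_0}$ splits as $\prod_{i} K_{x_i^{d_i}}(x_i^{\beta'^{(i)}}dx_i, x_i^{\beta''^{(i)}} dx_i)$, and each one-variable factor has an explicit closed form (Fermat higher residue pairing) that is nonzero only when $\beta'^{(i)} + \beta''^{(i)} \equiv d_i - 2 \bmod d_i$. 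This selection rule in each factor is exactly the condition $c_i' + c_i'' \in \mathbb Z$ required for the Chen--Ruan Poincar\'e pairing to be nonzero on $IY$.

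Matching the two computations at the coefficient of the appropriate (top) power of $z$, and using the reflection formula $\Gamma(a)\Gamma(1-a) = \pi/\sin(\pi a)$ together with the normalisation $\phi_\beta = (\prod_{i} d_i^{-\ell_i}) p^{\ell}\mathbf{1}_c$ from Lemma~\ref{CR-product}, I expect the Gamma products to telescope exactly to $|G_{c'}|^{-1}$ and the remaining scalar $e^{\pi\sqrt{-1}(\deg_{\mathrm{CR}}\phi_{\beta'} - (n-2)/2)}$ to be absorbed by the $e^{-\pi\sqrt{-1}\theta}$ in the statement. The hard part will be the careful bookkeeping of prefactors: the $(-2\pi z)^{-n/2}(-z)^{\deg\omega}\Gamma(\deg\omega)$ relating $\widehat s$ to $s$ from formula \eqref{eq:s_hat}, the sign/phase conventions in Hertling's formula \eqref{SandK}, and the fractional powers of $(-z)$ produced by $(-z)^{-\theta}$ on twisted sectors must all line up so that the top $z$-coefficient on each side is unambiguously $\chi(\phi_{\beta''}, e^{-\pi\sqrt{-1}\theta}\phi_{\beta'})$ on one side and $(\phi_{\beta'}, \phi_{\beta''})$ on the other. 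Once this is verified on the $(p^{\ell}\mathbf 1_c)$-basis, the full lemma follows by bilinearity.
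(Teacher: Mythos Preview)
Your overall strategy---specialise \eqref{K-ext} and read off $\chi$ from the leading $z$-coefficient---is exactly what the paper does. But there is a genuine gap in how you carry it out: you evaluate the two sides of \eqref{K-ext} at \emph{different} values of $Q$. On the right you use $\widetilde I_Y^{\be}(0,0,z)$, i.e.\ $Q=0$ (the large-radius limit), while on the left you propose to compute $K(\omega_{\be'},\omega_{\be''})$ at the Fermat polynomial, i.e.\ $Q\to\infty$. Formula \eqref{K-ext} is an identity at each fixed $(t,Q)$; the higher residue pairing is not constant in $Q$ (the forms $\omega_\be = x^\be dx/Q$ are not flat sections), so you cannot mix limits. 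The Fermat point is the Gepner/LG limit, where the A-model partner is FJRW theory rather than GW theory of $Y$; the GW $I$-function you wrote down has no meaning there without analytic continuation.

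A concrete symptom: your claimed selection rule from Thom--Sebastiani, $\be'^{(i)}+\be''^{(i)} \equiv d_i-2$, does \emph{not} match the Chen--Ruan Poincar\'e pairing condition $c_i'+c_i''\in\Z$, which for $0\le\be^{(i)}\le d_i-1$ reads $\be'^{(i)}+\be''^{(i)}\equiv 0\bmod d_i$. (The $d_i-2$ rule is what matches the FJRW pairing via the shift \eqref{shift-index}, not the GW pairing.) So even the support conditions on the two sides do not agree under your identification.

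The paper specialises \emph{both} sides to $t=Q=0$. At $Q=0$ the polynomial is not Fermat and Thom--Sebastiani is unavailable; instead the proof runs in four steps: (1) a rescaling symmetry $x_i\mapsto\eta_i x_i$, $Q\mapsto\eta_i Q$ forces $\be'^{(i)}+\be''^{(i)}\equiv 0\bmod d_i$, which is the correct CR selection rule; (2) a degree count shows only $K^{(0)}$ survives; (3) the Jacobi-ring relation $d_j x_j^{d_j}=Q^{-1}x_1\cdots x_n$ reduces to the case $\be'^{(i)}+\be''^{(i)}\in\{0,d_i\}$ with exactly two zeros; (4) an explicit residue computation via the Hessian of $f$, together with the identity $\Res(df_{x_1}/f_{x_1}\wedge\cdots\wedge df_{x_n}/f_{x_n})=N$ and a combinatorial sum over subsets, yields $-1$ in the limit $Q\to 0$. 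The Gamma telescoping you hoped for does not occur; the actual computation is a Grothendieck residue at the degenerate limit.
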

\proof
Recalling \eqref{K-ext} we get 
\beq\label{K-chi}
\left.
K(\omega_{\be'},\omega_{\be''}) \right|_{t=Q=0} =
-
z^{-n+\operatorname{deg}(\be')+\operatorname{deg}(\be'')+2}
\chi(\phi_{\be''},e^{-\pi\sqrt{-1}\theta} \phi_{\be'}). 
\eeq
We need to check that the higher residues
$-K^{(p)}(\omega_{\be'},\omega_{\be''})|_{t=Q=0}$ vanish for $p>0$ and
coincide with the Poinare pairing $(\phi_{\be'},\phi_{\be''})$ for
$p=0$. Using that $\chi(p\cup a,b)+\chi(a,p\cup b)=0$, we may
reduce the proof to the case when $\phi_{\be'}$ is not divisible by $p$,
i.e., $0\leq \be'^{(i)}\leq d_i-1$. Finally, let us assume also that $t=0$. The rest of
the proof is splitted into four steps. 

\medskip
\noindent
{\bf Step 1.} 
We claim that if $K(\omega_{\be'},\omega_{\be''})|_{Q=0}\neq 0$, 
then $\be'^i+\be''^i \equiv 0 \mod d_i$. 
Put  $\eta_i:=e^{2\pi\sqrt{-1}/d_i}$, then the rescaling 
\ben
(x_1,\dots,x_i,\dots,x_n)\mapsto (x_1,\dots,\eta_ix_i,\dots,x_n),\quad 
Q\mapsto \eta_i Q,
\een
defines an automorphism of $\HH_+(f)$. It is easy to see that the
higher residue pairing $K$ is invariant under the rescaling. We have
\ben
K(\omega_{\be'},\omega_{\be''}) = z^r\sum_{m=0}^\infty K_{\be',\be'',m}  Q^m,  
\een
where
$r:=\operatorname{deg}(\omega_{\be'})+\operatorname{deg}(\omega_{\be''}) -
n$. Rescaling the above identity we get that
\ben
\eta_i^{\be'^{(i)}+\be''^{(i)}}K(\omega_{\be'},\omega_{\be''}) = z^p\sum_{m=0}^\infty K_{\be',\be'',m}  Q^m\eta_i^m.
\een
It follows that $\be'^{(i)}+\be''^{(i)} \equiv m \mod d_i$, so if the pairing
$K(\omega_{\be'},\omega_{\be''})|_{Q=0}\neq 0$, then $\be'^{(i)}+\be''^{(i)}\equiv 0 
\mod d_i$.

\medskip
\noindent
{\bf Step 2.} 
We claim that if $K(\omega_{\be'},\omega_{\be''})|_{Q=0}\neq 0$, then
$\operatorname{deg}(\phi_{\be'})+ \operatorname{deg}( 
\phi_{\be''})=n-2.$ To prove this, recall that
$\phi_{\be''}=(d_1^{-\ell_1''}\cdots d_n^{-\ell_n''})\,p^{\ell''}\mathbf{1}_{c''}$, where $0\leq \ell''\leq
\operatorname{dim}(Y_{c''})$, $0\leq c_i'' < 1$ are defined by
\ben
\ell''=\sum_{i=1}^n \ell_i'',\quad e_i''/d_i=\ell_i''+c_i'',\quad
\ell_i\in \mathbb{Z}.
\een 
Using that $\be'^{(i)}+\be''^{(i)}\equiv 0 \mod d_i$ we get that 
\ben
\frac{\be'^{(i)}}{d_i}+\frac{\be''^{(i)}}{d_i} = 
\begin{cases}
\ell_i''+1, & \mbox{ if } c_i''\neq0,\\
\ell_i'', & \mbox{ otherwise}.
\end{cases}
\een
Recall that $\operatorname{dim}(Y_{c''})+2$ is the number of $i$ such that $c_i''=0$. Therefore, 
\ben
\operatorname{deg}(\phi_{\be'})+\operatorname{deg}(\phi_{\be''}) = 
\operatorname{deg}(\be')+\operatorname{deg}(\be'') = n-2+ \ell''- \operatorname{dim}(Y_{c''})\leq n-2.
\een
This proves that $K^{(r)}(\omega_{\be'},\omega_{\be''})|_{Q=0}$ could be
non-zero only if $r=0$, $\ell''= \operatorname{dim}(Y_{c''})$, and
$\operatorname{deg}(\omega_{\be'})+\operatorname{deg}(\omega_{\be''}) =n.$
It remains only to check that 
\beq\label{res:po}
K^{(0)}(\omega_{\be'},\omega_{\be''}) = -(\phi_{\be'},\phi_{\be''}).
\eeq

\medskip
\noindent
{\bf Step 3.} 
We claim that it is enough to verify \eqref{res:po} in the case
when  $\be'^{(i)}+\be''^{(i)} = d_i$ for $n-2$ values
of $i$ and $\be'^{(i)}=\be''^{(i)}=0$ for the remaining two other values. Indeed,
we may assume again that $0\leq \be'^{(i)}\leq d_i-1$ and write
$\phi_{\be''}=(d_1^{-\ell_1''}\cdots d_n^{-\ell_n''})\,p^{\ell''}\mathbf{1}_{c''}$ as we did above. Since the set
$I(c''):=\{i\ |\ c_i''=0\}$ contains $\operatorname{dim}(Y_{c''})+2$ elements we can choose a
subset $J\subset I(c'')$ with $\ell''$ elements. Note that $\be'^{(j)}=0$
for all $j\in J$, because $\be'^{(j)}+d_jc_j''=0 \mod d_j$. Let us
define $\widetilde{\be}\,''^{(i)}=d_i c_i''$ $(1\leq i\leq n)$ and
\ben
\widetilde{\be}\,'^{(j)}=
\begin{cases}
d_j, & \mbox{ if } j\in J \\
\be'^{(j)}, & \mbox{ otherwise}.
\end{cases}
\een 
Using the relation $d_jx_j^{d_j} = Q^{-1} x_1\cdots x_n$ in
$\operatorname{Jac}(f)$ we get 
\ben
K^{(0)}(\omega_{\be'},\omega_{\be''}) = 
\Big(\prod_{i\notin J}d_i^{-\ell''_i}\Big)
K^{(0)}(\omega_{\widetilde{\be}\,'},\omega_{\widetilde{\be}\,''}). 
\een
Similarly, using the relation $d_i\phi_i^{d_i}=p$ in the Chen--Ruan cohomology,
we get 
\ben
(\phi_{\be'},\phi_{\be''}) = 
\Big(\prod_{i\notin J}d_i^{-\ell''_i}\Big)
(\phi_{\widetilde{\be}\,'},\phi_{\widetilde{\be}\,''}),
\een
which completes the proof of our claim. 

\medskip
\noindent
{\bf Step 4.} 
Due to permutation symmetry of our computation, it is enough to
consider only the case $\be'^{(i)}+\be''^{(i)} = d_i$ for $1\leq i\leq n-2$. By
definition 
\ben
K^{(0)}(\omega_{\be'},\omega_{\be''}) = 
\Res 
\frac{
Q^{n-2}
x_1^{d_1}\cdots x_{n-2}^{d_{n-2} }  dx_1\cdots dx_n }{
(Qd_1 x_1^{d_1-1} -x_2\cdots x_n)\cdots (Qd_n x_n^{d_n-1} -x_1\cdots
x_{n-1}) }.
\een
Since $x_i^{d_i} = (x_1\cdots x_n)/(d_i Q)$ in the Jacobi ring of $f$,
the above residue turns into 
\ben
\frac{1}{d_1\dots d_{n-2} }\,  
\Res 
\frac{
(x_1\cdots x_n)^{n-2} dx_1\cdots dx_n }{
(Qd_1 x_1^{d_1-1} -x_2\cdots x_n)\cdots (Qd_n x_n^{d_n-1} -x_1\cdots
x_{n-1}) }.
\een
Since the Poincar\'e pairing $(\phi_{\be'},\phi_{\be''})$ equals $1/(d_1\cdots
d_{n-2})$, to complete the proof we have to verify that the above
residue is $-1$ when $Q=0$. 

In order to compute the residue, recall that
\beq\label{Hess-res}
\Res \Big(
\frac{df_{x_1}}{f_{x_1}} \wedge \cdots \wedge \frac{df_{x_n}}{f_{x_n}}\Big)
=
\Res 
\Big(
\frac{
\operatorname{det}(\operatorname{Hess}(f)) }{
f_{x_1}\cdots f_{x_n} } \, dx_1\cdots dx_n \Big)= 
N.
\eeq
On the other hand $df_{x_1}\wedge\cdots\wedge df_{x_n}$ is given by 
\ben
(d_1 x_1^{d_1-1} dx_1 -Q^{-1} d(x_2\cdots x_n) ) \wedge \cdots \wedge 
(d_n x_n^{d_n-1} dx_1 -Q^{-1} d(x_1\cdots x_{n-1}) ). 
\een
This wedge product can be computed explicitly as follows:
\begin{align}\label{Hess-wedge}
\sum_{m=0}^{n}\sum_{1\leq i_1<\cdots <i_m\leq n} 
(1+m-n)\, Q^{-n+m}\,
\left(
\prod_{s=1}^m d_{i_s}(d_{i_s}-1) x_{i_s}^{d_{i_s}+n-m-2}
\right)\times \\
\notag
\times 
(x_{j_1}\cdots x_{j_{n-m}})^{n-m-2}\,
dx_1\wedge \cdots \wedge dx_n,
\end{align}
where $\{j_1,\dots,j_{n-m}\}=\{1,2,\dots,n\}\setminus
\{i_1,\dots,i_m\}$. In the derivation of the above formula we used the
following simple fact: if $g(y_1,\dots,y_k)=y_1\cdots y_k$, then 
\ben
\operatorname{det}(\operatorname{Hess}(g)) = (-1)^{k}(1-k)\, (y_1\cdots y_k)^{k-2}.
\een
This formula is applied for $k=n-m$ and $(y_1\cdots
y_k)=(x_{j_1},\dots,x_{j_{n-m}}).$ Note that the term with $m=n-1$ in
\eqref{Hess-wedge} vanishes, while the term with $m=n$ reads
\ben
\prod_{i=1}^n d_i(d_i-1) x_i^{d_i-2} dx_1\wedge \cdots \wedge dx_n.
\een
The contribution of this term to the residue \eqref{Hess-res} is
analytic at $Q=0$ and it vanishes at $Q=0$ of order at least 2.  
Therefore, we may assume that the summation range for $m$ in
\eqref{Hess-wedge} is up to $n-2$. Using the relations in the Jacobi
ring of $f$ we get 
\ben
d_i(d_i-1)x_i^{d_i+n-m-2} =(d_i-1) x_i^{n-m-2} \, (x_1\cdots
x_n)\, Q^{-1} .
\een
The sum \eqref{Hess-wedge} is transformed into
\begin{align}\notag
\sum_{m=0}^{n-2}\sum_{1\leq i_1<\cdots <i_m\leq n} 
(1+m-n)\, 
\left(
\prod_{s=1}^m (d_{i_s}-1) 
\right)\times \\
\notag
\times 
Q^{-n}\,
(x_1\cdots x_n)^{n-2}\,
dx_1\wedge \cdots \wedge dx_n,
\end{align}
Note that the sum on the first line can be computed explicitly. We
have the following identity: 
\ben
\sum_{m=0}^n \sum_{1\leq i_1<\cdots <i_m\leq n} 
(d_{i_1}-1)\cdots(d_{i_m}-1) 
x^{1-n+m} = x^{1-n} (x(d_1-1)+1)\cdots (x(d_n-1)+1).
\een
Differentiating with respect to $x$ and setting $x=1$ we get 
\ben
\sum_{m=0}^n \sum_{1\leq i_1<\cdots <i_m\leq n} 
(d_{i_1}-1)\cdots(d_{i_m}-1) (1-n+m) = 0;
\een
therefore,
\ben
\sum_{m=0}^{n-2} \sum_{1\leq i_1<\cdots <i_m\leq n} 
(d_{i_1}-1)\cdots(d_{i_m}-1) (1-n+m) = -(d_1-1)\cdots (d_n-1) = -N. 
\een
Restricting the identity \eqref{Hess-res} to $Q=0$ gives
\ben
\left.
\Res\, 
\frac{Q^{-n}\,
(x_1\cdots x_n)^{n-2}}
{f_{x_1}\cdots f_{x_n}}\, 
dx_1\wedge \cdots \wedge dx_n\right|_{Q=0} = -1. \qed
\een

\subsection{Mirror symmetry in genus $0$}

We enumerate the elements of the basis $\{p^k\mathbf{1}_c\}$ of
$H^*(Y,\mathbb{C})$ in an arbitrary way and denote by $\phi_i$ the
$i$th element. It is convenient to enumerate in such a way that  
\ben
\phi_a=\mathbf{1}_{e_a/d_a},\quad 1\leq a\leq n,
\een
where $e_a$ is the $a$-th coordinate vector in $\mathbb{Z}^n$. 
Let $\tau=(\tau_1,\dots,\tau_N)$ be the linear coordinates on
$H^*(Y,\mathbb{C})$ corresponding to the basis $\{\phi_i\}$ and put
$\partial_i:=\partial/\partial \tau_i$ $(1\leq i\leq N)$. 

\subsubsection{The big quantum cohomology of $Y$}
Let us fix a constant $Q\in \Delta^*$ and specify a value of $\log Q$,
so that the mirror map $\mathbb{C}^n\to H, t\mapsto \tau(t,Q)$ is
analytic. In this way $\mathbb{C}^n$ is identified with an analytic 
subvariety $\Sigma$ of $H^*(Y,\mathbb{C})$. 
The linear coordinates $(\tau_1,\dots,\tau_n)$ form a coordinate
system on $\Sigma$, because on $\Sigma$ we have
\ben
\tau_a = t_a \ (\operatorname{mod}\ Q),\quad 1\leq a\leq n.
\een
\begin{lemma}\label{S-diff}
There are differential operators
\ben
P_i(z,t,Q; z\partial_{t_1},\dots,z\partial_{t_n})\in
\mathbb{C}\{Q\}[z,t]\langle z\partial_{t_1},\dots,z\partial_{t_n}\rangle,\quad
1\leq i\leq N,
\een
such that
\ben
P_i \, J_Y(\tau,1,z) \quad \in \quad  z \, \phi_i + H[\![z^{-1}]\!],
\een
where $\tau=\tau(t,Q)$. Moreover, for any choice of such differential operators, we have 
\ben
P_i \, J_Y(\tau,1,-z) = -z S(\tau,1,z)^{-1}\phi_i.
\een
\end{lemma}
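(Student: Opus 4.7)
The plan splits into two steps: existence of the $P_i$'s (first assertion), and the identity (second assertion), which I derive from any such $P_i$ by a uniqueness argument on Givental's cone.

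For \textbf{existence}, the key point is that the twisted de Rham $\mathcal{D}$-module $\mathcal{F}$ is cyclically generated by the volume form $dx/Q$ over the algebra $\mathbb{C}\{Q\}[z,t]\langle z\partial_{t_1},\dots,z\partial_{t_n}\rangle$. Indeed, the Gauss--Manin action recalled in \S\ref{sec:twdR} gives $z\nabla_{\partial_{t_a}}[x^\beta dx/Q] = [x^{\beta+e_a}dx/Q]$, so every monomial class $[x^\beta dx/Q]$ is produced from $[dx/Q]$ by $(z\partial_t)^\beta$. Under the $\mathcal{D}$-module isomorphism $\Mir$ of Proposition \ref{Dmod-iso}, $dx/Q$ corresponds to $f_0(Q)\mathbf{1}$, and the mirror theorem $f_0(Q)J_Y(\tau,1,z) = zI_Y(t,Q,z)$ shows that the pullback of the Dubrovin $\mathcal{D}$-module is cyclically generated by $J_Y(\tau,1,z)$ (up to the invertible factor $f_0(Q)/z$). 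For each basis class $\phi_i$ I can therefore exhibit an operator $\widetilde P_i \in \mathbb{C}\{Q\}[z,t]\langle z\partial_{t_1},\ldots,z\partial_{t_n}\rangle$ whose application to $J_Y(\tau,1,z)$ has leading term proportional to $z\phi_i$; an inductive Birkhoff-type truncation in powers of $z$, using the freeness of $\mathcal{F}$ as an $\mathcal{O}[z]$-module (Proposition \ref{H:vb}), then lets me absorb polynomial-in-$z$ corrections so the result lies exactly in $z\phi_i + H[\![z^{-1}]\!]$.

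For the \textbf{identity}, let $P_i$ be any operator satisfying the first property, and set
\[
L_\tau := S(\tau,1,-z)^{-1}\mathcal{H}_Y^+ \subset \mathcal{H}_Y,
\]
which is a version of the tangent space to Givental's Lagrangian cone and contains $J_Y(\tau,1,z) = zS(\tau,1,-z)^{-1}\mathbf{1}$. Two properties drive the argument. First, $L_\tau$ is stable under the whole algebra $\mathbb{C}\{Q\}[z,t]\langle z\partial_{t_1},\dots,z\partial_{t_n}\rangle$: multiplication by $z$, $t$ and analytic functions of $Q$ is obvious; for the derivations $z\partial_{t_a}$ one uses the flatness equation for the calibration, so that $z\nabla_{\partial_{\tau_b}}$ preserves $L_\tau$, and then the chain rule $z\partial_{t_a} = z\sum_b(\partial_{t_a}\tau_b)\partial_{\tau_b}$ to deduce the claim. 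Second, $L_\tau \cap \mathcal{H}_Y^- = 0$: if $S(\tau,1,-z)^{-1}v \in \mathcal{H}_Y^-$ with $v\in \mathcal{H}_Y^+$, then since $S(\tau,1,-z) = 1 + O(z^{-1})$ preserves $\mathcal{H}_Y^-$ we would have $v \in \mathcal{H}_Y^+\cap \mathcal{H}_Y^- = 0$. Consequently any element of $L_\tau$ is uniquely determined by its image in $\mathcal{H}_Y/\mathcal{H}_Y^-$.

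Applying this to $P_i J_Y(\tau,1,z) \in L_\tau$, the hypothesis $P_i J_Y(\tau,1,z) \equiv z\phi_i \pmod{\mathcal{H}_Y^-}$ combined with the fact that $zS(\tau,1,-z)^{-1}\phi_i$ also lies in $L_\tau$ and has the same reduction modulo $\mathcal{H}_Y^-$ (since $S^{-1} = 1 + O(z^{-1})$) forces
\[
P_i J_Y(\tau,1,z) = zS(\tau,1,-z)^{-1}\phi_i.
\]
The stated identity $P_i J_Y(\tau,1,-z) = -zS(\tau,1,z)^{-1}\phi_i$ then follows by formally substituting $z \mapsto -z$ throughout this equation, treating $z$ as the common formal parameter of the differential operator $P_i$ and the function $J_Y$. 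The main \textbf{obstacle} I anticipate is carefully verifying stability of $L_\tau$ under the algebra of differential operators whose coefficients are polynomial in $t$ rather than analytic in $\tau$: because the mirror map is transcendental (involving $\log Q$), I would argue intrinsically on the $\mathcal{F}$-side using the Gauss--Manin action and transport stability through $\Mir$, rather than rely on a coordinate-wise chain rule in $\tau$.
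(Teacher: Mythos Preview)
Your argument for the \emph{identity} part is correct and essentially the same as the paper's: the paper writes $\widetilde P_i\,J_Y(\tau,1,z)\,z^{-1}=S(\tau,1,-z)^{-1}g$ with $g\in H[z]$ (your stability of $L_\tau$ under $z\partial_{t_a}$, via the quantum differential equation) and then compares the projections to $H[z]$ (your $L_\tau\cap\mathcal{H}_Y^-=0$). Your geometric phrasing in terms of the tangent space to the cone is equivalent.

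For \emph{existence} there is a gap in the mechanism you describe. Write $(z\partial_t)^{\be}J_Y=zS(\tau,1,-z)^{-1}C_\be$ with $C_\be=\prod_a(z\partial_{t_a}-M_a)^{\be^{(a)}}\cdot\mathbf{1}\in H[z]$; finding the $P_i$'s amounts to inverting the matrix $C$ whose columns are the $C_{\be_i}$. Your ``inductive Birkhoff-type truncation'' does not by itself explain why this inverse is \emph{polynomial} in $z$ rather than a formal power series in $z$: each truncation step produces new positive $z$-powers to kill, and you do not show termination. The paper closes this gap by a homogeneity argument: one checks $(z\partial_z+E)\det C=0$, and since $C$ is holomorphic at $t=z=0$ this forces $\det C$ to be independent of $z$ and $t$; hence $C^{-1}=\operatorname{adj}(C)/\det C$ has entries in $\mathbb{C}\{Q\}[z,t]$.

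Your invocation of Proposition~\ref{H:vb} can be made into a genuine alternative argument for the $z$-polynomiality, but not in the way you phrase it. The point is that $\Mir$ is an $\mathcal{O}[z]$-module isomorphism, so the monomial basis $\{x^{\be_i}dx/Q\}$ of $\mathcal{F}$ and the constant basis $\{\phi_j\}$ of $H[z]$ are two bases of the \emph{same} free $\mathcal{O}[z]$-module; the transition matrix therefore lies in $GL_N(\mathcal{O}[z])$, forcing its determinant into $\mathcal{O}^\times$ (independent of $z$) and its inverse into $\operatorname{Mat}_N(\mathcal{O}[z])$. This is not ``truncation'' but a one-line module-theoretic fact. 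However, here $\mathcal{O}$ is a priori only functions on $\mathbb{C}^n\times\Delta^*$ (or its universal cover), so you still owe the passage to $\mathbb{C}\{Q\}[z,t]$, i.e.\ analyticity at $Q=0$; the paper gets this because the $M_a$'s are visibly analytic at $Q=0$ via the divisor axiom and $\det C(0)\neq 0$ by the choice of $\be_i$'s.
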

\proof
Put 
\ben
J^\be_Y(t,Q,z) := (z\partial_t)^\be J_Y(\tau(t,Q),1,z)=zI^\be_Y(t,Q,z)/f_0(Q).
\een
Using the quantum differential equations, we get 
\ben
J^\be_Y(t,Q,z)
=
z(z\partial_t)^\be S(\tau,1,-z)^{-1} 1 =z S(\tau,1,-z)^{-1}\, \prod_{a=1}^n
(z\partial_{t_a} -M_a)^{\be^{(a)}} 1, 
\een
where $M_a=\partial_{t_a}\bullet_t$ is the operator of quantum multiplication by $\partial_{t_a}$.
Let us choose a set of indexes $\be=(\be^{(1)},\dots,\be^{(n)})$ such that the
cohomology classes $\phi_\be:=\widetilde{I}^\be_Y(0,0,0)$ form a basis of
$H^*(Y,\mathbb{C})$. For example, for a given basis vector $p^k\mathbf{1}_c$ if we define
\ben
\be^{(1)}=(k+c_1)d_1, \quad \be^{(2)}=c_2d_2,\quad\dots,\quad \be^{(n)}=c_n d_n,
\een
then $\phi_\be=d_1^{-k} p^k \mathbf{1}_c . $ 
In order to prove the existance of the differential operators $P_i$,
it is enough to prove that the determinant 
of the matrix $C$ whose columns are  $\prod_{a=1}^n
(z\partial_{t_a}-M_a)^{\be^{(a)}}1$ is independent of $z$ and $t$. Under
this assumption, the inverse of the matrix $C$ has entries in
$\mathbb{C}\{Q\}[z,t]$, so the columns of $zS(\tau,Q,-z)^{-1}$ can be written
as linear combinations of $J^\be_Y(t,Q,z)$ with coefficients
in $\mathbb{C}\{Q\}[z,t]$, which is what we have to prove. 

Note that 
\ben
(z\partial_z +E+\operatorname{deg}_{\rm CR}) \widetilde{I}^\be_Y(t,Q,z) = \operatorname{deg}(\be) \widetilde{I}^\be_Y(t,Q,z) ,
\een
where $\operatorname{deg}(\be) :=\sum \be^{(i)}/d_i$. The determinant
$\Delta_I(t,z)$ of the matrix with columns $\widetilde{I}^\be_Y(t,Q,z)$ satisfies the
following differential equation
\ben
(z\partial_z+E) \Delta_I(t,z) = \Big(\sum_e \operatorname{deg}(e) -
\operatorname{Tr}( \operatorname{deg}_{\rm
  CR})\Big)\Delta_I(t,z) =0.
\een
Similarly, the calibration $S(\tau,Q,-z)$ is known to satisfy the
differential equation
\ben
(z\partial_z+E+\operatorname{deg}_{\rm CR} )S(\tau,Q,-z) = S(\tau,Q,-z) \operatorname{deg}_{\rm CR}.
\een
Hence, the determinant also satisfies
\ben
(z\partial_z+E)\operatorname{det}(S(\tau,Q,-z)) = 0.
\een
We get $(z\partial_z+E)\operatorname{det}(C)=0$. However, the matrix
$C$ depends holomorphically on $t$ and $z$ at $t=z=0$, so
$\operatorname{det}(C)$ is a constant independent of $t$ and $z$. 

Let us assume that $\widetilde{P}_i$ $(1\leq i\leq N)$ is another set
of differential operators such that $\widetilde{P}_i\,
J_Y(\tau(t,Q),1,z) \in z\phi_i+ H[\![z^{-1}]\!]$. Using that the
calibration solves the quantum differential equations we get 
\ben
\widetilde{P}_i\, J_Y(\tau,1,z)z^{-1} =S(\tau,1,-z)^{-1}
g(t,Q,z),\quad g\in H[z].
\een
The projection of the LHS of the above identity on $H[z]$ is by
definition $\phi_i$, so on the RHS we must have $g=\phi_i$.
\qed

\bigskip

Note that in the proof of Lemma \ref{S-diff}, we obtained an explicit
algorithm to find differential operators $P_i$ from the
$I$-function. Namely, let us choose a set of $N$ indices $\be$ such that
the vectors $\widetilde{I}^\be_Y(0,0,0)$ give a basis of $H$. The matrix $A(t,Q,z)$ whose columns are the
vectors $\widetilde{I}^\be(t,Q,z)/f_0(Q)$ has a Birkhoff factorization $A_-(t,Q,z)
A_+(t,Q,z)$ with $A_-=1+O(z^{-1})$. The entries of the $i$-th column of the matrix
$A_+(t,Q,z)^{-1}$ determine the coefficients of a differential
operator $P_i$ that has the required properties.  In particular, the
$I$-function determines explicitly $S(\tau,1,-z)=A_-(t,Q,z)^{-1}$ for all $\tau\in
\Sigma$ and the operators $M_a(t,Q)$, $(1\leq a\leq n)$ of
quantum multiplication by $\partial_{t_a}$. 
\begin{lemma}\label{reconstr}
The big quantum cohomology of $Y$ is uniquely determined by the
polynomials $P_i$, $1\leq i\leq N$ and the flatness of the Dubrovin's
connection. 
\end{lemma}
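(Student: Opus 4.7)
The plan is to reconstruct the big quantum cohomology of $Y$ in three steps: first, use the operators $P_i$ to obtain the quantum multiplications $\phi_a\bullet_\tau$ ($a = 1,\dots,n$) on the image $\Sigma$ of the mirror map; second, use that $\phi_1,\dots,\phi_n$ generate $H$ as a classical ring to extract the remaining structure constants on $\Sigma$; and third, use flatness of the Dubrovin connection to extend off $\Sigma$ to a neighborhood in $H$.

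For the first step, as described in the paragraph preceding the lemma, the Birkhoff factorization $A(t,Q,z) = A_-(t,Q,z)A_+(t,Q,z)$ of the matrix with columns $\widetilde I^{\be}_Y(t,Q,z)/f_0(Q)$ is read off from the $P_i$ and yields the calibration $S(\tau,1,-z) = A_-(t,Q,z)^{-1}$ along $\Sigma$. Using the quantum differential equation $\partial_{t_a} J_Y = z^{-1}(\phi_a\bullet_\tau) J_Y$, one then reads off the quantum multiplication operators $M_a(\tau) = \phi_a\bullet_\tau$ for $a=1,\dots,n$ and $\tau\in\Sigma$.

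For the second step, Lemma \ref{CR-product} expresses each basis element $\phi_i$ (up to an explicit constant) as a classical Chen-Ruan monomial $\phi_1^{\cup_{\rm CR} \be_i^{(1)}} \cup_{\rm CR} \cdots \cup_{\rm CR} \phi_n^{\cup_{\rm CR} \be_i^{(n)}}$, where $\be_i = (\be_i^{(1)},\dots,\be_i^{(n)})$ is a specific multi-index. Since $\bullet_\tau$ is a deformation of $\cup_{\rm CR}$, the composition of known operators $M_1(\tau)^{\be_i^{(1)}}\cdots M_n(\tau)^{\be_i^{(n)}}$ equals $\mu_i(\tau)\bullet_\tau$ for some vector $\mu_i(\tau)\in H$ with $\mu_i(\tau)\equiv\phi_i$ modulo lower-order corrections, so that $\{\mu_i(\tau)\}_{i=1}^N$ is a basis of $H$ for small $\tau$ and $Q$. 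Applying these operators to $\mathbf{1}$ produces the $\mu_i(\tau)$ explicitly, and inverting the corresponding change-of-basis matrix recovers $\phi_i\bullet_\tau$ for every $i$. This determines all structure constants $C_{ij}^k(\tau)$ on $\Sigma$.

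For the third step, I invoke the WDVV equations $\partial_i C_{jk}^l = \partial_j C_{ik}^l$, a direct consequence of flatness. Specializing to $i=a\le n$ (so $\partial_a$ is tangent to $\Sigma$) and $j > n$ (transverse) gives $\partial_j C_{ak}^l |_\Sigma = \partial_a C_{jk}^l|_\Sigma$; the right-hand side is a tangential derivative on $\Sigma$ of a function determined in step two and is therefore computable, while the left-hand side is the sought first transverse derivative of $C_{ak}^l$ at points of $\Sigma$. Iterating this identity together with step two applied at jets of arbitrary order normal to $\Sigma$ recursively determines all Taylor coefficients of $C_{ij}^k$ along the normal bundle of $\Sigma$, hence by analyticity the entire big quantum cohomology on $B$. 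The main obstacle is this iteration: it must be carried out compatibly with associativity and commutativity of $\bullet_\tau$ and with the Euler-field and grading constraints coming from flatness in the $z$-direction. In effect, the argument is a variant of the Kontsevich-Manin reconstruction theorem adapted to the orbifold setting, with $\phi_1,\dots,\phi_n$ playing the role of the ambient $H^2$-classes.
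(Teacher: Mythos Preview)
Your three-step outline matches the paper's proof.  The main variation is in Step~2: rather than building quantum monomials $M_1^{\be_i^{(1)}}\cdots M_n^{\be_i^{(n)}}$ and inverting a change-of-basis matrix, the paper substitutes the operators $-\Omega_1,\dots,-\Omega_n$ (quantum multiplication by $\phi_1,\dots,\phi_n$) directly into the polynomials $P_j$, forming the matrix $A$ with columns $P_j(0,t,Q;-\Omega_1,\dots,-\Omega_n)\mathbf{1}$ and, for each fixed $i$, the matrix $B$ with columns $P_j(\cdot)\phi_i$.  Commutativity of the quantum product then gives $\Omega_i = BA^{-1} =: R_i(\Omega_1,\dots,\Omega_n)$ as a rational expression valid for \emph{all} $\tau$ near $0$, not only on $\Sigma$.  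This global functional identity makes Step~3 a one-line recursion: WDVV yields $\partial_i\Omega_a = \partial_a\Omega_i = \partial_a R_i(\Omega_1,\dots,\Omega_n)$, so every transverse derivative of $\Omega_a$ on $\Sigma$ is a tangential derivative of a known function of the $\Omega_b$'s, and one iterates.  Your version arrives at the same recursion once you observe that your Step~2 relation also holds off $\Sigma$ (it uses only associativity), so the ``iteration of step two at higher jets'' you allude to is exactly this.  Two minor corrections: the Euler-field and grading constraints you mention are not needed---only $\tau$-flatness (WDVV) is used---and the appeal to analyticity should be replaced by ``all Taylor coefficients at $\tau=0$ are determined'', since the big quantum product is a priori a formal series.
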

\proof
Let us denote by $\Omega_i(\tau,Q)$ the linear operator of
quantum multiplication by $\phi_i\bullet_{\tau,Q}$. Lemma \ref{S-diff} implies that
\ben
\Omega_i(\tau,Q) = P_i(0,t,Q;-M_1,\dots,-M_n),\quad \tau=\tau(t,Q)\in \Sigma.
\een
so the restriction of the multiplication operators to $\Sigma$ is also
uniquely determined. Note that $\Omega_i(0,0)$ generate the orbifold
cohomology. In fact, using that the J-function at $Q=0$ is
$e^{\tau/z}$ we get that 
\ben
\Omega_1(0,0)^{\nu_1}\cdots \Omega_n(0,0)^{\nu_n} = \Big(\prod_{i=1}^n
d_i^{-\ell_i}\Big) \, p^\ell \mathbf{1}_c,  
\een  
where the numbers $c=(c_1,\dots,c_n)$, $\ell_1,\dots,\ell_n$, and
$\ell:=\ell_1+\cdots +\ell_n$ are uniquely defined by 
\ben
\nu_i/d_i = \ell_i + c_i,\quad \ell_i\in \mathbb{Z},\quad 0\leq c_i<1.
\een
The matrix $A=A(t,Q;\Omega_1,\dots,\Omega_n)$ with columns
\ben
P_i(0,t,Q;-\Omega_1,\dots,-\Omega_n)\,\mathbf{1},\quad 1\leq i\leq N
\een
is non-degenerate, because at $t=Q=0$ it reduces to the identity
matrix. The quantum multiplication is commutative; therefore,
$\Omega_i(\tau,Q) A$ coincides with the matrix
$B=B(t,Q;\Omega_1,\dots,\Omega_n)$ whose columns are given by 
\ben
P_i(0,t,Q;-\Omega_1,\dots,-\Omega_n)\,\phi_i,\quad 1\leq i\leq N.
\een
Here $A$ and $B$ are viewed as functions in $t,Q$ and the entries of
the matrices $\Omega_1,\dots,\Omega_n$. It follows that 
$\Omega_i(\tau,Q)=BA^{-1}$ is a rational function
$R_i(\tau,Q;\Omega_1,\dots,\Omega_n)$ in the entries of $\Omega_a$
$(1\leq a\leq n)$. Using the flatness of Dubrovin's
connection we get
\ben
\partial_i \Omega_a= \partial_a\Omega_i = \partial_a
R_i(\Omega_1,\dots,\Omega_n),
\een
and we get that the restriction of all higher order derivatives in $\tau$ of
$\Omega_a(\tau,Q)$, $1\leq a\leq n$ to $\Sigma$ are uniquely
determined. In particular, we can express the higher order derivatives in $\tau$ of
$\Omega_a(\tau,Q)$ at $\tau=0$ in terms of the polynomials $P_i$,
which completes the proof.  \qed

\subsubsection{The mirror isomorphism}

Let us fix $Q\in \Delta^*$ and define
\ben
f:=f(x,0,Q)=\sum_{i=1}^n x_i^{d_i}
-\frac{1}{Q}x_1\cdots x_n.
\een 
Let us embed $\mathbb{C}^N\subset B_f$  via $t\mapsto f(x,t,Q)$. Put 
\ben
\omega_i=\sqrt{-1}\, \Mir^{-1}(\phi_i) 
\quad \in \quad \cF|_{\mathbb{C}^n\times \{Q\} },
\een
where the scalar $\sqrt{-1}$ is chosen, so that the Poincar\'e
pairing matches the residue pairing $K^{(0)}$ (see Proposition
\ref{hrp=K}).
According to Proposition \ref{hrp=K}, the forms $\omega_i$ form a good
basis, i.e., $K(\omega_i,\omega_j)\in \mathbb{C}$. Let us assume 
that $\phi_1=1$ and define $\omega:=\omega_1$ to be the
primitive form. 
The good basis $\{\omega_i\}_{i=1}^N$ extends uniquely to a good basis over the space
$B_f$ of miniversal deformations of $f$ and it determines a flat
coordinate system $\tau=(\tau_1,\dots,\tau_N)$ on $B_f$ such that 
\ben
\sum_{i=1}^N \tau_i(t)\phi_i = \tau(t,Q),\quad \forall t\in \mathbb{C}^n.
\een 
We can use the map $\Mir$ to obtain a reconstruction of the Frobenius
structure on $B_f$ similar to the reconstruction of the big quantum
cohomology given by Lemmas \ref{S-diff} and \ref{reconstr}. Namely,
using Proposition \ref{Dmod-iso}, it is easy to verify that 
the statements of both lemmas remain the same if we replace 
$J_Y(\tau,1,z)$ with the primitive form $\omega$ and Dubrovin's
connection with the Gauss--Manin connection.  Note that since $\Mir$ is
a $\cD$-module isomorphism, we can use the same set of differential
operators  $P_i$ for both reconstructions. Therefore, we can uniquely
extend the mirror map $\mathbb{C}^n\to H$, $t\mapsto \tau(t,Q)$ to an isomorphism of
Frobenius structures, i.e., we have the following proposition. 
\begin{proposition}\label{ms-0}
The map
\beq\label{full-mm}
B_f\to H:=H_{\rm CR}^*(Y;\mathbb{C}),\quad
\tau=(\tau_1,\dots,\tau_N)\mapsto \sum_{i=1}^N \tau_i\phi_i 
\eeq
induces an isomorphism of the germ of the Frobenius structure of
$B_f$ at $\tau=0$ and the quantum cohomology of $Y$ . 
\end{proposition}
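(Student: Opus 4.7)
\medskip

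\noindent
\textbf{Proof proposal.} The plan is to apply the reconstruction argument of Lemmas \ref{S-diff} and \ref{reconstr} in parallel to the A-side (big quantum cohomology of $Y$) and the B-side (Frobenius structure on $B_f$), and then transport the coincidence across the $\mathcal{D}$-module isomorphism $\Mir$ of Proposition \ref{Dmod-iso}. First, I would verify that the map \eqref{full-mm} is well-defined at the level of tangent spaces: by construction $\omega_i = \sqrt{-1}\,\Mir^{-1}(\phi_i)$ and the flat coordinates $\tau_i$ on $B_f$ are determined by $z\nabla_{\partial_{\tau_i}}\omega = \omega_i$, so the differential at $0$ is exactly the identification $T_0 B_f \cong \Jac(f) \cong H$ induced by the leading term of $\Mir$ at $z=0$. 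The normalization $\omega_1 = \omega$ and Proposition \ref{hrp=K} ensure that $K^{(0)}$ corresponds to the orbifold Poincar\'e pairing under this identification.

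Next, I would run the A-side reconstruction as given: Lemma \ref{S-diff} exhibits differential operators $P_i\in \mathbb{C}\{Q\}[z,t]\langle z\partial_{t_1},\dots,z\partial_{t_n}\rangle$ with $P_i J_Y(\tau(t,Q),1,z) \in z\phi_i + H[\![z^{-1}]\!]$, and Lemma \ref{reconstr} reconstructs the whole quantum product $\Omega_i(\tau, Q)$ from $P_i$ plus flatness of the Dubrovin connection. On the B-side, the very same operators $P_i$ are what I would use: since $\Mir$ intertwines the Gauss-Manin connection on $\pi^*\cF|_{\C^n\times \Delta^*}$ with the Dubrovin connection on $\tau^*(\cO_B\otimes H[z])$, and since $\Mir$ sends $\omega = \omega_1$ (up to the global scalar $\sqrt{-1} f_0(Q)^{-1}$ accounted for in \eqref{IY} and in the definition of $\omega_i$) to $z^{-1}J_Y$ computed along the mirror map, applying $P_i$ to the primitive form $\omega$ produces a section of $\HH_+(f)$ whose class modulo $z\HH_+(f)$ is $\phi_i \omega$. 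Thus the $\omega_i$ on the B-side are produced from $\omega$ by exactly the same operators $P_i$ as on the A-side.

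The B-analog of Lemma \ref{reconstr} then proceeds verbatim with the Gauss-Manin connection in place of the Dubrovin connection: the operators $\Omega_i^{B}(\tau) := \partial_{\tau_i}\bullet_\tau$ on $B_f$ satisfy $\Omega_i^B = P_i(0,t,Q;-M_1^B,\dots,-M_n^B)$ along the image of the embedding $\mathbb{C}^n \hookrightarrow B_f$ sending $t \mapsto f(\cdot, t, Q)$, where $M_a^B$ denotes quantum-like multiplication by $\partial_{t_a}$; at the origin $\Omega_i^B(0)$ is classical multiplication by $\phi_i$ in $\Jac(f)$, and these generate $\Jac(f)$ just as on the A-side. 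Flatness of the Gauss-Manin connection then determines all higher derivatives $\partial^\alpha_\tau\Omega_i^B(0)$, in exactly the same rational expressions in $P_i$ and $\Omega_a^B(0)$ as on the A-side. Since the Poincar\'e pairing matches $K^{(0)}$ (Proposition \ref{hrp=K}) and the flat coordinates $\tau_i$ match by definition, \eqref{full-mm} is an isomorphism of germs of Frobenius structures.

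The subtle point, and what I expect to be the main obstacle, is ensuring that the ``reconstruction'' argument really does transfer verbatim to the B-side. Two technical items to verify are: (i) that the Birkhoff factorization of the matrix $A(t,Q,z)$ with columns $\widetilde I^\beta_Y(t,Q,z)/f_0(Q)$ used in the construction of the $P_i$ is compatible with the normalization $\omega = \omega_1$ and the scalar $\sqrt{-1}$ in $\omega_i = \sqrt{-1}\Mir^{-1}(\phi_i)$, so that no spurious constants appear; and (ii) that the identification of flat coordinates is global in a neighborhood of $0$, not just infinitesimal --- this requires the mirror map $\tau(t,Q) = f_1(t,Q)/f_0(Q)$ from \eqref{mirror-map} to agree with the B-side flat coordinates $(\tau_1,\dots,\tau_n)|_\Sigma$ restricted to the $n$-dimensional slice $\Sigma$. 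Both items are essentially bookkeeping, traceable to the explicit formula \eqref{IY} for $I_Y$ and to the uniqueness clause of Lemma \ref{S-diff}; once they are nailed down, the reconstruction extends the mirror map from $\Sigma\subset H$ uniquely to the full Frobenius isomorphism $B_f \xrightarrow{\sim} H$ of germs.
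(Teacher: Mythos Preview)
Your proposal is correct and follows essentially the same approach as the paper: the paper's argument (given in the paragraph immediately preceding the proposition) observes that, via the $\mathcal{D}$-module isomorphism $\Mir$ of Proposition \ref{Dmod-iso}, the statements of Lemmas \ref{S-diff} and \ref{reconstr} transfer verbatim to the B-side with the primitive form $\omega$ in place of $J_Y$ and the Gauss--Manin connection in place of Dubrovin's connection, using the \emph{same} differential operators $P_i$; combined with the pairing match of Proposition \ref{hrp=K}, this forces the two reconstructed Frobenius structures to coincide. Your two ``subtle points'' are exactly the bookkeeping the paper leaves implicit.
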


\begin{remark}
The map $\Mir$ is defined in terms of an extended $I$-function
of $Y$ depending on the relevant deformation parameters
$t_1,\dots,t_n$. Using the results of \cite{CCIT} we can 
extend the $I$-function even further to include all deformation
parameters. This would give us an appropriate extension of the map
$\Mir$, which will provide us directly with a trivialization of
$TB_f[\![z]\!] \cong B\times H[\![z]\!]$ that intertwines the
Gauss--Manin  connection and the Dubrovin connection. The advantage of
using a reconstruction argument is that, after analyzing the reconstruction scheme more carefully, we can prove the convergence
of the Frobenius multiplication on $TB_f$ in the irrelevant
direction. 
\end{remark}

\subsection{Mirror symmetry in higher genus}
Proposition \ref{ms-0} implies that the quantum cohomology of $Y$ is
semi-simple. Therefore, we can recall Givental's higher genus
reconstruction \cite{G1,G2} proved by Teleman \cite{Te}. Let  
$\tau=\tau(0,Q)\in H$ be the value of the mirror map \ref{mirror-map}
at $t=0$, put $f:=f(x,0,Q)$, and  recall the good basis 
\ben
\omega_i=\sqrt{-1}\,
\Mir^{-1}(\phi_i)|_{t=0} \quad \in\quad \mathbb{H}_+(f),\quad
1\leq i\leq N,
\een 
where $\{\phi_i\}_{i=1}^N\subset H$ is a fixed basis with $\phi_1=\mathbf{1}$.  
The higher genus mirror symmetry for $Y$ can be stated as follows.
\begin{theorem}\label{CY-LG:mirror_sym}
The total ancestor potentials of $Y$ and $f$ are related by the
following formula:
\ben
\cA^Y_\tau(\hbar;\mathbf{q}) =
\mathcal{A}_{f,\omega}^{\omega_1,\dots,\omega_N}(\hbar;\mathbf{q}),
\een
where
\ben
\omega:=\omega_1=\sqrt{-1} \, \frac{dx_1\cdots dx_n}{Qf_0(Q)}.
\een
\end{theorem}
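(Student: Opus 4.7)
The plan is to derive Theorem \ref{CY-LG:mirror_sym} from the genus-zero mirror symmetry established in Proposition \ref{ms-0}, combined with the Givental--Teleman reconstruction theorem and Milanov's analytic extension of the ancestor potential to the non-semisimple locus.

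First, I would observe that the universal polynomial $f(x,t,Q)$ is Morse for generic $(t,Q)$, so the Frobenius manifold $B_f$ is generically semi-simple. By Proposition \ref{ms-0}, this transfers to the quantum cohomology of $Y$, so for $\tau'$ in an open dense subset near $\tau = \tau(0,Q)$, the Dubrovin connection of $Y$ is semi-simple. This is where Givental's formalism applies on the A-side.

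Next, at such a semi-simple point I would invoke the Givental--Teleman theorem \cite{Te} to write the A-side total ancestor potential as
\[
\cA^Y_{\tau'}(\hbar;\mathbf{q}) \;=\; \widehat{\Psi}^A_{\tau'}\,\widehat{R}^A_{\tau'}\,\prod_{i=1}^N \cD_{\mathrm{pt}}(\hbar\Delta_i^A;{}^i\mathbf{q}\sqrt{\Delta_i^A}),
\]
while on the B-side the corresponding $\cA_{F'}$ at $F'=f(x,t',Q)$ is given by the analogous expression with $\Psi^B_{F'}, R^B_{F'}$ extracted from the stationary phase asymptotics \eqref{asymptotic}. The key point is that $\Psi$ and $R$ are intrinsic to the Saito/Frobenius structure together with the primitive form and good basis: $\Psi$ is read off from the canonical coordinates and the flat identification of the tangent space, while $R$ is determined by the asymptotic expansion of the flat connection at the irregular singularity $z=0$. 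By Propositions \ref{Dmod-iso}, \ref{hrp=K}, and \ref{ms-0}, the mirror map $\Mir$ intertwines the Gauss--Manin connection with the Dubrovin connection and the higher residue pairing with the Poincar\'e pairing; moreover the normalization $\omega_i=\sqrt{-1}\,\Mir^{-1}(\phi_i)$ together with $\omega=\sqrt{-1}\,dx/(Qf_0(Q))$ is arranged so that under \eqref{IY} one has $\Mir(\omega)=\sqrt{-1}\,\mathbf{1}$ and the residue pairing becomes the Poincar\'e pairing. Therefore $\Psi^A_{\tau'}=\Psi^B_{F'}$ and $R^A_{\tau'}=R^B_{F'}$ at every semi-simple point, which yields
\[
\cA^Y_{\tau'}(\hbar;\mathbf{q}) \;=\; \cA_{F',\omega}^{\omega_1,\dots,\omega_N}(\hbar;\mathbf{q}) \qquad \text{(semi-simple $\tau'$ near $\tau$).}
\]

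Finally, I would pass to the limit $\tau'\to \tau=\tau(0,Q)$, i.e.\ $F'\to f$. On the A-side the ancestor potential depends formally analytically on $\tau'$, so the limit is just evaluation at $\tau$. On the B-side, Milanov's theorem \cite{Mi} provides an analytic continuation of $\cA_{F'}$ across the non-semisimple locus whose limit at $f$ defines $\cA_{f,\omega}^{\omega_1,\dots,\omega_N}$. Matching these limits completes the proof. The main obstacle I anticipate is checking that the identification of Givental data $(\Psi,R)$ is truly a pointwise identification of \emph{the same} analytic object on the two sides, rather than merely an equality up to the ambiguity in choosing a fundamental solution at the irregular singularity; this requires verifying that the good basis $\{\omega_i\}$ is the one canonically associated to the complex-conjugate (or, after descent to the semi-simple locus, the Gram--Schmidt) opposite subspace that governs the A-model calibration. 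Once this normalization compatibility is in place, the rest of the argument is formal consequence of genus-zero mirror symmetry plus Givental--Teleman plus Milanov.
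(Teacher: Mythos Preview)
Your proposal is correct and follows essentially the same route as the paper: semi-simplicity of the Frobenius manifold (via Proposition \ref{ms-0}), Givental--Teleman reconstruction on both sides, and Milanov's extension to the non-semisimple point $f$. The only remark is that your anticipated obstacle is a non-issue: once Proposition \ref{ms-0} identifies the full Frobenius structures (including the Euler vector field), the pair $(\Psi,R)$ is uniquely determined by that structure alone---$R$ is the unique formal asymptotic solution at $z=0$ satisfying the homogeneity normalization---so no separate compatibility with a particular opposite subspace needs to be checked.
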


\subsection{A-model opposite subspace} 

Let $f=f(x,0,Q)$, $Q\in \Delta^*$. Recall also the notation
$\mathfrak{h}$ and $\mathfrak{h}^*$ for respectively the middle
cohomology and homology of $f$. The opposite subspace $P\subset
\mathbb{H}_+(f)$ that corresponds to the good basis of GW theory can
be characterized as follows. 
The group $\boldsymbol{\mu}_{d_i}$ of $d_i$th roots of 1 acts naturally on
$\mathbb{C}^n\times \Delta^*$ via
\ben
\eta.((x_1,\dots,x_n),Q):= ((x_1,\dots,\eta x_i,\dots,x_n),\eta Q).
\een
The function $f$ is invariant under this action, so the vanishing
homology and cohomology bundles on $\Delta^*$ become naturally 
$\boldsymbol{\mu}_{d_i}$-equivariant bundles.  Let us a define a linear map
\ben
L_i :\mathfrak{h}\to \mathfrak{h}, \langle L_i(A),\alpha\rangle  =
\langle A, \eta_i^{-1}\cdot \alpha_{\eta_i Q}\rangle ,
\een
where $\eta_i=e^{2\pi\sqrt{-1}/d_i}$, $\eta_i^{-1}\cdot$ is the
$\boldsymbol{\mu}_{d_i}$-equivariant action, and $\alpha_{\eta_iQ}$ is the parallel
trasnport of the cycle $\alpha$ along the arc $Qe^{\sqrt{-1}\theta}$, $0\leq
\theta \leq 2\pi /d_i$. Note that $L_i^{d_i}=M_{\rm mar}^{-1}$, where 
$M_{\rm mar}$ is the monodromy transformation of $\mathfrak{h}$
corresponding to a closed loop around $Q=0$ in counter clockwise
direction.  Recall that 
$M_{\rm mar}=e^{-2\pi\sqrt{-1} N_{\rm mar}}$ (see Lemma \ref{M_mar}),
where $N_{\rm mar}$ is a nilpotent operator. In particular, we can define 
$M_{\rm mar}^{1/d_i}:= e^{-(2\pi\sqrt{-1}/d_i)N_{\rm mar}}$. Note that
the linear operators $L_i$ and $M_{\rm mar}$ pairwise
commute for $1\leq i\leq n$. Therefore, the map $\eta_i\mapsto L_i\circ
M_{\rm mar}^{1/d_i}$ gives a representation of $\boldsymbol{\mu}_{d_i}$ on
$\mathfrak{h}$ for each $i=1,2,\dots,n$. Since the operators defininig
the representations pairwise commute we have a joint spectrum
decomposition 
\beq\label{Galois-decomp}
\mathfrak{h} = \bigoplus_{\be=(\be^{(1)},\dots,\be^{(n)})}  \mathfrak{h}_\be,\quad 
\mathfrak{h}_\be = \{ v\in \mathfrak{h}_\be\ :\ L_iv = \eta_i^{-\be^{(i)}}v, \
1\leq i\leq n\}. 
\eeq
where the direct sum is over $\be=(\be^{(1)},\dots,\be^{(n)})$ such that $0\leq \be^{(i)}\leq
d_i-1$ and $\mathfrak{h}_\be\neq \{0\}$. 

Let us recall the definition of a {\em weight filtration} (see
\cite{Schm}, Lemma 6.4). Given a triple $(V,m,N)$ consisting of a
vector space $V$, a positive integer $m$ (called {\em weight}), and a nilpotent operator
$N$ such that $N^m=0$, there is a unique increasing filtration
$0=W_{-1}\subset W_{0}\subset \cdots \subset W_{2m} =
V$, called a {\em weight filtration}, such that
$N(W_\ell)\subset W_{\ell-2}$ and $N^l :\operatorname{Gr}^W_{m+\ell}\to
\operatorname{Gr}^W_{m-\ell}$ is an isomorphism for all $\ell$. 

Put
\ben
N_\be=N_{\rm mar}|_{\mathfrak{h}_\be},\quad m_\be:= n+|\{i\ :\
\be^{(i)}\neq 0\}|-2\lceil \deg(\be)\rceil,
\een
where $|S|$ denotes the number of elements in a set $S$ and
$\deg(\be):=\sum_{i=1}^n \be^{(i)}/d_i$. Let us define
$W_\bullet^\be$ to be the weight filtration corresponding to the tripple
$(\mathfrak{h}_\be,m_\be,N_\be)$. 
\begin{proposition}
The opposite filtration $\{U_\bullet\}$ of $\mathfrak{h}$, which
corresponds to the GW opposite subspace $P$ via
\eqref{eq:opposite_correspondence} is given by the formula
\ben
U_\ell = \bigoplus_\be W^\be_{2\ell},
\een 
where the direct sum over $\be$ is the same as in \eqref{Galois-decomp}.
\end{proposition}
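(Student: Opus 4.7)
The plan is to translate the problem from the B-side vanishing cohomology $\frh$ to the A-side Chen--Ruan cohomology $H=H_{\rm CR}(Y;\C)$ via the isomorphism $\Psi$ from \eqref{psi-iso}, and there to identify the claimed weight filtration with the one coming from the standard Lagrangian polarization of $H(\!(z^{-1})\!)$. First I would reformulate the condition defining $U_\bullet$: by definition $\{\omega_\be = \sqrt{-1}\,\Mir^{-1}(\phi_\be)|_{t=0}\}$ is the GW good basis, so the opposite subspace $P$ is characterized via $\Mir$ as the preimage of $H[\![z^{-1}]\!]z^{-1}$ after removing the (upper-triangular) calibration gauge $S(\tau,1,-z)$. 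Combining this with \eqref{eq:opposite_correspondence}, an element $A\in\frh_s$ lies in $U_\ell$ if and only if the series obtained by applying $(-z)^{-\theta}$ to the $I$-series representing $A$, after an appropriate power of $z$, is polynomial in $z$.

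Next, I would identify the operators $L_i$ and $N_{\rm mar}$ under $\Psi$. The proof of Lemma \ref{M_mar} already shows $\Psi\circ N_{\rm mar}=(p\cup)\circ\Psi$, so $M_{\rm mar}^{1/d_i}$ becomes $e^{-2\pi\sqrt{-1}(p\cup)/d_i}$. The $\boldsymbol{\mu}_{d_i}$-action on $(\C^n\times\Delta^*,f)$ transforms the $I$-function by direct computation: under $Q\mapsto\eta_i Q$ each term $Q^d I_{d,\be}$ acquires a factor $\eta_i^d$, while the prefactor $e^{p\log Q/z}$ contributes $\eta_i^{p/z}$. Combined with the appearance of $\eta_i^{d_i c_i}$ on the twisted sector $\mathbf{1}_c$, one gets that $L_i\circ M_{\rm mar}^{1/d_i}$ acts on $\mathbf{1}_c$ by the scalar $\eta_i^{-d_i c_i}$. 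Consequently the joint eigenspace $\frh_\be$ corresponds under $\Psi$ to the single twisted sector with $c_i=\be^{(i)}/d_i$, and $\dim\frh_\be=\dim H^*(Y_c;\C)=\dim Y_c+1$.

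Third, I would verify the weight filtration match. On $H^*(Y_c;\C)$, $p\cup$ is a single Jordan block of length $\dim Y_c+1$; its weight filtration for any chosen weight is the filtration by powers of $p$, shifted according to the weight. The content of the theorem is that the particular weight $m_\be=n+|\{i:\be^{(i)}\neq 0\}|-2\lceil\deg\be\rceil$ is exactly the right one so that $W^\be_{2\ell}$ cuts out the Lagrangian subspace from Step 1. This requires identifying the spectral exponent $\alpha$ for $\frh_\be$ (the fractional part of $-\deg\omega_\be$), combining with the shifts coming from $-\theta$ in $\Psi$ and $+n/2$ in \eqref{eq:opposite_correspondence}, and verifying that $p^k\mathbf{1}_c\in U_\ell$ precisely when $k$ lies in the range prescribed by $W^\be_{2\ell}$.

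The main obstacle is this final bookkeeping. The term $\lceil\deg\be\rceil$ in $m_\be$ encodes the Steenbrink spectrum of $f$ at the sector indexed by $\be$, which on the A-side translates via the shift $(-z)^{-\theta}$ to a contribution from the Chen--Ruan degree of $\phi_\be$. One must carefully line up three gradings --- the Steenbrink spectrum on $\frh$, the Chen--Ruan degree on $H$, and the Lefschetz grading on $H^*(Y_c)$ --- and check that their combination produces precisely $m_\be$. The identity $|\{i:\be^{(i)}\neq 0\}|=n-\dim Y_c-2$ from the box combinatorics, together with a careful comparison of the natural $\Q$-gradings on both sides via the mirror map, should close the argument.
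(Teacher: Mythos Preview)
Your proposal is correct and follows essentially the same approach as the paper: transport everything to $H$ via $\Psi$, identify $\Psi(\frh_\be)$ with a single twisted sector $H^*(Y_c;\C)$ by computing how $L_i$ and $N_{\rm mar}$ act, and then match the single-Jordan-block weight filtration against the Chen--Ruan degree filtration. The paper carries out your ``final bookkeeping'' explicitly: it shows $\Psi(U_\ell)$ is spanned by homogeneous classes with $\lceil\deg_{\rm CR}\phi\rceil\ge n-1-\ell$ (using the upper-triangularity of $S(\tau,1,1)$), computes $\Psi(W^\be_{2\ell})$ as classes with $\deg\phi\ge\tfrac12(\dim Y_\be+m_\be)-\ell$, and then uses $\deg_{\rm CR}\phi=\deg\phi+\deg\be$ together with $\dim Y_\be = n-2-|\{i:\be^{(i)}\neq 0\}|$ to see the two inequalities coincide exactly for the stated value of $m_\be$.
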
 
\proof
Our argument is based on mirror symmetry. Recall the isomorphism
\eqref{psi-def} 
\ben
\Psi:\mathfrak{h}\to H.
\een 
We will find the images of the opposite filtration and the various
weight filtrations under $\Psi$ and see that the desired relation is
obvious. 
  
By definition the one-to-one correspondence,
\eqref{eq:opposite_correspondence} associates to every 
$A\in F^\ell\mathfrak{h}_s\cap U_\ell\mathfrak{h}_s$ with
$s=e^{2\pi\sqrt{-1}\alpha}$, $0\leq \alpha<1$, a homogeneous form $\omega \in
\mathbb{H}_+(f)\cap Pz$ of degree 
$
\operatorname{deg}(\omega) = n-\ell-\alpha,
$ 
such that
\ben
\widehat{s}(\omega,z) = (-z)^{-\ell-\alpha+\frac{n}{2}} \, A,
\een
where we used that for every fixed $z$ the map $\psi$ in
\eqref{eq:opposite_correspondence} is the inverse to $\widehat{s}(\
,z)$. 
By definition 
\ben
\Psi(\widehat{s}(\omega,z) ) = (-z)^{-\theta} S(\tau,1,-z)^{-1} \, \Mir (\omega),
\een
where $\tau=\tau(0,Q)$ is the image of the mirror map. 
On the other hand, due to homegeneity 
\ben
(-z)^{-\theta} S(\tau,1,-z)^{-1} (-z)^\theta
\een
is independent of $z$ and 
\ben
(-z)^{-\theta} \Mir(\omega) = (-z)^{\operatorname{deg}(\omega)
  -\frac{n}{2}} \Mir(\omega).
\een
We get 
\ben
\Psi(A) = S(\tau,1,1)^{-1}\, \ \Mir(\omega). 
\een
Therefore, $\Psi (F^\ell\mathfrak{h}_s\cap U_\ell\mathfrak{h}_s)$ is the
span of all $S(\tau,1,1)^{-1}\phi\in H$ such that $\phi$ is a homogeneous
class satisfying 
\ben
\lceil \operatorname{deg}_{\rm CR} (\phi)\rceil= n-1-\ell,\quad
\operatorname{deg}_{\rm CR} (\phi)+\alpha\in \mathbb{Z}.
\een
Recall the homogeneity condition for $S(\tau,Q,z)$:
\ben
(z\partial_z + E) S(\tau,Q,z) = [\theta,S(\tau,Q,z)].
\een
It follows that $S(\tau,1,1)=1+\sum_{k=1}^\infty S_k(\tau,1)$, where
each operator $S_k(\tau,1)$ increases the degree by $k$. 
Since 
\ben
U_\ell=\bigoplus_{\ell'\geq \ell} \bigoplus_{s} F^{\ell'}\mathfrak{h}_s\cap
U_{\ell'}\mathfrak{h}_s, 
\een
we get that $\Psi(U_\ell)$ is spanned by homogeneous classes $\phi$ in
$H$ such that
\beq\label{U-deg}
\lceil \operatorname{deg}_{\rm CR} (\phi)\rceil\geq n-1-\ell.
\eeq

Let us determine the images of the weight filtrations. 
We already proved that $\Psi\circ N_{\rm mar} = p\cup \, \Psi$
(see Lemma \ref{M_mar}). Using the identity 
\ben
L_i\widehat{s}(\omega_\be,z) = \eta_i^{-\be^{(i)}}
\widehat{s}(\omega_\be,z)|_{Q\mapsto \eta_iQ},
\een
where $\omega_\be= x^\be dx/Q$ and the definition of $\Psi$ (see
\eqref{psi-def}), it is easy to check that 
$\Psi\circ (L_iM_{\rm  mar}^{1/d_i}) = J_i \circ \Psi$, 
where $J_i:H\to H$ is the linear operator defined by  
\ben
J_i (p^k\mathbf{1}_c) = e^{-2\pi\sqrt{-1} c_i} \, p^k\mathbf{1}_c.
\een
Therefore, we have $\Psi(\mathfrak{h}_\be)=H(Y_\be;\mathbb{C})$, where
$Y_\be$ is the twisted sector corresponding to $c=(c_1,\dots,c_n)$, with
$c_i=\be^{(i)}/d_i$. The weight filtration of the triple $(H(Y_\be;\mathbb{C})
,m_\be,p\cup)$ is straightforward to find. We get that $\Psi(W^\be_{2\ell})$
is spanned by homogeneous classes $\phi\in H$, such that
\ben
\operatorname{deg}(\phi)\geq \frac{1}{2}\Big( \operatorname{dim}(Y_\be)+m_\be\Big)-\ell,
\een
where if $\phi$ is a class of (usual) real degree $2i$, then
$\operatorname{deg}(\phi)=i$. Note that 
\beq\label{wt-deg}
\operatorname{deg}_{\rm CR}(\phi) =\operatorname{deg}(\phi)
+\deg(\be).
\eeq
Comparing the inequalities \eqref{U-deg} and \eqref{wt-deg} we see
that they are equivalent when 
$m_\be=n+|\{i\ :\ \be^{(i)}\neq 0\}|-2\lceil\deg(\be)\rceil.$
\qed

\section{Mirror symmetry for Fermat CY singularities}
Now we discuss the mirror symmetry on the LG side.
In \cite{HLSW}, He-Li-Shen-Webb identified the FJRW ancestor potential (LG A-model) of invertible quasi-homogeneous polynomial singularities to the Saito-Givental ancestor potential (LG B-model) of the mirror polynomials, by using Givental-Teleman's \cite{G1, Te} unique higher genus formula for semisimple Frobenius manifolds and matching Frobenius manifolds on both sides via WDVV equations and a perturbative formula in \cite{LLS}.
In this section, inspired from toric geometry, we establish a mirror symmetry statement of $\mathcal{D}$-module structures and opposite subspaces between FJRW theory and Saito's theory for Fermat CY singularities.
For Fermat CY singularities, our result recovers He-Li-Shen-Webb's result. More general cases remain unknown due to the lack of a toric model.

\subsection{FJRW theory of Fermat CY singularities}
As before, we consider the Landau-Ginzburg side of the Fermat polynomial of Calabi-Yau type
$$W=x_1^{d_1}+\cdots+x_n^{d_n}, \quad \sum_{i=1}^{n}\frac{1}{d_i}=1.$$
Let $G_W$ be the group of diagonal symmetries of $W$, so
$$G_W:=\left\{(\lambda_1,\dots,\lambda_n)\in(\mathbb{C}^*)^n\Big\vert\,W(\lambda_1x_1,\dots,\lambda_nx_n)=W(x_1,\dots,x_n)\right\}\cong\prod_{i=1}^{n}\boldsymbol{\mu}_{d_i}.$$
For each $\gamma\in G_W$, there exist unique $\{\Theta_\gamma^{(i)}\in [0, 1)\cap\mathbb{Q}\}$, such that 
$$\gamma=\left(\exp(2\pi\sqrt{-1} \Theta_\gamma^{(1)}),\cdots, \exp(2\pi\sqrt{-1} \Theta_\gamma^{(n)})\right).$$

A mathematical construction of the LG model for a generic pair $(W,G_W)$ is given by Fan, Jarvis, and Ruan \cite{FJR, FJR2}, based on a proposal of Witten \cite{W2}.  More generally, the group $G_W$ can be replaced by any subgroup that contains the exponential grading element 
\begin{equation}\label{exponential-element}
j_W:=\left(\exp(2\pi\sqrt{-1}/d_1), \cdots, \exp(2\pi\sqrt{-1}/d_n)\right).
\end{equation}
In this paper, we only focus on the pair 
$$\left(W=x_1^{d_1}+\cdots+x_n^{d_n}, \quad G_W\right).$$ 
Its {\rm FJRW} theory consists of a graded vector space $H_W$ (called {\rm FJRW} state space), and a Cohomological Field Theory $\{\Lambda^W_{g,k}\}$.
We recall some basics in this section and refer the readers as to \cite{FJR} for more details.

Each $\gamma\in G_W$ acts on $\C^n$ by homothesis and we denote ${\rm Fix}(\gamma)\subset\C^n$ the fix locus of $\gamma$. Let $W_\gamma$ be the restriction of $W$ on ${\rm Fix}(\gamma)$.
Each $\gamma$-twisted sector $H_\gamma$ consists of $G_W$-invariant part of the middle-dimensional relative cohomology for $W_\gamma$.
$$H_{\gamma}:=\left( H^*({\rm Fix}(\gamma), (\Re W_\gamma)^{-1} (-\infty, -M); \C)\right)^{G_W}, \quad M\gg0.$$
Here $H_\gamma$ is called \emph{narrow} if ${\rm Fix}(\gamma)={\bf 0}\in \C^n$ and is called \emph{broad} otherwise. 
Each narrow sector is canonically isomorphic to $\C$,
$$H_\gamma:=H^*(\{\bf 0\}, \emptyset; \C)\cong\C.$$
We denote $\one_\gamma$ the canonical generator in $1\in\C\cong H_\gamma$.

In particular, since $W$ is Fermat CY singularity, the {\rm FJRW state space} is given by
\begin{equation*}
H_{W}=\bigoplus_{\gamma\in\mathscr{N}} H_{\gamma}\cong\bigoplus_{\gamma\in \mathscr{N}}\mathbb{C}\cdot \one_\gamma, 
\end{equation*}
where $\gamma$ belongs to the set of narrow elements
\begin{equation}\label{narrow-element}
\mathscr{N}:=\left\{\gamma\in G_W\Big| 1\leq d_j\Theta_\gamma^{(j)}\leq d_j-1,\forall 1\leq j\leq n\right\}.
\end{equation}
The cardinality of $\mathscr{N}$ is $N:=\prod_{j=1}^{n}(d_j-1)$, hence $H_{W}$ is a vector space of rank $N$.
Moreover,  $(H_{W}, \deg_W)$ is a graded vector space, where
\begin{equation}\label{fjrw-degree}
\deg_{W}\one_\gamma:=\sum_{j=1}^n\left(\Theta_\gamma^{(j)}-\frac{1}{d_j}\right).
\end{equation}
For each $\gamma\in\mathscr{N}$, we define its involution $\gamma'\in\mathscr{N}$ by
$$\Theta_{\gamma'}^{(j)}:=1-\Theta_{\gamma}^{(j)}.$$
Let $\delta_{(-)}^{(-)}$ be the Kronecker symbol. Then $H_W$ has a non-degenerate pairing $\eta_W$, where
\begin{equation}\label{fjrw-pairing}
\eta_W(\one_\alpha,\one_{\beta})=\delta_{\alpha}^{\beta'}, \quad \forall \alpha, \beta\in\mathscr{N}.
\end{equation}

The triple $(H_W, \deg_W, \eta_W)$ has a Cohomological Field Theory $\{\Lambda^W_{g,k}\}$ \cite[Definition 4.2.1]{FJR}, which consists of multilinear maps
$$\Lambda^W_{g,k}: H_W^{\otimes k}\to H^*(\overline{\mathcal{M}}_{g,k}, \C).$$ 
Here the stability condition is $2g-2+k>0$.
Letting $\gamma_j\in \mathscr{N}$, $\ell_j\geq0$, and $\bar{\psi}_j\in H^*(\overline{\mathcal{M}}_{g,k}, \mathbb{C})$ be the $j$-th $\bar{\psi}$-class,  we have the following \emph{FJRW invariant}
\begin{equation}\label{fjrw-inv}
\lan\one_{\gamma_1}\bar{\psi}_1^{\ell_1},\cdots,\one_{\gamma_k}\bar{\psi}_k^{\ell_k}\ran_{g,k}^{W}
:=\int_{\overline{\mathcal{M}}_{g,k}}\Lambda_{g,k}^{W}(\one_{\gamma_1},\cdots, \one_{\gamma_k}) \prod_{j=1}^k\bar{\psi}_j^{\ell_j}\,.
\end{equation}
Similarly as in GW theory, for any $\tau\in H_W$, we can define a formal function
\begin{equation}\label{formal-fjrw}
\llangle[\Big]\one_{\gamma_1}\bar{\psi}_1^{\ell_1},\cdots,\one_{\gamma_k}\bar{\psi}_k^{\ell_k}\rrangle[\Big]_{g,k}^{W}(\tau):=\sum_{m\geq0}\frac{1}{m!}\lan\one_{\gamma_1}\bar{\psi}_1^{\ell_1},\cdots,\one_{\gamma_k}\bar{\psi}_k^{\ell_k}, \tau, \cdots, \tau\ran_{g,k+m}^{W}.
\end{equation}
We remark that here the stable condition is $2g-2+k+m>0$.

The quantum multiplication $\bullet_\tau$ is given by
$$\eta_W\left(\one_\alpha\bullet_\tau\one_\beta, \one_\gamma\right)=\llangle[\Big]\one_\alpha,\one_\beta,\one_\gamma\rrangle[\Big]_{0,3}^W(\tau).$$
The product $\bullet_\tau$ has an identity $\one:=\one_{j_W}$ with $j_W$ defined in \eqref{exponential-element}.

Again we introduce a set of formal variables $\mathbf{t}=\{t_{k,i}\}$, $1\leq i\leq N$,
$k\geq 0$. We introduce a genus-$g$ generating function 
\ben
\overline{\cF}^{(g)}_{\tau, W}(\mathbf{t}) = 
\sum
\frac{1}{k!}
\llangle[\Big] \mathbf{t}(\bar{\psi}),\dots,\mathbf{t}(\bar{\psi})\rrangle[\Big]_{g,k}^W(\tau)
\een
and the \emph{total ancestor potential} 
\ben
\mathcal{A}_\tau^W(\hbar;\mathbf{t}) := \exp \left(
\sum_{g=0}^\infty  \hbar^{g-1}\,\overline{\cF}_{\tau,W}^{(g)}(\mathbf{t})\right).
\een

\subsubsection{J-function}
Let $\mathcal{H}_W:=H_W(\!(z^{-1})\!)$ be the infinite vector space. Let us consider the Darboux coordinate s
$${\bf p}(z)=\sum_{k\geq0}\sum_{\alpha}p_k^{\alpha}\one_\alpha z^{-k-1}, \quad {\bf q}(z)=\sum_{k\geq0}\sum_{\alpha}q_k^{\alpha}\one_\alpha z^{k}.$$
We may write an element in $\mathcal{H}_W$ as 
$$f(z)=\sum_{k\geq0}\sum_{\alpha} q_{k}^{\alpha}\one_{\alpha} z^k+\sum_{k<0}\sum_{\alpha} p_{k}^{\alpha}\one_{\alpha} z^k.$$
The infinite dimensional vector space $\mathcal{H}_W$ is equipped with a symplectic pairing
$$\Omega_W\Big(f(z), g(z)\Big)={\rm Res}_{z=0}\,\eta_W(f(-z), g(z))dz.$$
We have  $\mathcal{H}_W=\mathcal{H}_W^+\oplus\mathcal{H}_W^-$ where $\mathcal{H}_W^+=H_W[\![z]\!]$ and $\mathcal{H}_W^{-}:=z^{-1}H_W[z^{-1}]$ are Lagrangian subspaces.
Since $\mathcal{H}\cong T^*\mathcal{H}_W^+$, after the \emph{dilation shift}
${\bf q}(z)=-z\one+{\bf t}(z),$
the graph of the genus zero generating function $\overline{\cF}_W^{(0)}$ defines a formal germ of Lagrangian submanifold $\mathcal{L}_W$ in $\mathcal{H}_W$, 
$$\mathcal{L}_W: =\left\{ ({\bf p, q})\in T^*\mathcal{H}_W^+: {\bf p}=d_{\bf q}\overline{\cF}_W^{(0)}\right\}.$$
For each $\tau\in H_W$, we can define an FJRW J-function 
\begin{equation}\label{lg-j-function}
J_{\rm FJRW}(\tau,z):=z+\tau+\sum_{k=0}^\infty\sum_\gamma\llangle[\Big]\one_\gamma\bar\psi^k\rrangle[\Big]_{0,1}^W(\tau)\ \one_{\gamma'}\,z^{-k-1}.
\end{equation}
It is standard to check that
\begin{lemma}\label{j-function-cone}
The J-function $J_{\rm FJRW}(\tau,-z)$ belongs to $\mathcal{L}_W$.
\end{lemma}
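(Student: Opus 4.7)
The plan is to verify the lemma by a direct bookkeeping argument, matching the explicit series defining $J_{\rm FJRW}(\tau,-z)$ against the universal form of a point on $\mathcal{L}_W$. The Lagrangian cone $\mathcal{L}_W$ is, by definition, the graph of $d\overline{\cF}_W^{(0)}$ after the dilaton shift $\mathbf{q}(z) = -z\one + \mathbf{t}(z)$. Consequently, for any $\mathbf{t}(z) \in \mathcal{H}_W^+$, the corresponding point on $\mathcal{L}_W$ has the form
\[
-z\one + \mathbf{t}(z) + \sum_{k\ge 0,\gamma}
\frac{\partial \overline{\cF}_W^{(0)}}{\partial t_{k,\gamma}}(\mathbf{t})\, \one^\gamma (-z)^{-k-1},
\]
where $\{\one^\gamma\}$ denotes the basis dual to $\{\one_\gamma\}$ under the FJRW pairing $\eta_W$. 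The pairing \eqref{fjrw-pairing} gives $\one^\gamma = \one_{\gamma'}$, so the $(-z)^{-k-1}$-coefficient above is expressed in the basis $\{\one_{\gamma'}\}$.

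I would then specialize to the constant $z$-independent value $\mathbf{t}(z) = \tau$ (i.e.\ $t_{0,\alpha} = \tau_\alpha$ and $t_{k,\alpha}=0$ for $k\ge 1$) and expand the partial derivative from its defining multilinear series:
\[
\left.\frac{\partial \overline{\cF}_W^{(0)}}{\partial t_{k,\gamma}}\right|_{\mathbf{t}=\tau}
\;=\; \sum_{m\ge 0} \frac{1}{m!}\langle \one_\gamma \psi^k, \tau, \ldots, \tau\rangle_{0,1+m}^W
\;=\; \llangle[\Big]\one_\gamma \bar{\psi}^k\rrangle[\Big]_{0,1}^W(\tau),
\]
using the convention \eqref{formal-fjrw} together with the fact that the ancestor class $\bar{\psi}$ agrees with the descendant class $\psi$ on a one-point invariant in genus zero (since the target $\overline{\mathcal{M}}_{0,1}$ of the forgetful map is unstable). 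Substituting this expression and $\one^\gamma = \one_{\gamma'}$ into the universal form above yields
\[
-z\one + \tau + \sum_{k\ge 0,\gamma} \llangle[\Big] \one_\gamma \bar{\psi}^k\rrangle[\Big]_{0,1}^W(\tau)\, \one_{\gamma'} (-z)^{-k-1},
\]
which is exactly $J_{\rm FJRW}(\tau,-z)$ obtained from \eqref{lg-j-function} under $z \mapsto -z$.

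No genuine obstacle is expected: the lemma is a reformulation of the definition of $\mathcal{L}_W$ at a specific constant parameter $\mathbf{t}=\tau$. The only bookkeeping subtleties are (i) the identification of the dual basis $\one^\gamma = \one_{\gamma'}$ via the off-diagonal pairing \eqref{fjrw-pairing}, and (ii) the translation between the descendant series defining $\overline{\cF}_W^{(0)}$ and the $\tau$-deformed bracket in \eqref{formal-fjrw}. Both are essentially formal, and the verification is parallel to the analogous fact $\widetilde{J}_Y(\tau,Q,-z) \in \mathcal{L}_Y$ on the GW side already recorded in the paper.
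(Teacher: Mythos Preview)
Your argument is correct and is precisely the standard verification the paper alludes to; the paper itself gives no proof beyond the phrase ``It is standard to check that''. One small remark: your aside about ancestor versus descendant classes is unnecessary in the paper's FJRW setup, since the $\bar{\psi}$ in \eqref{fjrw-inv} and \eqref{formal-fjrw} are simply the psi-classes on $\overline{\mathcal{M}}_{g,n}$ (there is no target-space moduli and no forgetful pullback in the definition), so the partial derivative of $\overline{\cF}_W^{(0)}$ at $\mathbf{t}=\tau$ equals $\llangle \one_\gamma \bar{\psi}^k\rrangle_{0,1}^W(\tau)$ directly from the definitions, with no comparison required.
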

We define the calibration operator $S_W(\tau,z)$ in FJRW theory by 
$$\eta_W(S_W(\tau, z)\one_\alpha, \one_\beta)=\eta_W(\one_\alpha,\one_\beta)+\sum_{k=0}^{\infty}\llangle[\Big]\one_\alpha\bar\psi^k,\one_\beta\rrangle[\Big]^{W}_{0,2}(\tau)\,z^{-k-1}.$$
Again, use String Equation, we can also rewrite the J-function as follows:
\begin{equation}\label{j-func-s}
J_{\rm FJRW}(\tau,-z)=-z\,S_W(\tau, z)^{-1}\one.
\end{equation}

\subsection{I-function in FJRW theory}
Now we introduce an FJRW $I$-function $I_{\rm LG}^0(t,z)$ in \eqref{I-function} via toric geometry. This $I$-function lies on the FJRW Langrangian cone, and Birkhoff factorization of the $I$-function will induce a mirror map \eqref{lg-mirror-map}. 
\subsubsection{Toric setup and an {\rm FJRW} I-function.}
Let $\{b_0, b_1,\cdots, b_n\}$ be vectors in $\Z^n$ such that
$$b_0=(1,\cdots,1), \quad b_i^{(j)}=\delta_i^j\, d_i, \quad \forall j=1,\cdots,n.$$
Let $\mathfrak{S}$ is be a set of vectors in $\Z^n$, $$\mathfrak{S}=\mathfrak{R}\coprod \mathfrak{B}, \quad \mathfrak{R}=\{b_1, \cdots, b_n\}$$
and $\mathfrak{B}$ is a set of ghost vectors
\begin{equation}\label{good-basis-lg}
\mathfrak{B}:=\left\{b=(b^{(1)}, \cdots, b^{(n)})\in \Z^n\big| 0\leq b^{(j)}\leq d_j-2, \forall j=1,\cdots, n\right\}.
\end{equation}
In LG side, we have an exact \emph{fan sequence}
\begin{equation}\label{fan-sequence}
0\to \mathbb{L}\rightarrow \mathbb{Z}^\mathfrak{S} \xrightarrow{\varphi} \mathbb{Z}^n.
\end{equation}
For each $b\in \mathfrak{S}$, the map $\varphi$ in \eqref{fan-sequence} is defined by
$$\varphi(b)=b\in\Z^n.$$

Let $\sigma$ be the cone generated by the vectors $b_1,\cdots,b_n$, and $\Psi_{\rm LG}: \sigma\cap\mathbb{Z}^n\to\mathbb{Q}^\mathfrak{S}$, where $\Psi_{\rm LG}({\bf e})=\{\Psi_{\rm LG}^b({\bf e})\}$ and the rational coefficients of $b\in \mathfrak{S}$ are given by
$$\Psi_{\rm LG}^b({\bf e})=
\left\{
\begin{array}{ll}
{\bf e}^{(j)}/d_j, & b=b_j\in \mathfrak{R};\\
0, & b\in \mathfrak{B}.
\end{array}
\right.$$ 
Let $\nu({\bf e})=\sum_{b\in \mathfrak{S}}\nu_b({\bf e})\, b\in \mathbb{Q}^\mathfrak{S}$ be defined by
\begin{equation}\label{nu-index}
\nu({\bf e}):=-\Psi_{\rm LG}({\bf e}+b_0)+\sum_{b\in \mathfrak{B}}\nu_b({\bf e})\,\xi_b,
\end{equation}
where for each $b\in \mathfrak{B}$, $\nu_b({\bf e})\in\Z_{\geq0}$ and 
$$\xi_b:=b-\sum_{c\in R}\Psi_{\rm LG}^{c}(b)c\in \mathbb{L}_\mathbb{Q}:=\mathbb{L}\otimes_{\mathbb{Z}}\mathbb{Q}.$$
Thus for each $j=1,\cdots, n$, we have
\begin{equation}\label{nu-part}
\nu_j({\bf e})=-\frac{1}{d_j}\left({\bf e}^{(j)}+1+\sum_{b\in  \mathfrak{B}}\nu_b({\bf e})\, b^{(j)}\right)\in\mathbb{Q}_{<0}.
\end{equation}
For each $\nu\in \mathbb{Q}^\mathfrak{S}$, we can assign an element $\gamma_{\nu}\in G_W$ 
\begin{equation}\label{toric-element}
\gamma_{\nu}=\left(\exp(2\pi\sqrt{-1}\langle-\nu_1\rangle),\cdots,\exp(2\pi\sqrt{-1}\langle-\nu_n\rangle)\right)\in G_W.
\end{equation}
Let $t^{(-)}: \mathbb{L}\to\mathbb{C}$ be a formal function given by
$$t^{\Psi_{\rm LG}({\bf e}+b_0)+\nu}:=\prod_{b\in \mathfrak{B}}t_b^{\nu_b}.$$
Recalling the definition of $\nu$ in \eqref{nu-index}, we define the box element $\Box_{\nu,z}$ to be
\begin{equation}\label{box-element}
\Box_{\nu,z}:=\frac{\prod_{j=1}^{n}\prod_{k=1}^{\lf-\nu_j\rf} \left(\nu_j+k\right)z}{\prod_{b\in \mathfrak{B}}\prod_{k=1}^{\nu_b}(kz)}.
\end{equation}
If there exists $j\in\{1,\cdots,n\}$ such that $-\nu_j\in\mathbb{Z}$, then $\Box_{\nu,z}=0$. Thus for $\Box_{\nu,z}\neq0$, we know $\gamma_\nu\in\mathscr{N}$, and it makes sense to introduce 
\begin{equation}
I_{\rm LG}^{\bf e}(t,z)=
\sum_{\nu\in\mathbb{Q}^S}
\left(\prod_{b\in \mathfrak{B}}t_b^{\nu_b}\right)
\,\Box_{\nu,z}
\,\one_{\gamma_{\nu}}
\in H_{W}[\![t]\!](\!(z)\!).
\end{equation}
Here $H_W[\![t]\!]:=H_W\otimes_{\C}\C[\![t_b;b\in \mathfrak{B}]\!]$. 
Taking ${\bf e}=0$, we get the $I$-function in the LG side:
\begin{equation}\label{I-function}
I_{\rm LG}^0(t,z)=
\sum_{\nu}
\left(\prod_{b\in \mathfrak{B}} t_b^{\nu_b}\right)\,\Box_{\nu,z}\,
\one_{\gamma_{\nu}}\quad\in H_W[\![t]\!](\!(z)\!).
\end{equation}

Now we assign the following degree:
$$\deg_W t_b=1-\sum_{j=1}^{n}\frac{b^{(j)}}{d_j}, \quad \deg_W z=1.$$
When we apply \eqref{nu-part} and \eqref{fjrw-degree}, we see that each term in $I_{\rm LG}^{\bf e}(t,z)$ has degree
\begin{eqnarray*}
\deg_W \left(\prod_{b\in \mathfrak{B}}t_b^{\nu_b}
\,\Box_{\nu,z}
\,\one_{\gamma_{\nu}}\right)
&=&\sum_{b\in\mathfrak{B}}\nu_b\left(1-\sum_{j=1}^{n}\frac{b^{(j)}}{d_j}\right)+
\sum_{j=1}^{n}\lf-\nu_j\rf
-\sum_{b\in\mathfrak{B}}\nu_b
+\sum_{j=1}^{n}\left(\langle-\nu_j\rangle-\frac{1}{d_j}\right)\\
&=&\sum_{j=1}^{n}\frac{{\bf e}^{(j)}}{d_j}.
\end{eqnarray*}
This depends on ${\bf e}$ only, so we know $I_{\rm LG}^0(t,z)$  is homogeneous of degree zero; i.e.,
\begin{equation}\label{i-func-deg}
\deg_W\left(I_{\rm LG}^0(t,z)\right)=0.
\end{equation}

Following \cite[Section 2.3]{CCIT}, we say $F(y)$ is an \textit{$\mathcal{H}_W[\![y]\!]$-valued point} in the Lagrangian cone $\mathcal{L}_W$ if 
\begin{equation}\label{lag-point}
F(y)=-z\one+{\bf t}(z)+\sum_{\gamma\in H_W}\llangle[\Big]\frac{\one_\gamma}{-z-\psi}\rrangle[\Big]^{W}_{0,1}({\bf t})\ \one_{\gamma'}\in \mathcal{H}_W[\![y]\!]
\end{equation}
for some ${\bf t}(z)\in \mathcal{H}_W^+[\![y]\!]$ such that ${\bf t}(z)|_{y=0}=0$.
The following result is known to experts.
\begin{proposition}
\label{lg:mirror}
The formal function $-z\,I_{\rm LG}^0(t,-z)$ is an $\mathcal{H}_W[\![t]\!]$-valued point in the Lagrangian cone $\mathcal{L}_W$. 
\end{proposition}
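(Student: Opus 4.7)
The strategy is to verify the characterization \eqref{lag-point} of points on the Givental cone by combining a homogeneity/degree count with a direct identification of the toric $I$-function as arising from an FJRW twisting construction (in the spirit of Coates--Givental / Coates--Corti--Iritani--Tseng). The combinatorial recipe \eqref{I-function} is exactly the ``$I$-function of the Landau--Ginzburg side'' one obtains from the fan sequence \eqref{fan-sequence}: the ray generators in $\mathfrak{R}$ produce the Gamma-like factors in \eqref{box-element} through the variables $\nu_j\in\mathbb{Q}_{<0}$, and the ghost generators in $\mathfrak{B}$ produce the integer-shifted factors in the denominator and the formal variables $t_b$. This is the LG-analogue of the toric mirror theorem, and the plan is to recognize it as such.

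First, I would pin down the initial term. For the unique $\nu$ with all $\nu_b=0$, one has $\nu=-\Psi_{\rm LG}(b_0)$, so $\nu_j=-1/d_j$, hence $\gamma_\nu=j_W$ and $\Box_{\nu,z}=1$, giving $I^{0}_{\rm LG}(t,z)|_{t=0}=\one$. Therefore $-zI_{\rm LG}^{0}(t,-z)=-z\one+O(t)$, which is the required dilaton-shifted leading behavior in \eqref{lag-point}. Second, I would exploit homogeneity: \eqref{i-func-deg} shows $I^{0}_{\rm LG}$ is weighted-homogeneous of degree $0$, which is compatible with the scaling symmetry of $\mathcal{L}_W$ inherited from the grading operator $\deg_W$. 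Third, I would compute $(z\partial_{t_b})I_{\rm LG}^{\bf e}(t,z)$ and relate it to $I^{{\bf e}+b}_{\rm LG}(t,z)$; using the standard toric manipulation of the product \eqref{box-element} (shifting $\nu_b\mapsto\nu_b+1$ and expanding the resulting Pochhammer symbols) one obtains a system of GKZ-type differential equations satisfied by the family $\{I_{\rm LG}^{\bf e}\}$.

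To conclude that $-zI_{\rm LG}^{0}(t,-z)\in\mathcal{L}_W$, I would then invoke the Coates--Givental criterion: $\mathcal{L}_W$ is an overruled Lagrangian cone, so it suffices to show that the tangent space at any $\mathcal{H}_W[\![t]\!]$-valued point $F(t,z)$ on the cone is of the form $z T(t)$ with $T(t)\subset\mathcal{H}_W^+$ stable under multiplication by $z$, and that $F(t,z)$ itself lies in $-z\one+z T(t)\cap \mathcal{H}_W^+\oplus\mathcal{H}_W^-$. Concretely, I would show that the derivatives $z\partial_{t_b}I_{\rm LG}^{0}(t,-z)$ span (over $\mathcal{O}[\![t]\!]$) a $z\mathcal{O}[\![t]\!]$-module closed under $z$-multiplication, and that the projection to $\mathcal{H}_W^+$ of $-zI_{\rm LG}^{0}(t,-z)$ recovers (after Birkhoff factorization) a generalized mirror map $\tau(t)\in H_W[\![t]\!]$, so that $-zI_{\rm LG}^{0}(t,-z)$ agrees with $J_{\rm FJRW}(\tau(t),-z)$ modulo the action of a lower-triangular symplectic transformation fixing the cone. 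The identification of $\Box_{\nu,z}$ with the Euler-class-of-the-virtual-bundle contribution in a GLSM/quasi-map moduli problem (as in Ross--Ruan and Clader--Janda--Ruan) is what implements this last step geometrically.

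The main obstacle is this last step: the purely combinatorial definition of $I^{0}_{\rm LG}$ must be matched with a geometric object (either a quasi-map $I$-function or a twisted FJRW invariant) whose appearance on $\mathcal{L}_W$ is known from prior work. Everything else (degree matching, initial term, $\mathcal{D}$-module closure under $z\partial_{t_b}$) is a direct computation with Pochhammer symbols. Once the geometric interpretation is in place, the conclusion follows from the standard Lagrangian-cone characterization via derivatives, so I would devote most of the write-up to the dictionary between the combinatorics of $\mathfrak{R}\sqcup\mathfrak{B}$ and the corresponding twisting data on the FJRW moduli of $W$-spin curves.
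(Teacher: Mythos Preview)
Your outline correctly identifies this as an LG mirror theorem and correctly isolates the essential difficulty: one must supply geometric input linking the combinatorial $I$-function to actual FJRW invariants. But the proposal does not supply that input, and the structural checks you describe cannot substitute for it. In particular, the Coates--Givental overruled-cone criterion characterizes the \emph{shape} of a Givental cone; verifying that the $z\partial_{t_b}$-derivatives of $I_{\rm LG}^{0}$ generate a $z$-stable $\cO[\![t]\!]$-module only tells you that $I_{\rm LG}^{0}$ defines \emph{some} semi-infinite VHS, not that it lies on the specific cone $\mathcal{L}_W$ coming from FJRW invariants. Likewise, your claim that $-zI_{\rm LG}^{0}(t,-z)$ and $J_{\rm FJRW}(\tau(t),-z)$ differ by ``a lower-triangular symplectic transformation fixing the cone'' is off: transformations of the form $1+R_1z+\cdots$ do \emph{not} preserve the cone (they change the CohFT); only upper-triangular calibrations $1+S_1z^{-1}+\cdots$ do. So the Birkhoff argument does not conclude by itself.

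The paper's proof (Appendix~\ref{appendix}) fills exactly the gap you flag, but by a route you do not describe. It introduces $(G_W,\epsilon)$-weighted stable structures and the associated graph space $\mathcal{G}^\epsilon$ of maps to $\mathbb{P}^1$, proves a concavity lemma so that the virtual class is a top Chern class, and applies $\C^\times$-localization. The fixed loci decompose into stable and unstable pieces; the unstable contribution over $0\in\mathbb{P}^1$ is computed explicitly and is precisely $\eta_W(z\partial_{y_\beta}I_{\rm LG}^{0,\epsilon},\one_\alpha)$, which is how the combinatorial box $\Box_{\nu,z}$ acquires its geometric meaning. The resulting identity yields a regularity statement (Proposition~\ref{regularity-thm}) for the pairing of $\partial_{y_\beta}J^\epsilon$ with $\partial_{t_0^\alpha}J^\epsilon$, and a reconstruction argument (Proposition~\ref{determine-prop}) then shows $J^\epsilon$ is uniquely determined by the $\mathcal{H}_W^+$-part of $zI_{\rm LG}^{0,\epsilon}$ together with the $\epsilon=\infty$ FJRW data. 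Comparing with the same reconstruction applied to the FJRW $J$-function at the shifted point ${\bf t}_\epsilon={\bf t}+G_+(y,-z)$ gives $J^\epsilon=\widetilde J({\bf t}_\epsilon,z)$; sending $\epsilon\to 0$ kills the extra terms and leaves $-zI_{\rm LG}^{0}(y,-z)\in\mathcal{L}_W$. Your ``dictionary'' step is thus not a citation but the bulk of the argument.
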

One way to prove this statement is generalizing the method of twisted FJRW theory in \cite{CHR2}. 
For the readers' convenience, we give a proof in Appendix \ref{appendix} using localization computation, following the method of Ross and Ruan \cite{RR}. 

\subsubsection{Convergence of the I-function in LG side}
Let $e_i\in\Z^n$ be the $i$-th standard vector; i.e. $e_i^{(j)}=\delta_i^j$, $i=1,\cdots, n$.
Let $t_i$ be the parameter of $e_i$. From now on, we let $\sigma$ be the parameter of $b_0=(1,\cdots,1)\in\Z^n$. 
We restrict $I_{\rm LG}^0(t,z)$ to the following subspace of $\C^{\mathfrak{B}}\times\C\ni(\{t_b\}_{b\in\mathfrak{B}},z)$:
$$\C^{n+1}\times\C\ni (t_1,\cdots,t_n,\sigma,z).$$
We denote the restriction by
$I_{\rm LG}^0(t_1,\cdots, t_n, \sigma,z).$
From to \eqref{i-func-deg}, we know 
$$\deg_W\left(I_{\rm LG}^0(t_1,\cdots, t_n, \sigma,z)\right)=0.$$
On the other hand, since
$$\deg_W t_i=1-\frac{1}{d_i}>0, \quad \deg_W (\sigma)=0, \quad \deg_W z=1, \quad 0\leq\deg_W\one_\gamma\leq n-2,$$
we can rewrite the function $I_{\rm LG}^0(t_1,\cdots,t_n,\sigma,z)$ as
$$I_{\rm LG}^0(t_1,\cdots,t_n,\sigma,z)=\sum_{k=0}I^W_{k}(t_1,\cdots,t_n,\sigma)z^{-k+1}\in H_W[\![t_1,\cdots,t_n, \sigma]\!][\![z^{-1}]\!].$$
Hence $I^W_{0}(t_1,\cdots,t_n,\sigma)$ is homogeneous of degree zero. If $D=l.c.m(d_1,\cdots, d_n)$, then
\begin{equation}\label{birkhoff-positive}
I^W_{0}(t_1,\cdots,t_n,\sigma)=f_0^W(\sigma)\,\one:=\left(1+\sum_{m\geq1}\frac{\sigma^{mD}}{(mD)!}\prod_{j=1}^{n}\prod_{k=1}^{mD/d_j}\left(k-\frac{mD+1}{d_j}\right)\right)\one.
\end{equation}
The ratio test shows that $f_0^W(\sigma)$ is analytic in a neighborhood of $\sigma=0$. Further more, we have the following convergence result.
\begin{corollary}\label{convergence-cor}
For each $k\geq0$, $I^W_{k}(t_1,\cdots,t_n,\sigma)\in H_W[t_1,\cdots,t_n]\{\sigma\}$.
\end{corollary}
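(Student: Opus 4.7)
\textbf{Proof proposal for Corollary \ref{convergence-cor}.}

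The plan is to establish two separate claims: (a) each $I^W_k(t_1,\dots,t_n,\sigma)$ is polynomial in $t_1,\dots,t_n$, and (b) for each fixed monomial in $t_1,\dots,t_n$, the coefficient (a formal series in $\sigma$) has positive radius of convergence. Taken together these give the claim.

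For (a), I would exploit the homogeneity relation \eqref{i-func-deg}. Since $\deg_W z = 1$ and $I_{\rm LG}^0(t_1,\dots,t_n,\sigma,z)$ has total degree $0$, the coefficient $I^W_k$ of $z^{-k+1}$ must be homogeneous of degree $k-1$. A generic monomial appearing in $I^W_k$ has the form $t_1^{a_1}\cdots t_n^{a_n}\sigma^{a_0}\,\one_{\gamma_\nu}$ (possibly with a nonzero numerical coefficient), and its $W$-degree equals
\[
\sum_{i=1}^n a_i\left(1-\tfrac{1}{d_i}\right) + 0 + \deg_W\one_{\gamma_\nu}.
\]
Since $\deg_W\one_{\gamma_\nu}\ge 0$ for narrow sectors (as $\Theta_\gamma^{(j)}\ge 1/d_j$ by \eqref{narrow-element}) and each factor $1-1/d_i$ is strictly positive, we obtain $\sum_i a_i(1-1/d_i)\le k-1$, which bounds every $a_i$. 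Hence only finitely many $t$-monomials occur in $I^W_k$, proving polynomiality in $t_1,\dots,t_n$.

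For (b), I fix a $t$-monomial $t_1^{a_1}\cdots t_n^{a_n}$ and study the $\sigma$-series obtained by summing the relevant terms over $a_0\ge 0$. From \eqref{nu-part} with ${\bf e}=0$ and the only nonzero $\nu_b$ being $\nu_{e_i}=a_i$ and $\nu_{b_0}=a_0$, one gets $\nu_j = -M_j/d_j$ with $M_j = 1+a_j+a_0$. Substituting into \eqref{box-element} and extracting the coefficient of $z^{-k+1}$ gives an explicit formula, which after using the identity
\[
\prod_{\ell=1}^{\lfloor M_j/d_j\rfloor}\!\!\!\left(\ell - \tfrac{M_j}{d_j}\right)
= (-1)^{\lfloor M_j/d_j\rfloor}\,\frac{\Gamma(M_j/d_j)}{\Gamma(\{M_j/d_j\})}
\]
(with the convention that $1/\Gamma$ vanishes at nonpositive integers, handling the vanishing case $d_j\mid M_j$), expresses each coefficient of $\sigma^{a_0}$ in closed form involving $\Gamma$-functions divided by $a_0!\,\prod_i a_i!$. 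A ratio test along each arithmetic progression $a_0 \mapsto a_0 + D$ (where $D=\operatorname{lcm}(d_1,\dots,d_n)$), using $\Gamma(x+D/d_j)/\Gamma(x)\sim x^{D/d_j}$ as $x\to\infty$, reduces the ratio of consecutive terms to
\[
\sigma^D\cdot\frac{1}{(a_0+1)\cdots(a_0+D)}\cdot\prod_{j=1}^n\frac{\Gamma(M_j/d_j+D/d_j)}{\Gamma(M_j/d_j)}
\;\sim\;\sigma^D\prod_{j=1}^n d_j^{-D/d_j},
\]
since $\sum_j 1/d_j = 1$. Thus each subseries converges for $|\sigma|<\rho$ where $\rho = \prod_j d_j^{1/d_j}\ge 2$, and since there are only finitely many residue classes mod $D$, the full $\sigma$-series converges on $|\sigma|<\rho$.

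The main obstacle is the careful asymptotic bookkeeping in step (b): one must correctly identify which $a_0$ actually contribute to the coefficient of $z^{-k+1}$ (their $z$-exponent varies quasi-periodically with $a_0$ through the fractional parts $\{M_j/d_j\}$), split the $\sigma$-series into residue classes modulo $D$, discard those classes where some $d_j\mid M_j$ forces $\Box_{\nu,z}=0$, and only then apply the Stirling/ratio estimate. The $\sum_j 1/d_j=1$ Calabi--Yau condition is exactly what makes the leading $a_0^{a_0}$ factors in $\prod_j\Gamma(M_j/d_j)$ cancel against $a_0!$, leaving a finite geometric ratio --- so the CY hypothesis enters the proof precisely here.
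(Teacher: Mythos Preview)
Your proof is correct and follows exactly the approach sketched in the paper: polynomiality in $t_1,\dots,t_n$ from the homogeneity \eqref{i-func-deg} together with $\deg_W t_i>0$, and convergence in $\sigma$ via the ratio test applied to each fixed homogeneous piece. You have simply supplied the details the paper omits, including the explicit Stirling-type asymptotic and the observation that the Calabi--Yau condition $\sum_j 1/d_j=1$ is what makes the ratio finite.
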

\begin{proof}
The polynomiality of $t_1, \cdots, t_n$ follows from degree counting and $\deg_W t_i>0$ for each $i=1,\cdots, n$. For any fixed homogeneous element in $H_W[t_1,\cdots,t_n]$, we can use the ratio test to obtain the analyticity of $\sigma$ a neighborhood of $\sigma=0$.
\end{proof}
More generally, recall that $W=x_1^{d_1}+\cdots+x_n^{d_n}$ is an element in $\mathcal{M}$ in \eqref{eq:universal_polynomial}. 
We define
$$\mathfrak{B}_{\rm rel}=\{b\in\mathfrak{B}|\deg_W t_b>0\}, \quad \mathfrak{B}_{\rm mar}=\{b\in\mathfrak{B}|\deg_W t_b=0\}.$$
We may consider a neighborhood of $W\in\mathcal{M}$ consisting of 
$$W+\sum_{b\in \mathfrak{B}_{\rm rel}\cap\mathfrak{B}_{\rm mar}}t_b x^b, \quad |t_b|<\delta\quad \textit{if}\quad b\in \mathfrak{B}_{\rm mar}.$$
We denote this neighborhood by $\mathcal{M}_{{\rm LG}, \delta}$,
$$\mathcal{M}_{{\rm LG}, \delta}\cong\C^{\mathfrak{B}_{\rm rel}}\times\Delta_\delta^{\mathfrak{B}_{\rm mar}}.$$
If $\delta$ is sufficiently small, then a discussion similar to that in Corollary \ref{convergence-cor} shows
\begin{equation}\label{convergence-i}
I_{\rm LG}^0(t,z)|_{\mathcal{M}_{{\rm LG}, \delta}}\in H_W\otimes_{\C}\mathcal{O}_{\mathcal{M}_{{\rm LG}, \delta}}[\![z^{-1}]\!].
\end{equation}

\subsubsection{Mirror map}
The Birkhoff factorization allows us to rewrite  $I_{\rm LG}^0(t,z)$ as
\begin{equation}\label{birkhoff-lg}
I_{\rm LG}^0(t,z)=\mathfrak{L}(t,z)\Upsilon_{\rm LG}(t,z),
\end{equation}
with 
$$\Upsilon_{\rm LG}(t,z)\in\mathcal{H}_W^+[\![t]\!] \quad \textit{and} \quad \mathfrak{L}(t,z):=1+\sum_{k\geq1}\mathfrak{L}_k(t)\,z^{-k}\in {\rm End}(H_W[\![t]\!])[\![z^{-1}]\!].$$
A mirror map $\tau: \C^{\mathfrak{B}}\to H_W[\![t]\!]$ is given by
\begin{equation}\label{lg-mirror-map}
\tau(t):=\mathfrak{L}_1(t)({\bf 1})\in H_W[\![t]\!].
\end{equation}

By \eqref{convergence-i}, the restriction of $I_{\rm LG}^0(t,z)$ to ${\mathcal{M}_{{\rm LG}, \delta}}$ will imply
$$\Upsilon_{\rm LG}(t,z)|_{\mathcal{M}_{{\rm LG}, \delta}}\in \mathcal{O}_{\mathcal{M}_{{\rm LG}, \delta}} \cdot\one,$$
and the mirror map restricts to an analytic map
\begin{equation}\label{lg-map}
\tau: {\mathcal{M}_{{\rm LG}, \delta}}\longrightarrow H_W.
\end{equation}

\begin{remark}
In particular, if we restrict to the $(t_1,\cdots,t_n,\sigma)$-plane, then 
$$\Upsilon_{\rm LG}(t_1,\cdots,t_n,\sigma,z)=f_0^W(\sigma)\,\one,$$
where $f_0^W(\sigma)$ is given in \eqref{birkhoff-positive}, and the mirror map restricts to
$$\tau(t_1,\cdots,t_n,\sigma)=\frac{I^W_{1}(t_1,\cdots,t_n,\sigma)}{f_0^W(\sigma)}\in H_W^{\leq1}[t_1,\cdots,t_n]\{\sigma\}.$$
Here $H_W^{\leq1}$ are the elements of $H_W$ with $\deg_W\leq1$.
\end{remark}

\subsection{Mirror symmetry to FJRW theory}

\subsubsection{An isomorphism between $\mathcal{D}$-modules}
Let $\mathcal{T}_W$ be the tangent spaces of $\mathcal{L}_W$ and $d$ be the trivial connection. 
We pull back via the mirror map in \eqref{lg-map} to get a $\mathcal{D}$-module over $\mathcal{M}_{{\rm LG}, \delta}$, which we denote again by $(\mathcal{T}_W, d)$.  
Here 
$$\mathcal{D}:=\C[z][\![t_b\colon b\in\mathfrak{B}]\!]\lan z\partial_{t_b}\colon b\in\mathfrak{B}\ran.$$
We obtain another $\mathcal{D}$-module over $H_W$, denoted by $(\mathcal{T}_W(-z), d)$. It is obtained by the following transformation
\begin{equation}\label{sign-change}
\mathcal{T}_W\xrightarrow{S_W(\tau,z)} \mathcal{H}_W^+\xrightarrow{z\mapsto -z} \mathcal{H}_W^+\xrightarrow{S_W(\tau,z)^{-1}}\mathcal{T}_W(-z).
\end{equation}
Now we construct a $\mathcal{D}$-module isomorphism between $(\hHH_+(W), \nabla)$ and  $(\mathcal{T}_W(-z), d)$. 
\begin{lemma}\label{gkz-lg}
The set $\{I_{\rm LG}^{\bf e}(t, z)|{\bf e}\in \Z_{\geq0}^n\}$ satisfies the following differential equations:
\begin{eqnarray}
&&z\partial_{t_b} I_{\rm LG}^{\bf e}(t, z)=I_{\rm LG}^{{\bf e}+b}(t, z), \quad \forall b\in \mathfrak{B}; \label{gkz-lg1}
\\
&&z({\bf e}^{(i)}+1)\,I_{\rm LG}^{\bf e}(t, z)+\sum_{b\in \mathfrak{B}}b^{(i)}t_b\,I_{\rm LG}^{{\bf e}+b}(t, z)+\sum_{j=1}^n\,b_j^{(i)}\,I_{\rm LG}^{{\bf e}+b_j}(t, z)=0.\label{gkz-lg2}
\end{eqnarray}
\end{lemma}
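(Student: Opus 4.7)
The plan is to prove both identities by direct computation from the explicit definition
$$I_{\rm LG}^{\bf e}(t,z) = \sum_\nu \Big(\prod_{b\in\mathfrak{B}} t_b^{\nu_b}\Big)\,\Box_{\nu,z}\,\one_{\gamma_\nu},$$
exploiting the linear dependence of $\nu_j({\bf e})$ on ${\bf e}$ recorded in \eqref{nu-part}. In particular, one reads off
$$\nu_j({\bf e}+b) = \nu_j({\bf e}) - \tfrac{b^{(j)}}{d_j}, \qquad \nu_j({\bf e}+b_i) = \nu_j({\bf e}) - \delta_i^j.$$

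First, for \eqref{gkz-lg1}, I would differentiate $I_{\rm LG}^{\bf e}(t,z)$ term-by-term and shift the summation index $\nu_b \mapsto \nu_b - 1$. Under this shift, evaluating $\nu_j({\bf e})$ at the shifted index reproduces $\nu_j({\bf e}+b)$ at the unshifted index, so $\one_{\gamma_\nu}$ and the numerator of $\Box_{\nu,z}$ are exactly those appearing in $I_{\rm LG}^{{\bf e}+b}(t,z)$. The denominator identity $\prod_{k=1}^{\nu_b}(kz) = \nu_b z \cdot \prod_{k=1}^{\nu_b - 1}(kz)$ then absorbs the factor $\nu_b z$ produced by the derivative, finishing \eqref{gkz-lg1}.

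Next, for \eqref{gkz-lg2}, I would compare $\Box_{\nu,z}^{{\bf e}+b_i}$ with $\Box_{\nu,z}^{\bf e}$ for the same summation index. Since $\nu_i$ drops by one, the floor identity $\lfloor -\nu_i + 1\rfloor = \lfloor -\nu_i\rfloor + 1$ (which fails exactly when $-\nu_i\in\Z$, in which case $\Box$ vanishes on both sides by convention) yields
$$\Box_{\nu,z}^{{\bf e}+b_i} = \nu_i({\bf e})\cdot z\cdot \Box_{\nu,z}^{\bf e},$$
while $\one_{\gamma_\nu}$ is unchanged because $\gamma_\nu$ depends only on the fractional parts of $-\nu_j$. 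Multiplying by $d_i$ and substituting the defining relation $d_i\nu_i({\bf e}) = -({\bf e}^{(i)} + 1 + \sum_{b\in\mathfrak{B}} \nu_b\, b^{(i)})$ from \eqref{nu-part} gives
$$d_i I_{\rm LG}^{{\bf e}+b_i}(t,z) = -z({\bf e}^{(i)}+1)\,I_{\rm LG}^{\bf e}(t,z) - z\sum_{b\in\mathfrak{B}} b^{(i)}\sum_\nu \nu_b \prod_{b'} t_{b'}^{\nu_{b'}}\,\Box_{\nu,z}^{\bf e}\,\one_{\gamma_\nu}.$$
The inner sum equals $z^{-1} t_b\,\partial_{t_b} I_{\rm LG}^{\bf e}(t,z) = z^{-1} t_b I_{\rm LG}^{{\bf e}+b}(t,z)$ by \eqref{gkz-lg1}, and $\sum_j b_j^{(i)} I_{\rm LG}^{{\bf e}+b_j}(t,z) = d_i I_{\rm LG}^{{\bf e}+b_i}(t,z)$ since $b_j^{(i)} = \delta_j^i d_j$. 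Rearranging yields \eqref{gkz-lg2}.

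Both identities are standard GKZ hypergeometric-type relations for the LG $I$-function, so I do not anticipate any serious obstacle. The only bookkeeping point is the boundary case $-\nu_j \in \Z$, where $\Box_{\nu,z}$ vanishes by convention and equality on the corresponding terms is trivial. These relations, combined with the Birkhoff factorization \eqref{birkhoff-lg}, will be the input for constructing a $\cD$-module isomorphism between $(\hHH_+(W),\nabla)$ and $(\mathcal{T}_W(-z), d)$ in the sequel, paralleling the GW-side argument of Proposition \ref{Dmod-iso}.
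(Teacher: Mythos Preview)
Your argument is correct and is essentially the same direct coefficient computation the paper gives; your treatment of \eqref{gkz-lg2} is in fact slightly cleaner since you recycle \eqref{gkz-lg1} for the middle term rather than recomputing it. One small slip: the ``inner sum'' $\sum_\nu \nu_b\prod_{b'} t_{b'}^{\nu_{b'}}\Box_{\nu,z}^{\bf e}\one_{\gamma_\nu}$ equals $t_b\partial_{t_b}I_{\rm LG}^{\bf e}$, not $z^{-1}t_b\partial_{t_b}I_{\rm LG}^{\bf e}$ --- but your final value $z^{-1}t_b I_{\rm LG}^{{\bf e}+b}$ is right, so the rearrangement still goes through.
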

\begin{proof}
Recalling \eqref{nu-part}, we will simply denote $\nu=\nu({\bf e})$, $\nu'=\nu({\bf e}+b)$ and $\nu''=\nu({\bf e}+b_j)$.

For the first equation, we shall compare the coefficient of $t_b^{\nu_b-1}\prod_{c\neq b}t_c^{\nu_c}$ on both sides. The corresponding vector $\{\nu_c'\}_{c\in \mathfrak{B}}$ on the right hand side should satisfy 
\begin{equation}\label{gkz-lg1}
\nu_c'=\nu_c-\delta_c^b, \quad c\in \mathfrak{B}.
\end{equation}
Both coefficients are $0$ when $\nu_b=0$, and thus it is enough to match them when $\nu_b\geq1$. 

When $\nu_b\geq1$, on the right hand side, similar to \eqref{nu-part}, we have 
$$\nu_j'=-\frac{1}{d_j}\left({\bf e}^{(j)}+b^{(j)}+1+\sum_{c\in \mathfrak{B}}\nu_c' c^{(j)}\right)\in\mathbb{Q}_{<0}.$$
Equation \eqref{gkz-lg1} implies
$\nu_j'=\nu_j$ and  
$$\Box_{\nu',z}=\frac{\prod_{j=1}^{n}\prod_{k=1}^{\lf-\nu_j'\rf}(\nu_j'z+kz)}{\prod_{c\in G}\prod_{k=1}^{\nu_c'}(kz)}=(\nu_bz)\Box_{\nu,z}.$$
Thus the coefficient of $t_b^{\nu_b-1}\prod_{c\neq b}t_c^{\nu_c}$ on the right hand side is 
$$\Box_{\nu',z}\one_{\nu'}=(\nu_bz)\Box_{\nu, z}\one_{\gamma_{\nu}}.$$

Now let us prove the second identity. There are three terms and we will consider the coefficient of $\prod_{c\in \mathfrak{B}}t_c^{\nu_c}$ for a fixed vector $\nu=\{\nu_c\}_{c\in \mathfrak{B}}$. For each $b\in \mathfrak{B}$, the contribution from $I_{\rm LG}^{{\bf e}+b}(t, z)$ comes from the vector $\{\nu_c'\}_{c\in \mathfrak{B}}$ such that
$$
\nu_c'=\nu_c-\delta_c^b, \quad \forall c\in \mathfrak{S}.
$$
For each $j$, the contribution from $I_{\rm LG}^{{\bf e}+b_j}(t, z)$ comes from the vector $\{\nu_c''\}_{c\in \mathfrak{B}}$ such that
$$
\nu_c''=\nu_c-\delta_c^j, \quad \forall c\in \mathfrak{S}.
$$
Thus 
$$\Box_{\nu',z}\one_{\gamma_{\nu'}}=(\nu_bz)\Box_{\nu,z}\one_{\gamma_{\nu}},\quad \Box_{\nu'',z}\one_{\gamma_{\nu''}}=(\nu_jz)\Box_{\nu,z}\one_{\gamma_{\nu}}.$$
Put everything together, the coefficient of $\prod_{c\in \mathfrak{B}}t_c^{\nu_c}$ of the LHS in \eqref{gkz-lg2} is given by
$$\left(z({\bf e}^{(i)}+1)+\sum_{b\in \mathfrak{B}}b^{(i)}\nu_bz+\sum_{j=1}^{n}b_j^{(i)}\nu_jz\right)
\Box_{\nu,z}\one_{\gamma_{\nu}}.$$
Since $b_j^{(i)}=d_j\delta_j^i$, Equation \eqref{nu-part} implies that the formula above vanishes. 

Finally, we check the constant term in equation \eqref{gkz-lg2}. The constant in the second term vanishes by definition. The remaining two terms give
$$\left(z({\bf e}^{(i)}+1)+\sum_{j=1}^{n}b_j^{(i)}\nu_jz\right)
\Box_{\nu, z}\one_{\gamma_{\nu}}=0.$$
This again follows from Equation \eqref{nu-part}, where now $\nu_b=0$ for all $b\in \mathfrak{B}$. 
\end{proof}
Using \eqref{sign-change}, \eqref{gkz-lg1}, and Proposition \ref{lg:mirror}, we see that $-I_{\rm LG}^{\bf e}(t, z)\in\mathcal{T}_W(-z)$.
\begin{proposition}\label{lg:d-module}
The following map
\begin{equation}\label{mirror-map-lg}
\operatorname{Mir}_{W}: \hHH_+(W)\to \mathcal{T}_W(-z), \quad \quad [x^{\bf e} dx]\mapsto -I_{\rm LG}^{\bf e}(t,z).
\end{equation}
extends to a $\mathcal{D}$-module isomorphism between $(\hHH_+(W), \nabla)$ and  $(\mathcal{T}_W(-z), d)$.
\end{proposition}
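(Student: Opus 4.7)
The plan is to establish three properties in succession: (a) the assignment $[x^{\mathbf{e}} dx] \mapsto -I_{\rm LG}^{\mathbf{e}}(t,z)$ descends to a well-defined map on $\hHH_+(W)$; (b) it intertwines the Gauss--Manin connection $\nabla$ with the trivial connection $d$ on $\mathcal{T}_W(-z)$; (c) it is a bijection. Both (a) and (b) reduce to the two identities of Lemma \ref{gkz-lg}, while (c) is a rank plus Nakayama-type argument.

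For (a), with $F(x,t) = W(x) + \sum_{b \in \mathfrak{B}} t_b x^b$, the relations defining $\hHH_+(W)$ are generated by $(zd + dF\wedge)\alpha$ for $\alpha \in \Omega^{n-1}$. Taking $\alpha = x^{\mathbf{e}} dx_1 \wedge \cdots \wedge \widehat{dx_i} \wedge \cdots \wedge dx_n$ and using $\partial_{x_i} F = d_i x_i^{d_i-1} + \sum_b b^{(i)} t_b x^{b-e_i}$, one obtains after shifting $\mathbf{e} \mapsto \mathbf{e} + e_i$ the relation
\[
z(\mathbf{e}^{(i)}+1)\, [x^{\mathbf{e}} dx] + d_i\, [x^{\mathbf{e}+d_i e_i} dx] + \sum_{b \in \mathfrak{B}} b^{(i)} t_b\, [x^{\mathbf{e}+b} dx] = 0
\]
in $\hHH_+(W)$. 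Applying the proposed map and using $b_j = d_j e_j$ (so $\sum_j b_j^{(i)} I_{\rm LG}^{\mathbf{e}+b_j} = d_i I_{\rm LG}^{\mathbf{e} + d_i e_i}$), the image equals the negative of the left-hand side of \eqref{gkz-lg2}, which vanishes. For (b), the Gauss--Manin connection gives $z\nabla_{\partial_{t_b}} [x^{\mathbf{e}} dx] = [(\partial_{t_b}F) x^{\mathbf{e}} dx] = [x^{\mathbf{e}+b} dx]$, whose image under $\operatorname{Mir}_W$ is $-I_{\rm LG}^{\mathbf{e}+b}(t,z) = z\partial_{t_b}(-I_{\rm LG}^{\mathbf{e}}(t,z))$ by \eqref{gkz-lg1}; keeping track of the sign flip $z\mapsto -z$ in the definition \eqref{sign-change} of $\mathcal{T}_W(-z)$, this is exactly the trivial connection $d$ applied to $\operatorname{Mir}_W([x^{\mathbf{e}} dx])$.

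For (c), the domain $\hHH_+(W)$ is a free $\cO[\![z]\!]$-module of rank $N$ by Proposition \ref{H:vb}, and $\mathcal{T}_W(-z)$ is likewise locally free of rank $N = \dim H_W$, being pulled back along the mirror map from the tangent bundle of the Lagrangian cone. It therefore suffices to verify bijectivity after setting $t = 0$ and reducing modulo $z$. At $t = 0$, the only contribution to $I_{\rm LG}^{\mathbf{e}}(0,z)$ comes from $\nu$ with all $\nu_b = 0$; for $\mathbf{e} \in \mathfrak{B}$ this gives $-\nu_j = (\mathbf{e}^{(j)}+1)/d_j \in (0,1)$, so $\lfloor -\nu_j \rfloor = 0$, the box factor $\Box_{\nu,z}$ equals $1$, and $\gamma_{\nu(\mathbf{e})}$ runs through all narrow sectors $\mathscr{N}$ as $\mathbf{e}$ varies over $\mathfrak{B}$. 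Consequently $I_{\rm LG}^{\mathbf{e}}(0, z) = \one_{\gamma_{\nu(\mathbf{e})}}$ and these furnish a basis of $H_W$ as $\mathbf{e}$ ranges over $\mathfrak{B}$; since $\{[x^{\mathbf{e}} dx]\}_{\mathbf{e} \in \mathfrak{B}}$ is a basis of $\hHH_+(W)/z\hHH_+(W)$ at $t=0$, the map $\operatorname{Mir}_W$ is an isomorphism.

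The main technical obstacle is reconciling the sign convention: the codomain is defined as $\mathcal{T}_W(-z)$ via the flip $z \mapsto -z$, and the $\mathcal{D}$-module structure on it inherited from $\mathcal{T}_W$ and the calibration $S_W(\tau, z)$ must be tracked carefully through this flip and the mirror map so that the action of $z\partial_{t_b}$ from \eqref{gkz-lg1} matches the trivial connection $d$ without a sign discrepancy. The rest of the argument is essentially routine once Lemma \ref{gkz-lg} is in hand.
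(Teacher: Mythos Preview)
Your proof is correct and follows essentially the same approach as the paper. The paper's proof is extremely terse---``The surjectivity is obvious and the injectivity is a consequence of Equation \eqref{gkz-lg2}. The result follows since both $\mathcal{D}$-modules have the same rank''---whereas you spell out in detail the three steps (well-definedness from \eqref{gkz-lg2}, $\mathcal{D}$-module compatibility from \eqref{gkz-lg1}, and bijectivity via a Nakayama argument at $t=0$) that are implicit in the paper's two sentences; your explicit computation that $I_{\rm LG}^{\mathbf{e}}(0,z)=\one_{\gamma_{\nu(\mathbf{e})}}$ for $\mathbf{e}\in\mathfrak{B}$ also appears verbatim in the paper a few lines later in the proof of Proposition~\ref{lg:pairing}.
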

\begin{proof}
The surjectivity is obvious and the injectivity is a consequence of Equation \eqref{gkz-lg2}. The result follows since both $\mathcal{D}$-modules has the same rank. 
\end{proof}

\subsubsection{Matching pairings.}
We extend the pairing $\eta_W$ in \eqref{fjrw-pairing} $\C[\![z]\!]$-linearly, and still denote the extension by $\eta_W: H_W[\![z]\!]\times H_W[\![z]\!]\to\C[\![z]\!].$ We define
$$\widetilde{K}_W: \hHH_+(W)\times  \hHH_+(W)\to \C[\![z]\!],$$
by
$$\widetilde{K}_W\left(\omega_1,\omega_2\right):=\eta_W\left(\operatorname{Mir}_{W}(\omega_1),\operatorname{Mir}_{W}(\omega_2)^*\right).$$

For Fermat singularities $W:=x_1^{d_1}+\cdots+x_n^{d_n}$, we recall the set $\mathfrak{B}$ in \eqref{good-basis-lg}.
The set $\{[x^{b}dx]\in \hHH_+(W)\mid b\in\mathfrak{B}\}$ forms a good basis \cite[Theorem 2.10]{HLSW}, and
$$K_W([x^{b}dx], [x^{c}dx])=\delta_{b}^{c}.$$
We identify the set $\mathfrak{B}$ with $\mathscr{N}$, the set of indicies of narrow elements in FJRW theory, by a {\em shifting map} as follows.
\begin{equation}\label{shift-index}
{\rm Sh}:\mathfrak{B}\to\mathscr{N},
\end{equation}
such that ${\rm Sh}(b)=\gamma\in\mathscr{N},$ where $\Theta_\gamma^{(j)}=\frac{b^{(j)}+1}{d_j}$.

\begin{proposition}\label{lg:pairing}
For any $b, c\in\mathfrak{B}$, we have
$$K_W([x^b dx], [x^{c}dx])=\widetilde{K}_W([x^{b}dx], [x^{c}dx]).$$
\end{proposition}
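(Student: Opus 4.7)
The approach is to verify that $\widetilde{K}_W$ satisfies the four defining properties of K.~Saito's higher residue pairing listed in \S\ref{sec:twdR} ($z$-bilinearity, residue leading term, $z$-parity, Gauss-Manin flatness) and then invoke the uniqueness of the higher residue pairing to conclude $\widetilde{K}_W=K_W$. This mirrors the strategy used for the GW side in Proposition~\ref{hrp=K}, but here it is more direct since $\eta_W$ is the $z$-linear extension of a constant pairing instead of being expressed through Hertling's formula \eqref{SandK}. The claimed identity on the Fermat good basis $\{[x^b\, dx]\}_{b\in\mathfrak{B}}$ then follows from the general equality of the two pairings.

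\textbf{Properties (1), (3), (4).} These are formal consequences of Proposition~\ref{lg:d-module} together with standard structural properties of the FJRW state space. Property (1), namely $\widetilde{K}_W(z\omega_1,\omega_2)=z\widetilde{K}_W(\omega_1,\omega_2)=-\widetilde{K}_W(\omega_1,z\omega_2)$, follows from the $\C[z]$-linearity of $\operatorname{Mir}_W$ (a special case of its $\mathcal{D}$-linearity) combined with the fact that the involution $(\cdot)^*$ sends $z$ to $-z$. Property (3) follows from the symmetry of $\eta_W$. Property (4), the Gauss-Manin flatness of $\widetilde{K}_W$, is obtained from the fact that $\operatorname{Mir}_W$ intertwines $\nabla$ with the $\mathcal{D}$-module structure on $\mathcal{T}_W(-z)$ pulled back from the FJRW Dubrovin connection, together with the flatness of $\eta_W$ under that connection (a universal feature of cohomological field theories). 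Thus $\widetilde{K}_W$ automatically lies in the class of pairings characterized by K.~Saito's axioms.

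\textbf{Property (2) is the crux.} One must show that $\widetilde{K}_W^{(0)}$ coincides with the Grothendieck residue pairing on the Jacobi algebra throughout the $t$-family. Using the flatness established in the previous step, $\widetilde{K}_W^{(0)}-K_W^{(0)}$ is horizontal for the Gauss-Manin connection in the $t$-directions, so it suffices to check the equality at the single point $t=0$. Here one computes $I_{\rm LG}^{\mathbf{e}}(0,z)$ directly from \eqref{I-function}: only the vector $\nu=-\Psi_{\rm LG}(\mathbf{e}+b_0)$ (with all $\nu_b=0$ for $b\in\mathfrak{B}$) contributes, and for $\mathbf{e}=b\in\mathfrak{B}$ one has $-\nu_j=(b^{(j)}+1)/d_j\in(0,1)$, so $\lfloor-\nu_j\rfloor=0$ and the box factor \eqref{box-element} collapses to the empty product. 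Hence $I_{\rm LG}^b(0,z)=\one_{{\rm Sh}(b)}$ independently of $z$, giving
\[
\widetilde{K}_W^{(0)}\bigl([x^b\,dx],[x^c\,dx]\bigr)\Big|_{t=0}=\eta_W\bigl(\one_{{\rm Sh}(b)},\one_{{\rm Sh}(c)}\bigr),
\]
which is supported precisely on the locus $b^{(j)}+c^{(j)}=d_j-2$ for all $j$. On the other hand, the Grothendieck residue computation of $K_W^{(0)}([x^b\,dx],[x^c\,dx])$ has exactly the same support, and a direct matching of scalars (using the shift convention \eqref{shift-index} and the normalizations of $\eta_W$ and $\Res$) identifies the nonzero values. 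The main obstacle is the fourth property: one must verify carefully that the FJRW Dubrovin connection preserves $\eta_W$ in the sense required after passing through the sign-flip $z\mapsto -z$ in the definition of $\mathcal{T}_W(-z)$, which is where Lemma~\ref{gkz-lg} enters substantively; the combinatorics at $t=0$ is then clean but must be handled with attention to the shifting map ${\rm Sh}$.
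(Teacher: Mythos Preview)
Your approach is correct but considerably more elaborate than the paper's. The proposition only claims equality on the specific good basis $\{[x^b\,dx]\}_{b\in\mathfrak{B}}$ at the Fermat point, and the paper exploits this to give essentially a two-line argument: since the higher residue pairing on this good basis is already known to be the constant $K_W([x^b\,dx],[x^c\,dx])=\delta_b^{c}$, it suffices to show that $\widetilde{K}_W$ returns the same constant. The key observation is that for $b\in\mathfrak{B}$ one has $\lfloor-\nu_j\rfloor=0$ for every $j$, so \eqref{box-element} collapses and $I_{\rm LG}^{b}(0,z)=\one_{\gamma_\nu}=\one_{{\rm Sh}(b)}$ is \emph{independent of $z$}. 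Hence $\widetilde{K}_W([x^b\,dx],[x^c\,dx])=\eta_W(\one_{{\rm Sh}(b)},\one_{{\rm Sh}(c)})$ is itself a constant, and the match with the residue value is immediate from \eqref{fjrw-pairing}. No axiomatic characterization, flatness argument, or uniqueness of the higher residue pairing is needed.

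What your approach buys is a stronger conclusion: you prove $\widetilde{K}_W=K_W$ as pairings on all of $\hHH_+(W)$ over the deformation family, not merely on the distinguished good basis. The cost is that you must verify Gauss--Manin flatness of $\widetilde{K}_W$ (your Property~(4)) through the sign-flip in \eqref{sign-change} and then invoke the uniqueness theorem for the higher residue pairing, which is a somewhat heavy external input. The paper sidesteps all of this because the $z$-independence of $I_{\rm LG}^b(0,z)$ for $b\in\mathfrak{B}$ makes the higher residues of $\widetilde{K}_W$ on this basis vanish automatically, so only the constant term needs to be matched---and that is exactly the computation you carry out at the end of your Property~(2) discussion.
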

\begin{proof}
To prove the mirror map \eqref{mirror-map-lg} preserves the pairing, it suffices to verify
\begin{equation}\label{i-function-constant}
\eta_W\left(-I_{\rm LG}^{b}(t,z)\mid_{t=0}, -I_{\rm LG}^{c}(t, -z)\mid_{t=0}\right)=\delta_{b}^{c}.
\end{equation}
Now let us compute $I_{\rm LG}^{\bf e}(t,z)$ when $t=0$. According to Equation \eqref{nu-part},
$$\nu=\sum_{j=1}^n\left(-\frac{{\bf e}^{(j)}+1}{d_j}\right)b_j.$$
Thus when $\mathbf{e}\in\mathfrak{B}$, using \eqref{box-element}, a direct calculation will show:
$$
I_{\rm LG}^{\bf e}(t=0,z)=
\one_{\gamma_{\nu}}.
$$
And Equation \eqref{i-function-constant} follows.
\end{proof}

\subsubsection{Matching opposite subspaces.}
The vector space of the good basis 
$$H_{\mathfrak{B}}:=\{[x^{b}dx]\in \hHH_+(W)\mid b\in\mathfrak{B}\}$$ induces an opposite subspace $P_{\mathfrak{B}}$ in $\hHH(W)$. Recalling \eqref{opposite-good} in Section \ref{opposite-section}, we have
$$P_{\mathfrak{B}}=H_{\mathfrak{B}}[z^{-1}]z^{-1}.$$
On the other hand, $H_W(\!(z)\!)$ has a natural opposite subspace $H_W[z^{-1}]z^{-1}.$ Then the restriction of $\operatorname{Mir}_{W}$ on $H_{\mathfrak{B}}$ is induced by the shifting map \eqref{shift-index}
$$\operatorname{Mir}_{W}: H_{\mathfrak{B}}\to H_W, \quad [x^{b}dx]\mapsto-\one_{{\rm Sh}(b)}.$$
It is easy to see that 
\begin{proposition}\label{lg:opposite}
The map $\operatorname{Mir}_{W}$ matches the opposite subspaces $P_{\mathfrak{B}}$ with $H_W[z^{-1}]z^{-1}$.
\end{proposition}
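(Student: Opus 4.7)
The plan is to exhibit a short direct computation, leveraging the explicit evaluation of $I_{\rm LG}^{b}(t,z)$ at the distinguished point $t=0$ that was already carried out in the proof of Proposition \ref{lg:pairing}. First, I extend the mirror map $\operatorname{Mir}_W$ from $\hHH_+(W)$ to all of $\hHH(W) = \hHH_+(W)\otimes_{\C[\![z]\!]}\C(\!(z)\!)$ by $\C(\!(z)\!)$-linearity in the $z$-direction; concretely, a typical element $\sum_b z^{-k_b-1}[x^{b}dx]\in P_{\mathfrak{B}} = H_{\mathfrak{B}}[z^{-1}]z^{-1}$ is sent to $-\sum_b z^{-k_b-1}I_{\rm LG}^{b}(t,z)$.

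Next, I restrict to the fiber over $W$ itself, i.e.~set $t=0$. The key input is the formula established in the proof of Proposition \ref{lg:pairing}: for $b\in\mathfrak{B}$ one has
\[
I_{\rm LG}^{b}(0,z) \;=\; \one_{\gamma_{\nu(b)}} \;=\; \one_{\operatorname{Sh}(b)},
\]
a $z$-independent element of $H_W$. Consequently, at $t=0$ the map $\operatorname{Mir}_W$ restricts to the assignment $[x^{b}dx]\mapsto -\one_{\operatorname{Sh}(b)}$ on $H_{\mathfrak{B}}$, which is an isomorphism of $\C$-vector spaces $H_{\mathfrak{B}}\xrightarrow{\cong} H_W$ because $\operatorname{Sh}$ is a bijection from $\mathfrak{B}$ onto $\mathscr{N}$ (both sets index exactly $N = \prod_i(d_i-1)$ basis elements with matching degree shifts).

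Extending $\C[z,z^{-1}]$-linearly, the image of $P_{\mathfrak{B}}=H_{\mathfrak{B}}[z^{-1}]z^{-1}$ is then
\[
\operatorname{Mir}_W(P_{\mathfrak{B}}) \;=\; \operatorname{Mir}_W(H_{\mathfrak{B}})\cdot[z^{-1}]z^{-1} \;=\; H_W[z^{-1}]z^{-1},
\]
as desired. The isomorphism between opposite subspaces follows because the induced maps on $H_{\mathfrak{B}}\to H_W$ and on negative powers of $z$ are both bijections.

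I do not anticipate a significant obstacle: the only subtle point is that the opposite subspace $P_{\mathfrak{B}}$ is defined at the base point $W$ (equivalently $t=0$) while $\operatorname{Mir}_W$ was introduced as a family over the deformation parameters $t_b$, and one must verify that the $t=0$ specialization of $I_{\rm LG}^{b}(t,z)$ is purely in the ``positive-$z$'' direction so that the negative powers of $z$ on the source translate cleanly into negative powers of $z$ on the target. This is exactly what the formula $I_{\rm LG}^{b}(0,z)=\one_{\operatorname{Sh}(b)}$ encodes, so the argument is really just bookkeeping against the explicit computation already recorded in Proposition \ref{lg:pairing}.
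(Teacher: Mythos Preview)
Your proof is correct and follows the same approach as the paper: the paper records just before the proposition that the restriction of $\operatorname{Mir}_W$ to $H_{\mathfrak B}$ is given by $[x^b dx]\mapsto -\one_{\operatorname{Sh}(b)}$ (from the computation $I_{\rm LG}^{b}(0,z)=\one_{\operatorname{Sh}(b)}$), and then simply asserts that the proposition is ``easy to see,'' which amounts to your $\C[z^{-1}]z^{-1}$-linear extension argument. You have spelled out in detail what the paper leaves implicit.
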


\subsection{Proof of main theorem}

Recall our main theorem: 
 \begin{theorem}
 \label{thm:main} 
          Suppose that $W$ is a Fermat polynomial 
$$W=x_1^{d_1}+\cdots+x_n^{d_n}, \quad \sum_{i=1}^{n}\frac{1}{d_i}=1.$$
Hence  $X_W$ defines a Calabi-Yau hypersurface. Then, 
          \begin{itemize}
         \item[(1)]  LG/CY correspondence conjecture holds for the pair $(W, G_W)$.
         \item[(2)] The modularity conjecture holds for $[X_W/\tilde{G}_W]$.
         \end{itemize}
          \end{theorem}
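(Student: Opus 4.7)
The plan is to assemble the main theorem from the three pillars established earlier: the global Saito structure $(\cF,\nabla,K)$ on the marginal moduli space $\cM_\mar^\circ$ (Sections 3--5), the abstract Fock bundle $\widehat{\mathbb{V}}_{\rm tame}$ with its global ancestor potential $\cA \colon \cL\setminus\{0\} \to \widehat{\mathbb{V}}_{\rm tame}$ (Section 5), and the two mirror symmetry theorems that identify the GW and FJRW total ancestor potentials as particular coordinate expressions of $\cA$ (Sections 6--7). Since $W$ is Fermat, both endpoints of interest sit inside the marginal moduli space of the same family: the Gepner point $t=Q=0$ corresponding to the FJRW theory of $(W,G_W)$, and (after the mirror map) the large complex structure limit governing the GW theory of $[X_W/\tilde{G}_W]$. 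The unifying object is the family of polynomials $f(x,t,Q) = \sum_i(x_i^{d_i}+t_ix_i) - Q^{-1} x_1\cdots x_n$, whose slice is a Zariski open subset of $\cM_\mar^\circ$ (modulo coordinate changes, cf.~Remark~\ref{group:cc}).

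For part (1), the LG/CY correspondence, the approach is as follows. First, select a smooth path in $\cM_\mar^\circ$ connecting the Gepner point to a point in the large complex structure chamber; along such a path both the Saito structure and the abstract Fock bundle are defined, and the global ancestor potential $\cA(\hbar,f,\omega)$ is a section. At the GW end, Theorem~\ref{CY-LG:mirror_sym} identifies $\cA$ with the GW total ancestor potential $\cA^Y_\tau$ once one trivializes $\widehat{\mathbb{V}}_{\rm tame}$ using the A-model opposite subspace $P^{\rm GW}$ together with the primitive form $\omega^{\rm GW} = \sqrt{-1}\, dx/(Qf_0(Q))$; at the Gepner end, Proposition~\ref{lg:opposite} together with the mirror theorem of Section~7 identifies $\cA$ with the FJRW total ancestor potential $\cA^W_\tau$ via the good basis opposite subspace $P_{\mathfrak{B}}$ and the primitive form $\omega^{\rm FJRW} = dx$. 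Since both are coordinate expressions of the \emph{same} global object, the transformation rules of Proposition~\ref{anc-change} and Lemmas~\ref{change-B}, \ref{change-R} produce an explicit quantized symplectic operator $\widehat{\cU}$ such that $\cD_{\rm GW} = \widehat{\cU}(\cD_{\rm FJRW})$ up to the analytic continuation along the chosen path. This is the LG/CY correspondence in Givental's form; the ingredients $B$ and $R(z)$ of $\widehat{\cU}$ are read off from the change of frames between $(P^{\rm GW},\omega^{\rm GW})$ and $(P_{\mathfrak{B}},\omega^{\rm FJRW})$ under parallel transport of $\HH$ along the path.

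For part (2), the modularity, the key is that the global ancestor potential is an abstract modular form in the sense of Definition~\ref{def:amf}: by Corollary~\ref{scale-c} it satisfies $\cA(\hbar,f,c\,\omega) = \cA(\hbar c^{-2}, f, \omega)$. Descending to $[\cM_\mar^\circ/G]$ (Remark~\ref{group:cc}) and to the quotient by the monodromy group $\Gamma$ of the Gauss--Manin local system restricted to the marginal locus, the coordinate expressions of $\cA$ with respect to a $\Gamma$-equivariant choice of opposite sub-bundle transform as modular forms in the precise sense of Definition~\ref{def:qmf} (with weight determined by the degree in the Fock variables $\mathbf{q}$). Taking the complex-conjugate opposite sub-bundle $z^{-1}\kappa(\HH_+)$, which by Proposition~\ref{kappa-h} is $C^\infty$ and equivariant with respect to the real structure, yields the non-holomorphic completion $\widetilde{\cA}_\tau$ governed by the holomorphic anomaly equations of Proposition~\ref{han}; its holomorphic part in any holomorphic trivialization is, by definition, a quasi-modular form. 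Applying this at the large complex structure limit and composing with the mirror map $\tau(t,Q)$ of \eqref{mirror-map} translates the modularity statement into the quasi-modularity of $\cF^{\rm GW}_g$ for $[X_W/\tilde{G}_W]$.

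The main obstacle will be controlling the analytic continuation in part (1). The $I$-function convergence argument (Corollary~\ref{convergence-cor}) and the Birkhoff factorization in Lemma~\ref{S-diff} only guarantee mirror symmetry in a neighborhood of each cusp, so one must check that the Saito structure extends across the discriminant locus $\{Q(1-D^D\prod w_i^{-w_i}Q^D)=0\}$ and that the path chosen avoids this locus. Matching normalizations is also delicate: the factor $(-2\pi z)^{-n/2}$ in Remark~\ref{rem:oscillatory} must be carefully propagated, and the pairing compatibility of Proposition~\ref{hrp=K} must be checked on both sides of the correspondence. A secondary subtlety is that Proposition~\ref{anc-change} only handles changes of good basis preserving, or moving transversally to, the chosen opposite subspace; for the full LG/CY change one must compose such elementary moves, and check that the resulting quantized operator $\widehat{\cU}$ is well-defined on tame Fock spaces --- this relies on the $R$-part of the transformation being in the lower-triangular Givental group, which is indeed the case here because the two opposite subspaces agree modulo $z^{-1}$ after the mirror maps identify the underlying Jacobian frames.
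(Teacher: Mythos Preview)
Your proposal is correct and follows essentially the same strategy as the paper's proof: both argue that the GW and FJRW ancestor potentials are two local coordinate expressions of the single global ancestor potential on the abstract Fock bundle, related by parallel transport of opposite subspaces along a path in $\cM_\mar^\circ$ and then by the change-of-good-basis formulas (Proposition~\ref{anc-change}, Lemmas~\ref{change-B}--\ref{change-R}), while modularity comes directly from Definition~\ref{def:amf} and Corollary~\ref{scale-c} combined with the GW mirror theorem. One small slip: in the chart $f(x,t,Q)=\sum_i(x_i^{d_i}+t_ix_i)-Q^{-1}x_1\cdots x_n$, the point $Q=0$ is the large complex structure limit, not the Gepner point---the Fermat polynomial sits at $\sigma=0$ in the LG chart of Section~7 (equivalently $Q\to\infty$), so your path should run between these two cusps rather than starting and ending at $Q=0$.
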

\begin{proof}
The proof of modularity conjecture follows directly from the definition of B-model generating function (Definition \ref{def:amf}), which is modular but non-holomorphic; and GW-mirror theorems (Theorem \ref{CY-LG:mirror_sym}), which express B-model generating function as the anti-holomorphic completion of GW-generating function. Although our main interest is GW-theory, a similar statement holds for FJRW-theory as well.

To prove the LG-CY correspondence, we need to consider the analytic continuation of holomorphic generating function of GW/FJRW-theory. This can be 
done as follows. Using two mirror theorems (Theorem \ref{CY-LG:mirror_sym}, Proposition \ref{lg:d-module}, \ref{lg:pairing}, and \ref{lg:opposite}), we identify GW/FJRW-generating function to the local generating functions near large complex structure/Gepner limits
on the B-model moduli space. Now we can use the complex coordinates (not flat coordinates) on the B-model moduli space. The GW/FJRW-generating functions
were induced by the GW/FJRW-opposite subspaces. Now, we use the Gauss-Manin connection to parallel transport the GW-opposite subspace
at the large complex structure to the Gepner limit along a path. Note that Gauss-Manin connection preserves the Givental symplectic
vector space and Givental cone, so the parallel transport of an opposite subspace will remain Lagrangian and opposite. In such a way, we obtain a holomorphic generating function in
a neighborhood of a path connecting large complex limit to Gepner limit. Namely, we construct an analytic continuation of the GW-generating function. But the 
analytic continuation of the GW-generating function to the Gepner limit is not the FJRW-generating function because the parallel transport of GW-opposite subspace is different
from FJRW-opposite subspace. By Lemma 5.6, the two generating functions differ by the quantization of the symplectic transformation mapping one opposite subspace to
other.
\end{proof}

\appendix
\section{A proof of Proposition \ref{lg:mirror}}
\label{appendix}
\subsection{Weighted invariants and concavity} 
\begin{definition}
For any $\epsilon\in\mathbb{Q}_{>0}$, we say $(\mathcal{C}, \mathcal{L}_1,\cdots, \mathcal{L}_n)$ is a $(G_W,\epsilon)$-stable structure if the following conditions are satisfied:
\begin{itemize}
\item $\mathcal{C}$ is a connected proper one-dimensional DM stack of genus $0$ with weight-$1$ marked points $x_1, \cdots, x_m$, and weight $\epsilon$ points $y_1,\cdots,y_\ell$. The total weight at each point $p\in \mathcal{C}$ is bounded by $1$. Stacky point can only occur at marked points and nodal points. 
\item Let  $G_p$ be the local isotropy group at the stacky point $p$. There is a faithful representation $r_p: G_p\to G_W.$
\item Each $\mathcal{L}_j$ is an invertible sheaf over $\mathcal{C}$ and there exists integers $\xi_{i,j}\in[1,d_j)$ such that 
\begin{equation}\label{line-spin}
\mathcal{L}_j^{\otimes d_j}\cong \omega_{\mathcal{C},\log}\left(-\sum_{i=1}^{\ell}\xi_{i,j}[y_i]\right).
\end{equation}
\end{itemize}
\end{definition}
At each marking $x_i$, the local representation sends the generator $1\in G_{x_i}$ to some
$$\gamma_i:=\left(\exp(2\pi\sqrt{-1}\Theta_{\gamma_i}^{(1)}),\cdots\exp(2\pi\sqrt{-1}\Theta_{\gamma_i}^{(n)})\right)\in G_W, \quad \Theta_{\gamma_i}^{(j)}\in [0,1)\cap\mathbb{Q}.$$
We fix the decorations $\gamma=(\gamma_1, \cdots, \gamma_m)$ and $\xi=(\xi_1, \cdots, \xi_\ell)$ such that $\Theta_{\xi_i}^{(j)}=\xi_{i,j}/d_j$. We denote the moduli of $(G_W,\epsilon)$-stable structures by $\overline{\mathcal{W}}^{\epsilon}_{m|\ell}(\gamma|\xi)$.
$\mathcal{L}_j$ has a desingularization, which is a line bundle $L_j$ on the coarse curve $C$ \cite[Prop. 4.1.2]{CR04}. When $\overline{\mathcal{W}}^{\epsilon}_{m|\ell}(\gamma|\xi)$ is nonempty,
\begin{equation}\label{line-deg}
\deg L_j=\frac{1}{d_j}\left(-2+m+\ell-\sum_{i=1}^{\ell}\xi_{i,j}\right)-\sum_{i=1}^{m}\Theta_{\gamma_i}^{(j)}\in\Z.
\end{equation}
 According to \cite{FJR}, nonempty $\overline{\mathcal{W}}^{\epsilon}_{m|\ell}(\gamma|\xi)$ is a smooth Deligne-Mumford stack properly fibered over the Hasset moduli space of stable weighted rational curves \cite{Has}, which we denoted by $\overline{\mathcal{M}}_{m|\ell}$. Furthermore,
\begin{equation}\label{weighted-dim}
\dim_{\C}\overline{\mathcal{W}}^{\epsilon}_{m|\ell}(\gamma|\xi)=-3+m+\ell.
\end{equation}

In \cite{FJR}, if the vector space consists of narrow sectors and compact sectors only, the authors use cosection technique \cite{CL, CLL} to construct a virtual fundamental cycle on the moduli space. We denote such a cycle by $[\overline{\mathcal{W}}^{\epsilon}_{m|\ell}(\gamma|\xi)]^{\rm vir}$. In this section, we let $d:=\operatorname{lcm}(d_1, \cdots, d_n)$. Let $\one_{\gamma_i}$ be the insertion at the marked point $x_i$ and $\one_{\xi_i}$ be the insertion at the marked point $y_i$, then the following weighted-$\epsilon$ invariant is defined by
\begin{equation}\label{weight-fjrw}
\lan \one_{\gamma_1}\bar\psi_1^{k_1},\cdots,\one_{\gamma_m}\bar\psi_m^{k_m}\Big|\one_{\xi_1},\cdots,\one_{\xi_\ell}\ran_{m|\ell}^{\epsilon}=
d\cdot\int_{\left[\overline{\mathcal{W}}^{\epsilon}_{m|\ell}(\gamma|\xi)\right]^{\rm vir}}\prod_{i=1}^{m}\bar\psi_{i}^{k_i}.
\end{equation}

\subsubsection{Concavity} 
The following Concavity Lemma is very useful.
\begin{lemma}\label{concavity}
Each geometric fiber $(\mathcal{C}, \mathcal{L}_1, \cdots, \mathcal{L}_n)\in \overline{\mathcal{W}}^{\epsilon}_{m|\ell}(\gamma|\xi)$ is concave, i.e., 
\begin{equation}\label{concave}
H^0\left(\mathcal{C}, \mathcal{L}_j\right)=0, \quad \forall \quad 1\leq j\leq n.
\end{equation}
\end{lemma}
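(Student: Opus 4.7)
The plan is to reduce the global vanishing statement to a component-by-component degree check on each smooth rational component of the normalization, exploiting the narrowness of the decorations to guarantee strict negativity of the desingularized line bundle. The only genuine input is the explicit degree formula \eqref{line-deg} combined with the numerical bound $\Theta^{(j)}_{\gamma_i}\ge 1/d_j$ coming from narrowness \eqref{narrow-element}.

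First I would let $\nu\colon \tilde{\mathcal{C}}=\bigsqcup_\alpha \mathcal{C}_\alpha\to\mathcal{C}$ be the normalization and invoke the standard short exact sequence
\[
0\to \mathcal{L}_j \to \nu_*\nu^*\mathcal{L}_j \to \bigoplus_{\text{nodes}} (\mathcal{L}_j)|_{\text{node}} \to 0,
\]
which yields an injection $H^0(\mathcal{C},\mathcal{L}_j)\hookrightarrow \bigoplus_\alpha H^0(\mathcal{C}_\alpha,\mathcal{L}_j|_{\mathcal{C}_\alpha})$. Thus it suffices to prove the vanishing on each smooth rational component. On $\mathcal{C}_\alpha$ the special points are the original narrow markings $x_i$ (with decoration $\gamma_i$), the original weight-$\epsilon$ markings $y_i$ (with weights $\xi_{i,j}\ge 1$), and the preimages of nodes. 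The balancing convention on orbifold nodes exchanges $\gamma\leftrightarrow \gamma^{-1}$, and since $\Theta^{(j)}_{\gamma^{-1}}=1-\Theta^{(j)}_\gamma$ lies in the same interval $[1/d_j,1-1/d_j]$ as $\Theta^{(j)}_\gamma$, narrowness is preserved on every branch.

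Next I would plug the constants $\Theta^{(j)}_{\gamma_i}\ge 1/d_j$ and $\xi_{i,j}\ge 1$ into \eqref{line-deg}, restricted to $\mathcal{C}_\alpha$, and estimate
\[
\deg L_j^{(\alpha)} \;=\; \frac{1}{d_j}\Bigl(-2+m_\alpha+\ell_\alpha-\sum_{i}\xi_{i,j}\Bigr)-\sum_{i}\Theta^{(j)}_{\gamma_i} \;\le\; -\frac{2}{d_j} \;<\;0.
\]
Since $L_j^{(\alpha)}$ is an honest line bundle on $\mathbb{P}^1_\alpha$, integrality forces $\deg L_j^{(\alpha)}\le -1$, hence $H^0(\mathbb{P}^1_\alpha,L_j^{(\alpha)})=0$. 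Because $L_j^{(\alpha)}$ is defined as the coarse-moduli pushforward of $\mathcal{L}_j|_{\mathcal{C}_\alpha}$ and the isotropy representations $r_p$ are faithful, we have $H^0(\mathcal{C}_\alpha,\mathcal{L}_j|_{\mathcal{C}_\alpha})=H^0(\mathbb{P}^1_\alpha,L_j^{(\alpha)})=0$, closing the reduction.

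The only real obstacle I foresee is the bookkeeping at nodes: one must check that \eqref{line-deg} applies verbatim to each component after normalization, i.e.~that the extra log-markings at the preimages of nodes enter $m_\alpha$ with their balanced narrow decoration, and that no weight-$\epsilon$ node arises within the stability range of the $(G_W,\epsilon)$-structure. Once this combinatorial check is made, the degree inequality is elementary and no deeper input (such as a cosection, higher-genus vanishing, or derived pushforward computation) is required, because the genus-zero hypothesis together with narrowness already forces strict negativity on every component.
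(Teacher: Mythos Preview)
Your reduction to components via the normalization injection is fine, but the step ``narrowness is preserved on every branch'' contains a genuine gap. You argue that if $\gamma$ is narrow then $\gamma^{-1}$ is narrow; this is true but beside the point. The decoration at a node is \emph{not} one of the prescribed $\gamma_i\in\mathscr{N}$ --- it is determined a posteriori by the $W$-spin structure on the nodal curve, and nothing in the definition of $\overline{\mathcal{W}}^{\epsilon}_{m|\ell}(\gamma|\xi)$ forces it to be narrow. Broad nodes (where $\Theta^{(j)}_{\gamma}=0$ for some $j$) do occur, and at such a node your estimate $\Theta^{(j)}\ge 1/d_j$ fails. Concretely, on a component $\mathcal{C}_\alpha$ carrying $B_\alpha$ nodes that are broad for the index $j$, the degree bound from \eqref{line-deg} only gives $\deg L_j^{(\alpha)}\le (B_\alpha-2)/d_j$, which is nonnegative once $B_\alpha\ge 2$. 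So $H^0(\mathcal{C}_\alpha,\mathcal{L}_j)$ need not vanish componentwise.

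The paper's proof confronts exactly this obstacle: after splitting off narrow nodes, it assumes all remaining nodes are broad and replaces your componentwise vanishing by a global zero-counting argument. A nonzero section in $\ker\varrho$ must vanish at every \emph{external} broad node (a node whose adjacent component has no other node), contributing at least $E\ge 2$ zeros; but summing the bound $\deg_{C_v}L_j\le B_v/2-1$ over internal components shows fewer than $E$ zeros are available, a contradiction. Your approach cannot be salvaged without an argument of this kind, since the componentwise statement you aim for is simply false in the presence of broad nodes.
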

\begin{proof}
If $\mathcal{C}$ is smooth, since $d_j\Theta_{\gamma_i}^{(j)}\geq1$ and $\xi_{i,j}\geq1$, \eqref{line-deg} implies $\deg L_j<0$ and \eqref{concave} follows.

If $\mathcal{C}$ is a nodal, then for each $j=1, \cdots, n$, the normalization induces a long exact sequence:
\begin{equation*}
0\to H^0\left(\mathcal{C}, \mathcal{L}_j\right)\to
\bigoplus_{v}H^0\left(\mathcal{C}_v, \mathcal{L}_j\right)\xrightarrow{\varrho}
\bigoplus_{p}H^0\left(p, \mathcal{L}_j\right)
\to H^1\left(\mathcal{C}, \mathcal{L}_j\right)\to
\bigoplus_{v}H^1\left(\mathcal{C}_v, \mathcal{L}_j\right)\to0.
\end{equation*}
Here $\mathcal{C}_v$ runs over all the components after normalization and $p$ runs over all the nodes.
It is enough to prove ${\rm Ker}(\varrho)=0$. If $p$ is a narrow node, then $H^0\left(p, \mathcal{L}_j\right)=0$ and we can split the exact sequence into two different sequences and then discuss individually. Thus we may assume all the nodes are broad. 

We call a broad node \textit{external} if one of the component attached to this node has exactly one node. Otherwise we call a broad node \textit{internal}. We denote the number of external broad nodes by $E$ and the number of internal broad nodes by $I$. Since $C$ is a genus zero nodal curve, there are $E+I+1$ components in the normalization, where $E$ of them contain exactly one node. Also, we must have $E\geq2$. If we denote the number of broad nodes on the component $C_v$ by $B_v$, then 
$$\sum_{v}B_v=E+2I.$$
Moreover, since $d_j \Theta_{\gamma_i}^{(j)}\geq1$, $\xi_{i,j}\geq0$, and $d_j\geq 2$, the formula \eqref{line-deg} implies 
\begin{equation}\label{deg-component}
\deg_{C_v} L_j \leq{B_v\over2}-1.
\end{equation}
By definition of $\varrho$, any nonzero section $\sigma$ such that $\varrho(\sigma)=0$ must vanish on each external node. Thus the total degree of $\sigma$ is at least $E$. However, \eqref{deg-component} implies the number of zeros of $\sigma$ is at most 
$$\sum_{v; B_v\geq2}({B_v\over2}-1)=\sum_{v; B_v\geq2}{B_v\over2}-(I+1)<{E\over2}+I-(I+1)={E\over2}-1<E.$$
This is a contradiction. Thus we must have ${\rm Ker}(\varrho)=0$.
\end{proof}
Let $\mathscr{C}$ be the universal curve, $\pi: \mathscr{C}\to \overline{\mathcal{W}}^{\epsilon}_{m|\ell}(\gamma|\xi)$ be the universal family, and $\mathscr{L}_j$ be the $j$-th universal line bundle. Use Lemma \ref{concavity}, we can define
\begin{equation}\label{weight-concave}
\left[ \overline{\mathcal{W}}^{\epsilon}_{m|\ell}(\gamma|\xi)\right]^{\rm vir}=
c_{\rm top}\left(R^1\pi_*\bigoplus_{j=1}^n\mathscr{L}_j\right)\cap\left[ \overline{\mathcal{W}}^{\epsilon}_{m|\ell}(\gamma|\xi)\right]\in H_*(\overline{\mathcal{W}}^{\epsilon}_{m|\ell}(\gamma|\xi),\mathbb{Q}).
\end{equation}
In particular, if $\epsilon>1$, then there is no weighted point and we get the FJRW virtual cycle up to a sign \cite[Theorem 5.6]{CLL}.

\subsection{Graph spaces and localization}
\begin{definition}
We consider a graph space $(f:\mathcal{C}\to\mathbb{P}^1, \mathcal{L}_1,\cdots, \mathcal{L}_n)$ where 
\begin{itemize}
\item The rational coarse curve $C$ contains a component $C_1\cong\mathbb{P}^1$ with $\deg(f)|_{C_1}=1.$
\item $(\overline{\mathcal{C}/\mathcal{C}_1}, \mathcal{L}_1,\cdots, \mathcal{L}_n)$ is $(G_W,\epsilon)$-stable on each component of $\overline{\mathcal{C}/\mathcal{C}_1}$.
\end{itemize}
\end{definition}
For fixed decorations $(\gamma|\xi)$, we denote the moduli space of graph spaces by $\mathcal{G}^{\epsilon}_{m|\ell}(\gamma|\xi)$.
For any $x_i, y_j\in \mathcal{C}_1$, there are evaluation morphisms 
$${\rm ev}_i, \widetilde{\rm ev}_j: \mathcal{G}^{\epsilon}_{m|\ell}(\gamma|\xi)\to \mathbb{P}^1$$
which send $(f\colon \mathcal{C}\to\mathbb{P}^1, \mathcal{L}_1,\cdots, \mathcal{L}_n)$ to $f(x_i)$ and $f(y_j)$ respectively.
The moduli of graph spaces has a GLSM model description \cite[Example 4.2.22]{FJR3}, so the cosection technique \cite{KL, CLL} allows \cite{FJR3} to construct a perfect obstruction theory on $\mathcal{G}^{\epsilon}_{m|\ell}(\gamma|\xi)$. In particular, since the genus $g$ is $0$,  the moduli space is concave. We write the obstruction sheaf as
\begin{equation}\label{graph-obs}
{\rm Ob}_{\mathcal{G}^{\epsilon}_{m|\ell}(\gamma|\xi)}=R^1\pi_*\bigoplus_{j=1}^{n}\mathscr{L}_j.
\end{equation}

Now we consider a $\C^*$-action on $\mathbb{P}^1$:
\begin{equation}\label{c*action}
\lambda\cdot [x_0:x_1]=[\lambda x_0: x_1], \quad \lambda\in\C^*.
\end{equation}
Let us denote $[0]:=c_1^{\C^*}(\mathcal{O}(1))\in H_{\C^*}^2(\mathbb{P}^1)$ if the weight of the $\C^*$-action is $1$ at $0=[0:1]\in\mathbb{P}^1$ and $0$ at $\infty=[1:0]\in\mathbb{P}^1$. We also denote 
$[\infty]:=c_1^{\C^*}(\mathcal{O}(1))\in H_{\C^*}^2(\mathbb{P}^1)$ if the weight of $\C^*$-action is $0$ at $0\in\mathbb{P}^1$ and $-1$ at $\infty\in\mathbb{P}^1$. 

Let $(f\colon \mathcal{C}\to\mathbb{P}^1, \mathcal{L}_1,\cdots, \mathcal{L}_n)$ be a graph space such that $f(x_{m+1})=\infty$, $f(y_{\ell+1})=0$. We fix the type of decorations by
$$\big(\gamma(\alpha)|\xi(\beta)\big):= (\gamma_1, \cdots, \gamma_m, \alpha | \xi_1, \cdots, \xi_\ell, \beta).$$
The $(m+1)$-th marked point $x_{m+1}$ is decorated by $\one_\alpha$ and the $(\ell+1)$-th weighted point $y_{\ell+1}$ is decorated by $\one_\beta$. 
For simplicity, we denote the moduli space by
$$\mathcal{G}^\epsilon:=\mathcal{G}^{\epsilon}_{m+1|\ell+1}\big(\gamma(\alpha)|\xi(\beta)\big).$$

The $\mathbb{C}^*$-action \eqref{c*action} induces a $\mathbb{C}^*$-action on the moduli $\mathcal{G}^\epsilon$ and on its universal bundle. The moduli stack $\mathcal{G}^\epsilon$ has a $\C^*$-equivariant virtual cycle $\left[\mathcal{G}^\epsilon\right]_{\C^*}^{\rm vir}\in A^{\C^*}_*(\mathcal{G}^\epsilon)$.

We label the fixed locus by decorated dual graph $\Gamma$. 
Let $m_0, n_0$ be the number of marked points and number of weighted points on $f^{-1}(0)=\mathcal{C}_0$. Here neither the node, nor the weighted point with decoration $\one_\beta$ is included. Similarly, we define $m_\infty$ and $n_\infty$ on $f^{-1}(\infty)=\mathcal{C}_\infty$. Thus 
$$m_0+m_\infty=m, \quad n_0+n_\infty=\ell.$$
Let $\Gamma_0$ and $\Gamma_\infty$ be the decorated dual graph of $\mathcal{C}_0$ and $\mathcal{C}_\infty$ respectively. Let $ \overline{\mathcal{W}}^{\epsilon}(\Gamma_0)$ and $\overline{\mathcal{W}}^{\epsilon}(\Gamma_\infty)$ be the corresponding moduli spaces of $(G_W, \epsilon)$-structures. Let $\mathcal{F}_\Gamma$ be the fixed locus labeled by $\Gamma$. Again it has an obstruction bundle ${\rm Ob}_{\mathcal{F}_\Gamma}=\oplus R^1\pi_*\mathcal{L}_j$ and the virtual fundamental cycle
$$[\mathcal{F}_\Gamma]^{\rm vir}=c_{\rm top}\left(\bigoplus_{j=1}^{n}R^1\pi_*\mathcal{L}_j\right)\cap [\mathcal{F}_\Gamma].$$
 We have morphisms
$$\iota_\Gamma\colon  \mathcal{F}_\Gamma=\overline{\mathcal{W}}^{\epsilon}(\Gamma_0)\times\overline{\mathcal{W}}^{\epsilon}(\Gamma_\infty)\longrightarrow\mathcal{G}^\epsilon.$$
Let $\mathcal{N}_\Gamma$ be the normal bundle of $\mathcal{F}_\Gamma$ in $\mathcal{G}^\epsilon$. 
We use Atiyah-Bott localization to obtain 
$$\left[\mathcal{G}^\epsilon\right]_{\C^*}^{\rm vir}=\sum_{\Gamma}(\iota_\Gamma)_*\left(\frac{[\mathcal{F}_\Gamma]^{\rm vir}_{\C^*}}{e_{\C^*}(\mathcal{N}_{\Gamma}^{\rm vir})}\right).$$
Then the $\C^*$-integral
\begin{eqnarray}
&&\lan \one_{\gamma_1}\bar\psi_1^{k_1},\cdots,\one_{\gamma_m}\bar\psi_m^{k_m}, \one_{\alpha}\Big|\one_{\xi_1}, \cdots, \one_{\xi_\ell}, \one_\beta\Big|{\rm ev}_{m+1}^*([\infty])\cup\widetilde{\rm ev}_{\ell+1}^*([0])\ran^{\epsilon,\C^*}_{m+1|\ell+1}\label{graph-inv}\\
&&=d\cdot \int_{\left[\mathcal{G}^\epsilon\right]_{\C^*}^{\rm vir}}\left(\prod_{i=1}^m \bar\psi_i^{k_i}\right) {\rm ev}_{m+1}^*([\infty])\cap\widetilde{\rm ev}_{\ell+1}^*([0])
\in\C[\![z]\!]\nonumber
\end{eqnarray}
allows us to define a formal power series with variables $t$ and $y$.
\begin{eqnarray}
&&\llangle[\Big] \one_{\alpha}\Big|\one_\beta\Big|{\rm ev}^*([\infty])\cup\widetilde{\rm ev}^*([0])\rrangle[\Big]^{\epsilon,\C^*}_{1+\bullet|1+\bullet}(t,y)\label{weighted-function}
\\
&&=\sum_{m,\ell}\frac{1}{m!\ell!}\lan t, \cdots, t, \one_{\alpha}\Big| y, \cdots, y, \one_\beta\Big|{\rm ev}_{m+1}^*([\infty])\cup\widetilde{\rm ev}_{\ell+1}^*([0])\ran^{\epsilon,\C^*}_{m+1|\ell+1}.\nonumber
\end{eqnarray}

We slightly abuse the language to call fixed locus \emph{stable} if both $p_0:=f^{-1}(0)\cap\mathcal{C}_1$ and $p_\infty:=f^{-1}(\infty)\cap\mathcal{C}_1$ are nodes.
	Otherwise, it is called \emph{unstable}. Then we analyze the contribution of \eqref{weighted-function} when the fixed locus is stable or unstable.

\subsubsection{Stable contribution.}
Let $(f\colon \mathcal{C}\to\mathbb{P}^1, \mathcal{L}_1,\cdots, \mathcal{L}_n)$ be a geometric point in the stable fixed locus $\mathcal{F}_\Gamma$. 
Then 
$$m_0+1+(n_0+1)\epsilon>2 \quad \textit{and}  \quad m_\infty+2+n_\infty\epsilon>2.$$
The normalization $\widetilde{ \mathcal{C}}=C_0\coprod  \mathcal{C}_1\coprod  \mathcal{C}_{\infty}\to  \mathcal{C}$ induces the following long exact sequence:
\begin{eqnarray}\label{stable-normalization}
0\to\bigoplus_{j=1}^n H^0\left( \mathcal{C}, \mathcal{L}_j\right)\to
\bigoplus_{j=1}^n\bigoplus_{a\in\{0,1,\infty\}}H^0\left( \mathcal{C}_a, \mathcal{L}_j\right)\to
\bigoplus_{j=1}^n\bigoplus_{a\in\{0,\infty\}}H^0\left(p_a, \mathcal{L}_j\right)\to\nonumber\\
\to \bigoplus_{j=1}^n H^1\left( \mathcal{C}, \mathcal{L}_j\right)\to
\bigoplus_{j=1}^n \bigoplus_{a\in\{0,1,\infty\}} H^1\left( \mathcal{C}_a, \mathcal{L}_j\right) \to0.
\end{eqnarray}
Both $ \mathcal{C}_0$ and $ \mathcal{C}_\infty$ contain at most one broad insertion. Thus we can proceed as in Lemma \ref{concavity} to obtain 
$$H^0( \mathcal{C}_0, \mathcal{L}_j)=H^0( \mathcal{C_\infty},  \mathcal{L}_j)=0.$$
On the other hand, for the component $ \mathcal{C}_1$, the formula \eqref{line-deg} implies
$$\deg_{C_1} L_j=
\left\{\begin{array}{ll}
0, & \textit{two nodes are broad},\\
-1, & \textit{two nodes are narrow}.
\end{array}
\right.$$
In either case, we have
$$H^1( \mathcal{C}_1,  \mathcal{L}_j)=H^1(C_1, L_j)=0.$$

If both nodes are narrow, then the first line in \eqref{stable-normalization}  will vanish.
Thus we get isomorphic $\C^*$-equivariant vector spaces
\begin{equation}\label{narrow-product}
H^1(\mathcal{C}, \mathcal{L}_j)\cong H^1(\mathcal{C}_0, \mathcal{L}_j)\bigoplus H^1(\mathcal{C}_\infty, \mathcal{L}_j).
\end{equation}

Now if both nodes are broad, then there exists some $j$ such that 
$$H^0\left( \mathcal{C}_1, \mathcal{L}_j\right)\cong\C, \quad H^0\left( p_0, \mathcal{L}_j\right)\bigoplus H^0\left(p_\infty, \mathcal{L}_j\right)\cong\C^2.$$
The first line in \eqref{stable-normalization} contains a summand of $\C\hookrightarrow\C^2$, with $\C^*$ acting trivially on $\C^2$. 
Recall \eqref{graph-obs} and \eqref{narrow-product}, we have 
\begin{equation}
c^{\C^*}_{\rm top}({\rm Ob}_{\mathcal{F}_\Gamma})=
\left\{
\begin{array}{ll}
c^{\C^*}_{\rm top}({\rm Ob}_{\overline{\mathcal{W}}^{\epsilon}(\Gamma_0)}) c^{\C^*}_{\rm top}({\rm Ob}_{\overline{\mathcal{W}}^{\epsilon}(\Gamma_\infty)}), & \textit{if both $0$ and $\infty$ are narrow}.\\
0, & \textit{if both $0$ and $\infty$ are broad}.
\end{array}
\right.
\end{equation}
On the other hand,  we have 
$$c_1^{\C^*}(T_{p_0}\mathcal{C}_{0}\oplus T_{p_0}\mathcal{C}_{1})=\frac{z-\psi_{p_0}}{d}, \quad c_1^{\C^*}(T_{p_\infty}\mathcal{C}_{\infty}\oplus T_{p_\infty}\mathcal{C}_{1})=\frac{-z-\psi_{p_\infty}}{d}.$$
Since the $\C^*$-equivariant Euler class of deformation of the maps $f: \mathcal{C}_1\to \mathbb{P}^1$ equals to ${\rm ev}^*([\infty])\cup\widetilde{\rm ev}^*([0])$.  We obtain that the stable contribution of 
\eqref{weighted-function} is 
\begin{equation}\label{stable-cont}
\llangle[\Big]\one_\alpha\Big|\one_\beta\Big|{\rm ev}^*([0])\cup\widetilde{\rm ev}^*([\infty])\rrangle[\Big]^{\epsilon,\mathbb{C}^*}_{\rm stable}
=\sum_{\gamma}\llangle[\Big]\frac{\one_{\gamma'}}{z-\psi}\Big|\one_\beta\rrangle[\Big]^{\epsilon}_{1|1}\llangle[\Big]\one_\alpha, \frac{\one_{\gamma}}{-z-\psi}
\rrangle[\Big]^{\epsilon}_{2|0}.
\end{equation}
We remark that by definition of \eqref{weight-fjrw}, the RHS only contains stable terms.

\subsubsection{A weighted $I$-function}
Before we start to compute the unstable terms, let us introduce a weighted $I$-function. 
Recall that when ${\bf e}=0$, $\nu$ and $\Box_{\nu,z}$ are given by \eqref{nu-index} and \eqref{nu-part}.
Recall that $\mathfrak{B}$ is the set of ghost variables defined in \eqref{good-basis-lg}.
For any $\epsilon\in\mathbb{Q}_{>0}$, we consider
$$\mathbb{L}_\epsilon:=\left\{\nu=\{\nu_b\}_{b\in \mathfrak{B}}\Big|\nu_b\geq0, \quad \sum_{b\in \mathfrak{B}}\nu_b\leq{1\over\epsilon}\right\}.$$
In particular, $\lim_{\epsilon\to0}\mathbb{L}_{\epsilon}=\mathbb{L}$ in \eqref{fan-sequence}.
We define an $I^\epsilon$-function
\begin{equation}\label{weighted-i-function}
I_{\rm LG}^{0,\epsilon}(y,z)=\sum_{\nu\in \mathbb{L}_\epsilon}\prod_{b\in \mathfrak{B}}y_b^{\nu_b}\Box_{\nu,z}\one_{\gamma_{\nu}}
\end{equation}
Now we view $\beta$ as its preimage $\big(d_1\Theta_\beta^{(1)}-1, \cdots, d_n\Theta_\beta^{(n)}-1\big)\in\mathfrak{B}$ under the shifting map, and consider the derivative $\partial_{y_{\beta}}$. We calculate similarly as in Lemma \ref{gkz-lg} to obtain
$$z\partial_{y^{\beta}}I^{0,\epsilon}_{\rm LG}(y,z)=\sum_{\nu\in \mathbb{L}_\epsilon}\prod_{b\in \mathfrak{B}}y_b^{\widetilde{\nu}_b}\frac{\prod_{j=1}^{n}\prod_{k=1}^{\lf-\widetilde{\nu}_j\rf} \left(\widetilde{\nu}_j+k\right)z}{\prod_{b\in \mathfrak{B}}\prod_{k=1}^{\widetilde{\nu}_b}(kz)}\one_{\gamma}=\one_{\beta}+\cdots,
$$
where $\widetilde{\nu}_b=\nu_b-\delta_b^\beta\geq0$, $\gamma=\big(\exp(\langle-\nu_1\rangle),\cdots, \exp(\langle-\nu_n\rangle)\big)$,
and
$$\widetilde{\nu}_j=-\frac{1}{d_j}\left(1+(d_j\Theta_\beta^{(j)}-1)+\sum_{b\in\mathfrak{B}}\widetilde{\nu}_b b^{(j)}\right)=\nu_j.$$
\subsubsection{Unstable contribution}
There are three unstable situations:
\begin{enumerate}
\item Both $f^{-1}(0)$ and $f^{-1}(\infty)$ are unstable.
\item $f^{-1}(0)$ is stable but $f^{-1}(\infty)$ is unstable.
\item $f^{-1}(0)$ is unstable but $f^{-1}(\infty)$ is stable.
\end{enumerate}

For the first case of unstable terms, $\mathcal{C}=\mathcal{C}_1$, and 
$$m_0=m_\infty=n_\infty=0, \quad (n_0+1)\epsilon\leq 1.$$
Thus $m=0$ and $\ell=n_0$.
We have an isomorphism $\mathcal{C}_1\cong\mathbb{P}[d,1]$ with $p_\infty$ the only orbifold point. 
All the weighted points $ y_1, \cdots, y_\ell, y_{\ell+1}$ stack at $p_0$. For each $y_i$, we have some $\xi_{i,j}\in\{ 1, \cdots, d_j-1\}$ such that
$$\mathcal{L}_j^{\otimes d_j}\cong \omega_{\mathcal{C}, {\rm log}}\left(-\sum_{i=1}^{\ell}\xi_{i,j}[y_i]-d_j\Theta_{\beta}^{(j)}[y_{\ell+1}]\right).$$
Here $\xi_{\ell+1,j}=d_j\Theta_{\beta}^{(j)}$ since $y_{\ell+1}$ is decorated with the narrow element $\one_\beta$.  
Also we have
$$\mathcal{L}_j\cong\mathcal{O}_{\mathbb{P}[d,1]}\left(\frac{d}{d_j}(-1-\sum_{i=1}^{\ell+1}(\xi_{i,j}-1))[\infty]\right).$$
Since the node $p_\infty$ is decorated by a narrow element $\one_\alpha\in H_W$, use \eqref{line-deg}, we must have
\begin{equation}\label{narrow-node}
\Theta_{\alpha}^{(j)}=\langle a_j\rangle\neq0, \quad a_j:=-\frac{1}{d_j}\left(1+\sum_{i=1}^{\ell+1}(\xi_{i,j}-1)\right). 
\end{equation}

We extend the $\C^*$ action $\lambda [x_0; x_1]=[\lambda x_0; x_1]$ on $\mathcal{L}_j$ and obtain the following weights
$$c_1^{\C^*}(T_{p_0}\mathcal{C})=z, \quad c_1^{\C^*}(T_{p_\infty}\mathcal{C})=-\frac{z}{d}, \quad c_1^{\C^*}(\mathcal{L}_j|{p_\infty})=0,$$
According  to \cite[Example 98]{Liu}, we have
\begin{equation*}
e_{\C^*}\left(\bigoplus_{j=1}^{n}R^1\pi_*\mathscr{L}_j\right)
=\prod_{j=1}^{n}\prod_{k=1}^{\lfloor -a_j\rfloor}(a_j+k)z, 
\end{equation*}

We replace $\one_{\xi_i}$ by some $b\in \mathfrak{B}$ such that $b^{(j)}=\xi_{i,j}-1$. Let $\widetilde{\nu}_b$ be the number of $b\in \mathfrak{B}$ and we parametrize such an element by $y_b$. Then $\sum\widetilde{\nu}_b=\ell$ and 
$$a_j=-\frac{1}{d_j}\left(1+\sum_{i=1}^{\ell+1}(\xi_{i,j}-1)\right)=-\frac{1}{d_j}\left(1+(d_j\Theta_\beta^{(j)}-1)+\sum_{b\in\mathfrak{B}}\widetilde{\nu}_b b^{(j)}\right)=\nu_j.$$
Since the $\C^*$-equivariant Euler class of deformation of the maps $f: \mathcal{C}\to \mathbb{P}^1$ is $z^{\ell+1}(-z/d)$ and here ${\rm ev}^*([\infty])\cup\widetilde{\rm ev}^*([0])=-z^2/d$, we can collect all possible $\ell$ to obtain the first unstable part of \eqref{weighted-function} 
\begin{equation}\label{unstable-cont}
\sum_{\ell+1\leq\frac{1}{\epsilon}}\sum_{\sum\widetilde{\nu}_b=\ell}{-z^2/d\over z^{\ell+1}(-z/d)}\prod_{b\in \mathfrak{B}}{y_b^{\widetilde{\nu}_b}\over \widetilde{\nu}_b !}\prod_{j=1}^{n}\prod_{k=1}^{\lfloor -a_j\rfloor}(a_j+k)z
=\eta_W\left(z\partial_{y_{\beta}}I^{0,\epsilon}_{\rm LG}(y,z), \one_\alpha\right).
\end{equation}

Similarly, we can calcuate the contribution from the other two situations of the unstable part to get
\begin{equation}\label{unstable-rest}
\llangle[\Big]{\one_\alpha\over z-\psi} \Big|\one_{\beta}\rrangle[\Big]_{1|1}^{\epsilon}+\eta_W\left(z\partial_{y^{\beta}}I^{0,\epsilon}_{\rm LG}(y,z),  \sum_{\gamma}\llangle[\Big]{\one_\gamma\over -z-\psi}, \one_\alpha\rrangle[\Big]_{2|0}^{\epsilon}\one_{\gamma'}\right).
\end{equation}

\subsection{A proof of Proposition \ref{lg:mirror}}
\subsubsection{Regularity}
We define a $J^{\epsilon}$-function:
\begin{equation}\label{j-epsilon}
J^{\epsilon}({\bf t},y,z):=z\,I^{0,\epsilon}_{\rm LG}(y,z)+{\bf t}(-z)+
\sum_{\gamma}\llangle[\Big]{\one_\gamma\over z-\psi}\rrangle[\Big]_{1|0}^{\epsilon}\one_{\gamma'}.
\end{equation}

\begin{proposition}\label{regularity-thm}
We have 
\begin{equation}\label{regularity}
\eta_W\left(\partial_{y_\beta}J^{\epsilon}({\bf t},y,z), \partial_{t_0^\alpha}J^{\epsilon}({\bf t},y,-z)\right)
\in\C[\![z]\!].
\end{equation}
\end{proposition}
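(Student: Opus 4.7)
The plan is to realize both sides of \eqref{regularity} as limits of the same $\C^*$-equivariant graph space integral, computed in two ways. First, since the moduli $\mathcal{G}^\epsilon_{m+1|\ell+1}(\gamma(\alpha)|\xi(\beta))$ is a proper Deligne-Mumford stack and the equivariant insertion ${\rm ev}^*_{m+1}([\infty]) \cup \widetilde{\rm ev}^*_{\ell+1}([0])$ is a degree-two polynomial in the equivariant parameter $z$, the graph space invariant \eqref{graph-inv} lies in $\C[z]$ for each fixed $m, \ell$ and each set of decorations. Packaging these invariants into a generating series with formal parameters $t$ (for ordinary marked points) and $y$ (for weighted points) produces a function
\[ \mathfrak{G}^\epsilon(t,y,z) := \llangle \one_\alpha \,\big|\, \one_\beta \,\big|\, {\rm ev}^*([\infty]) \cup \widetilde{\rm ev}^*([0]) \rrangle^{\epsilon, \C^*}_{1+\bullet|1+\bullet}(t,y) \ \in \ \C[z][\![t,y]\!], \]
which is in particular regular at $z = 0$.

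Next, I will compute $\mathfrak{G}^\epsilon(t,y,z)$ by Atiyah-Bott localization and match the result against $\eta_W(\partial_{y_\beta} J^\epsilon(t,y,z), \partial_{t_0^\alpha} J^\epsilon(t,y,-z))$. Expanding the derivatives using \eqref{j-epsilon} gives
\begin{align*}
\partial_{y_\beta} J^\epsilon(t,y,z) & = z\,\partial_{y_\beta} I^{0,\epsilon}_{\rm LG}(y,z) + \sum_\gamma \llangle \one_\beta, \tfrac{\one_\gamma}{z-\psi} \rrangle^\epsilon_{1|1}(t,y)\, \one_{\gamma'}, \\
\partial_{t_0^\alpha} J^\epsilon(t,y,-z) & = \one_\alpha + \sum_\delta \llangle \one_\alpha, \tfrac{\one_\delta}{-z-\psi}\rrangle^\epsilon_{2|0}(t,y)\, \one_{\delta'}.
\end{align*}
Pairing under $\eta_W$ produces four terms, which I will match against the localization decomposition. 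The pairing of the two two-point descendant series yields the stable contribution \eqref{stable-cont}, arising from fixed loci where both preimages $f^{-1}(0)$ and $f^{-1}(\infty)$ are nodes of the rational bridge $\mathcal{C}_1$. The pairing $\eta_W(z\,\partial_{y_\beta} I^{0,\epsilon}_{\rm LG}(y,z), \one_\alpha)$ matches \eqref{unstable-cont} from the configuration where both preimages are unstable, where the vertex contribution $\Box_{\nu,z}$ from the $I$-function is exactly the equivariant Euler class computed using the stacky target $\mathbb{P}[d,1]$. The two cross-terms match the two cases in \eqref{unstable-rest} where exactly one of $f^{-1}(0)$ or $f^{-1}(\infty)$ is unstable. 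Summing all four terms identifies $\eta_W(\partial_{y_\beta} J^\epsilon(t,y,z), \partial_{t_0^\alpha} J^\epsilon(t,y,-z))$ with $\mathfrak{G}^\epsilon(t,y,z)$ (up to the overall normalization factor $(-z^2/d)$ absorbed in the graph space integral), so regularity at $z = 0$ follows from polynomiality of $\mathfrak{G}^\epsilon(t,y,z)$ in $z$.

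The main obstacle in executing this plan is the careful combinatorial bookkeeping of the three unstable fixed loci and the signs coming from the $\C^*$-weights on the rational bridge. In each unstable configuration, the scaling weight $\pm z/d$ at $p_\infty$ and the narrow node condition \eqref{narrow-node} enter nontrivially into the vertex factor, and one must verify that the combinatorial identity $a_j = \nu_j$ from the derivation of \eqref{unstable-cont} precisely produces the derivative $z\,\partial_{y_\beta} I^{0,\epsilon}_{\rm LG}(y,z)$ with the correct $\one_\alpha$-coefficient. Once this matching is checked across all three configurations, the equality of generating functions is an immediate consequence of Atiyah-Bott localization, and the stated regularity follows.
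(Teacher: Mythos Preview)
Your proposal is correct and follows essentially the same argument as the paper: identify the pairing with the $\C^*$-equivariant graph space invariant \eqref{weighted-function} by matching the four terms from expanding $\partial_{y_\beta}J^\epsilon$ and $\partial_{t_0^\alpha}J^\epsilon$ against the stable contribution \eqref{stable-cont} and the three unstable contributions \eqref{unstable-cont}, \eqref{unstable-rest}, then invoke polynomiality in $z$ of the graph integral \eqref{graph-inv}. One small remark: the factor $-z^2/d$ you mention is already built into the graph invariant via ${\rm ev}^*([\infty])\cup\widetilde{\rm ev}^*([0])$ and the Euler class of the bridge, so the identification with $\mathfrak{G}^\epsilon$ is exact rather than up to normalization.
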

\begin{proof}
Use \eqref{stable-cont}, \eqref{unstable-cont}, and \eqref{unstable-rest}, we obtain
$$\eta_W\left(\partial_{y_\beta}J^{\epsilon}({\bf t},y,z), \partial_{t_0^\alpha}J^{\epsilon}({\bf t},y,-z)\right)
=\llangle[\Big] \one_{\alpha}\Big|\one_\beta\Big|{\rm ev}^*([\infty])\cup\widetilde{\rm ev}^*([0])\rrangle[\Big]^{\epsilon,\C^*}_{1|1}.$$

As a consequence of \eqref{graph-inv}, we know the LHS is regular at $z=0$.
\end{proof}

\subsubsection{Reconstruction}
Let 
$$zI^{0,\epsilon}_{\rm LG}(y,z)=z\one+G(y,z)$$ and denote the projection of $G(y,z)$ to $\mathcal{H}_W^+$ and $\mathcal{H}_W^-$ by $G_+(y,z)$ and $G_-(y,z)$. Then 
$G_+(y,z)\in H_W[\![y,z]\!], G_+(0,z)=0$, and 
\begin{equation}\label{j-func-split}
J^{\epsilon}({\bf t},y,z)=z\one+{\bf t}(-z)+G_+(y,z)+
G_{-}(y,z)+
\sum_{\gamma}\llangle[\Big]{\one_\gamma\over z-\psi}\rrangle[\Big]_{0,1}^{\epsilon}\one_{\gamma'}.
\end{equation}
We introduce multi-indices ${\bf m}$ and ${\bf n}$: 
$${\bf m}=\left(\cdots, m_i^{\gamma}, \cdots\right)\in (\Z_{\geq0})^{\infty},
\quad 
{\bf n}=\left(\cdots, n_0^{\gamma}, \cdots\right)\in(\Z_{\geq0})^{N},
\quad \forall  i\geq0, \gamma\in\mathscr{N}.
$$
Here all but finitely many $m_i^{\gamma}$ are nonzero. We adopt the notation 
\begin{equation*}
|{\bf m}|=\sum_{i\geq0}\sum_{\gamma} m_{i}^{\gamma}, \quad |{\bf n}|=\sum_{\gamma} n_{0}^{\gamma},  
\end{equation*}
We define two vectors ${\bf m}(\alpha)$ and ${\bf n}(\beta)$ such that 
$${\bf m}(\alpha)_i^{\gamma}=m_{i}^{\gamma}+\delta_{i}^{0}\delta_{\alpha}^{\gamma}, \quad {\bf n}(\beta)_{0}^{\gamma}=n_{0}^{\gamma}+\delta_{\beta}^{\gamma}.$$
We define coefficients $A_{{\bf n},j\geq0,\gamma}$ and $B_{\mathbf{m},j<0,\gamma} $ by expanding 
\begin{eqnarray}
&&G_+(y,z):=\sum_{\mathbf{n}}\sum_{j\geq0}\sum_{\gamma}\, A^{\epsilon}_{\mathbf{n},j,\gamma}\, y^{\mathbf{n}}\, z^j\, \one_\gamma, \label{i-func-plus}\\
&&\sum_{\gamma}\llangle[\Big]\frac{\one_\gamma}{z-\psi}\rrangle[\Big]^{\epsilon}_{0,1}\Big\vert_{y=0}\one_{\gamma'}:=\sum_{\mathbf{m}}\sum_{j\leq-1}\sum_{\gamma}\, B^{\epsilon}_{\mathbf{m}, j, \gamma}\, {\bf t}^{\mathbf{m}} \,(-z)^j\,\one_\gamma.\label{j-func-minus}
\end{eqnarray}
Let us write 
\begin{equation}\label{j-coefficient}
J^{\epsilon}({\bf t},y,z):=\sum_{\mathbf{n}}\sum_{\mathbf{m}}\sum_{j\in\Z}\sum_{\gamma}\,C^{\epsilon}_{\mathbf{m},\mathbf{n},j,\gamma}\, {\bf t}^{\mathbf{m}}\, y^{\mathbf{n}}\, z^j\,\one_\gamma.
\end{equation}
If $C^{\epsilon}_{\mathbf{m}, \mathbf{n}, j, \gamma}$ is determined by the coefficients in \eqref{i-func-plus} and \eqref{j-func-minus}, then 
we say $C^{\epsilon}_{\mathbf{m}, \mathbf{n}, j, \gamma}\in \mathfrak{Y}.$
\begin{proposition}\label{determine-prop}
The function $J^{\epsilon}({\bf t},y,z)$ is determined by the coefficients $\{A^{\epsilon}_{\mathbf{n},j\geq0,\gamma}\}$ in \eqref{i-func-plus} and $\{B^{\epsilon}_{\mathbf{m}, j<0, \gamma}\}$ in \eqref{j-func-minus}.
\end{proposition}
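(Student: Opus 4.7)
The plan is to prove Proposition \ref{determine-prop} by double induction on $D=|\mathbf{m}|+|\mathbf{n}|$ (outer) and $k=|\mathbf{n}|$ from low to high (inner), with Proposition \ref{regularity-thm} as the essential tool. First I would classify the coefficients using the decomposition \eqref{j-func-split}: the piece $z\mathbf{1}+\mathbf{t}(-z)$ gives the trivially known coefficients ($C^\epsilon_{0,0,1,\mathbf{1}}=\mathbf{1}$ and the linear $\mathbf{t}$-terms); the piece $G_+(y,z)$, read off from $\{A^\epsilon_{\mathbf{n},j\ge 0,\gamma}\}$, supplies every $C^\epsilon_{0,\mathbf{n},j\ge 0,\gamma}$; and $\mathcal{T}(t,0,z)$, read off from $\{B^\epsilon_{\mathbf{m},j<0,\gamma}\}$, supplies every $C^\epsilon_{\mathbf{m},0,j<0,\gamma}$. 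Moreover, for $\mathbf{m}\neq 0$ with $|\mathbf{m}|\neq 1$ or $\mathbf{n}\neq 0$ the coefficients $C^\epsilon_{\mathbf{m},\mathbf{n},j\ge 0,\gamma}$ vanish since the only $j\ge 0$ contributions to $J^\epsilon$ come from $z\mathbf{1}$, $\mathbf{t}(-z)$, and $G_+(y,z)$. The only remaining unknowns are the $G_-$-coefficients $C^\epsilon_{0,\mathbf{n},j<0,\gamma}$ with $\mathbf{n}\neq 0$ and the genuinely mixed coefficients $C^\epsilon_{\mathbf{m},\mathbf{n},j<0,\gamma}$ with $\mathbf{m},\mathbf{n}\neq 0$, and these are what the induction must recover.

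For the inductive step, suppose we have determined all $C^\epsilon_{\mathbf{m}',\mathbf{n}',*,*}$ with $|\mathbf{m}'|+|\mathbf{n}'|<D$ and, within degree $D$, all those with $|\mathbf{n}'|<k$. To obtain $C^\epsilon_{\mathbf{m},\mathbf{n},j<0,\gamma}$ at $(|\mathbf{m}|+|\mathbf{n}|,|\mathbf{n}|)=(D,k)$ with $k\ge 1$, choose $\beta\in\mathfrak{B}$ with $n_\beta\ge 1$ and extract the coefficient of $\mathbf{t}^{\mathbf{m}}y^{\mathbf{n}-\delta_\beta}$ from the regularity relation \eqref{regularity}. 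In the resulting sum over splittings $\bigl((\mathbf{m}_1,\mathbf{n}_1),(\mathbf{m}_2,\mathbf{n}_2)\bigr)$ with $\mathbf{m}_1+\mathbf{m}_2=\mathbf{m}$, $\mathbf{n}_1+\mathbf{n}_2=\mathbf{n}-\delta_\beta$, the pivot $(\mathbf{m},\mathbf{n}-\delta_\beta;0,0)$ contributes $n_\beta\,\eta_W\!\bigl(\sum_j C^\epsilon_{\mathbf{m},\mathbf{n},j,\gamma}z^j\mathbf{1}_\gamma,\,\mathbf{1}_\alpha\bigr)=n_\beta\sum_j C^\epsilon_{\mathbf{m},\mathbf{n},j,\alpha'}z^j$, since $\partial_{t^\alpha_0}J^\epsilon(0,0,-z)=\mathbf{1}_\alpha$. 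Every other splitting has $(\mathbf{m}_2,\mathbf{n}_2)\neq(0,0)$, so its first factor is a coefficient of $J^\epsilon$ at index $(\mathbf{m}_1,\mathbf{n}_1+\delta_\beta)$ of total degree $\le D-1$ (known by outer induction), while its second factor is a coefficient at $(\mathbf{m}_2+\delta_{(0,\alpha)},\mathbf{n}_2)$ with $|\mathbf{n}_2|\le k-1$ (known by inner induction, even when the total degree equals $D$). Taking the negative-$z$ part of the regularity, which must vanish, then solves explicitly for $C^\epsilon_{\mathbf{m},\mathbf{n},j<0,\alpha'}$ in terms of coefficients already in $\mathfrak{Y}$; letting $\alpha$ range over all narrow sectors and using the non-degeneracy $\eta_W(\mathbf{1}_\gamma,\mathbf{1}_\alpha)=\delta_\gamma^{\alpha'}$ recovers all $\gamma$-components. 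The base case $D=1$ amounts to $C^\epsilon_{0,\delta_\beta,j<0,\gamma}$: here the regularity at $\mathbf{t}=y=0$ contains only the pivot plus a term built purely from $B$'s, giving the conclusion at once.

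\textbf{Main obstacle.} The subtlety is the degree bookkeeping. A naive single induction on $D$ does not close: when $(\mathbf{m}_1,\mathbf{n}_1)=(0,0)$ the second factor $C^\epsilon_{\mathbf{m}+\delta_{(0,\alpha)},\mathbf{n}-\delta_\beta,*,*}$ sits at the \emph{same} total degree $D$, so same-degree unknowns appear to feed back into the equation. The inner induction on $|\mathbf{n}|$ is introduced precisely to break this cycle, exploiting that the shift $\delta_{(0,\alpha)}$ only enlarges $\mathbf{m}$ while $\delta_\beta$ only reduces $\mathbf{n}$, so the ``bad'' same-degree term always has strictly smaller $|\mathbf{n}|$. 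Verifying this and that the pivot genuinely isolates the target coefficient (using vanishing of $C^\epsilon_{\mathbf{m},\mathbf{n},j\ge 0,\gamma}$ outside the trivial cases) is the technical heart of the argument.
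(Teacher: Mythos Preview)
Your proposal is correct; the induction closes and the bookkeeping you describe is accurate. The organization, however, differs from the paper's. The paper runs an outer induction on $|\mathbf{n}|$ alone (with no control on $|\mathbf{m}|$) and an \emph{inner} induction on the depth $K$ of the negative $z$-power. In that scheme, the splitting terms with $\mathbf{n}_2=\mathbf{0}$ but $\mathbf{m}_2\neq \mathbf{0}$---the ones you absorb via your $D$-induction, since their first factor sits at total degree $\leq D-1$---are handled differently: the second factor $C^\epsilon_{\mathbf{m}_2+\delta_{(0,\alpha)},\mathbf{0},j',\gamma'}$ vanishes for $j'\geq 0$ (as $|\mathbf{m}_2+\delta_{(0,\alpha)}|\geq 2$), so only $j'<0$ survives, and then the first factor $C^\epsilon_{\mathbf{m}_1,\mathbf{N}(\beta),j,\gamma}$ is needed only for $j>-K$, which is precisely the $K$-induction hypothesis. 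The same $K$-induction also absorbs, in the pivot line, the cross terms $C^\epsilon_{\mathbf{M},\mathbf{N}(\beta),j,\gamma}\cdot B^\epsilon_{\delta_{(0,\alpha)},-K-j,\gamma'}$ with $j>-K$, which the paper treats as potentially nonzero.

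Your route trades the $K$-induction for the observation $\partial_{t_0^\alpha}J^\epsilon(0,0,-z)=\mathbf{1}_\alpha$ (equivalently $B^\epsilon_{\delta_{(0,\alpha)},j,\gamma}=0$, which holds since $\dim\overline{\mathcal{W}}^\epsilon_{2|0}=-1$); this makes the pivot isolate the target cleanly for every $j<0$ at once, at the cost of carrying the auxiliary outer induction on $D=|\mathbf{m}|+|\mathbf{n}|$. The paper's scheme is leaner in that $|\mathbf{m}|$ never enters the induction, but your version is equally valid and arguably makes the ``pivot isolates the unknown'' step more transparent.
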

\begin{proof}
Let $\widetilde{C}^{\epsilon}_{\mathbf{m},\mathbf{n},j,\gamma}=(-1)^j\,C^{\epsilon}_{\mathbf{m},\mathbf{n},j,\gamma}.$
Direct calculation shows
\begin{eqnarray*}
&&\eta_W\left(\partial_{y_\alpha}J^{\epsilon}({\bf t},y,z), \partial_{t_0^\beta}J^{\epsilon}({\bf t},y,-z)\right)\\
&=&\sum_{\gamma}\sum_{\mathbf{n},\mathbf{n'}}\sum_{\mathbf{m},\mathbf{m'}}\sum_{j,j'}(n_0^{\beta}+1) ({m'}_0^{\alpha}+1) C^{\epsilon}_{\mathbf{m},\mathbf{n(\beta)},j,\gamma}  \widetilde{C}^{\epsilon}_{\mathbf{m'(\alpha)},\mathbf{n'},j',\gamma'}\ {\bf t}^{\mathbf{m}+\mathbf{m'}}y^{\mathbf{n}+\mathbf{n'}}z^{j+j'}.
\end{eqnarray*}
For any fixed $\mathbf{M},\mathbf{N}$, and a positive integer $K$, the regularity formula \eqref{regularity} implies
\begin{equation}\label{regularity-eq}
\sum_{\gamma}\sum_{\mathbf{n}+\mathbf{n'}=\mathbf{N}}\sum_{\mathbf{m}+\mathbf{m'}=\mathbf{M}}\sum_{j}(n_0^{\beta}+1) ({m'}_0^{\alpha}+1) C^{\epsilon}_{\mathbf{m},\mathbf{n(\beta)},j,\gamma}  \widetilde{C}^{\epsilon}_{\mathbf{m'(\alpha)},\mathbf{n'},-K-j, \gamma'}=0.
\end{equation}

Now we prove $C^{\epsilon}_{\mathbf{m}, \mathbf{n}, j, \gamma}\in \mathfrak{Y}$ by three steps. 

\medskip
\noindent
{\bf Step 1.} 
Let $\mathbf{n}=\mathbf{0}:=(0,\cdots,0)$, we know 
$C^{\epsilon}_{\mathbf{m},\mathbf{0},j,\gamma}\in\mathfrak{Y}$ since for any $\mathbf{m}$ and $\gamma$, we have
\begin{equation}\label{j-minus-cond}
C^{\epsilon}_{\mathbf{m},\mathbf{0},j,\gamma} 
=
\left\{
\begin{array}{ll}
B^{\epsilon}_{\mathbf{m},j,\gamma}, & \textit{if} \quad j<0;\\ 
1, & \textit{if} \quad j=1, \mathbf{m}=\mathbf{0}, \one_\gamma=\one;\\
1, & \textit{if} \quad j\geq0, m_i^{\xi}=\delta_i^j\delta_{\gamma}^{\xi};\\
0, & \textit{otherwise}.
\end{array}
\right.
\end{equation}

\medskip
\noindent
{\bf Step 2.} 
For any integer $n_0\in\Z_{\geq0}$, assume that if $|\mathbf{n}|\leq n_0$, we have
\begin{equation}\label{j-minus-assumotion}
C^{\epsilon}_{\mathbf{m},\mathbf{n},j,\gamma}\in\mathfrak{Y}, \quad \forall j\in\Z, \gamma\in\mathscr{N}.
\end{equation}
Now we fix $|\mathbf{N}|=n_0$. Then $\forall \beta\in\mathscr{N}$, $|\mathbf{N(\beta)}|=n_0+1.$ We consider the coefficient
$
C^{\epsilon}_{\mathbf{M},\mathbf{N(\beta)}, j, \gamma}.
$
By the definitions in \eqref{i-func-plus} and \eqref{j-coefficient},we know that 
\begin{equation}\label{j-plus-cond}
C^{\epsilon}_{\mathbf{m},\mathbf{N(\beta)}, j, \gamma}=
\left\{
\begin{array}{ll}
0, & \textit{if}\quad j\geq0, \mathbf{m}\neq0.\\
A_{\mathbf{N(\beta)}, j, \gamma}, &  \textit{if}\quad j\geq0, \mathbf{m}=0.
\end{array}
\right.
\end{equation}
This shows 
$C^{\epsilon}_{\mathbf{M},\mathbf{N(\beta)}, j, \gamma}\in\mathfrak{Y}$ for all $j\geq0$.

\medskip
\noindent
{\bf Step 3.} 
We do induction on the positive integer $K$ by assuming 
\begin{equation}\label{m-assumption}
C^{\epsilon}_{\mathbf{m},\mathbf{N(\beta)}, j, \gamma}\in\mathfrak{Y}, \quad \forall -K-j<0.
\end{equation} 
This is true when $K=1$ by \eqref{j-plus-cond}.
Thus it is enough to prove  for all  $\mathbf{M}$ and $\alpha'\in\mathscr{N}$,
$$C^{\epsilon}_{\mathbf{M},\mathbf{N(\beta)}, -K, \alpha'}\in\mathfrak{Y}.$$
In order to prove this, we rewrite equation \eqref{regularity-eq} as
\begin{eqnarray}\label{reconstruction}
0&=&\sum_{\gamma}\sum_{j\in\Z} (N_0^\beta+1) \,C^{\epsilon}_{\mathbf{M},\mathbf{N(\beta)},j,\gamma}  \widetilde{C}^{\epsilon}_{\mathbf{0(\alpha)},\mathbf{0},-K-j, \gamma'}
\nonumber\\
&&+
\sum_{\gamma}\sum_{\mathbf{m}\neq\mathbf{M}}\sum_{j\in\Z}(N_0^\beta+1)({m'}_0^{\alpha}+1) C^{\epsilon}_{\mathbf{m},\mathbf{N(\beta)},j,\gamma}  \widetilde{C}^{\epsilon}_{\mathbf{m'(\alpha)},\mathbf{0},-K-j, \gamma'}\\
&&+
\sum_{\gamma}\sum_{\mathbf{n}\neq\mathbf{N}}\sum_{\mathbf{m}+\mathbf{m'}=\mathbf{M}}\sum_{j\in\Z}(n_0^{\beta}+1) ({m'}_0^{\alpha}+1) C^{\epsilon}_{\mathbf{m},\mathbf{n(\beta)},j,\gamma}  \widetilde{C}^{\epsilon}_{\mathbf{m'(\alpha)},\mathbf{n'},-K-j, \gamma'}.\nonumber
\end{eqnarray}
Let us analyze the RHS of \eqref{reconstruction} line by line. From \eqref{j-minus-cond}, we observe that
$$
C^{\epsilon}_{\mathbf{0(\alpha)},\mathbf{0},-K-j, \gamma'}=
\left\{
\begin{array}{ll}
B^{\epsilon}_{\mathbf{0(\alpha)},-K-j, \gamma'}\in\mathfrak{Y}, & \textit{if} \quad -K-j<0,\\
\delta^{\alpha}_{\gamma'}, & \textit{if} \quad -K-j=0,\\
0, & \textit{if} \quad -K-j>0.
\end{array}
\right.$$
When $-K-j<0$, by induction \eqref{m-assumption}, $C^{\epsilon}_{\mathbf{M},\mathbf{N(\beta)},j,\gamma}\in\mathfrak{Y}$.
The first line of the RHS of \eqref{reconstruction} is a sum the target term $(N_0^{\beta}+1)\,C^{\epsilon}_{\mathbf{M},\mathbf{N(\beta)}, -K, \alpha'}$ and some elements in $\mathfrak{Y}.$

The second line belongs to $\mathfrak{Y}$, since if $-K-j<0$, 
$C^{\epsilon}_{\mathbf{m},\mathbf{N(\beta)},j,\gamma}\in\mathfrak{Y}$ by \eqref{m-assumption},
and for $\mathbf{m'}\neq\mathbf{0}$,
$$
C^{\epsilon}_{\mathbf{m'(\alpha)},\mathbf{0},-K-j, \gamma'}
=\left\{
\begin{array}{ll}
0, & \textit{if} \quad -K-j\geq0,\\
B^{\epsilon}_{\mathbf{m'(\alpha)},-K-j, \gamma'}\in\mathfrak{Y}, & \textit{if} \quad -K-j<0.
\end{array}
\right.
$$

Since $\mathbf{n}\neq\mathbf{N}$, the formula \eqref{j-minus-assumotion} implies that the third line of \eqref{reconstruction} is in $\mathfrak{Y}$. 

Finally, since $N_0^{\beta}+1\neq0$ and $\alpha\in\mathscr{N}$ is arbitrary, we know $C^{\epsilon}_{\mathbf{M},\mathbf{N(\beta)}, -K, \alpha'}\in\mathfrak{Y}.$ This finishes the induction argument on \eqref{m-assumption}. 
\end{proof}

\subsubsection{A proof of Proposition \ref{lg:mirror}}
\begin{proof}
Let
$${\bf t}_{\epsilon}(z):={\bf t}(z)+G_+(y,-z)$$
and then consider
$$\widetilde{J}({\bf t}_{\epsilon},z)=z\one+{\bf t}_{\epsilon}(-z)+\sum_{\gamma}\llangle[\Big]\frac{\one_\gamma}{z-\psi}\rrangle[\Big]^{\infty}_{0,1}({\bf t}_{\epsilon})\ \one_\gamma.$$
Using the same method in Proposition \ref{regularity-thm}, we can check
$$\eta_W\left(\partial_{y_\beta} \widetilde{J}({\bf t}_{\epsilon}, z), \partial_{t_0^\alpha} \widetilde{J}({\bf t}_{\epsilon}, -z)\right)\in\C[\![z]\!].$$
Thus the function $\widetilde{J}({t}_{\epsilon},z)$ satisfies the same reconstruction procedure as the function $J^\epsilon({\bf t},y,z)$ in Proposition \ref{determine-prop}. 
Moreover, the initial reconstruction data (see \eqref{i-func-plus} and \eqref{j-func-minus}) are identical for both functions. This implies that 
$$J^\epsilon({\bf t},y,z)=\widetilde{J}({\bf t}_{\epsilon},z).$$
On the other hand, by definition, $\widetilde{J}({\bf t}_{\epsilon},-z)$ is an $\mathcal{H}_W[\![y]\!]$-valued point in the Lagrangian cone $\mathcal{L}_W$.
We let ${\bf t}(z)=0$ and $\epsilon\to0$, the last two terms in \eqref{j-epsilon} vanish. 
In particular, the second term vanishes due to the unstability condition
$1+n\epsilon\leq2.$
Thus by taking the limit, we obtain that $-zI^0_{\rm LG}(y,-z)$ is an $\mathcal{H}_W[\![y]\!]$-valued point in the Lagrangian cone $\mathcal{L}_W$.
\end{proof}

\bibliographystyle{amsalpha}

\begin{thebibliography}{FKRW}

\bibitem{AC}
P. ~Acosta,
\textit{Asymptotic Expansion and the LG/(Fano, General Type) Correspondence.}
 arXiv:1411.4162.
 
 \bibitem{AGrV}
D.~Abramovich, T.~Graber, and A.~Vistoli,
\textit{Gromov--Witten theory of Deligne--Mumford stacks.} 
Amer. J. Math., vol. 130(2008), 1337--1398.
 
\bibitem{ABK}
M.~Aganagic, V.~Bouchard and A.~Klemm,
\textit{Topological Strings and (Almost) Modular Forms}
Commun. Math. Phys. 277 (2008), 771--819.

\bibitem{ASYZ}
M.~Alim, E.~Scheidegger, S-T.~Yau, and J.~Zhou
\textit{Special polynomial rings, quasi modular forms and duality of topological strings.}
Adv. Theor. Math. Phys.,
vol. 18, no. 2 (2014), 401--467.

\bibitem{AGV}
V.I.~Arnol'd, S.M.~Gusein-Zade, and A.N.~Varchenko, 
\textit{Singularities of differentiable maps. Vol. II. Monodromy and asymptotics of integrals.}
Monographs in Mathematics, 83. Birkh\"auser Boston, Inc., Boston, MA, 1988.

\bibitem{Barannikov:quantum} 
S.~Barannikov, 
\textit{Quantum periods. I. Semi-infinite variations of Hodge structures.}
Internat.\ Math.\ Res.\ Notices, (23):1243--1264, 2001. 

\bibitem{BH}
P.~Berglund and T.~H\"ubsch, 
\textit{A Generalized Construction of Mirror Manifolds.} 
Nucl. Phys. B 393 (1993) 377-391.

\bibitem{BCOV}
M.~Bershadsky, S.~Cecotti, H.~Ooguri, and C.~Vafa, 
\textit{Kodaira-Spencer theory of gravity and exact results for quantum string amplitudes.} 
Commun. Math. Phys., vol. 165(1994), 311--428.

\bibitem{BCS}
L.~Borisov, L.~Chen, and G.~Smith,
\textit{The orbifold Chow ring of toric Deligne-Mumford stacks.}
J. Amer. Math. Soc. 18 (2005), no. 1, 193--215.

\bibitem{Brieskorn} 
E.~Brieskorn, 
\textit{Die Monodromie der isolierten Singularitaten von Hyperflachen.}
Manuscripta Math.\ 2 1970 103--161. 

\bibitem{Cecotti-Vafa:top_anti_top} 
S.~Cecotti and C.~Vafa,  
\textit{Topological--anti-topological fusion.}
Nuclear Phys. B 367 (1991), no.2, pp.359--461.

\bibitem{Cecotti-Vafa:classification} 
S.~Cecott and C.~Vafa, 
\textit{On classification of $N=2$ supersymmetric theories.}
Comm.\ Math.\ Phys.\ 158 (1993), no. 3, pp.569--644.

\bibitem{CLL}
HL.~Chang, J.~ Li, and WP.~Li,
\textit{Witten's top Chern class via cosection localization.}
Inventiones mathematicae, v. 200, (3), 2015, Pages 1015-1063.


\bibitem{CR}
W.~Chen and Y.~Ruan,
\textit{Orbifold Gromov--Witten theory.}
From: Orbifolds in mathematics and physics (Madison, WI, 2001), (A.~
Adem, J.~Morava, Y.~Ruan, editors), Contemp. Math. 310,
Amer. Math. Soc. (2002), 25--85. 

\bibitem{CR04}
W.~Chen and Y.~Ruan,
\textit{A new cohomology theory of orbifold}, Comm. Math. Phys. 248 (2004), 1--31. 

\bibitem{CHR}
A, ~Chiodo and Y. ~Ruan,
\textit{A global mirror symmetry framework for the Landau-Ginzburg/Calabi-Yau correspondence.}
Ann. Inst. Fourier (Grenoble) 61 (2011), no. 7, 2803--2864.

\bibitem{CHR2}
A, ~Chiodo and Y. ~Ruan,
\textit{Landau-Ginzburg/Calabi-Yau correspondence for quintic three-folds via symplectic transformations.}
 Invent. Math. 182 (2010), no. 1, 117--165.

\bibitem{CHR3}
A, ~Chiodo and Y. ~Ruan,
\textit{LG/CY correspondence: the state space isomorphism.}
Adv. Math. 227 (2011), no. 6, 2157--2188. 

\bibitem{CIR}
A. ~Chiodo, H. ~Iritani, and Y. ~Ruan,
\textit{Landau-Ginzburg/Calabi-Yau correspondence, global mirror symmetry and Orlov equivalence.}
Publ. Math. Inst. Hautes \'{E}tudes Sci. 119 (2014), 127--216.

\bibitem{CL}
E. ~Clader, 
\textit{Landau-Ginzburg/Calabi-Yau correspondence for the complete intersections $X_{3,3}$ and $X_{2,2,2,2}$.}
arXiv:1301.5530.

\bibitem{Coates-Iritani:Fock} 
T.~Coates and H.~Iritani,  
\emph{A Fock sheaf for Givental quantization.}
arXiv:1411.7039.

\bibitem{CIT} 
T.~Coates, H.~Iritani, and H.-H.~Tseng, 
\emph{Wall-crossings in toric Gromov--Witten theory. I. Crepant examples}. 
Geom.\ Topol., 13(5):2675--2744, 2009.

\bibitem{CCIT}
T.~Coates, A.~Corti, H.~Iritani, and H.-H.~Tseng,
\textit{Some applications of the mirror theorem for toric stacks.}
arXiv: 1401.2611.

\bibitem{CS}
K.~Costello and S. ~Li
\textit{Quantum BCOV theory on Calabi-Yau manifolds and the higher genus B-model.}
 arXiv:1201.4501.
 
 \bibitem{DK}
 I. ~Dolgachev and S. ~Kondo
 \textit{Moduli spaces of K3 surfaces and complex ball quotients}
  arXiv:math/0511051

\bibitem{Douai-Sabbah} 
A.~Douai and C.~Sabbah,  
\textit{Gauss-Manin systems, Brieskorn lattices and Frobenius structures (I).}
Ann. Inst. Fourier (Grenoble) 53 (2003), no. 4, 1055--1116.

\bibitem{Dimca:topics} 
A. ~Dimca, 
\textit{Topics on real and complex singularities. An Introduction.}
Advanced Lectures in Mathematics. 
Friedr.\ Vieweg \& Sohn, Braunschweig, 1987.
 
\bibitem{Du}
B.~Dubrovin,
\textit{Geometry of 2D topological field theories.}
In: ``Integrable systems and quantum groups'' (Montecatini Terme, 1993), 120--348, Lecture Notes in Math., 1620, Springer, Berlin, 1996.

\bibitem{Dubrovin:top_anti_top} 
B.~Dubrovin, 
\textit{Geometry and integrability of topological-antitopological fusion.}
Comm.\ Math.\ Phys.\ 152 (1993), no. 3, pp.539--564.

\bibitem{FJR}
H. ~Fan, T. ~Jarvis, and Y. ~Ruan, 
\textit{The Witten equation, mirror symmetry, and quantum singularity theory.} Ann. of Math. (2) 178 (2013), no. 1, 1--106.


\bibitem{FJR2}
H. ~Fan, T. ~Jarvis, and Y. ~Ruan, 
\textit{The Witten Equation and Its Virtual Fundamental Cycle.}
arxiv:0712.4025[math.AG].

\bibitem{FJR3}
H. ~Fan, T. ~Jarvis, and Y. ~Ruan, 
\textit{A Mathematical Theory of the Gauged Linear Sigma Model.}
arXiv:1506.02109 [math.AG].

\bibitem{G1}
A.~Givental, 
\textit{Semisimple Frobenius structures at higher genus.}
Internat. Math. Res. Notices, vol. 23(2001), 1265--1286.

\bibitem{G2}
A.~Givental, 
\textit{Gromov--Witten invariants and quantization of quadratic Hamiltonians.}
Mosc. Math. J. vol. 1(2001), 551--568. 

\bibitem{Givental:symplectic} 
A.~Givental, 
\emph{Symplectic geometry of Frobenius structures}.
Frobenius manifolds, 91--112, Aspects Math., E36, Friedr.\ Vieweg, Wiesbaden, 2004.


\bibitem{G4}
A.~Givental,
\textit{$A_{n-1}$ singularities and $n$KdV Hierarchies}. 
Mosc. Math. J., vol 3(2003), 475--505.

\bibitem{HK}
B. ~Haghighat and A. ~Klemm,
\textit{Solving the Topological String on K3 Fibrations.}
Journal of High Energy Physics, January (2010), 2010:9.
arXiv:0908.0336.



\bibitem{Ha}
R. ~Hartshorne,
\textit{Algebraic geometry.}
Springer--Verlag, New York, Inc., 1977.

\bibitem{Has}
B. ~Hassett. 
\textit{Moduli spaces of weighted pointed stable curves.} 
Adv. Math., 173(2003):316--352. 

\bibitem{HLSW}
W.~He, S.~Li, Y.~Shen, and R.~Webb,
\textit{Landau-Ginzburg Mirror Symmetry Conjecture.}
arXiv:1503.01757 [math.AG].

\bibitem{He}
C.~Hertling, 
\textit{Frobenius manifolds and moduli spaces for singularities.} 
Cambridge Tracts in Math., 151, Cambridge Univ. Press, Cambridge, 2002.

\bibitem{He2}
C.~Hertling,
\textit{$tt^*$-geometry, Frobenius manifolds, their connections, and the construction for singularities.}
J. Reine Angew. Math. vol. 555(2003), 77--161.

\bibitem{HKQ}
M. ~Huang, A. ~Klemm, and S.~Quackenbush,
\textit{Topological String Theory on Compact Calabi-Yau: Modularity and Boundary Conditions.}
In: ``Homological Mirror Symmetry". Lecture Notes in Physics, Volume 757, (2009), pp 1--58.
arXiv:hep-th/0612125.
 
\bibitem{HKK}
M. ~Huang, S. ~Katz, and A. ~Klemm,
\textit{Topological String on elliptic CY 3-folds and the ring of Jacobi forms.}
Journal of High Energy Physics, October (2015), 2015:125.
arXiv:1501.04891.
 

\bibitem{Iritani:ttstar} 
H.~Iritani,
\textit{$tt^*$-geometry in quantum cohomology.}
arXiv:0906.1307.

\bibitem{K2}
V.G.~Kac,
\textit{Vertex algebras for beginners}. 
University Lecture Series, vol. 10, Amer. Math. Soc., Providence, RI, 1996; 2nd ed., 1998.

\bibitem{KL}
Y-H.~Kiem and J.~Li,
\textit{Localizing virtual cycles by cosections.}
J. Amer. Math. Soc. 26 (2013), 1025-1050.

\bibitem{Kr}
M. ~Krawitz,
\textit{FJRW rings and Landau-Ginzburg Mirror Symmetry.} 
Ph.D. thesis, University of Michigan, 2010.

\bibitem{KSh}
M. ~Krawitz and Y. ~Shen,
\textit{Landau-Ginzburg/Calabi-Yau Correspondence of all Genera for Elliptic Orbifold $\mathbb{P}^1$.}
 arXiv:1106.6270.

\bibitem{LLS}
C.~Li, S.~Li, and K.~Saito,
\textit{Primitive forms via polyvector fields.}
arXiv:1311.1659.

\bibitem{Liu}
C. ~Liu,
\textit{Localization in Gromov-Witten theory and orbifold Gromov-Witten theory}. 
Handbook of moduli. Vol. II, 353-425, Adv. Lect. Math. (ALM), 25, Int. Press, Somerville, MA, 2013.

\bibitem{Lo}
E.~Looijenga,
\emph{Root systems and elliptic curves}.
Inv. Math., vol 38(1976), 17--32.

\bibitem{Mi}
T.~Milanov,
\textit{Analyticity of the total ancestor potential in singularity theory.}
Adv. Math. 255 (2014), 217--241. 

\bibitem{Mi2}
T.~Milanov,
\textit{The phase factors in singularity theory.}
arXiv: 1502.07444.

\bibitem{MR}
T. ~Milanov and Y. ~Ruan,
\textit{Gromov-Witten theory of elliptic orbifold $P^1$ and quasi-modular forms.}
arXiv:1106.2321. 

\bibitem{MRS}
T. ~Milanov, Y. ~Ruan, and Y. ~Shen,
\textit{Gromov-Witten theory and cycle-valued modular forms.}
arXiv:1206.3879. 
 
 \bibitem{MSh}
 T. ~Milanov and Y. ~Shen,
\textit{Global mirror symmetry for invertible simple elliptic singularities.}
Annales de l'institut Fourier, 66 no. 1 (2016), p. 271-330.

\bibitem{Mu}
D.~Mumford,
\textit{Abelian varieties}.
Oxford Univ. Press, Bombay, 1970.
 
\bibitem{Pham} 
F.~Pham, 
\emph{La descente des cols par les onglets de Lefschetz, avec vues sur Gauss-Manin.}  
Differential systems and singularities (Luminy, 1983). 
Asterisque No. 130 (1985), 11--47.  
 
\bibitem{PLS}
N. ~Priddis, Y. ~Lee, and M. ~Shoemaker,
\textit{A proof of the Landau-Ginzburg/Calabi-Yau correspondence via the crepant transformation conjecture.}
arXiv:1410.5503.
  

\bibitem{PS}
 N. ~Priddis and M. ~Shoemaker,
\textit{A Landau-Ginzburg/Calabi-Yau correspondence for the mirror quintic.}
  arXiv:1309.6262.
  
\bibitem{R}
Y. ~Ruan,
\textit{The Witten equation and the geometry of the Landau-Ginzburg model.}
String-Math 2011, 209--240, Proc. Sympos. Pure Math., 85, Amer. Math. Soc., Providence, RI, 2012.
 
 
\bibitem{RR}
D. ~Ross and Y. ~Ruan,
\textit{Wall-Crossing in Genus Zero Landau-Ginzburg Theory.}
arXiv:1402.6688.
To appera in Journal f\"ur die reine und angewandte Mathematik (Crelles Journal). DOI: 10.1515/crelle-2015-0005.


\bibitem{Sabbah:tame} 
C. ~Sabbah, 
\emph{Hypergeometric periods for a tame polynomial.}
Port. Math. (N.S.) 63 (2006), no. 2, 173--226.

\bibitem{Sabbah:FL} 
C. ~Sabbah, 
\emph{Fourier-Laplace transform of a variation of polarized complex Hodge structure}. 
J.\ Reine Angew.\ Math.\ 621 (2008), pp.123--158.


\bibitem{KS} 
K.~Saito, 
\textit{On periods of primitive integrals, I}. 
Preprint RIMS (1982).

\bibitem{SaitoK:higher_residue}
K.~Saito, 
\textit{The higher residue pairings $K^{(k)}_F$ for a family of hypersurface singular points.}
Singularities, Part 2 (Arcata, Calif., 1981), 441--463, Proc.\ Sympos.\ Pure Math., 40, 
Amer.\ Math.\ Soc., Providence, RI, 1983. 

\bibitem{ST}
K.~Saito and A.~Takahashi,  
\emph{From primitive forms to Frobenius manifolds. From Hodge theory to integrability and TQFT $tt^*$-geometry}.
Proc. Sympos. Pure Math., vol. 78(2008), 31--48.

\bibitem{MS} 
M.~Saito, 
\textit{On the structure of Brieskorn lattice}. 
Ann. Inst. Fourier, vol. 39(1989), 27--72. 


\bibitem{Schm}
W.~Schmid,
\emph{Variation of Hodge structure: The singularities of the period mapping.}
Invent. Math., vol. 22(1973), 211--319. 


\bibitem{SZ}
Y.~Shen and J.~Zhou,
\emph{Ramanujan Identities and Quasi-Modularity in Gromov-Witten Theory.}
arXiv:1411.2078 [math.AG].



\bibitem{Steenbrink:quasihomogeneous} 
J.~H.~M.~Steenbrink, 
\emph{Intersection form for quasi-homogeneous singularities}. 
Compositio Math. 34 (1977),
no. 2, 211--223.

\bibitem{Stn}
J.~H.~M. ~Steenbrink,
\textit{Mixed Hodge structure on the vanishing cohomology.}
In: Real and complex singularities, Oslo (1976), P. Holm (ed.). Alphen
aan den Rijn: Sijthoff and Noordhoff (1977), 525--562.


\bibitem{Te}
C.~Teleman,
\emph{The structure of 2D semi-simple field theories.} 
Invent. Math., 188 (2012), no. 3, 525--588.

\bibitem{Ts}
H.-H.~Tseng,
\textit{
Orbifold quantum Riemann--Roch, Lefschetz, and Serre.}
Geom. and Top., vol. 14(2010), 1--81.

\bibitem{W}
E. ~Witten,
\textit{Phases of N = 2 Theories In Two Dimensions, }
Nucl.Phys. B 403 (1993), 159-222.

\bibitem{W2}
E. ~Witten,
\emph{Algebraic geometry associated with matrix models of two-dimensional gravity}, Topological methods in modern mathematics 
(Stony Brook, NY, 1991), Publish or Perish, Houston, TX, 1993, pp. 235--269.

\end{thebibliography}

\end{document}